\numberwithin{equation}{section}
\DeclarePairedDelimiter{\p}{\lparen}{\rparen}
\DeclarePairedDelimiter{\ip}{\langle}{\rangle}
\let\c@equation\c@subsubsection
\newtheorem{corollary}[subsubsection]{Corollary}
\newtheorem{lemma}[subsubsection]{Lemma}
\newtheorem{prop}[subsubsection]{Proposition}
\newtheorem{theorem}[subsubsection]{Theorem}
\theoremstyle{definition}
\newtheorem{definition}[subsubsection]{Definition}
\newtheorem{construction}[subsubsection]{Construction}
\newtheorem{example}[subsubsection]{Example}
\newtheorem*{claim*}{Claim}
\theoremstyle{remark}
\newtheorem{remark}[subsubsection]{Remark}
\newcommand{\wt}{\widetilde}
\newcommand{\wh}{\widehat}
\newcommand{\ol}{\overline}
\newcommand{\defeq}{\vcentcolon=}
\newcommand{\eqdef}{=\vcentcolon}
\newcommand{\Nis}{\mathrm{Nis}}
\newcommand{\op}{\mathrm{op}}
\newcommand{\fr}{\mathrm{fr}}
\newcommand{\sspt}{\mathds{1}}
\newcommand{\A}{\mathbb{A}}
\newcommand{\D}{\mathbf{D}}
\newcommand{\G}{\mathbb{G}}
\newcommand{\HH}{\mathbb{H}}
\newcommand{\LL}{\mathbb{L}}
\newcommand{\PP}{\mathbb{P}}
\newcommand{\Q}{\mathbb{Q}}
\newcommand{\Z}{\mathbb{Z}}
\newcommand{\bfI}{\mathbf{I}}
\newcommand{\bfK}{\mathbf{K}}
\newcommand{\scrC}{\mathscr{C}}
\newcommand{\scrF}{\mathscr{F}}
\newcommand{\scrL}{\mathscr{L}}
\newcommand{\scrM}{\mathscr{M}}
\newcommand{\calA}{\mathcal{A}}
\newcommand{\calC}{\mathcal{C}}
\newcommand{\calE}{\mathcal{E}}
\newcommand{\calO}{\mathcal{O}}
\newcommand{\calU}{\mathcal{U}}
\newcommand{\calV}{\mathcal{V}}
\newcommand{\calZ}{\mathcal{Z}}
\newcommand{\rmC}{\mathrm{C}}
\newcommand{\rmc}{\mathrm{c}}
\newcommand{\rmF}{\mathrm{F}}
\newcommand{\rmH}{\mathrm{H}}
\newcommand{\rmh}{\mathrm{h}}
\newcommand{\rmK}{\mathrm{K}}
\newcommand{\rmL}{\mathrm{L}}
\newcommand{\rmM}{\mathrm{M}}
\newcommand{\rmT}{\mathrm{T}}
\newcommand{\rmW}{\mathrm{W}}
\newcommand{\DM}{\mathbf{DM}}
\newcommand{\Ab}{\mathrm{Ab}}
\newcommand{\Sch}{\mathrm{Sch}}
\newcommand{\Sm}{\mathrm{Sm}}
\newcommand{\EssSm}{\mathrm{EssSm}}
\newcommand{\SmOp}{\mathrm{SmOp}}
\newcommand{\SH}{\mathbf{SH}}
\newcommand{\Spc}{\mathrm{Spc}}
\newcommand{\Shv}{\mathrm{Shv}}
\newcommand{\eff}{\mathrm{eff}}
\newcommand{\PSh}{\mathrm{PSh}}
\newcommand{\ext}{\mathrm{ext}}
\newcommand{\MSL}{\mathrm{MSL}}
\newcommand{\red}{\mathrm{red}}
\newcommand{\pair}{\mathrm{pair}}
\newcommand{\id}{\mathrm{id}}
\newcommand{\pt}{\mathrm{pt}}
\newcommand{\MW}{\mathrm{MW}}
\newcommand{\Corr}{\mathbf{Corr}}
\DeclareMathOperator{\End}{End}
\DeclareMathOperator{\Char}{char}
\DeclareMathOperator{\im}{im}
\DeclareMathOperator{\Ext}{Ext}
\DeclareMathOperator{\Hom}{Hom}
\DeclareMathOperator{\Fun}{Fun}
\DeclareMathOperator{\coker}{coker}
\DeclareMathOperator{\Spec}{Spec}
\DeclareMathOperator{\pr}{pr}
\DeclareMathOperator{\Div}{div}
\DeclareMathOperator{\Map}{Map}
\DeclareMathOperator{\Cor}{Cor}
\DeclareMathOperator{\Mod}{Mod}
\DeclareMathOperator{\res}{res}
\DeclareMathOperator{\Fr}{Fr}
\DeclareMathOperator{\CH}{CH}
\DeclareMathOperator{\rk}{rk}
\DeclareMathOperator{\cofib}{cofib}
\newcommand{\SheafHom}{\mathscr{H}\kern-3pt om}
\address{Andrei Druzhinin, Chebyshev Laboratory, St. Petersburg State University, 14th Line V.O., 29B, Saint Petersburg 199178 Russia}
\email{\href{mailto:andrei.druzh@gmail.com}{andrei.druzh@gmail.com}}
\address{H{\aa}kon Kolderup, University of Oslo, Postboks 1053, Blindern, 0316 Oslo, Norway}
\email{\href{mailto:haakoak@math.uio.no}{haakoak@math.uio.no}}
\begin{document}

\title{Cohomological correspondence categories}
\author{Andrei Druzhinin and Håkon Kolderup}

\subjclass[2010]{14F05, 14F35, 14F42, 19E15}
\keywords{Correspondences, motives, motivic homotopy theory}

\maketitle
\begin{abstract}
We prove that homotopy invariance and cancellation properties are satisfied by any category of correspondences that is defined, via Calmès and Fasel's construction, by an underlying cohomology theory. In particular, this includes any category of correspondences arising from the cohomology theory defined by an $\MSL$-algebra.
\end{abstract}

\addtocontents{toc}{\protect\setcounter{tocdepth}{1}}
\section{Introduction}
Originally envisioned by Grothendieck, the theory of motives was set in new light by Beilinson's conjecture on the existence of certain \emph{motivic complexes}, from which it should be possible to derive a satisfactory motivic cohomology theory. This point of view ultimately led to Suslin and Voevodsky's construction of the derived category of motives $\DM(k)$ over any field $k$ \cite{Voe-motives}. The basic ingredient of this construction is the category $\Cor_k$ of finite correspondences over $k$. Finite correspondences define an additive category, and presheaves on this category---baptized \emph{presheaves with transfers}---are exceptionally well behaved. Indeed, presheaves with transfers carry a very rich theory, satisfying fundamental properties such as preservation of homotopy invariance under sheafification \cite{Voe-hty-inv}, and a cancellation property with respect to smashing with $\G_m$ \cite{Voe-cancel}. These results are crucial in order to obtain a good category of motivic complexes.

Shortly after Suslin and Voevodsky's introduction of motivic complexes, a ``nonlinear'' version of $\DM(k)$ was defined by Morel and Voevodsky \cite{Morel-Voevodsky} in the context of motivic homotopy theory. In this more general setting, the motivic stable homotopy category $\SH(k)$ was constructed, most notably via the $\A^1$-localization and the $\PP^1$-stabilization process. The category $\SH(k)$ is equipped with an adjunction 
\begin{align}
\gamma^*:\SH(k)\rightleftarrows\DM(k):\gamma_*\label{eq:adjunction}
\end{align} 
such that the image of the unit for the symmetric monoidal structure on $\DM(k)$ is mapped to the motivic Eilenberg--Mac Lane spectrum $\rmH\Z$ in $\SH(k)$ under $\gamma_*$. In fact, this adjunction exhibits $\DM(k)$ as the category of modules over the ring spectrum $\rmH\Z$ (at least after inverting the exponential characteristic of $k$) \cite{MZmod}. Furthermore, the restriction of $\gamma_*$ to the heart of the homotopy t-structure on $\DM(k)$ is fully faithful. In fact, with rational coefficients, the category $\SH(k)_\Q$ splits into a plus part and a minus part, where the plus part is equivalent to $\DM(k,\Q)$ \cite{Cisinski-Deglise}. Informally we can think of $\DM(k,\Q)$ as consisting of the oriented part of $\SH(k)_\Q$. 

Several alternative and refined versions of the category of correspondences have been introduced in the wake of Suslin and Voevodsky's pioneering work, many of which attempt to provide a better approximation to the motivic stable homotopy category than $\DM(k)$. In particular, it is desirable to construct correspondences that capture also the unoriented information contained in $\SH(k)$. Examples include
\begin{itemize}
\item the category $\Z\rmF_*(k)$ of linear framed correspondences, introduced by Voevodsky and further developed by Garkusha and Panin \cite{Framed};
\item $\rmK_0^\oplus$-, and $\rmK_0$-correspondences, studied by Suslin and Walker in \cite{Suslin-Grayson, Walker};
\item the category $\wt\Cor_k$ of finite Milnor--Witt correspondences, introduced by Calmès--Déglise--Fasel \cite{Calmes-Fasel, MW-cplx}; and
\item the category $\mathrm{GW}\mkern-3mu\Cor_k$ of finite Grothendieck--Witt correspondences defined by the first author in \cite{DruDMGWeff}.
\end{itemize}
To exemplify to what extent the above categories succeed in providing better approximations to $\SH(k)$, let us mention that framed correspondences classify infinite $\PP^1$-loop spaces \cite{five-authors}, and the heart of the category $\wt\DM(k)$ associated to $\wt\Cor_k$ is equivalent to the heart of $\SH(k)$ (with respect to the homotopy t-structure) \cite{framed-MW}.

Along with the introduction of each new category of correspondences follows the need to prove fundamental properties like strict homotopy invariance and cancellation in order to produce a satisfactory associated derived category of motives. For the above examples, this is achieved in \cite{framed-cancel,hty-inv, Suslin-Grayson, MW-cancel, MW-cplx,GWStrHomInv, GW-cancel}. The aim of this note is to establish these properties simultaneously for a certain class of correspondence categories, namely those that are defined by an underlying cohomology theory (see \Cref{def:ACor} for the precise meaning). This includes Voevodsky's finite correspondences---which can be defined using the cohomology theory $\CH^*$ of Chow groups---as well as finite Milnor--Witt correspondences $\wt\Cor_k$, which are defined using Chow--Witt groups $\wt\CH^*$. More generally, any ring spectrum $E\in\SH(k)$ that is an algebra over Panin and Walter's algebraic cobordism spectrum $\MSL$ \cite{Panin-Walter} gives rise to a cohomological correspondence category.

\subsection{Outline}
In \Cref{section:coh-thy} we introduce the axioms for a cohomology theory $A^*$ needed to build the associated category $\Cor^A_k$ of finite $A$-correspondences. The definition of the category $\Cor^A_k$ is given in \Cref{section:ccorrs}. In addition we give in \Cref{section:ccorrs} a number of constructions in the category $\Cor^A_k$. Most notably, \Cref{constr:lrangleclass} ensures that a regular function on a smooth relative curve along with a trivialization of the relative canonical class gives rise to a finite $A$-correspondence; this construction is used to define all the finite $A$-correspondences needed to prove strict homotopy invariance and cancellation.

\Cref{section:framed} is a brief comparison between our construction of $A$-correspondences and framed correspondences. This is done by constructing a functor from the category of framed correspondences $\Fr_*(k)$ to $\Cor^A_k$.

Sections \ref{section:inj-aff-line}, \ref{section:rel-aff-line}, \ref{section:inj-loc-sch} and \ref{section:et-exc} are devoted to the proof of the strict homotopy invariance property of homotopy invariant presheaves on $\Cor^A_k$. The proof breaks down into several excision results as well as a moving lemma, each of which is treated in its own section.

In \Cref{section:cancel} we show the cancellation theorem for finite $A$-correspondences, following the technique in Voevodsky's original proof \cite{Voe-cancel}.

Finally, in \Cref{section:motives} we use the previous results to establish a well behaved category of motivic complexes $\DM_A(k)$ associated to the category $\Cor^A_k$, and we show several properties expected of this category. In particular, we define $A$-motivic cohomology in this category, and show that $\DM_A(k)$ comes equipped with an adjunction to $\SH(k)$ parallelling \eqref{eq:adjunction}. Note that these constructions are for the most part standard. For this reason we keep it rather brief on certain formal aspects of the constructions, and refer the interested reader to, e.g., \cite{Voe-motives, MVW} or \cite{MW-cplx} for further details.

\Cref{appendix} is a collection of the geometric results used in the proofs of the excision theorems. 

\subsection{Relationship to other works}\label{section:rel-to-other-works}
In the independent project \cite{five-authors2}, the construction of the category $\Cor^E_k$ of \Cref{section:SL-orient} is generalized to arbitrary ring spectra in $\SH(S)$ over a base scheme $S$. Let us also mention that functors from the category of framed correspondences to other correspondence categories have been considered by several authors. The original construction of a functor $\Fr_*(k)\to\wt\Cor_k$ from framed correspondences to finite Milnor--Witt correspondences was given by Déglise and Fasel in \cite{MW-cplx}. In \cite[§4.2]{five-authors2}, the functor of Déglise and Fasel was refined to an $\rmh\Spc$-enriched functor $\Phi^E\colon\rmh\Corr^\fr(\Sch_S)\to\rmh\Corr^E(\Sch_S)$ from the homotopy category of the $\infty$-category of framed correspondences to finite $E$-correspondences.

\subsection{Conventions and notation}Throughout, the symbol $k$ will denote a field, and the symbol $\G_m\defeq \Spec(k[t,t^{-1}])$ will denote the multiplicative group scheme over $k$. In certain sections we will also need to put some restrictions on the field $k$; this will be stated in the beginning of the relevant section.

By a \emph{base scheme} we mean a noetherian scheme of finite Krull dimension. If $S$ is a base scheme, we let $\Sm_S$ denote the category of schemes that are smooth, separated and of finite type over $S$. 
By an \emph{essentially smooth scheme} we mean a scheme that is a projective limit of open immersions of smooth ones. We denote the category of essentially smooth schemes by $\EssSm_S$. 
If $f\colon X\to Y$ is a morphism in $\Sm_S$ (or $\EssSm_S$), we let $\omega_f\defeq \omega_{X/S}\otimes f^*\omega_{Y/S}^{-1}$ denote the relative canonical sheaf. Moreover, we may write simply $\omega_Y$ for $\omega_{X\times_S Y/X}$. 
In the case of smooth (or essentially smooth) schemes $X,Y\in\Sm_k$ (or $\EssSm_k$) over a field $k$, we will often abbreviate $X\times_k Y$ to $X\times Y$; $\A^n_k$ to $\A^n$ and $\PP^n_k$ to $\PP^n$.
Throughout, we will let $i_0$ and $i_1$ denote the zero-, respectively the unit section $i_0,i_1\colon\Spec k\to\A^1$. If we for example need to emphasize that $\A^2$ has coordinates $(x,y)$, we may for brevity denote this by $\stackrel{(x,y)}{\A^2}$. This notation will in particular be used in the tables in Sections \ref{section:inj-aff-line}, \ref{section:rel-aff-line}, \ref{section:inj-loc-sch} and \ref{section:et-exc}. 

If $\scrL$ is a line bundle on a scheme $X$ and $s\in \Gamma(X,\scrL)$ is a section of $\scrL$, 
we will denote by $Z(s)\subseteq X$ the vanishing locus of $s$. 
We say that a section $s\in \Gamma(X,\scrL)$ is \emph{invertible} if 
the homomorphism $\calO_X\to \scrL$ defined by $s$ is an isomorphism.

We denote by $\Map_\scrC(X,Y)$ the mapping spaces of an $\infty$-category $\scrC$, and write $[X,Y]_\scrC\defeq\pi_0\Map_\scrC(X,Y)$. If $\scrC$ is any category, we denote by $\PSh(\scrC)\defeq\Fun(\scrC^\op,\Spc)$ the $\infty$-category of presheaves on $\scrC$, and for a ring $R$ we denote by $\PSh(\scrC;R)$ the $\infty$-category of presheaves of $R$ modules on $\scrC$. Moreover, we let $\PSh_\Sigma(\scrC)$ denote the full subcategory of $\PSh(\scrC)$ spanned by presheaves that carry finite coproducts to finite products \cite[§5.5.8]{Lurie}.

\subsection{Acknowledgments}We are grateful to Alexey Ananyevskiy, Frédéric Déglise, Jean Fasel, Ivan Panin, and Paul Arne Østvær for helpful discussions and comments. We would also like to thank Marc Hoyois for explaining to us how to use the six functors formalism to construct pushforwards.

Both authors gratefully acknowledge the support provided by the RCN Frontier Research Group Project no. 250399 ``Motivic Hopf equations''.
The first author would also like to thank ``Native towns'', a social investment program of PJSC ``Gazprom Neft'', for support.

Finally, we would like to thank the anonymous referee for several very helpful comments and remarks.

\addtocontents{toc}{\protect\setcounter{tocdepth}{2}}

\section{Twisted cohomology theories with support}\label{section:coh-thy}
Let $S$ be a base scheme. We denote by $\SmOp_S^\rmL$ the category of triples $(X,U,\scrL)$, where $X\in\Sm_S$ is separated, smooth and of finite type over $S$, $U$ is an open subscheme of $X$ and $\scrL$ is a line bundle on $X$. A morphism $(X,U,\scrL)\to(Y,V,\scrM)$ in $\SmOp^\rmL_S$ consists of a pair $(f,\alpha)$ of a morphism of $S$-schemes $f\colon X\to Y$ such that $f(U)\subseteq V$, and an isomorphism $\alpha\colon \scrL \xrightarrow{\cong}f^*\scrM$. Note that there is an embedding $\Sm_S\to\SmOp^\rmL_S$ given by $X\mapsto(X,\varnothing,\calO_X)$. For any $(X,U,\scrL)\in\SmOp^\rmL_S$, we will write $i_U$ for the inclusion $i_U\colon U\to X$ and $j_U$ for the inclusion $j_U\colon(X,\varnothing,\scrL)\to(X,U,\scrL)$. In the case when $U=\varnothing$, we will often denote the triple $(X,\varnothing,\scrL)\in\SmOp_S^\rmL$ simply by $(X,\scrL)$.

\begin{definition}\label{def:pre-coh}
A \emph{twisted pre-cohomology theory} is a graded functor 
\[
A^*\colon (\SmOp_S^\rmL)^\op\to\Ab^\Z
\]
which satisfies the following properties:
\begin{itemize}
\item[(a)](Localization) There is a natural transformation $\partial\colon A^*(X,U,\scrL)\to A^{*+1}(U,i_U^*\scrL)$ of degree $+1$ which fits into an exact sequence
\[
A^*(X,\scrL)\xrightarrow{i_U^*} A^*(U,i_U^*\scrL)\xrightarrow{\partial} A^{*+1}(X,U,\scrL)\xrightarrow{j_U^*} A^{*+1}(X,\scrL).
\]
\item[(b)] (Étale excision) Suppose that $f\colon X\to Y$ is an étale morphism of smooth $S$-schemes. Assume moreover that $Z\subseteq Y$ is a closed subset such that $f|_{f^{-1}(Z)}\colon f^{-1}(Z)\to Z$ is an isomorphism. Then the pullback homomorphism
\[
f^*\colon A^n(Y,Y\setminus Z,\scrL)\to A^n(X,X\setminus f^{-1}(Z),f^*\scrL)
\]
is an isomorphism for any line bundle $\scrL$ on $Y$ and any $n\in\Z$.
\end{itemize}
If $(X,U,\scrL)\in\SmOp^\rmL_S$, let $Z\defeq X\setminus U$ be the closed complement of $U$. We then write $A^*_Z(X,\scrL)\defeq A^*(X,U,\scrL)$. The map $j_U^*\colon A_Z^*(X,\scrL)\to A^*(X,\scrL)$ is called the \emph{extension of support-homomorphism}.
\end{definition}

\begin{remark}
\Cref{def:pre-coh} is but a twisted version of Panin and Smirnov's definition of a cohomology theory considered for example in \cite{Panin-Smirnov}, except that for our purposes we need not assume the axiom of homotopy invariance. In the case of oriented homotopy invariant theories, our definition coincides with Panin and Smirnov's definition.
\end{remark}

\begin{remark}\label{rem:Zarex}
The axiom of étale excision in \Cref{def:pre-coh} implies that there is a canonical isomorphism $A^*_{Z_1\amalg Z_2}(X,\scrL)\cong A^*_{Z_1}(X,\scrL)\oplus A^*_{Z_2}(X,\scrL)$, i.e., that the cohomology theory $A^*$ also satisfies Zariski excision. In fact, Zariski excision is enough to prove most of the results below. The only places where we need étale excision are in the construction of the functor from framed correspondences to $A$-correspondences in \Cref{section:framed}, and in the proof that $A$-transfers are preserved under Nisnevich sheafification (\Cref{thm:ext}). Furthermore, the latter case only requires étale excision on local schemes. In \Cref{cor:hty-inv-exc} we show that a \emph{homotopy invariant} cohomology theory satisfying Zariski excision will automatically satisfy étale excision on local schemes.
\end{remark}

\begin{definition}\label{def:coh}
Let $A^*$ be a twisted pre-cohomology theory. 
Suppose that we in addition are given the following data:
\begin{enumerate} 
\item (Pushforward) 
For any morphism $f\colon X\to Y\in \Sm_S$ of smooth equidimensional $S$-schemes of constant relative dimension $d$, and any closed subset $Z\subseteq X$ such that $f|_Z$ is finite, we have a \emph{pushforward} 
homomorphism 
\[
f_*\colon A^n_Z(X,\omega_f\otimes f^*\scrL)\to A^{n-d}_{f(Z)}(Y,\scrL)
\]
for any $n\ge0$ and any line bundle $\scrL$ on $Y$. 
\item (External product)
The cohomology theory is a \emph{ring cohomology theory}, i.e., there is an associative product structure
\[
\times\colon A^n_{Z_1}(X,\scrL)\otimes A_{Z_2}^m(Y,\scrM)\to A_{Z_1\times_S Z_2}^{n+m}(X\times_S Y,\scrL\boxtimes \scrM)
\]
and a unit $1\in A^0(S)$.
\end{enumerate}
We say that a pre-cohomology theory $A^*$ equipped with the homomorphisms $f_*$ and the product $\times$ as above forms a \emph{good cohomology theory} if the following properties hold:
\begin{enumerate}
\setcounter{enumi}{2}
\item (Pushforward functoriality) 
The homomorphisms $f_*$ are functorial in the sense that $\id_*=\id$, and if 
$
(X_1,U_1,\scrL_1)\xrightarrow{f}(X_2,U_2,\scrL_2)\xrightarrow{g}(X_3,U_3,\scrL_3)
$
are composable morphisms in $\SmOp_S^\rmL$ finite on the supports $Z_i\defeq X_i\setminus U_i$, then the diagram
\[\begin{tikzcd}
A^{n-d_f}_{f(Z_1)}(X_2,\omega_g\otimes g^*\scrL_3)\ar{r}{g_*} & A^{n-d_{gf}}_{gf(Z_1)}(X_3,\scrL_3)\\
A^{n}_{Z_1}(X_1,\omega_f\otimes f^*\scrL_2)\ar{u}{f_*}\ar{ur}[swap]{(gf)_*} &
\end{tikzcd}\]
is commutative. Here $d_f$, $d_g$ and $d_{gf}$ are the respective relative dimensions of the morphisms.

\item (External product functoriality)
The external product $\times$ commutes with pullbacks in the sense that if $f\colon(X,f^*\scrL)\to(Y,\scrL)$ and $g\colon(X',g^*\scrL')\to(Y',\scrL')$ are morphisms in $\SmOp_S^\rmL$, then the diagram
\[\begin{tikzcd}
A^n(Y,\scrL)\otimes A^m(Y',\scrL')\ar{r}{\times}\ar{d}[swap]{f^*\otimes g^*}\ & A^{n+m}(Y\times_S Y',\scrL\boxtimes\scrL')\ar{d}{(f\times g)^*}\\
A^n(X,f^*\scrL)\otimes A^m(X',g^*\scrL')\ar{r}{\times} & A^{n+m}(X\times_S X',f^*\scrL\boxtimes g^*\scrL')
\end{tikzcd}\]
is commutative.

\item (Base change) 
For any strongly transversal square (defined in \Cref{def:transsq}) that is equipped with a set of compatible line bundles (defined in \Cref{def:base-change}) the diagram
\[\begin{tikzcd}
A^n_{\phi_Y^{-1}(Z)}(Y',\scrM')\ar{r}{i'_*} & A_{i'(\phi_Y^{-1}(Z))}^{n-d'}(X',\scrL')\\
A^n_Z(Y,\scrM)\ar{u}{\phi^*_Y}\ar{r}{i_*} & A^{n-d}_{i(Z)}(X,\scrL)\ar{u}[swap]{\phi_X^*},
\end{tikzcd}\]
is commutative.
\item (Projection formula) Suppose that $f\colon X\to Y$ and $Z\subseteq X$ satisfy the hypotheses of (1), and let $W\subseteq Y$ be a closed subset. Let moreover $\scrL$ and $\scrM$ be two line bundles on $Y$. Given any two cohomology classes $\alpha\in A^n_Z(X,\omega_f\otimes f^*\scrL)$ and $\beta\in A^m_{W}(Y,\scrM)$, we then have
\[
f_*(\alpha)\smile\beta=f_*(\alpha\smile f^*\beta).
\]
\item (Graded commutativity)
For any
$\alpha\in A^n_Z(X,\scrL)$ and $\beta\in A^m_Z(X,\scrL)$, 
we have
\[
\alpha\smile \beta = \ip{-1}^{nm}(\beta\smile\alpha).
\]
Here $\ip{-1}\in A^0(S)$ is given as the pushforward $\ip{-1}\defeq(\id_S,-1)_*(1)$; see \Cref{def:ip}. Hence the ring $A^*(S)$ is $\ip{-1}$-graded commutative.
\end{enumerate}
\end{definition}

\begin{remark}\label{rmk:cup}
The existence of an external product $\times$ as in \Cref{def:coh} (2) is equivalent to the existence of a cup product 
$
\smile\colon A^n_{Z_1}(X,\scrL)\otimes A^m_{Z_2}(X,\scrM)\to A^{n+m}_{Z_1\cap Z_2}(X,\scrL\otimes\scrM)$;
see \cite[Definition 1.5]{Panin-Smirnov} for further details on this.
\end{remark}

\begin{definition}\label{def:transsq}
Let
\begin{equation}\label{diag:transsq}
\begin{tikzcd}
Y'\ar{r}{i'}\ar{d}[swap]{\phi_Y} & X'\ar{d}{\phi_X} \\
Y\ar{r}{i} & X
\end{tikzcd}
\end{equation}
be a Cartesian square of smooth $S$-schemes.
The square \eqref{diag:transsq} is called \emph{transversal} if 
the corresponding sequence 
\[
0\to g^*(\Omega_X)\to \phi_Y^*(\Omega_Y)\oplus {i^\prime}^*(\Omega_{X^\prime})\to \Omega_{Y^\prime}\to 0
\] 
is exact,
where $g\defeq\phi_X\circ i^\prime=i\circ \phi_Y$.
Note that for any transversal square, the isomorphism $d\phi_Y$
induces an isomorphism
$d\phi_Y\colon  \phi_Y^*\omega_i\xrightarrow{\cong} \omega_{i^\prime}$.

A transversal square \eqref{diag:transsq} is called \emph{strongly transversal}
if one of the following two conditions are satisfied:
\begin{itemize}
\item The morphisms $i$ and $i'$ are closed embeddings. 
\item The morphisms $\phi_X$ and $\phi_Y$ are smooth and surjective.
\end{itemize}
\end{definition}




\begin{definition}\label{def:base-change}
Suppose that the square \eqref{diag:transsq} is strongly transversal. Then a \emph{compatible set of line bundles} on the square \eqref{diag:transsq} consists of the following data:
\begin{itemize}
\item Line bundles $\scrL,\scrL',\scrM,\scrM'$ on respectively $X,X',Y$ and $Y'$.
\item Isomorphisms of line bundles
\[\begin{array}{ll}
\alpha\colon\phi_X^*\scrL\xrightarrow{\cong}\scrL'; & \gamma\colon i^*\scrL\otimes\omega_i\xrightarrow{\cong}\scrM;\\
\beta\colon\phi_Y^*\scrM\xrightarrow{\cong}\scrM';  & \delta\colon (i')^*\scrL'\otimes\omega_{i^\prime}\xrightarrow{\cong} \scrM'.
\end{array}\]
\end{itemize}
We furthermore require that $\beta\circ\phi_Y^*(\gamma)$ corresponds to $\delta\circ ((i')^*(\alpha)\otimes\id_{\omega_{i'}})$ under the isomorphism 
\[\Hom_{\calO_{Y'}}(\phi_Y^*i^*\scrL\otimes\phi_Y^*\omega_i,\scrM')\cong \Hom_{\calO_{Y'}}((i')^*\phi_X^*\scrL\otimes\omega_{i^\prime},\scrM')\]
induced by the canonical isomorphism $\phi_Y^*\omega_i\cong \omega_{i^\prime}$ for the transversal square.
\end{definition}

\section{Cohomological correspondences}\label{section:ccorrs}

We are now ready to extend Calmès and Fasel's definition of finite Milnor--Witt correspondences \cite{Calmes-Fasel} to our setting:


\begin{definition}\label{def:ACor}
Let $S$ be a connected base scheme, and suppose that $A^*$ is a good cohomology theory on $\SmOp_S^\rmL$. Assume further that $p\colon X\to S$ is a smooth map of constant relative dimension $d$. Denote by $\calA_0(X/S)$ the set of \emph{admissible subsets\footnote{Note that for any $X,Y\in\Sm_k$ we have $\calA_0(X\times Y/X)=\calA(X,Y)$, where $\calA(X,Y)$ is the set of admissible subsets of $X\times Y$ in the sense of \cite[Definition 4.1]{Calmes-Fasel}.} of $X$ relative to $S$}---that is, closed subsets $T$ of $X$ such that each irreducible component of $T_\red$ is finite and surjective over $S$ via the morphism $p$. The set $\calA_0(X/S)$ is partially ordered by inclusions. As the empty set has no irreducible components, it is admissible. 
If $X$ is connected, we define the group of \emph{finite relative $A$-cycles on $X$} as
\[
\rmC^A_0(X/S)\defeq \varinjlim_{T\in\calA_0(X/S)} A_T^{d}(X,\omega_{X/S}).
\]
If $X$ is not connected, we may write $X=\coprod_j X_j$ where the $X_j$'s are the connected components of $X$. We then set
$
\rmC^A_0(X/S)\defeq\prod_j \rmC^A_0(X_j/S).
$

Now let $k$ be a field, and suppose further that $S\in\Sm_k$. Let $\Cor^A_S$ denote the category whose objects are the same as the objects of $\Sm_S$, i.e., smooth separated schemes of finite type over $S$, and morphisms defined as follows. Let $X,Y\in\Sm_S$, and suppose first that $X$ and $Y$ are connected. We define the group of \emph{finite relative $A$-correspondences from $X$ to $Y$ as}
\[
\Cor^A_S(X,Y)\defeq \rmC^A_0(X\times_S Y/X).\]
Note in particular that $\Cor^A_S(X,S)=A^0(X)$ for any $X\in\Sm_S$. If $X$ or $Y$ is not connected, let $X=\coprod_iX_i$ and $Y=\coprod_jY_j$ denote the connected components of $X$ and $Y$. Then we put $\Cor^A_S(X,Y)\defeq\prod_{i,j}\Cor^A_S(X_i,Y_j)$. If $S=\Spec k$, we refer to $\Cor_k^A(X,Y)$ simply as the group of \emph{finite $A$-correspondences from $X$ to $Y$}. 

Composition of finite relative $A$-correspondences is defined in an identical manner as \cite[§4.2]{Calmes-Fasel}. Indeed, if $\alpha\in\Cor^A_S(X,Y)$ and $\beta\in\Cor^A_S(Y,Z)$, we put
\begin{align}
\beta\circ\alpha\defeq(p_{XZ})_*\p*{p_{XY}^*\alpha\smile p_{YZ}^*\beta}.\label{eq:corr-comp}
\end{align}
Here we write $p_{XY}$ for the projection $p_{XY}\colon X\times_S Y\times_S Z\to X\times_S Y$, and similarly for the other two maps. An identical proof as that of \cite[Lemma 4.13]{Calmes-Fasel} then shows that the groups $\Cor^A_S(X,Y)$ form the mapping sets of a (discrete) category $\Cor^A_S$ whose objects are the same as those of $\Sm_S$. We refer to $\Cor_S^A$ as the category of \emph{finite relative $A$-correspondences}. In the case when $S=\Spec k$, we refer to $\Cor_k^A$ simply as the category of \emph{finite $A$-correspondences}.

Finally, we define the \emph{homotopy category} $\ol\Cor^A_S$ of $\Cor_S^A$ as follows. The objects of $\ol\Cor^{A}_S$ are the same as those of $\Cor^{A}_S$, and the morphisms are given by
\begin{align*}
&\ol\Cor^{A}_S(X,Y)\defeq\Cor^{A}_S(X,Y)/\sim_{\A^1}\\
&=\coker\p*{\Cor^{A}_S(\A^1_S\times_S X,Y)\xrightarrow{i_0^*-i_1^*}\Cor^{A}_S(X,Y)}.
\end{align*}
We write $[\alpha]$ for the class in $\ol\Cor_S^A$ of a finite relative $A$-correspondence $\alpha$ from $X$ to $Y$.
\end{definition}

\subsubsection{Graph functors}We define a graph functor $\gamma_{A,S}\colon\Sm_S\to\Cor^A_S$ similarly as \cite[§4.3]{Calmes-Fasel}: the functor $\gamma_{A,S}$ is the identity on objects, and if $f\colon X\to Y$ is a morphism in $\Sm_S$, we let $\gamma_{A,S}(f)\defeq i_*(1)$. Here $i\colon \Gamma_f\to X\times_S Y$ is the embedding of the graph of $f$, and 
$
i_*\colon A^0(\Gamma_f,\calO_{\Gamma_f})\to A^{\dim Y}_{\Gamma_f}(X\times_S Y,\omega_Y)
$
is the induced pushforward. If $S=\Spec k$, we will write $\gamma_A$ for the graph functor. We will often abuse notation and write simply $f$ instead of $\gamma_{A,S}(f)$.

\subsubsection{Symmetric monoidal structure}Defining $X\oplus Y\defeq X\amalg Y$ turns $\Cor^A_S$ into an additive category with zero-object the empty scheme. Moreover, $\Cor^A_S$ is symmetric monoidal, with tensor product $\otimes$ defined by $X\otimes Y\defeq X\times_S Y$ on objects, and given by the external product on morphisms.

\begin{lemma}\label{lemma:corr-cat}
The category $\Cor^A_k$ is a (discrete) correspondence category in the sense of \cite[Definition 4.1]{Classification} (see also \cite[§2]{Garkusha-reconst}).
\end{lemma}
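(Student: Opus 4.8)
The plan is to verify the axioms in \cite[Definition 4.1]{Classification} one by one, using the constructions already set up in \Cref{section:ccorrs}: the additive and symmetric monoidal structure on $\Cor^A_k$, the graph functor $\gamma_A\colon\Sm_k\to\Cor^A_k$, and the composition formula \eqref{eq:corr-comp}. The essential point is that almost all of these axioms are purely formal consequences of the good cohomology theory axioms (Pushforward functoriality, External product functoriality, Base change, Projection formula, Graded commutativity) combined with the already-stated fact that composition is associative and unital (the analogue of \cite[Lemma 4.13]{Calmes-Fasel}). So the proof is essentially a bookkeeping exercise: I would go through the list of axioms for a correspondence category and for each one point to the corresponding structure or compatibility in \Cref{def:coh} and the constructions following \Cref{def:ACor}.

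Concretely, I would proceed as follows. First, recall that $\Cor^A_k$ is an additive symmetric monoidal category with the graph functor $\gamma_A$ being symmetric monoidal, identity on objects, and sending coproducts in $\Sm_k$ to direct sums; this is exactly what was recorded in the ``Symmetric monoidal structure'' paragraph. Second, for the axiom relating correspondences to the graph functor — that every correspondence factors appropriately through graphs of the projections, or that $\gamma_A(f)$ composed with a correspondence is computed by pullback along $f$ — I would use the graph-functor definition $\gamma_A(f)=i_*(1)$ together with the Base change axiom and Projection formula to reduce $\gamma_A(f)\circ\alpha$ and $\alpha\circ\gamma_A(f)$ to honest pullbacks $f^*$ on the relevant cohomology groups, mirroring \cite[§4.3]{Calmes-Fasel}. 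Third, the compatibility of the monoidal structure with composition (the interchange law $(\beta\otimes\beta')\circ(\alpha\otimes\alpha')=(\beta\circ\alpha)\otimes(\beta'\circ\alpha')$) follows from External product functoriality together with Pushforward functoriality and the fact that the external product is associative. Fourth, the unit and the base-point/augmentation structure $\Cor^A_k(X,\Spec k)=A^0(X)$, noted already after \Cref{def:ACor}, supplies whatever augmentation axiom \cite[Definition 4.1]{Classification} demands.

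Throughout, I would lean on the observation (implicit in \Cref{section:ccorrs}) that all of Calmès--Fasel's lemmas used to establish that $\wt\Cor_k$ is a correspondence category only invoke formal properties of Chow--Witt groups that are among our axioms in \Cref{def:coh}; hence each of their proofs transports verbatim, with $\wt\CH^*$ replaced by $A^*$. In particular the compatibilities needed to manipulate the triple-product formula \eqref{eq:corr-comp} — functoriality of pushforward along the projections $p_{XY}, p_{YZ}, p_{XZ}$, the projection formula to move $\smile$ past $(p_{XZ})_*$, and base change to commute the various pullbacks — are precisely axioms (3), (5), (6) of \Cref{def:coh}.

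The main obstacle, such as it is, is bookkeeping of line-bundle twists: in our setting every cohomology group carries a twist by a relative canonical bundle (e.g.\ $A^d_T(X\times_S Y,\omega_{X\times_S Y/X})$), and one must check that the canonical isomorphisms between these twists — coming from the transversality isomorphisms $d\phi_Y\colon\phi_Y^*\omega_i\xrightarrow{\cong}\omega_{i'}$ of \Cref{def:transsq} and the compatible sets of line bundles of \Cref{def:base-change} — are coherent enough that the diagrams required by \cite[Definition 4.1]{Classification} commute on the nose and not merely up to a non-canonical twist isomorphism. This is the only place where the argument is not a literal transcription of \cite{Calmes-Fasel}, since \emph{loc.\ cit.}\ works with a fixed cohomology theory; but the coherence is guaranteed by the compatibility condition imposed in \Cref{def:base-change} (that $\beta\circ\phi_Y^*(\gamma)$ and $\delta\circ((i')^*(\alpha)\otimes\id_{\omega_{i'}})$ agree), so once this is unwound the verification goes through. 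I expect the write-up to be short, mostly citing \cite{Calmes-Fasel} and \Cref{def:coh}.
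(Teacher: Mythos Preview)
Your approach is correct in spirit but takes a longer route than the paper. The paper's proof is a single line: it invokes \cite[Proposition 4.5]{Classification}, which packages exactly the verification you describe (that a category built from a suitable cohomology theory with pushforwards, external products, base change, and projection formula satisfies the axioms of a correspondence category). In other words, the bookkeeping you propose---checking each axiom of \cite[Definition 4.1]{Classification} via the properties in \Cref{def:coh} and the Calm\`es--Fasel composition formula---has already been carried out once and for all in \cite{Classification}, and the paper simply cites that result. Your direct verification would work and is more self-contained, at the cost of repeating arguments that are not specific to $A^*$; the paper's approach is terser and highlights that the lemma is a formal consequence of the axiomatic setup rather than anything particular to $\Cor^A_k$.
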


\begin{proof}
This follows from \cite[Proposition 4.5]{Classification}.
\end{proof}

\subsubsection{}For $S$ a smooth $k$-scheme there is a functor $\ext_S\colon \Cor^A_k\to \Cor^A_S$ defined as follows. For any $X\in\Sm_k$, let $X_S\defeq X\times_k S$. Let $X,Y\in\Sm_k$; by working with one connected component at a time, we may assume that $X$ and $Y$ are connected. By the universal property of fiber products we have a morphism $f\colon X_S\times_S Y_S\to X\times Y$, which induces a pullback morphism
\[
f^*\colon A_T^{\dim Y}(X\times Y,\omega_{Y})\to A_{f^{-1}(T)}^{\dim Y}(X_S\times_S Y_S,f^*\omega_{Y})
\]for any $T\in\calA_0(X\times Y/X)$. As finiteness and surjectivity are preserved under base change we have $f^{-1}(T)\in\calA_0(X_S\times_S Y_S/X_S)$. 
Moreover, the canonical sheaf $\omega_{X/k}$ pulls back over $X_S$ to $\omega_{X_S/S}$, and similarly for $\omega_{Y/k}$. Hence $f^*\omega_{X\times Y/X}\cong\omega_{X_S\times_S Y_S/X_S}$. Since pullbacks commute with extension of support, we get an induced map on the colimit
\[
\ext_S\colon \Cor^A_k(X,Y)\to \rmC^A_0(X_S\times_S Y_S/X_S)=\Cor^A_S(X_S,Y_S).
\]
It follows from the base change axiom applied to the diagram
\[\begin{tikzcd}
X_S\times_S Y_S\times_S Z_S\ar{r}{p_{X_SY_S}}\ar{d}[swap]{f_{XYZ}} & X_S\times_S Y_S\ar{d}{f_{XY}}\\
X\times Y \times Z\ar{r}{p_{XY}} & X\times Y
\end{tikzcd}\]
that the map $\ext_S$ preserves composition of finite $A$-correspondences. Thus we obtain a functor $\ext_S\colon\Cor^A_k\to \Cor^A_S$.

\subsubsection{}In the opposite direction there is a ``forgetful'' functor $\res_S\colon\Cor^A_S\to \Cor^A_k$ induced by pushforwards.
Indeed, let $X,Y\in \Sm_S$. 
Then there is a Cartesian diagram
\[\xymatrix{
X\times_S Y \ar[r]^{i_{XY}}\ar[d] & X\times Y\ar[d]\\
\Delta_S \ar[r]^i & S\times S
,}
\]
where $\Delta_S\subseteq S\times S$ denotes diagonal. Moreover, we have isomorphisms
$\omega_{X\times_S Y}\otimes i^*_{XY}\omega_{X\times Y}^{-1}\cong\omega_{i_{XY}} \cong \omega_i\cong \omega_S^{-1}$.
Thus there is, for any $T\in\calA_0(X\times_SY/X)$, a pushforward homomorphism 
\begin{align*}
(i_{XY})_*\colon A^{\dim_S Y}_{T}(X\times_S Y,\omega_{Y/S})\to 
A^{\dim Y}_{i_{XY}(T)}(X\times Y, \omega_Y ).
\end{align*}
Passing to the colimit, we obtain a map $\res_S\colon\Cor^A_S(X,Y)\to\Cor^A_k(X,Y)$. 
To show that this homomorphism preserves composition in the category $\Cor_S$, first note that the commutative diagram
\[\begin{tikzcd}
X\times_S Y\times_S Z\ar{d}[swap]{p_{X\times_S Z}}\ar{r}{i_{XYZ}} & X\times Y\times Z\ar{d}{p_{XZ}}\\
X\times_S Z\ar{r}{i_{XZ}} & X\times Z
\end{tikzcd}\]
yields $(i_{XZ})_*(p_{X\times_S Z})_*=(p_{XY})_*(i_{XYZ})_*$. By decomposing the morphism $i_{XYZ}$ as
\[
i_{XYZ}\colon X\times_S Y\times_S Z\xrightarrow{i_X}X\times Y\times_S Z\xrightarrow{i_Y}X\times Y\times Z
\]
and applying the projection formula twice, we obtain the claim.
Hence the maps $\res_S$ above define a functor $\res_S\colon\Cor^A_S\to\Cor^A_k$. 

\subsubsection{}For any $X\in\Sm_S$, $Y\in\Sm_k$ and any admissible subset $T$ of $X\times Y$ we have a natural isomorphism $A^{\dim Y}_T(X\times Y,\omega_Y)\cong A^{\dim_SY_S}_T(X\times_SY_S,\omega_{X\times_SY_S/X})$. These isomorphisms define a natural isomorphism $\Cor^A_k(X,Y)\cong\Cor^A_S(X,Y_S)$. Similarly as in \cite[§6.2]{Calmes-Fasel} we deduce from this that the functors $\res_S$ and $\ext_S$ form an adjunction
$
\res_S:\Cor^A_S\rightleftarrows\Cor^A_k:\ext_S.
$

\subsection{Examples of cohomological correspondence categories}
Different choices for the cohomology theory $A^*$ recover various known correspondence categories, as well as new ones. For example, if  $A^*=\CH^*$ is the theory of Chow groups, then the definition of $\Cor^A_k$ gives back Voevodsky's category $\Cor_k$ of finite correspondences. If the ground field $k$ is perfect and of characteristic not $2$, then we can let $A^*$ be Chow--Witt theory, i.e., $A^*=\wt\CH^*$. In this case we obtain Calmès--Déglise--Fasel's category $\wt\Cor_k$ of finite Milnor--Witt correspondences. On the other hand, we can also define a good cohomology theory $A^*$ by letting $A^n_T(X,\scrL)\defeq \rmH^n_T(X,\bfI^{n},\scrL)$, where $\bfI^n$ is the Nisnevich sheaf of powers of the fundamental ideal. Then $\Cor^A_k$ is the category $\mathrm{W}\mkern-3mu\Cor_k$ of finite Witt-correspondences considered in \cite[Remark 5.16]{Calmes-Fasel}. Note that $\rmW\mkern-3mu\Cor_k$ thus defined differs from the category of Witt correspondences defined in \cite{Druzhinin}; however, arguing similarly as in \cite{effectivity} one can show that the associated derived categories of motives are equivalent after inverting the exponential characteristic of the ground field.

\subsubsection{Algebras over $\MSL$}\label{section:SL-orient}
More generally, we claim that any ring spectrum $E\in\SH(k)$ that is an algebra over $\MSL$ defines a cohomological correspondence category. Here $\MSL\in\SH(k)$ denotes the ring spectrum constructed by Panin and Walter in \cite{Panin-Walter}. 

In order to show this, let us first recollect a few notions from the formalism of six functors. Let $X\in\Sm_k$, and suppose that $i\colon Z\subseteq X$ is a closed subscheme. Let moreover $p\colon X\to\Spec k$ be the structure map. We then have adjunctions
$
p^*:\SH(k)\rightleftarrows\SH(X):p_*
$ and 
$i_!:\SH(Z)\rightleftarrows\SH(X):i^!.
$
If $q\colon\calE\to X$ is a vector bundle on $X$, let $s\colon X\to\calE$ denote the zero section. Recall from \cite[§5.2]{Hoyois-six-fu} that this defines \emph{Thom transformations}
\[
\Sigma^\calE\defeq q_\#s_*:\SH(X)\rightleftarrows \SH(X): s^!q^*\eqdef\Sigma^{-\calE}.
\]
In fact, these functors are defined for any $\xi\in \rmK(X)$ \cite[§16.2]{norms}.

\begin{definition}[\protect{\cite{MW-ring-spt,five-authors2}}]
Let $E\in\SH(k)$ be a spectrum and let $X$, $Z$ be as above. Let furthermore $\xi\in \rmK(Z)$. The \emph{$\xi$-twisted cohomology of $X$ with support on $Z$ and coefficients in $E$} is the space
\[
E_Z(X,\xi)\defeq\Map_{\SH(k)}(\sspt_k,p_*i_!\Sigma^\xi i^!p^*E),
\]
where $\sspt_k\in\SH(k)$ denotes the motivic sphere spectrum. The associated bigraded \emph{twisted cohomology groups with support} are then given as
\[
E_Z^{p,q}(X,\xi)\defeq[\sspt_k,\Sigma^{p,q}p_*i_!\Sigma^\xi i^!p^*E]_{\SH(k)}.
\]
\end{definition}

\begin{prop}
Suppose that $E\in\SH(k)$ is an $\MSL$-algebra. Let $X\in\Sm_k$, and suppose that $i\colon Z\subseteq X$ is a closed subscheme. For any line bundle $\scrL$ on $X$, set
\[
A^n_Z(X,\scrL)\defeq E^{2n,n}_Z(X,i^*\scrL)
\]
Then $A^*_Z(X,\scrL)$ defines a good cohomology theory and hence a cohomological correspondence category $\Cor^E_k$.
\end{prop}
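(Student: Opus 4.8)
The plan is to check, one axiom at a time, that the requirements of \Cref{def:pre-coh} and \Cref{def:coh} are formal consequences of Morel--Voevodsky localization together with the standard functorialities of the six operations on $\SH$; the ``hence'' part is then immediate from \Cref{def:ACor}. I would follow the verification carried out in \cite{MW-ring-spt, five-authors2}. The only non-formal ingredient is that an $\MSL$-algebra $E$ is $\mathrm{SL}$-oriented; equivalently, the Thom transformation $\Sigma^{\xi}$, applied to $E$-modules, factors through $(\rk\xi,\det\xi)\in\Z\times\Pic(X)$, so that a rank-$r$ bundle $\calE$ gives $\Sigma^{\calE}E\simeq\Sigma^{2r,r}\langle\det\calE\rangle E$ with $\langle-\rangle$ the Picard-graded twist. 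This is exactly what makes the twists by $\omega_{f}$, $\omega_{i}$ and normal bundles appearing in \Cref{def:coh} visible on the groups $A^{*}_{Z}(X,\scrL)=E^{2*,*}_{Z}(X,i^{*}\scrL)$, and it is where the hypothesis on $E$ is used.

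\emph{Pre-cohomology axioms.} Functoriality of $A^{*}$ is that of $E^{*,*}$. For localization, write $z\colon Z\hookrightarrow X$ for the closed complement of $i_{U}\colon U\hookrightarrow X$, apply the localization cofiber sequence $z_{!}z^{!}\to\id\to(i_{U})_{*}(i_{U})^{*}$ in $\SH(X)$ to $\Sigma^{\scrL}p^{*}E$, then apply $p_{*}$ and $\Map_{\SH(k)}(\sspt_{k},\Sigma^{2*,*}(-))$; using $p_{U}=p\circ i_{U}$, the projection formula $z^{!}\Sigma^{\scrL}\simeq\Sigma^{z^{*}\scrL}z^{!}$ for the dualizable twist, and the analogous compatibility for $(i_{U})_{*}$, the three terms become $A^{*}_{Z}(X,\scrL)$, $A^{*}(X,\scrL)$, $A^{*}(U,i_{U}^{*}\scrL)$ and the long exact sequence is the desired one, with $\partial$ the connecting map; naturality in $(X,U,\scrL)$ is clear. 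For \etale excision, let $f\colon X\to Y$ be \etale and an isomorphism over a closed $Z\subseteq Y$, and write $z\colon Z\hookrightarrow Y$, $z'\colon Z'\hookrightarrow X$ with $Z'=f^{-1}(Z)$, $\bar f\colon Z'\xrightarrow{\sim}Z$. Since $f^{!}\simeq f^{*}$ for \etale $f$, base change along $f$ gives $f_{*}z'_{!}\simeq z_{!}\bar f_{*}$ and $(z')^{!}f^{*}\simeq\bar f^{*}z^{!}$, so $f_{*}z'_{!}(z')^{!}f^{*}\simeq z_{!}\bar f_{*}\bar f^{*}z^{!}\simeq z_{!}z^{!}$ as $\bar f$ is an isomorphism; applying $p_{Y*}$ and the matching twist yields $A^{n}(Y,Y\setminus Z,\scrL)\cong A^{n}(X,X\setminus Z',f^{*}\scrL)$, and one checks the isomorphism is $f^{*}$.

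\emph{Good cohomology theory.} The pushforward (1) is the Gysin map: for $f\colon X\to Y$ smooth of relative dimension $d$ with $f|_{Z}$ finite, smooth purity $f^{!}\simeq\Sigma^{T_{f}}f^{*}$ (with $\det T_{f}=\omega_{f}^{-1}$) combined with the counit $f_{!}f^{!}\to\id$ --- using that $f|_{Z}$ is proper, so the relevant $(-)_{!}$ is a $(-)_{*}$ --- gives, after $p_{Y*}$ and trading the tangent twist for the shift $[2d](d)$ and the line-bundle twist $\omega_{f}$ via the $\mathrm{SL}$-orientation, the map $f_{*}\colon A^{n}_{Z}(X,\omega_{f}\otimes f^{*}\scrL)\to A^{n-d}_{f(Z)}(Y,\scrL)$ with the stated degree drop and change of twist. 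The external product (2) is built from the Künneth map $p_{X*}(-)\boxtimes p_{Y*}(-)\to(p_{X}\times p_{Y})_{*}(-\boxtimes-)$ in $\SH(k)$, the identity $z_{1!}(-)\boxtimes z_{2!}(-)\simeq(z_{1}\times z_{2})_{!}(-\boxtimes-)$ for closed immersions, the compatibility of the Picard-graded twist with $\boxtimes$, and the multiplication $E\otimes E\to E$; the unit is the image of $\sspt_{k}\to E$, and \Cref{rmk:cup} then supplies $\smile$. Pushforward functoriality (3) and external-product functoriality (4) follow from functoriality of $(-)_{!}$, $(-)_{*}$ and coherence of the purity equivalences and Künneth maps under composition; the projection formula (6) is the six-functor identity $f_{!}(\calF\otimes f^{*}\calG)\simeq f_{!}\calF\otimes\calG$ plus the $E$-module structure; graded commutativity (7) comes from the symmetry of $\boxtimes$ and the $\epsilon$-graded commutativity of $E$, with $\langle-1\rangle$ the image of $\langle-1\rangle\in\mathrm{GW}(k)=\pi_{0,0}\sspt_{k}$ under $\sspt_{k}\to E$. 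Finally, base change (5): for a strongly transversal square, the base-change transformation $\phi_{X}^{*}i_{!}\simeq i'_{!}\phi_{Y}^{*}$ (available since $i$ is a closed immersion or $\phi_{X},\phi_{Y}$ are smooth) gives the commutativity, once one verifies --- using $d\phi_{Y}\colon\phi_{Y}^{*}\omega_{i}\xrightarrow{\sim}\omega_{i'}$ and the compatible set of line bundles of \Cref{def:base-change} --- that the twists along the two composites agree.

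\emph{Main difficulty.} No single step is deep; the work is the bookkeeping of twists --- matching the abstract Thom transformations and the isomorphisms $\phi_{Y}^{*}\omega_{i}\cong\omega_{i'}$, $\omega_{X\times_{S}Y/X}\cong\omega_{i_{XY}}\cong\ldots$ against the precisely twisted groups of \Cref{def:coh} --- and checking that the Gysin pushforward built from $f_{!}$ and purity agrees with the classical one, so the compatibilities (3)--(7) can be imported rather than re-proved. Verifying the compatible-line-bundles hypothesis in axiom (5) is the most delicate of these.
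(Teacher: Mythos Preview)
Your approach is the same as the paper's: verify the axioms via the six-functor formalism on $\SH(k)$, with the $\mathrm{SL}$-orientation of an $\MSL$-algebra supplying the Thom isomorphism $\Sigma^{\xi}E\simeq\Sigma^{2\rk\xi,\rk\xi}\Sigma^{\det\xi-\calO}E$ that converts abstract $\rmK$-theory twists into the line-bundle twists of \Cref{def:coh}. The paper's proof is terser---it simply cites \cite{MW-ring-spt,fund-classes,five-authors2} for the pieces you write out---but the content is identical.

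One point to correct: you build the pushforward from \emph{smooth} purity $f^{!}\simeq\Sigma^{T_{f}}f^{*}$, assuming $f\colon X\to Y$ is smooth. But \Cref{def:coh}(1) asks for pushforward along an arbitrary morphism between smooth $k$-schemes of constant relative dimension, and the closed-immersion case is essential (it is used immediately afterward to define the graph functor $\gamma_{A}$). The paper handles this by invoking the general Gysin map
\[
f_{!}\colon E_{Z}(X,f^{*}\xi+\LL_{f})\longrightarrow E_{f(Z)}(Y,\xi)
\]
from \cite{fund-classes,five-authors2}, valid for lci $f$ (any morphism of smooth $k$-schemes is lci via the graph factorization), with $\LL_{f}$ the cotangent complex; the $\mathrm{SL}$-Thom isomorphism then turns the twist $\LL_{f}$ into $\omega_{f}$ and the shift by $d$. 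Your argument is the smooth special case of this; replacing ``smooth purity'' by ``lci purity/fundamental class'' fixes it with no change to the rest of your outline.
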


\begin{proof}
The proposition follows from the six operations on $\SH(k)$, as explained in \cite{MW-ring-spt,fund-classes} or \cite{five-authors2}. Indeed, for the contravariant functoriality we refer to \cite[§2.2]{MW-ring-spt}, and for the definition of the cup product, see \cite[§2.3.1]{MW-ring-spt}. The pushforward is given by the Gysin map
$
f_!\colon E_Z(X,f^*\xi+\LL_f)\to E_{f(Z)}(Y,\xi),
$
where $\LL_f\in \rmK(X)$ is the cotangent complex of $f$; see \cite{fund-classes,five-authors2}. In particular, for $\MSL$ we have the Thom isomorphism $\Sigma^\xi\MSL\simeq \Sigma^{2\rk\xi,\rk\xi}\Sigma^{\det\xi-\calO}\MSL$ \cite[Example 16.29]{norms}. When $\xi$ is a line bundle $\scrL$, this gives the pushforward
$
f_*\colon A^n_Z(X,\omega_f\otimes f^*\scrL)\to A^{n-d}_{f(Z)}(Y,\scrL).
$
For the base change and projection formulas, see \cite[Proposition 2.2.5]{MW-ring-spt} and \cite[Remark 2.3.2]{MW-ring-spt}.
\end{proof}

\subsection{Presheaves on \texorpdfstring{$\Cor^A_k$}{CorAk}}Our basic object of study is the $\infty$-category $\PSh_\Sigma(\Cor^A_k;\Z)$ of presheaves of abelian groups on $\Cor^A_k$ that take finite coproducts to finite products. More generally we may of course also consider, for any coefficient ring $R$, the $\infty$-category $\PSh_\Sigma(\Cor^A_k;R)$ of presheaves of $R$-modules. For notational simplicity we will however mostly work with $R=\Z$. 

\begin{definition}
The objects of $\PSh_\Sigma(\Cor^A_k;\Z)$ will be referred to as \emph{presheaves with $A$-transfers}.

A presheaf with $A$-transfers $\scrF\in\PSh_\Sigma(\Cor_k^A;\Z)$ is \emph{homotopy invariant} if for any $X\in\Sm_k$, the map
$
\pr^*\colon\scrF(X)\xrightarrow{\cong}\scrF(X\times\A^1)
$
induced by the projection $\pr\colon X\times\A^1\to X$ is an isomorphism.
\end{definition}

\subsubsection{}The $\infty$-category $\PSh_\Sigma(\Cor^A_k;\Z)$ inherits a symmetric monoidal structure from that on $\Cor^A_k$ via Day convolution. Moreover, the graph functor $\gamma_A\colon\Sm_k\to\Cor^A_k$ defines a ``forgetful'' functor 
$
\gamma^A_*\colon \PSh_\Sigma(\Cor^A_k;\Z)\to\PSh_\Sigma(\Sm_k)
$
given by $\gamma^A_*(\scrF)\defeq \scrF\circ\gamma_A$. Similarly as in \cite[§1.2]{MW-cplx}, we deduce that the functor $\gamma^A_*$ admits a left adjoint $\gamma_A^*$ which is symmetric monoidal.

\subsubsection{Sheaves on $\Cor^A_k$}For any Grothendieck topology $\tau$, we define the $\infty$-category $\Shv_\tau(\Cor^A_k;\Z)$ consisting of those presheaves $\scrF\in\PSh_\Sigma(\Cor^A_k;\Z)$ such that $\gamma^A_*(\scrF)$ is a $\tau$-sheaf on $\Sm_k$. The adjunction $(\gamma_A^*,\gamma^A_*)$ above then defines an adjunction
\[
\gamma^*_A:\Shv_\tau(\Sm_k)\rightleftarrows\Shv_\tau(\Cor^A_k;\Z):\gamma_*^A,
\]
and the symmetric monoidal structure on $\PSh(\Cor^A_k;\Z)$ restricts to a symmetric monoidal structure on $\Shv_\tau(\Cor^A_k;\Z)$. 

\subsubsection{}In this text, we will almost exclusively work with the case when $\tau=\Nis$ is the Nisnevich topology. We show below (see \Cref{thm:ext}) that the full inclusion $i\colon\Shv_\Nis(\Cor^A_k;\Z)\to\PSh_\Sigma(\Cor^A_k;\Z)$ admits a left adjoint $a_\Nis\colon\PSh_\Sigma(\Cor^A_k;\Z)\to\Shv_\Nis(\Cor^A_k;\Z)$. In particular, the Nisnevich sheafification of a presheaf on $\Cor^A_k$ comes equipped with $A$-transfers in a canonical way. Hence we can make the following definition:

\begin{definition}
Let $X\in\Sm_k$ be a smooth $k$-scheme. Following the notation of \cite{Calmes-Fasel}, we let $\rmc_A(X)\in\PSh_\Sigma(\Cor^A_k;\Z)$ denote the representable presheaf on $\Cor^A_k$ given by $U\mapsto\Cor^A_k(U,X)$. Moreover, we let 
\[
\Z_A(X)\defeq a_\Nis(\rmc_A(X))\in\Shv_\Nis(\Cor^A_k;\Z)
\] 
denote the Nisnevich sheaf associated to the presheaf $\rmc_A(X)$.
\end{definition}

\subsection{Correspondences of pairs}
In the excision theorems of Sections \ref{section:rel-aff-line} and \ref{section:et-exc} we are always in the setting of a pair of schemes $j\colon U\subseteq X$, and we are led to consider the associated quotient $\coker(j^*\colon\scrF(X)\to \scrF(U))$ for a given presheaf with $A$-transfers. In particular, if $U=X$ and $j$ is the identity, then the associated quotient is zero. The notion of a correspondence of pairs provides a natural setting to study these objects.

\begin{definition}
Let $\Cor^{A,\pair}_S$ denote the category whose objects are those of $\SmOp_S$ and whose morphisms are defined as follows. For $(X,U),(Y,V)\in\SmOp_S$, with open immersions $j_X\colon U\to X$ and $j_Y\colon V\to Y$, consider the complex
\[
\Cor_S^A(X,V)\xrightarrow{d_0}\Cor_S^A(X,Y)\oplus\Cor_S^A(U,V)\xrightarrow{d_1}\Cor_S^A(U,Y)
\]
in which $d_0\defeq((j_Y)_*,j_X^*)$ and $d_1\defeq j_X^*-(j_Y)_*$.
We define the group $\Cor_S^{A,\pair}((X,U),(Y,V))$ of \emph{finite relative $A$-correspondences of pairs} as the homology of this complex, i.e.,
\[
\Cor_S^{A,\pair}((X,U),(Y,V))\defeq\ker d_1/\im d_0.
\]
In particular, if $U=X$, then $\Cor_S^{A,\pair}((X,X),(Y,V))=0$.
We denote the elements of $\Cor^{A,\pair}_S((X,U),(Y,V))$ by $(\alpha,\beta)$, where $\alpha\in\Cor_S^A(X,Y)$ and $\beta\in\Cor_S^A(U,V)$.
If $\beta$ is implicitly understood, we may write simply $\alpha$ instead of $(\alpha,\beta)$.
The composition in $\Cor^{A,\pair}_S$ is defined by $(\alpha,\beta)\circ(\gamma,\delta)\defeq(\alpha\circ\gamma,\beta\circ\delta)$.

Finally, we define the homotopy category $\ol\Cor^{A,\pair}_S$ of $\Cor^{A,\pair}_S$ as follows. The objects of $\ol\Cor^{A,\pair}_S$ are the same as those of $\Cor^{A,\pair}_S$, and the morphisms are given by
\begin{align*}
&\ol\Cor^{A,\pair}_S((X,U),(Y,V))\defeq\Cor^{A,\pair}_S((X,U),(Y,V))/\sim_{\A^1}\\
&=\coker\p*{\Cor^{A,\pair}_S(\A_S^1\times_S(X,U),(Y,V))\xrightarrow{i_0^*-i_1^*}\Cor^{A,\pair}_S((X,U),(Y,V))}.
\end{align*}
Here $\A_S^1\times_S(X,U)$ is shorthand for $(\A_S^1\times_S X,\A_S^1\times_S U)$. If $(\alpha,\beta)\in\Cor^{A,\pair}_S((X,U),(Y,V))$ is a finite relative $A$-correspondence of pairs, we write $[(\alpha,\beta)]$, or simply $[\alpha]$, for the image of $(\alpha,\beta)$ in $\ol\Cor^{A,\pair}_S((X,U),(Y,V))$. 
\end{definition}

\subsection{Correspondences between essentially smooth schemes}

We will frequently encounter local-, and henselian local schemes, and we need to consider correspondences also between such objects. The definitions and results below take care of this. We remind the reader that the definition of an étale neighborhood can be found in \Cref{def:et-nbhd} in the appendix.

\begin{definition}
Let $X=\varprojlim X_\alpha\in\EssSm_S$ be an essentially smooth $S$-scheme. Consider a closed subscheme $T=\varprojlim T_\alpha$ of $X$, where 
$T_\alpha$ is a closed subscheme of $X_\alpha$ for each $\alpha$. Define 
\[
A^{n}_T(U\times_S X,\omega_X)\defeq \varinjlim_\alpha A^{n}_{T_\alpha}(U\times_S X_\alpha,\omega_{X_\alpha}).
\]
Furthermore, for any $U=\varprojlim_\alpha U_\alpha\in \EssSm_S$, and for any $X\in \Sm_S$, we define 
\[
\Cor^A_S(U,X)\defeq\varinjlim_\alpha \Cor^A_S(U_\alpha,X).
\]
Finally, for any $X\in \Sm_S$, any point $x\in X$, and any $U\in \EssSm_S$, we put
\[
\Cor^A_S(U,X^h_x)\defeq\varprojlim_v\limits \Cor^A_S(U,X^\prime).
\]
Here the limit ranges over all étale neighborhoods $v\colon (X^\prime,x)\to (X,x)$ of $x$ in $X$.
\end{definition}

\begin{lemma}\label{lm:AEssSmSum}
For any $X\in \Sm_S$ of relative dimension $d$ over $S$, and for any henselian local scheme $U\in \EssSm_S$, 
we have 
\[
A^{d}_T(U\times_S X,\omega_X)= \bigoplus_{x\in X} A^{d}_{T_x}(U\times_S X_x,\omega_X)= \bigoplus_{x\in X} A^{d}_{T^h_x}(U\times_S X^h_x,\omega_{X^h_x})
\]
for any $T\in\calA_0(U\times_S X/U)$. Here $x$ ranges over the set of all (not necessarily closed) points of $X$, and $T_x\defeq T\times_X X_x$; $T^h_x\defeq T\times_X X^h_x$.
\end{lemma}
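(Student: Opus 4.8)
The plan is to decompose $T$ according to the points of $X$ lying under it and to show, by excision, that each piece depends only on a (henselian) local neighbourhood of the corresponding point. The key geometric input is that, since $p\colon T\to U$ is finite and $\mathcal{O}(U)$ is henselian local, the finite $\mathcal{O}(U)$-algebra $\mathcal{O}(T)$ splits as a finite product of local rings; geometrically $T=\coprod_i T_i$ with each $T_i$ a henselian local scheme whose closed point lies over the closed point of $U$. Write $x_i\in X$ for the image of the closed point of $T_i$. Because $T_i$ is local, every point of $T_i$ maps to a generization of $x_i$, so $T_i$ is contained in $U\times_S X_{x_i}$ and, for every Zariski open $V\ni x_i$, is disjoint from $U\times_S(X\smallsetminus V)$. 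Grouping by the point of $X$, set $T_x\defeq\coprod_{i\,:\,x_i=x}T_i$; this is the clopen part of $T$ concentrated over $x$, it is nonzero for only finitely many $x$, and the $T_x$ are pairwise disjoint closed subschemes of $U\times_S X$.

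Next I would run two applications of excision, which is available by \Cref{rem:Zarex}. Zariski excision over the disjoint decomposition $T=\coprod_x T_x$ gives $A^d_T(U\times_S X,\omega_X)=\bigoplus_x A^d_{T_x}(U\times_S X,\omega_X)$. Then, for a fixed $x$, since $T_x$ avoids $U\times_S(X\smallsetminus V)$ for every open $V\ni x$, excision identifies $A^d_{T_x}(U\times_S X,\omega_X)$ with $A^d_{T_x}(U\times_S V,\omega_X)$; passing to the colimit over $V\ni x$ and invoking the definition of twisted cohomology with support on essentially smooth schemes yields $A^d_{T_x}(U\times_S X,\omega_X)\cong A^d_{T_x}(U\times_S X_x,\omega_X)$, which gives the first identification. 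For the second I would write $X^h_x=\varprojlim_v X'$ over étale neighbourhoods $v\colon(X',x')\to(X,x)$, use that each henselian local summand of $T_x$ lifts canonically along every such $v$ (by henselianity, as $\kappa(x')=\kappa(x)$), and apply étale excision (\Cref{def:pre-coh}(b)) along $\id_U\times v$ to compare $A^d_{T_x}(U\times_S X_x,\omega_X)$ with $A^d_{T^h_x}(U\times_S X^h_x,\omega_{X^h_x})$ in the colimit over the étale-neighbourhood system.

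The substance of the argument is the excision bookkeeping. The nontrivial points are: verifying that the pieces $T_x$ are disjoint and that each stays away from the closed sets removed when localizing $X$ at $x$ — this is exactly where finiteness of $T$ over $U$ and henselianity of $U$ enter; tracking the twists $\omega_X$, $\omega_{X^h_x}$ through the comparisons using the canonical isomorphism $\phi_Y^*\omega_i\cong\omega_{i'}$ attached to the relevant transversal squares; and, for the henselian statement, checking that the canonical lifts along étale neighbourhoods are compatible enough for étale excision to apply uniformly in the colimit. I expect this last point — making the étale-excision comparison work coherently over all étale neighbourhoods of $x$ at once — to be the main obstacle.
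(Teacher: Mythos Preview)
Your proposal is correct and follows essentially the same route as the paper: decompose $T$ into henselian local pieces using finiteness over the henselian local $U$, group them by the image point $x\in X$, apply Zariski excision for the first equality and étale excision for the second. The only substantive simplification the paper makes over your outline concerns what you call the ``main obstacle'': once you know that each $T_x$ is semi-local henselian, its preimage under any étale neighbourhood $(X',x')\to(X,x)$ automatically contains a clopen piece isomorphic to $T_x$ (this is the universal property of henselian rings), so étale excision applies directly at every stage and no extra compatibility bookkeeping is needed; equivalently, $T_x=T^h_x$ already as subschemes, so there is nothing to track in the colimit.
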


\begin{proof}
Since $U$ is henselian local and $T\in\calA_0(U\times_S X/U)$ is finite over $U$, it follows that $T$ is a semi-local henselian scheme. In fact,
$T= \coprod\limits_{z\in T_{(0)}} T_z^h$, where $z$ ranges over the set of closed points in $T$. 
Hence $T=\coprod\limits_{x\in X} T_x$ and $T= \coprod\limits_{x\in X} T^h_x$, 
where $x$ ranges over the set of all points of $X$. In particular we have $T_x=T^h_x$. 
We note that $T^h_x$ is semi-local henselian, but not necessarily local. By Zariski excision, we obtain
$
A^{d}_T(U\times_S X,\omega_X)= \bigoplus_{x\in X} A^{d}_{T_x}(U\times_S X,\omega_X),
$
and 
$
A^{d}_{T_x}(U\times_S X^\prime,\omega_X)=A^{d}_{T_x}(U\times_S X,\omega_X)
$ 
for any open $X^\prime\subseteq X$ containing $x$. This implies the first claim.

For the second equality, note that since the scheme $T^h_x$ is semi-local henselian for any $x\in X$, it follows that $T^h_x$ is isomorphic to its preimage under any étale neighborhood $v\colon (X^\prime,x)\to (X,x)$. Hence it follows from étale excision that 
$A^{d}_{T^h_x}(U\times_S X,\omega_X)=A^{d}_{T^h_x}(U\times_S X^\prime,\omega_X)$, and consequently 
$
A^{d}_{T^h_x}(U\times_S X,\omega_X)=A^{d}_{T^h_x}(U\times_S X^h_X,\omega_{X^h_X}).
$
So the second equality follows.
\end{proof}

\begin{lemma}\label{lm:EssSmACor=limA_T}
Let $X\in\Sm_S$ be as in \Cref{lm:AEssSmSum}. Then, for any point $x\in X$ and for any henselian local scheme $U\in \EssSm_S$ we have 
\begin{align*}
\Cor^A_S(U,X_x) &= \varinjlim\limits_{T\in \calA_0(U\times_S X_x/U), x\in X} A^{d}_T(U\times_S X_x,\omega_{X_x}),\\
\Cor^A_S(U,X^h_x) &= \varinjlim\limits_{T\in \calA_0(U\times_S X^h_x/U), x\in X} A^{d}_T(U\times_S X^h_x,\omega_{X^h_x}).
\end{align*}

\end{lemma}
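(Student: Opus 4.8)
The plan is to deduce both identities by unwinding the definitions and showing that the cofiltered limit defining $\Cor^A_S(U,X_x)$ (resp.\ $\Cor^A_S(U,X^h_x)$) collapses onto the displayed filtered colimit, because its transition maps are excision isomorphisms on supports. I would treat the two cases uniformly: write $N$ for the cofiltered category of open neighborhoods of $x$ in the first case and of étale neighborhoods $v\colon(X',x)\to(X,x)$ in the second, so that $X_x=\varprojlim_{X'\in N}X'$ (resp.\ $X^h_x=\varprojlim_{X'\in N}X'$), and recall that by definition $\Cor^A_S(U,X_x)=\varprojlim_{X'\in N}\Cor^A_S(U,X')$ (resp.\ $\Cor^A_S(U,X^h_x)=\varprojlim_{X'\in N}\Cor^A_S(U,X')$), each $\Cor^A_S(U,X')$ being in turn the filtered colimit $\varinjlim_\alpha\varinjlim_{T'\in\calA_0(U_\alpha\times_S X'/U_\alpha)}A^{d}_{T'}(U_\alpha\times_S X',\omega_{X'})$ once a presentation $U=\varprojlim_\alpha U_\alpha$ with $U_\alpha\in\Sm_S$ is fixed.

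First I would pin down the transition maps of the system $\{\Cor^A_S(U,X')\}_{X'\in N}$ on supports. A morphism in $N$ from $X'_1$ to $X'_2$ is an open immersion in the first case, and in the second case becomes an open immersion after restricting to a smaller neighborhood of the marked point (rigidity of étale morphisms), so for the purpose of computing the limit we may assume the transition maps are open immersions. For such a map, the homomorphism $\Cor^A_S(U,X'_1)\to\Cor^A_S(U,X'_2)$ sends the class of a correspondence supported on $T\in\calA_0(U_\alpha\times_S X'_1/U_\alpha)$ to the same class viewed on $U_\alpha\times_S X'_2$, via the identification of $A^{d}_{T}(U_\alpha\times_S X'_1,\omega_{X'_1})$ with $A^{d}_{T}(U_\alpha\times_S X'_2,\omega_{X'_2})$ furnished by excision (\Cref{def:pre-coh}(b), \Cref{rem:Zarex}); this is legitimate because $T$, being finite and hence proper over $U_\alpha$, is already closed in $U_\alpha\times_S X'_2$. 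Consequently a compatible family in $\varprojlim_{X'\in N}\Cor^A_S(U,X')$ is precisely the datum of a single class on some $A^{d}_{T}(U\times_S X_x,\omega_{X_x})$, with $T\in\calA_0(U\times_S X_x/U)$, together with its restrictions — that is, an element of $\varinjlim_{T}A^{d}_{T}(U\times_S X_x,\omega_{X_x})$; and likewise with $X^h_x$ in place of $X_x$. This is the claim.

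Turning the last sentence into a rigorous argument is the only real content, and it is where I expect the work, and the need for care, to lie: one must compare the index category of the limit with that of the colimit, i.e.\ show that $\calA_0(U\times_S X_x/U)$ (resp.\ $\calA_0(U\times_S X^h_x/U)$) is, cofinally and compatibly, the ``limit'' of the posets $\calA_0(U_\alpha\times_S X'/U_\alpha)$. This amounts to a spreading-out statement for admissible subsets: by the standard permanence and constructibility results for morphisms under cofiltered limits of noetherian schemes with affine transition maps, every admissible subset of the limit descends to, and is cut out by restriction from, an admissible subset at a finite stage, the conditions ``finite over the base'' and ``each irreducible component of the reduction surjects onto the base'' both spreading out and remaining stable under the transition maps; here the description of a closed subset finite over the henselian local $U$ as a finite disjoint union of henselian-local pieces, supplied by \Cref{lm:AEssSmSum}, keeps the bookkeeping of irreducible components under control. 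Granting this spreading-out statement, identifying the two filtered colimits is formal and the proof concludes. I would isolate the spreading-out statement as a short auxiliary lemma, as it is the only point at which noetherianity, affineness of the transition maps, and the henselian local hypothesis on $U$ genuinely enter.
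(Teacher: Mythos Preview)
Your approach is different from the paper's, and while the outline is plausible, the step you isolate as ``the only real content'' is genuinely harder to close than you suggest without making fuller use of \Cref{lm:AEssSmSum}.

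The paper does not argue via spreading out. Instead it applies \Cref{lm:AEssSmSum} to each term of the cofiltered system: for every neighborhood $X'$ one has
\[
\Cor^A_S(U,X')=\varinjlim_{T}A^d_T(U\times_S X',\omega_{X'})=\bigoplus_{x'\in X'} \varinjlim_{T}A^d_T(U\times_S X'_{x'},\omega_{X'_{x'}}),
\]
and since $X'_{x'}=X_{x'}$ for any open $X'\ni x'$, each summand is independent of $X'$. The transition maps in the inverse system are then just inclusions of direct summands, so the inverse limit over Zariski (resp.\ \'etale) neighborhoods extracts exactly the summands indexed by the points lying in every $X'$, and the computation finishes in one line. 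In short, the paper front-loads the henselian hypothesis via the direct-sum decomposition and never needs to spread out admissible subsets at all.

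Your route can be made to work, but the crux is not the spreading-out lemma you propose. Spreading out produces a map from $\varinjlim_{T}A^d_T(U\times_S X_x,\omega_{X_x})$ into the inverse limit; the nontrivial direction is the other one. You need to know that an element of $\bigcap_{X'}\Cor^A_S(U,X')$ is represented by a class whose support already lies over $U\times_S X_x$. Excision alone only tells you the transition maps are injective, so the inverse limit is an intersection inside $\Cor^A_S(U,X)$; to see that this intersection coincides with the subgroup of classes supported over $X_x$ you must be able to throw away the connected components of the support lying over points $y\notin X_x$ without changing the class. That is precisely the content of the decomposition in \Cref{lm:AEssSmSum}, which you invoke only for bookkeeping. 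If you promote it to the main structural input, your argument collapses to the paper's.

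One minor point: your reduction of the \'etale case to open immersions ``by rigidity'' is not correct as stated---morphisms of \'etale neighborhoods are \'etale but need not become open immersions after shrinking. This is harmless, however: the only thing you use is that the transition maps are excision isomorphisms on fixed supports, and for \'etale morphisms this is supplied directly by the \'etale excision axiom (\Cref{def:pre-coh}(b)), exactly as in the proof of the second equality of \Cref{lm:AEssSmSum}.
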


\begin{proof}
The first claim follows from the first equality of \Cref{lm:AEssSmSum}, by the following computation:
\begin{align*}
\Cor^A_S(U,X_x) &= 
\varprojlim_{v} 
\varinjlim\limits_{T\in \calA_0(U\times_S X^\prime/U)} A^{d}_T(U\times_S X^\prime,\omega_{X^\prime})\\
&= \varprojlim_{v} 
\varinjlim\limits_{T\in \calA_0(U\times_S X^\prime/U)} 
\bigoplus_{x^\prime\in X^\prime} A^{d}_{T_{x^\prime}}(U\times_S X^\prime_{x^\prime},\omega_{X^\prime_{x^\prime}})\\
&=\varprojlim_{v} 
\bigoplus_{x^\prime\in X^\prime} \varinjlim\limits_{T\in \calA_0(U\times_S X^\prime_{x^\prime}/U)} 
A^{d}_{T}(U\times_S X^\prime_{x^\prime},\omega_{X^\prime_{x^\prime}})\\
&=\varprojlim_{v} 
\varinjlim\limits_{T\in \calA_0(U\times_S X^\prime_x/U)} 
A^{d}_{T}(U\times_S X^\prime_{x},\omega_{X^\prime_{x}})= A^{d}_{T}(U\times_S X_{x},\omega_{X_{x}}).
\end{align*}
Here $v\colon (X^\prime,x)\hookrightarrow (X,x)$ ranges over the set of Zariski neighborhoods of $x$ in $X$.
The second equality of the claim follows in a similar manner from the second equality of \Cref{lm:AEssSmSum} with $X_x$ replaced by $X^h_x$, and with $v$ ranging over the set of étale neighborhoods of $x$ in $X$.
\end{proof}

\subsection{Constructing correspondences from functions and trivializations}
From now on we will assume that the base scheme $S$ is the spectrum of a field $k$. Later on we will also have to put more restrictions on $k$ (e.g., infinite or perfect); the appropriate assumptions will be stated in the beginning of each section where they are needed.

\subsubsection{}We will now describe how to construct a finite $A$-correspondence from the data of a regular function on a relative curve together with a trivialization of the relative canonical class. This construction can be thought of as an analogous statement to the defining axiom of a pretheory in the sense of Voevodsky \cite{Voe-hty-inv}, and will be used throughout.

\begin{construction}\label{constr:lrangleclass}
Suppose that there is a diagram
\begin{equation}
\label{diag:pCf}
\begin{tikzcd}
\calC\ar{r}{f}\ar{d}[swap]{p}\ar{dr}{g} & \A^1\\
U & X
\end{tikzcd}
\end{equation}
in $\Sm_k$ satisfying the following properties:
\begin{enumerate}
\item $p\colon \mathcal C\to U$ is a smooth relative curve, and $g\colon\calC\to X$ is any morphism.
\item $Z(f)=Z\amalg Z^\prime$, with $Z$ finite over $U$.
\item There is an isomorphism $\mu\colon \calO_\calC\xrightarrow{\cong}\omega_{\mathcal C/U}$.
\end{enumerate} 
We can then define finite $A$-correspondences 
\begin{align*}
&\Div^A_U(f)_Z^\mu\in \Cor^A_U(U,\mathcal C);\quad \Div^A(f)_Z^\mu\in \Cor^A_k(U,\mathcal C);\\
&\Div^A_U(f)_Z^{\mu,g}\in \Cor^A_U(U,X);\quad \Div^A(f)_Z^{\mu,g}\in \Cor^A_k(U,X)
\end{align*}
as follows:

Let $\Gamma_f$ denote the graph of the morphism $f$, 
with embedding $i_f\colon \Gamma_f\hookrightarrow  \mathcal C\times\A^1$.
Consider the pushforward homomorphism 
$(i_f)_*\colon A^0(\Gamma_f,\mathcal O_{\Gamma_f}\otimes\omega_{i_f})\to A_{\Gamma_f}^{1}(\mathcal C\times\A^1,\mathcal O_{\mathcal C\times\A^1})$, and 
let $dT\colon \mathcal O_{\A^1}\cong \omega_{\A^1}$ be the trivialization defined by the coordinate function $T$ on $\A^1$. Using the trivializations $-dT$ and $\mu$ we then obtain a homomorphism 
$
i_*\colon A^0(\Gamma_f,\mathcal O_{\Gamma_f})\to A_{\Gamma_f}^{1}(\mathcal C\times \A^1,\omega_{\mathcal C\times\A^1/U\times\A^1}).
$
Consider the image $i_*(1)\in A_{\Gamma_f}^{1}(\mathcal C\times\A^1,\omega_{\calC\times\A^1/U\times\A^1})$ of $1\in  A^0(\Gamma_f,\calO_{\Gamma_f})$ under the map $i_*$.

Next we may pull back along the zero section,
$i_0^*\colon A_{\Gamma_f}^{1}(\mathcal C\times\A^1,\omega_{\mathcal C\times\A^1/U\times\A^1})\to A_{Z(f)}^{1}(\mathcal C,\omega_{ \mathcal C/U})$. Since $Z(f)=Z\amalg Z^\prime$ we have 
$
A_{Z(f)}^{1}(\mathcal C,\omega_{ \mathcal C/U})= A_{Z}^{1}(\mathcal C,\omega_{ \mathcal C/U})\oplus A_{Z^\prime}^{1}(\mathcal C,\omega_{\mathcal C/U})
$
by \Cref{rem:Zarex}. We define the finite relative $A$-correspondence
\[
\Div^A_U(f)_Z^\mu\in\Cor^A_U(U,\calC)
\]
as the image of $i_*(1)\in A^1_{\Gamma_f}(\calC\times\A^1,\omega_{\calC\times\A^1/U})$ under the composite homomorphism
\[
A_{\Gamma_f}^{1}(\mathcal C\times\A^1,\omega_{\mathcal C/U})\xrightarrow{i_0^*}
A_{Z(f)}^{1}(\mathcal C,\omega_{ \mathcal C/U})\to
A_{Z}^{1}(\mathcal C,\omega_{ \mathcal C/U})\to\Cor^A_U(U,\calC).
\]
Here the second map is the projection to the first coordinate, and the last map is the canonical homomorphism to the colimit. By composing with the morphism $g$ we obtain the finite relative $A$-correspondence
\[
\Div_U^A(f)_Z^{\mu,g}\defeq g\circ\Div_U^A(f)_Z^\mu\in\Cor_U^A(U,X).
\]
We readily obtain a nonrelative $A$-correspondence by applying the functor $\res_U$. More precisely, we define
\[
\Div^A(f)_Z^{\mu,g}\defeq g\circ \res_U(\Div^A_U(f)_Z^\mu)\in \Cor^A_k(U,X).
\]
If it is clear from the context, we might drop the trivialization $\mu$ or the map $g$ from the notation. Moreover, if $Z=Z(f)$ and $Z$ is finite over $U$, we may also abbreviate $\Div^A(f)_{Z(f)}$ to $\Div^A(f)$. We think of $\Div^A(f)_Z^\mu$ as a divisor supported on $Z$ whose multiplicity at each component of $Z$ is given by an $A$-cohomology class.
\end{construction}

\begin{lemma}\label{lm:fmu}
Let $\mathcal C$, $Z$, $p$, $f$ and $g$ be as in \Cref{constr:lrangleclass}. Then
$\Div^A(\lambda f)^{\lambda \mu, g}_Z=\Div^A(f)^{\mu, g}_Z$
for any $\lambda\in \Gamma(U,\calO_U^\times)$.
\end{lemma}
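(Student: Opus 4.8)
The plan is to reduce to an equality of twisted cohomology classes with support, and then to compare the two instances of \Cref{constr:lrangleclass} by means of a fibrewise dilation. Since $\res_U$, composition with $g$, the projection onto the summand indexed by $Z$, and the passage to the colimit over admissible subsets are carried out identically for $f$ and for $\lambda f$ --- and since $Z(\lambda f)=Z(f)$ (as $\lambda$ is invertible), so that the decomposition $Z(f)=Z\amalg Z'$ is unchanged --- it suffices to prove that the two classes
\[
i_0^*\bigl(i_*(1)\bigr)\in A^1_{Z(f)}(\calC,\omega_{\calC/U})
\]
produced by \Cref{constr:lrangleclass} out of the data $(f,\mu)$ and $(\lambda f,\lambda\mu)$ agree.

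The key device is the automorphism $m_\lambda\colon\calC\times\A^1\to\calC\times\A^1$ over $U$ given by $(c,t)\mapsto(c,\lambda(p(c))\cdot t)$. It is an isomorphism over $\calC$, it fixes the zero section (so that $m_\lambda\circ i_0=i_0$, whence $i_0^*\circ m_\lambda^*=i_0^*$), and since it pulls back the defining equation $T-\lambda f$ of $\Gamma_{\lambda f}$ to $\lambda\cdot(T-f)$ it restricts to an isomorphism $\Gamma_f\xrightarrow{\cong}\Gamma_{\lambda f}$ compatible with the projections to $\calC$. Two consequences are relevant. First, because $m_\lambda$ lies over $\calC$, it acts trivially on $\omega_{\calC\times\A^1/U\times\A^1}=\pr_\calC^*\omega_{\calC/U}$ and fixes the pulled-back trivialization $\mu$. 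Second, on conormal bundles the induced map $dm_\lambda$ carries the canonical generator $\overline{T-\lambda f}$ of the conormal of $\Gamma_{\lambda f}$ to $\lambda$ times the canonical generator $\overline{T-f}$ of the conormal of $\Gamma_f$; dually, it identifies the ``$-dT$''-trivialization of $\omega_{i_{\lambda f}}$ with a unit rescaling of the ``$-dT$''-trivialization of $\omega_{i_f}$.

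Next I would apply the base change axiom (\Cref{def:coh} (5)) to the Cartesian square with vertices $\Gamma_f,\Gamma_{\lambda f},\calC\times\A^1,\calC\times\A^1$ and horizontal arrows $m_\lambda|_{\Gamma_f}$ and $m_\lambda$: it is transversal since $m_\lambda$ is an isomorphism, hence strongly transversal because the vertical arrows are closed immersions, and one equips it with the evident compatible set of line bundles ($\scrL=\scrL'=\pr_\calC^*\omega_{\calC/U}$, $\scrM=\omega_{\calC/U}\otimes\omega_{i_{\lambda f}}$, $\scrM'=\omega_{\calC/U}\otimes\omega_{i_f}$, with the canonical identifications). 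This yields $m_\lambda^*\circ(i_{\lambda f})_*=(i_f)_*\circ(m_\lambda|_{\Gamma_f})^*$. Feeding in the class $1$ and transporting the trivializations via the two observations above --- moving the conormal rescaling past $(i_f)_*$ by the projection formula (\Cref{def:coh} (6)) --- the construction for $(\lambda f,\lambda\mu)$ becomes, after pull-back along $m_\lambda$, the construction for $(f,\mu)$ composed with two unit twist-changes: one coming from the conormal identification, one from the rescaling $\mu\rightsquigarrow\lambda\mu$. These are mutually inverse and hence cancel --- equivalently, their composite is multiplication by $\ip{\lambda}\smile\ip{\lambda^{-1}}=\ip{1}=1$ in $A^0(U)$. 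Therefore $m_\lambda^*\bigl(i_*(1)\bigr)$ for $(\lambda f,\lambda\mu)$ equals $i_*(1)$ for $(f,\mu)$; applying $i_0^*$ and using $i_0^*\circ m_\lambda^*=i_0^*$ gives the desired equality of classes in $A^1_{Z(f)}(\calC,\omega_{\calC/U})$, and the lemma follows after projecting onto $Z$, passing to the colimit, applying $\res_U$, and composing with $g$.

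The main obstacle I anticipate is the bookkeeping with twisting line bundles in the last step: one must verify that the canonical choices above genuinely form a compatible set of line bundles in the sense of \Cref{def:base-change}, and --- more delicately --- that the unit rescaling produced in the conormal direction by $f\rightsquigarrow\lambda f$ is exactly the one that undoes the rescaling produced by $\mu\rightsquigarrow\lambda\mu$. This is sign- and convention-sensitive; note that the sign hidden in ``$-dT$'' is irrelevant, as it enters symmetrically in the two constructions.
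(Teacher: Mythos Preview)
Your proposal is correct and follows essentially the same approach as the paper: both arguments use the fibrewise dilation $m_\lambda=H^\lambda\colon(c,t)\mapsto(c,\lambda t)$ on $\calC\times\A^1$, which carries $\Gamma_f$ isomorphically onto $\Gamma_{\lambda f}$ and fixes the zero section, to transport the construction for $(\lambda f,\lambda\mu)$ to the construction for $(f,\mu)$; the two unit rescalings of the trivialization (one from the conormal, one from $\mu\rightsquigarrow\lambda\mu$) cancel exactly as you say. The paper phrases this via pushforward along $H^{\lambda^{-1}}$ and the automorphism $\Lambda$ given by multiplication by $\lambda$ on the twisting line bundle, while you phrase it via pullback along $m_\lambda$ and base change, but for an isomorphism these are equivalent formulations of the same computation.
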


\begin{proof}
For any smooth $U$-scheme $X$, any closed subscheme $Z\subseteq X$, and any line bundle $\scrL$ on $X$,
define the automorphism $\Lambda_X\colon A^*_Z(X,\scrL)\to A^*_Z(X,\scrL)$ as the map induced by the automorphism $\scrL\to \scrL$ given by multiplication by $\lambda$.

Consider the homomorphisms
\begin{align*}
i_*\colon 
A^0(\Gamma_f,\mathcal O_{\Gamma_f})&\to A_{\Gamma_f}^{1}(\mathcal C\times \A^1,\omega_{\mathcal C\times\A^1/U}),
\\
i_*^{\lambda f,\lambda \mu}\colon 
A^0(\Gamma_{\lambda f},\mathcal O_{\Gamma_{\lambda f} })&\to A_{\Gamma_{\lambda f} }^{1}(\mathcal C\times \A^1,\omega_{\mathcal C\times\A^1/U})
\end{align*}
in the constructions of 
$\Div^A(f)^{\mu, g}_Z$ and $\Div^A(\lambda f)^{\lambda \mu, g}_Z$.
Let moreover $i_*^{\lambda\mu}$ denote the homomorphism 
\[
i_*^{\lambda\mu}\colon
A^0({\Gamma_f},\mathcal O_{\Gamma_f})\to A_{\Gamma_f}^{1}(\mathcal C\times \A^1,\omega_{\mathcal C\times\A^1/U})
\]
given by the trivialization $dT\otimes\lambda\mu$.
Define automorphisms 
\begin{align*}
H^{\lambda}\colon \A^1\times \calC\to \A^1\times \calC,&\quad (T,x)\mapsto (\lambda T,x),\\
H^{\lambda^{-1}}\colon \A^1\times \calC\to \A^1\times \calC,&\quad (T,x)\mapsto (\lambda^{-1}T,x).
\end{align*}
Then $H^{\lambda^{-1}}(\Gamma_{\lambda f})=\Gamma_f$,
and $H_*^{\lambda^{-1}}(dT)=\lambda^{-1} dT$.
Hence 
\[
H_*^{\lambda^{-1}} \circ i_*^{\lambda f,\lambda\mu} = (\Lambda_{\mathcal C\times\A^1})^{-1} \circ i_*^{\lambda\mu} = i_*,
\]
and the claim follows.
\end{proof}

\begin{lemma}
Let $\mathcal C$, $Z$, $p$ and $f$ be as in \eqref{diag:pCf}
and suppose that $Z = Z_1\amalg Z_2$ with both $Z_1$ and $Z_2$ finite over $U$.
Then $\Div^A(f)_Z^{\mu,g}=\Div^A(f)_{Z_1}^{\mu,g}+\Div^A(f)_{Z_2}^{\mu,g}$.
\end{lemma}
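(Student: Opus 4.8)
The plan is to unwind \Cref{constr:lrangleclass} and reduce the claim to the additivity of the Zariski excision decomposition furnished by \Cref{rem:Zarex}, together with the (bi)additivity of composition in $\Cor^A_S$ and of the functor $\res_U$. Throughout, note that the class $i_*(1)\in A^1_{\Gamma_f}(\calC\times\A^1,\omega_{\calC\times\A^1/U})$ and its pullback $\alpha\defeq i_0^*(i_*(1))\in A^1_{Z(f)}(\calC,\omega_{\calC/U})$ produced in \Cref{constr:lrangleclass} do not depend on the choice of finite part of $Z(f)$; the three correspondences $\Div^A_U(f)_Z^\mu$, $\Div^A_U(f)_{Z_1}^\mu$, $\Div^A_U(f)_{Z_2}^\mu$ are all obtained from this single class $\alpha$ by projecting to the appropriate summand and passing to the colimit.

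First I would prove the identity at the relative level, namely $\Div^A_U(f)_Z^\mu=\Div^A_U(f)_{Z_1}^\mu+\Div^A_U(f)_{Z_2}^\mu$ in $\Cor^A_U(U,\calC)=\rmC^A_0(\calC/U)$. Since $Z(f)=Z\amalg Z'=Z_1\amalg Z_2\amalg Z'$, iterating \Cref{rem:Zarex} gives a direct sum decomposition
\[
A^1_{Z(f)}(\calC,\omega_{\calC/U})\cong A^1_{Z_1}(\calC,\omega_{\calC/U})\oplus A^1_{Z_2}(\calC,\omega_{\calC/U})\oplus A^1_{Z'}(\calC,\omega_{\calC/U})
\]
refining the coarser decomposition indexed by $\{Z,Z'\}$, so that the $Z$-component of $\alpha$ equals the sum of its $Z_1$- and $Z_2$-components under $A^1_Z(\calC,\omega_{\calC/U})\cong A^1_{Z_1}(\calC,\omega_{\calC/U})\oplus A^1_{Z_2}(\calC,\omega_{\calC/U})$. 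The compatibility I need to record is that, for $i=1,2$, the extension-of-support homomorphism $A^1_{Z_i}(\calC,\omega_{\calC/U})\to A^1_Z(\calC,\omega_{\calC/U})$ is precisely the inclusion of the $i$-th summand, which is how the isomorphism of \Cref{rem:Zarex} is built. As the transition maps in the colimit $\rmC^A_0(\calC/U)=\varinjlim_{T\in\calA_0(\calC/U)}A^1_T(\calC,\omega_{\calC/U})$ are exactly these extension-of-support homomorphisms, the images in the colimit of the $Z_1$- and $Z_2$-components of $\alpha$ add up to the image of the $Z$-component, which is the desired relative identity.

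Finally I would deduce the stated identity by applying $g\circ\res_U(-)$ to both sides. The functor $\res_U\colon\Cor^A_U(U,\calC)\to\Cor^A_k(U,\calC)$ is additive, being induced by the additive pushforward homomorphisms $(i_{XY})_*$ passed to the colimit, and post-composition with the morphism $g$ is additive since $\Cor^A_S$ is an additive category. Hence
\[
\Div^A(f)_Z^{\mu,g}=g\circ\res_U(\Div^A_U(f)_Z^\mu)=g\circ\res_U(\Div^A_U(f)_{Z_1}^\mu)+g\circ\res_U(\Div^A_U(f)_{Z_2}^\mu)=\Div^A(f)_{Z_1}^{\mu,g}+\Div^A(f)_{Z_2}^{\mu,g}.
\]
The only mildly delicate point, and the main obstacle, is the bookkeeping around \Cref{rem:Zarex}: one must confirm that the direct-sum decomposition it provides is natural with respect to refining $Z$ into $Z_1\amalg Z_2$ and that its structure maps coincide with the extension-of-support homomorphisms defining the colimit $\rmC^A_0(\calC/U)$. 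Everything else is formal additivity.
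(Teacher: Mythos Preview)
Your proposal is correct and is precisely an unpacking of the paper's own one-line proof, which reads ``The claim follows from the definition and \Cref{rem:Zarex}.'' You have made explicit the two ingredients the paper leaves implicit: the iterated Zariski-excision splitting $A^1_{Z(f)}\cong A^1_{Z_1}\oplus A^1_{Z_2}\oplus A^1_{Z'}$ and the compatibility of its summand inclusions with the extension-of-support maps defining the colimit, followed by the formal additivity of $\res_U$ and of post-composition with $g$.
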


\begin{proof}
The claim follows from the definition and \Cref{rem:Zarex}.
\end{proof}

\begin{definition}\label{ex:EtneigShNot}
Let $\mathcal C$, $U$, $\mu$, $Z$, $X$, $p$, $f$ and $g$ be as above and suppose that $U^\prime\subseteq U$ and $X^\prime\subseteq X$ are open subschemes
such that $Z\times_U U^\prime\subseteq g^{-1}(X^\prime)$. Write $f^\prime\defeq f\big|_{\mathcal C\times_U U^\prime}$ and $g^\prime\defeq g\big|_{\mathcal C\times_U U^\prime}$. This data defines a correspondence of pairs
\[
\p*{\Div^A(f)^{\mu,g}_{Z},\Div^A(f^\prime)^{\mu,g^\prime}_{Z\times_U U^\prime}}\in\Cor^{A,\pair}_k( (U,U^\prime),(X,X^\prime)).
\]

Suppose furthermore that
$
\pi\colon (\calC^\prime,Z^\prime)\to (\calC,Z)
$
is an étale neighborhood (see \Cref{def:et-nbhd}) 
satisfying $Z^\prime \times_U U^\prime \subseteq v^{-1}(X^\prime)$, where $v\defeq g \circ \pi$.
Then this data defines 
a finite $A$-correspondence of pairs
$\Div^A(\tilde{f})_Z^{\tilde\mu, v}\in\Cor^{A,\pair}_k((U,U^\prime),(X,X^\prime))$, where $\tilde f\defeq\pi^*(f)$ and $\tilde \mu\defeq\pi^*(\mu)$.
If the morphism $\pi$ is implicitly understood from the context, we may sometimes abuse notation and write simply
$\Div^A(f)_Z^{\mu, g}$ for this $A$-correspondence. 
\end{definition}

\begin{lemma}\label{lm:ZeroPairC}
Let $\mathcal C$, $U$, $\mu$, $Z$, $X$, $p$, $f$ and $g$ be as above and suppose that $U^\prime\subseteq U$ and $X^\prime\subseteq X$ are open subschemes.
If $Z\cap g^{-1}(X\setminus X^\prime)=\varnothing$, then $\Div^A(f)^{\mu,g}_{Z}=0\in \Cor^{A,\pair}_k((U,U^\prime),(X,X^\prime))$.  
\end{lemma}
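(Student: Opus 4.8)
With notation as above, if $Z\cap g^{-1}(X\setminus X')=\varnothing$, then $\Div^A(f)^{\mu,g}_{Z}=0$ in $\Cor^{A,\pair}_k((U,U'),(X,X'))$.

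The plan is to unwind the definition of the group $\Cor^{A,\pair}_k((U,U'),(X,X'))$ as the homology of the three-term complex
\[
\Cor^A_k(U,X')\xrightarrow{d_0}\Cor^A_k(U,X)\oplus\Cor^A_k(U',X')\xrightarrow{d_1}\Cor^A_k(U',X),
\]
and to exhibit $\Div^A(f)^{\mu,g}_{Z}$ as the image under $d_0$ of a single finite $A$-correspondence in $\Cor^A_k(U,X')$. The hypothesis $Z\cap g^{-1}(X\setminus X')=\varnothing$ says precisely that $g(Z)\subseteq X'$, so the morphism $g\colon\calC\to X$ restricted to a neighborhood of $Z$ factors through $X'$; more importantly, since $Z$ is the support of the divisor class, the whole class $\Div^A_U(f)^\mu_Z\in A^1_Z(\calC,\omega_{\calC/U})$ can be pushed forward along a map landing in $X'$. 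First I would observe that, because $Z$ is closed in $\calC$ with $g(Z)\subseteq X'$ and $g^{-1}(X')$ is open containing $Z$, the correspondence $\Div^A_U(f)^{\mu,g}_Z=g\circ\Div^A_U(f)^\mu_Z$ is obtained by pushing forward the class living on $Z\subseteq g^{-1}(X')$ along $g|_{g^{-1}(X')}\colon g^{-1}(X')\to X'$; hence it defines an element $\alpha'\defeq g'\circ\Div^A_U(f)^\mu_Z\in\Cor^A_k(U,X')$ (here $g'$ denotes the corestriction of $g$ to $X'$ on the relevant locus), with the property that $j_X^*\alpha'=0$ is false—rather, $(j_X)_*\alpha'=\Div^A(f)^{\mu,g}_Z$, where $j_X\colon X'\hookrightarrow X$.

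Next I would compute $d_0(\alpha')=((j_X)_*\alpha', j_U^*\alpha')$. The first component is exactly $\Div^A(f)^{\mu,g}_Z$ by construction. The second component $j_U^*\alpha'\in\Cor^A_k(U',X')$ is, by the compatibility of the $\Div^A$-construction with base change along the open immersion $U'\hookrightarrow U$ (this is the content of the pair $(\Div^A(f)^{\mu,g}_Z,\Div^A(f')^{\mu,g'}_{Z\times_U U'})$ recorded in Definition~\ref{ex:EtneigShNot}), equal to $\Div^A(f')^{\mu,g'}_{Z\times_U U'}$, now viewed as a correspondence into $X'$. Thus $d_0(\alpha')=(\Div^A(f)^{\mu,g}_Z,\Div^A(f')^{\mu,g'}_{Z\times_U U'})$, which is by definition the representative of the class of $\Div^A(f)^{\mu,g}_Z$ in $\Cor^{A,\pair}_k((U,U'),(X,X'))$. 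Since this representative lies in $\im d_0$, its homology class is zero, which is the assertion.

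The main obstacle, and the step requiring genuine care, is the first one: justifying that $\Div^A_U(f)^\mu_Z$—a priori an element of $A^1_Z(\calC,\omega_{\calC/U})$ obtained by pulling back along the zero section and projecting to the $Z$-summand—can legitimately be regarded as a class supported on $Z\subseteq g^{-1}(X')$ and pushed forward along the open $g^{-1}(X')\hookrightarrow\calC$ followed by $g|_{g^{-1}(X')}$, yielding the same answer as first pushing forward to $X$ and then noting the support lies in $X'$. This is a compatibility between pushforward and extension-of-support (restriction to an open containing the support), which follows from the base change axiom (Definition~\ref{def:coh}(5)) applied to the Cartesian square formed by the open immersions $g^{-1}(X')\hookrightarrow\calC$ and $X'\hookrightarrow X$, together with Zariski excision (\Cref{rem:Zarex}) identifying $A^1_Z(\calC,\omega_{\calC/U})$ with $A^1_Z(g^{-1}(X'),\omega)$ since $Z\subseteq g^{-1}(X')$ is closed. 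Once this identification is in place, everything else is bookkeeping with the definitions of $d_0$, of the graph/composition formula \eqref{eq:corr-comp}, and of the $\A^1$-relation (which plays no role here since we produce an honest element of $\im d_0$, not merely one up to $\A^1$-homotopy).
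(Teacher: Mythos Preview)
Your proof is correct and takes essentially the same approach as the paper: both arguments exploit the fact that $g(Z)\subseteq X'$ to lift $\Div^A(f)^{\mu,g}_Z$ to an element $\alpha'\in\Cor^A_k(U,X')$ and then observe that the pair class is $d_0(\alpha')$, hence zero. The paper phrases this more categorically---it says the correspondence of pairs factors through $(X',X')$, and since $\Cor^{A,\pair}_k((U,U'),(X',X'))=0$ (an immediate computation from the defining complex when $j_Y=\id$), the class vanishes---but unwinding that factorization statement gives exactly your $\im d_0$ argument.
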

\begin{proof}
The correspondence $\Div^A(f)^{\mu,g}_{Z}\in \Cor^{A}_k(U,X^\prime)$
defines the diagonal 
in the diagram
\[\begin{tikzcd}
X^\prime\ar[r] & X\\
U^\prime\ar[r]\ar[u]& U.\ar[u]\ar[ul]
\end{tikzcd}\]
Moreover, the vertical arrows in the above diagram define the correspondence of pairs $\Div^A(f)_Z^{\mu, g}\in\Cor^{A,\pair}_k((U,U^\prime),(X,X^\prime))$; it follows that $\Div^A(f)^{\mu,g}_Z$ factors through $(X',X')$ and is therefore zero.
\end{proof}

\begin{definition}\label{def:ip}
Let $U\in\Sm_k$ and suppose that $\lambda$ is an invertible regular function on $U$. We can then consider the morphism $(\id,\lambda)\colon(U\times U,\omega_U)\to(U\times U,\omega_U)$ in $\SmOp_k^\rmL$. We denote by 
\[\langle \lambda\rangle\in \Cor^A_k(U,U)\] 
the image of $\id_U\in\Cor^A_k(U,U)$ under the corresponding pushforward map $(\id,\lambda)_*$. In particular, if $\lambda=-1$, we will write $\epsilon$ for the finite $A$-correspondence 
$
\epsilon\defeq-\ip{-1}\in\Cor^A_k(U,U).
$
\end{definition}

\begin{example}
Suppose that $A^*=\wt\CH^*$, so that $\Cor^A_k$ is the category of finite Milnor--Witt correspondences. Then $\ip\lambda\in\Cor^A_k(U,U)$ is the Milnor--Witt correspondence $\ip\lambda\cdot\id_U\in\wt\Cor_k(U,U)$ given by multiplication with the quadratic form $\ip\lambda\in\bfK_0^{\MW}(U)$. In particular, the finite $A$-correspondence $\epsilon$ coincides with the usual $\epsilon$ defined in Milnor--Witt $\rmK$-theory.
\end{example}

\begin{lemma}\label{lm:sectZ(f)cor}
Let $U$, $\mathcal C$, $p$, $f$ and $g$ be as in \eqref{diag:pCf}. Suppose also that $p$ induces an isomorphism $Z(f)\cong U$, so that $Z(f)$ defines a section $s\colon U\to \calC$ of $p$.
Then the following hold: \begin{itemize}
\item[(a)] 
There is an invertible regular function $\lambda$ on $U$ such that $\Div^A(f)_{Z(f)}^{\mu,g}=g\circ s\circ \ip\lambda$ in $\Cor^A_k(U,X)$.
\item[(b)] 
If moreover $\mu|_{Z(f)}= df$, where $df$ denotes the trivialization of the normal bundle $N_{Z(f)/\calC}$ defined by $f$, then $\Div^A( f)_{Z(f)}^{\mu,g}=g\circ s$.
\end{itemize}
\end{lemma}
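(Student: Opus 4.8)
The plan is to express $\Div^A_U(f)^\mu_{Z(f)}$ as a single cohomology class, evaluate it with the base change axiom, translate the answer into $\Cor^A_U$, and finally push it down with $\res_U$.

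Write $s\colon U\to\calC$ for the section of $p$ with image $Z(f)$ and $j\colon Z(f)\hookrightarrow\calC$ for the inclusion; working componentwise we may assume $U$ connected. Since $p$ restricts to an isomorphism $Z(f)\cong U$, the closed subscheme $Z(f)=V(f)$ is a smooth relative divisor in the relative curve $\calC$, so $f$ generates its ideal sheaf and $df$ trivializes the conormal bundle $N^\vee_{Z(f)/\calC}$; in particular $df$ is nonzero along $Z(f)$, so the divisors $\Gamma_f=V(T-f)$ and $\calC\times\{0\}=V(T)$ meet transversally inside $\calC\times\A^1$. Hence the Cartesian square
\[\begin{tikzcd}
Z(f)\ar{r}{j}\ar{d} & \calC\ar{d}{i_0}\\
\Gamma_f\ar{r}{i_f} & \calC\times\A^1
\end{tikzcd}\]
is transversal, and strongly transversal because $j$ and $i_f$ are closed immersions. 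I will equip it with the compatible set of line bundles (\Cref{def:base-change}) in which the bundle on $\Gamma_f$ is the one used in \Cref{constr:lrangleclass} (trivialized by $-dT$ and $\mu$, identified with $\omega_{i_f}\otimes i_f^*\omega_{\calC\times\A^1/U\times\A^1}$ via the isomorphism $\gamma$ of that construction), the bundle on $\calC\times\A^1$ is $\omega_{\calC\times\A^1/U\times\A^1}$, the bundle on $\calC$ is $\omega_{\calC/U}$ with $\alpha=\id$, the bundle on $Z(f)$ is $\calO_{Z(f)}$ with $\beta=\id$, and $\delta$ is the unique isomorphism $j^*\omega_{\calC/U}\otimes\omega_j\xrightarrow{\cong}\calO_{Z(f)}$ forced by the compatibility square of \Cref{def:base-change}, namely $\delta=\phi_Y^*\gamma$. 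The base change axiom (item (5) of \Cref{def:coh}) then gives
\[i_0^*\bigl(i_*(1)\bigr)=j_*^{\delta}(1)\in A^1_{Z(f)}(\calC,\omega_{\calC/U}),\]
and since $i_*(1)$ is exactly the class from \Cref{constr:lrangleclass}, the correspondence $\Div^A_U(f)^\mu_{Z(f)}$ is the image of $j_*^{\delta}(1)$ in $\Cor^A_U(U,\calC)$.

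Next I will compare $\delta$ with the canonical isomorphism $\delta_{\mathrm{can}}\colon j^*\omega_{\calC/U}\otimes\omega_j\xrightarrow{\cong}\calO_{Z(f)}$ obtained from adjunction together with the triviality of $\omega_{Z(f)/U}$; this is the identification used by the graph functor, so $j_*^{\delta_{\mathrm{can}}}(1)=\gamma_{A,U}(s)$. The two isomorphisms differ by multiplication by a unit on $Z(f)$, and unwinding the trivialization in \Cref{constr:lrangleclass} (this is where the sign in $-dT$ enters) this unit is precisely $(\mu|_{Z(f)})\cdot(df)^{-1}$; transporting it to $U$ along $p|_{Z(f)}$ defines the invertible regular function $\lambda$ on $U$. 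Thus $j_*^{\delta}(1)=j_*^{\delta_{\mathrm{can}}}(\ip{\lambda|_{Z(f)}})$, and by the projection formula (item (6) of \Cref{def:coh}) applied to $j$, together with graded commutativity, $j_*^{\delta_{\mathrm{can}}}(\ip{\lambda|_{Z(f)}})=j_*^{\delta_{\mathrm{can}}}(j^*\ip{p^*\lambda})=\ip{p^*\lambda}\smile j_*^{\delta_{\mathrm{can}}}(1)=\ip{p^*\lambda}\smile\gamma_{A,U}(s)$, using $p\circ j=p|_{Z(f)}$ and $j^*\ip{p^*\lambda}=\ip{\lambda|_{Z(f)}}$.

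On the other hand, let $\ip{\lambda}_U\in\Cor^A_U(U,U)$ be the image of $\id_U$ under the relative pushforward $(\id,\lambda)_*$. Expanding $s\circ\ip{\lambda}_U$ by the composition formula \eqref{eq:corr-comp} over $U$ — in which the triple product $U\times_U U\times_U\calC$ is just $\calC$ and the three projections are $\id$, $p$, $\id$ — one gets $s\circ\ip{\lambda}_U=\ip{p^*\lambda}\smile\gamma_{A,U}(s)$ in $\Cor^A_U(U,\calC)$. Comparing with the previous paragraph, $\Div^A_U(f)^\mu_{Z(f)}=s\circ\ip{\lambda}_U$ over $U$. Applying the functor $g\circ\res_U(-)$ and using $\res_U(\gamma_{A,U}(s))=\gamma_A(s)$ and $\res_U(\ip{\lambda}_U)=\ip{\lambda}$ — both of which follow from pushforward functoriality (item (3) of \Cref{def:coh}) exactly as in the verification that $\res_U$ is a functor — yields $\Div^A(f)^{\mu,g}_{Z(f)}=g\circ s\circ\ip{\lambda}$, which is (a). For (b), the hypothesis $\mu|_{Z(f)}=df$ says precisely that $\delta=\delta_{\mathrm{can}}$, whence $\lambda=1$ and $\ip{\lambda}=\id$, giving $\Div^A(f)^{\mu,g}_{Z(f)}=g\circ s$.

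The main obstacle is the bookkeeping in the third paragraph: checking that the discrepancy between the base-change-compatible identification $\delta$ and the canonical adjunction identification $\delta_{\mathrm{can}}$ is exactly $(\mu|_{Z(f)})(df)^{-1}$, with the correct sign supplied by the $-dT$ of \Cref{constr:lrangleclass}, and in particular that it is trivial when $\mu|_{Z(f)}=df$. Verifying transversality of the square, producing the compatible line bundle data, and establishing the formal behaviour of $\res_U$ and of the composition formula are routine.
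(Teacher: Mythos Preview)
Your proposal is correct and follows essentially the same approach as the paper: set up a strongly transversal square involving $Z(f)$, $\calC$, $\Gamma_f$, $\calC\times\A^1$, apply the base change axiom to reduce $i_0^*(i_*(1))$ to a pushforward along $j\colon Z(f)\hookrightarrow\calC$, and identify the discrepancy between the induced trivialization and the canonical one as the unit $\lambda=\mu|_{Z(f)}\cdot(df)^{-1}$. The paper organizes the same computation slightly differently (it splits the diagram into two transversal squares and names an auxiliary trivialization $\nu$ rather than invoking the projection formula explicitly), but the substance is identical, including the treatment of (b).
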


\begin{proof}
(a) 
Let $j\colon Z(f)\to \Gamma_f$, $j_f\colon Z(f)\to\calC$ and $i_f\colon\Gamma_f\to\calC\times\A^1$ denote the closed embeddings. Consider the following diagram consisting of two 
squares of varieties equipped with compatible sets of line bundles (in which we have also included the relevant line bundles in the notation): 
\[\begin{tikzcd}
(Z(f),\calO_{Z(f)})\ar{r}{(\id,\mu)} \ar{d}{j} & (Z(f),\omega_{Z(f)/U})\ar{r}{j_f}\ar{d}{j} & (\calC,\omega_{\calC/U})\ar{d}{i_0} \\
(\Gamma_f,\calO_{\Gamma_f})\ar{r}{(\id,\mu)} & (\Gamma_f,\omega_{\Gamma_f/U})\ar{r}{i_f} & (\calC\times\A^1,\omega_{\calC\times\A^1/U}).
\end{tikzcd}\]
The first square is evidently transversal (and strongly transversal). To prove that the second one is (strongly) transversal, it is enough to note that the homomorphism
$k[\calC][T]=k[\calC\times \A^1]\to k[\calC]$ given by $T\mapsto 0$ takes the function $f-T$ to $f$ and induces an isomorphism 
\[
N_{\Gamma_f/\calC\times \A^1}\otimes k[\calC\times 0]=(f-T)/(f-T)^2\otimes k[\calC][T]/(T)\cong (f)/(f)^2=N_{Z(f)/\calC}.
\]
Hence the base change axiom gives us the following commutative diagram:
\[
\begin{tikzcd}
A^0(\Gamma_f,\calO_{\Gamma_f})\ar{r}{\mu}\ar{d}[swap]{j^*} 
& A^0(\Gamma_f,\omega_{\Gamma_f/U})\ar{d}[swap]{j^*}\ar{r}{(i_f)_*} 
& A^1_{\Gamma_f}(\calC\times\A^1,\omega_{\calC\times\A^1/U})\ar{d}[swap]{i_0^*}  \\
A^0(Z(f),\calO_{Z(f)})\ar{r}{\mu_{Z(f)}}\ar{drr}[swap]{(j_f,\nu)_*} & 
A^0(Z(f),\omega_{Z(f)/U}\otimes\omega_{j})\ar{r}{(j_f)_*} &
A^1_{Z(f)}(\calC,\omega_{\calC/U}\otimes\omega_{i_0})\ar{d}{-dT}\\ 
& & A^1_{Z(f)}(\calC,\omega_{\calC/U}).
\end{tikzcd}\]
Here 
$j^*$ and $i_0^*$ are defined via the canonical isomorphisms 
$j^*(\omega_{\Gamma_f/U})\cong \omega_{Z(f)/U}\otimes\omega_j$
and
$i_0^*(\omega_{\calC\times\A^1/U})\cong \omega_{\calC/U}\otimes\omega_{i_0}$
induced by the short exact sequences of vector bundles
\[
0\to T_{Z(f)}\to j^*(T_{\Gamma_f})\to N_{Z(f)/\Gamma_f}\to0
\] and 
\[
0\to T_{\calC\times 0}\to i_0^*(T_{\calC\times\A^1})\to N_{\calC\times0/\calC\times\A^1}\to0.
\]
Moreover, the homomorphism $\mu_{Z(f)}$ is given as the composition of $\mu|_{Z(f)}$ and the isomorphism $j^*\omega_{\Gamma_f/U}\cong \omega_{Z(f)/U}\otimes\omega_j$;
the homomorphism $(j_f)_*$ is defined via the isomorphism $j_f^*(\omega_{i_0})\cong \omega_{j}$ induced by the canonical isomorphism $\Gamma_f\cong \calC$;
and the diagonal homomorphism $(j_f,\nu)_*$ is induced by some trivialization $\nu\colon \calO_{Z(f)}\cong\omega_{Z(f)/U}$.

It follows from the construction that
$\Div^A(f)_{Z(f)}^\mu=-dT(i_0^*(i_f)_*\mu(1))$.
Since the diagram is commutative we thus obtain 
$\Div^A(f)_{Z(f)}^\mu=(j_f,\nu)_*j^*(1)=s\circ\langle \lambda\rangle$, where $\lambda$ is given as the fraction of $\nu$ and the canonical isomorphism $\omega_{Z(f)/U}\cong \calO_U$ induced by the isomorphism $p\colon Z(f)\stackrel{\cong}{\to}U$.

(b)
A straightforward computation with isomorphisms of line bundles shows that 
$(j_f)_*$ is given as the product of the canonical isomorphism $\calO_{Z(f)}\cong \omega_{Z(f)/U}$ with the invertible function $\mu\big|_{Z(f)}\otimes df^{-1}$, where $df\colon \calO_{Z(f)}\cong \omega_{Z(f)/U}$ denotes the trivialization induced by the choice of the generator $-f$ of the ideal $(f)=I(Z(f))$.
So $\lambda=1$, and the claim follows.
\end{proof}

\subsection{Some homotopies}
We now give a computation with $A$-correspondences that will come in handy later on, especially in the proof of \Cref{lm:locup}.

\begin{lemma}\label{lm:sqroot}
Suppose that the base field $k$ is infinite. Let $U$ be an essentially smooth local scheme over $k$ and let 
$\lambda\in \Gamma(U,\calO_U^\times)$. 
Suppose that $\lambda = w^2$ for some invertible section $w$ on $U$.
Then $\langle \lambda \rangle \sim_{\A^1} \id_U\in \Cor^A_k(U,U)$.
Similarly $\langle \lambda \rangle \sim_{\A^1} \id_{(U,V)}\in \Cor^{A,\pair}_k((U,V),(U,V))$ for any open subscheme $V\subseteq U$.
\end{lemma}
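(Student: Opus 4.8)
The plan is to exhibit an explicit $\A^1$-homotopy between the two correspondences, built from the divisor construction of \Cref{constr:lrangleclass} together with the computational input of \Cref{lm:sectZ(f)cor} and \Cref{lm:fmu}. First I would rephrase both ends of the desired homotopy as divisor classes. On the trivial relative curve $\calC=\A^1_t\times U\xrightarrow{\pr}U$, with $g=\pr_U$, the function $f=t-1$ has $Z(f)$ equal to the unit section, so by \Cref{lm:sectZ(f)cor}(b) one gets $\Div^A(f)^{dt,g}_{Z(f)}=\id_U$, whereas the same $f$ equipped with a suitably normalized trivialization of $\omega_{\calC/U}$ differing from $dt$ by $w^{2}$ yields, via \Cref{lm:sectZ(f)cor}(a), the correspondence $\langle w^{2}\rangle=\langle\lambda\rangle$. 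Thus it suffices to connect these two divisor classes by an $\A^1$-homotopy.

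The naive attempt — keep $f=t-1$ and let the trivialization slide from $dt$ to $w^{2}\,dt$ over an auxiliary $\A^1_s$ — does not work: a trivialization of $\omega_{\A^1_t\times\A^1_s\times U/\A^1_s\times U}$ has the form $c\cdot dt$ with $c$ a global unit on $\A^1_t\times\A^1_s\times U$, hence (as $\calO(U)$ is a reduced domain) constant in $s$, which would force the two ends to agree. So the homotopy must genuinely move the zero locus. Over $B\defeq\A^1_s\times U$ I would therefore take the relative curve $\A^1_t\times B\to B$ and a family $f_s$ of regular functions of \emph{higher} degree in $t$ whose zero locus $Z(f_s)$ is an admissible (in general reducible) closed subscheme, all of whose irreducible components are finite and surjective over $B$ (e.g. sections), and whose component structure degenerates appropriately at $s=0$ and $s=1$ — schematically something of the shape $f_s(t)=t\,(t-\phi_s)\cdots$ with $\phi_0,\phi_1\in\calO(U)^\times$ but $\phi_s$ allowed to vanish for intermediate $s$. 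Computing $i_0^*$ and $i_1^*$ of $\Div^A(f_s)^{\mu,\pr_U}_{Z(f_s)}$ via \Cref{lm:sectZ(f)cor} and the additivity of $\Div^A(-)_Z$ in $Z$, and normalizing trivializations using \Cref{lm:fmu}, should produce $\langle w^{2}\rangle$ at one end and $\id_U$ at the other — possibly up to extra hyperbolic summands $\langle a\rangle\oplus\langle -a\rangle$ which are arranged to coincide at both ends and then cancelled, using that $\A^1$-homotopy classes form a group under $\oplus$.

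The pair version then follows from the very same construction, since $\Div^A$ is compatible with passage to the categories of pairs (as in \Cref{ex:EtneigShNot} and \Cref{lm:ZeroPairC}): restricting the homotopy along an open $V\subseteq U$ gives $\langle\lambda\rangle\sim_{\A^1}\id_{(U,V)}$ in $\Cor^{A,\pair}_k$.

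The hard part is the middle step. Because the twist $\langle c_s\rangle$ cannot be interpolated directly, the support $Z(f_s)$ has to vary nontrivially while staying admissible throughout the whole family, and the hyperbolic contributions that inevitably appear when a function of degree $\ge2$ factors must be kept under control and made to cancel between the two ends. This is presumably where the hypotheses enter — $U$ local and $k$ infinite — allowing one to place the relevant sections in sufficiently general position; the characteristic $2$ case (where the $\pm$-splitting degenerates) may require a separate argument.
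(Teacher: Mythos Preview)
Your strategic outline is on the right track and correctly identifies the relevant tools (\Cref{constr:lrangleclass}, \Cref{lm:sectZ(f)cor}, \Cref{lm:fmu}) as well as the appearance of auxiliary summands that must cancel. However, you leave the ``hard middle step'' unresolved, and the actual construction hinges on an idea you have not isolated: the square root $w$ must be used \emph{inside} the homotopy, not merely to rewrite $\langle\lambda\rangle=\langle w^2\rangle$ at the outset. Without this, your sketch gives no mechanism producing $\langle\lambda\rangle$ rather than some other $\langle c\rangle$ at one end.

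Concretely, the paper builds two explicit homotopies of degree-$2$ polynomials, passing through the perfect square $(t-w)^2$ as the pivot. First, on $\G_m\times U$ with trivialization $t\,dt$, the linear interpolation $(1-\nu)\alpha(t-\lambda)(t-1)+\nu\alpha(t-w)^2$ (where $\alpha=(\lambda-1)^{-1}$) connects $\langle\lambda\rangle+\langle-1\rangle$ to $\Div^A(\alpha w^{-1}(t-w)^2)^{dt}$, using \Cref{lm:fmu} to pass from $t\,dt$ to $dt$. Second, on $\A^1\times U$ with trivialization $dt$, interpolating $\alpha'(t-w)^2$ with $\alpha'(t-{\alpha'}^{-1})t$ (where $\alpha'=\alpha w^{-1}$) connects this to $\langle1\rangle+\langle-1\rangle$. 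Cancelling the common $\langle-1\rangle$ yields the claim. The first homotopy requires $\lambda(x)\neq1$ at the closed point so that $\alpha$ is a unit; the hypothesis that $k$ is infinite enters only in a subsequent reduction (choose $u\in\calO_U^\times$ with $u(x)\neq1$ and $u(x)\neq w(x)^{-1}$, apply the special case to $\lambda u^2$ and $u^2$, and compose), not for any general-position argument on the support of $Z(f_s)$ as you suggest. The passage to pairs is then exactly as you say: all homotopies constructed lie in $\Cor^A_U(U\times\A^1,U)$ and hence restrict over any open $V\subseteq U$.
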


\begin{proof}
Assume first that $V=\varnothing$ and $\lambda(x)\neq 1$, where $x\in U$ is the closed point.
Let $\alpha\defeq(\lambda-1)^{-1}$, and define the regular function 
\[
h\defeq(1-\nu) \alpha(t-\lambda)(t-1) + \nu  \alpha(t-w)^2\in 
\Gamma(\stackrel{t}{\G_m}\times U\times\stackrel{\nu}{\A^1},\calO).
\]
Keeping the notation of \eqref{diag:pCf} in mind, consider the following diagram:
\[\begin{tikzcd}
\G_m\times U\times\A^1\ar{d}[swap]{p}\ar{dr}{\pr}\ar{r}{h} & \A^1\\
U\times\A^1 & U.
\end{tikzcd}\]
Here the morphisms $p$ and $\pr$ are the projections.
We aim to apply \Cref{constr:lrangleclass} to this diagram. To this end, notice that $h$ is a polynomial in $t$ with leading term $\alpha$, which is invertible on $U$. Moreover, the substitution $t\mapsto 0$ takes $h$ to $(1-\nu) \alpha\lambda + \nu \alpha w^2=\alpha\lambda$, which is invertible too.
Hence $Z(h)\subseteq\G_m\times U\times \A^1$ is finite over $U\times\A^1$.
Using the trivialization $tdt$ of the canonical class of $\G_m$, we get from \Cref{constr:lrangleclass} a finite relative $A$-correspondence
\[
\Theta\defeq \Div^A_U(h)^{tdt,\pr}\in \Cor^A_U (U\times\A^1,U).
\]
Let $i_0,i_1\colon U\to U\times\A^1$ denote the zero-, and unit sections. We then have
\begin{align*}
\Theta\circ i_0 &= \Div^A_U( \alpha(t-\lambda)(t-1) )^{tdt, \pr}\\ 
&= \Div^A_U( (\lambda-1)^{-1}(t-\lambda)(t-1) )^{tdt,\pr}_{Z(t-\lambda)} + 
\Div^A_U( -(1-\lambda)^{-1}(t-\lambda)(t-1) )^{tdt,\pr}_{Z(t-1)} \\ 
&=\langle \lambda \rangle + \langle -1 \rangle.
\end{align*}
On the other hand,
\[
\Theta\circ i_1 = \Div^A_U( \alpha(t-w)^2 )^{tdt, \pr}
=\Div^A_U( \alpha w^{-1} (t-w)^2 )^{dt, \pr},
\]
where the second equality follows from \Cref{lm:fmu}.
Thus we see that
\[
\langle \lambda \rangle + \langle -1 \rangle \sim_{\A^1} \Div^A_U( \alpha w ^{-1}(t-w)^2 )^{dt, \pr}
\in \Cor^A_U(U,U).
\]
We now construct yet another homotopy similar to the one in the proof of \cite[Lemma 13.15]{hty-inv},
which is in turn inspired by \cite[Lemma 7.3]{Nesh-FrKMW}.
Put $\alpha^\prime\defeq\alpha w^{-1}$. Consider the regular function 
\[
h^\prime= (1-\nu) \alpha^\prime (t-w)^2 + \nu \alpha^\prime (t-{\alpha^\prime}^{-1})t\in 
\Gamma(\stackrel{t}{\A^1}\times U\times\stackrel{\nu}{\A^1},\calO),
\]
along with the diagram
\[\begin{tikzcd}
\A^1\times U\times\A^1\ar{d}[swap]{p'}\ar{dr}{\pr'}\ar{r}{h'} & \A^1\\
U\times\A^1 & U
\end{tikzcd}\]
in which $p'$ and $\pr'$ are the projections.
As $h'$ is a polynomial in $t$ with leading term $\alpha^\prime$, which is invertible on $U$, it follows that $Z(h^\prime)\subseteq\A^1\times U\times \A^1$ is finite over $U\times\A^1$. Using the trivialization $dt$ of the canonical class of $\A^1$, we then get from \Cref{constr:lrangleclass} a finite $A$-correspondence
\[
\Theta^\prime\defeq \Div^A_U(h^\prime)^{dt,\pr'}\in \Cor^A_U(U\times\A^1,U).
\]
By definition of $h'$, the $A$-correspondence $\Theta'$ satisfies
\begin{align*}
\Theta^\prime\circ i_0 &= \Div^A_U( \alpha^\prime (t-w)^2 )^{dt, \pr^\prime},\\
\Theta'\circ i_1 &= \Div^A_U( \alpha^\prime (t-{\alpha^\prime}^{-1})t )^{dt, \pr^\prime}\\
&= \Div^A_U( \alpha^\prime (t-{\alpha^\prime}^{-1})t)^{dt,\pr^\prime}_{Z(t-{\alpha^\prime}^{-1})}+
\Div^A_U( \alpha^\prime (t-{\alpha^\prime}^{-1})t )^{dt, \pr^\prime}_{Z(t)}\\
&=\langle 1 \rangle + \langle -1 \rangle. 
\end{align*}
Thus we see that
\[\Div^A_U( \alpha^\prime (t-w)^2 )^{dt, \pr^\prime} \sim_{\A^1} \langle 1 \rangle + \langle -1 \rangle
\in \Cor^A_U(U,U).\]
Now, since 
$\Div^A_U( \alpha^\prime (t-w)^2 )^{dt, \pr^\prime}=\Div^A_U( \alpha^\prime (t-w)^2 )^{dt, \pr}$,
we get
\[
\langle \lambda \rangle + \langle -1 \rangle\sim_{\A^1} \langle 1 \rangle + \langle -1 \rangle\in \Cor^A_U(U,U).
\]
Thus the claim follows from the fact that $\langle 1 \rangle=\id_U$.

We have now proved the claim in the case $\lambda(x)\neq 1$.
In the general case when $\lambda\in \Gamma(U,\calO_U^\times)$,
consider a function $u\in \Gamma(U,\calO_U^\times)$ such that $u(x)\neq w(x)^{-1}$ and $u(x)\neq 1$.
Such a function exists, since the base field is infinite by assumption.
Then we have by the above that 
$\langle \lambda u^2 \rangle \sim_{\A^1} \id_U$ and $\langle u^2 \rangle \sim_{\A^1} \id_U$ in $\Cor^A_U(U,U)$.
Thus, since $\langle \lambda u^2 \rangle=\langle \lambda \rangle\circ \langle u^2 \rangle$, the claim follows.

So the claim of the lemma is done for $V=\varnothing$.
The case of a pair $(U,V)$ with $V\neq \varnothing$ follows, since
all the constructed homotopies are relative homotopies over $U$, i.e., they are elements of $\Cor^A_U(U\times\A^1,U)$. Consequently all the homotopies defined are elements in $\Cor^{A,\pair}_U((U,V)\times\A^1,(U,V))$ as well.
\end{proof}

\section{Connection to framed correspondences}\label{section:framed}

Using similar techniques as in \Cref{constr:lrangleclass} we can define a functor $\Upsilon\colon\Fr_*(k)\to\Cor^A_k$ from the category of framed correspondences \cite{Framed} to the category $\Cor^A_k$. See also \cite{five-authors2} for an alternative approach using Thom classes \cite[Lemma 4.3.24]{five-authors2}. 

\begin{construction}\label{constr:functorFrAcor}
Let $\Phi=(Z,\calV, \phi; g)\in \Fr_n(X,Y)$
be an explicit framed correspondence. Thus 
$Z$ is a closed subset in $\A^n\times X=\A^n_X$; $(\calV,Z)\to (\A^n_X,Z)$ is an étale neighborhood of $Z$ in $\A^n_X$;
$\phi=(\phi_i)$, where the $\phi_i$'s are regular functions on $\calV$ such that $Z=Z(\phi)$; and $g$ is a morphism $g\colon \calV\to Y$.
For any unit $\lambda\in k^\times$ 
 we define a finite $A$-correspondence $\Upsilon_\lambda(\Phi)\in \Cor^A_k(X,Y)$ in the following way.

Let $dt\colon \omega_{\A^1}\cong \calO_{\A^1}$ denote the standard trivialization of the canonical class, and consider further two trivializations
 $\mu_1,\mu_2\colon \omega_{\A^n}\cong \calO_{\A^n}$ given by $\mu_1=(dt)^{\wedge n}$ and $\mu_2=\lambda^n\mu_1$.
Let $\Gamma$ denote the graph $\Gamma\subseteq \A^n_X\times_X\calV=\A^n\times\calV$ of the relative morphism $\calV\to \A^n_X$ over $X$.
Then there is a canonical projection $\Gamma\to \A^n_X$.
Denote by $i_X\colon X\to \A^n_X$ and $i_{\calV}\colon \calV \to \A^n\times \calV$ the embeddings given by the zero sections.
Let furthermore $g^\prime\colon \calV\to X\times Y$ denote the product of $g$ and the projection to $X$. The following diagram summarizes the situation:
\begin{equation}\label{diag:functorFrACor}\begin{tikzcd}
& Y&\\
\A^n\times\calV\ar[d] & \calV\ar[l,shift right,swap,"\Gamma"]\ar[l,shift left,"i_\calV"]\ar[d]\ar[r]\ar[u,"g"] &\A^n_X \\
\A^n\times X & X.\ar[l,swap,"i_X"]
\end{tikzcd}
\end{equation}
We then define
$
\Upsilon_\lambda(\Phi)\defeq g^\prime_*(i^*_{\calV}(\Gamma_*(1))),
$
where we use the trivialization $\mu_1$ of the canonical class $\omega_{\A^n}$, and the trivialization of $\omega_{\calV/X}$ defined by the pullback of $\mu_2$ along the étale morphism $\calV\to \A^n\times X$.

In other words, the finite $A$-correspondence $\Upsilon_\lambda(\Phi)$ is obtained as the image of $i_\calV^*(\Gamma_*(1))\in A^n_Z(\calV,\omega_{\calV/X})$ under the composition
\[
A^n_Z(\calV,\omega_{\calV/X})\to\Cor^A_X(X,\calV)\xrightarrow{\res_X}\Cor^A_k(X,\calV)\xrightarrow{g_*}\Cor^A_k(X,Y)
\]
in which the last map is given by composition with $g$.
\end{construction}

\begin{theorem}
For each unit $\lambda\in k^\times$, \Cref{constr:functorFrAcor} defines a functor 
$
\Upsilon_\lambda\colon\Fr_*(k)\to \Cor^A_k
$ 
that carries the framed correspondence $\sigma=(0,\A^1,t,\pr\colon \A^1\to \pt)\in \Fr_1(\pt,\pt)$ to $\ip\lambda\in\Cor^A_k(\pt,\pt)$. Moreover, $\Upsilon_\lambda$ factors through the category $\Z\rmF_*(k)$ of linear framed correspondences.
\end{theorem}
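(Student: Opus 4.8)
The plan is to first reduce to the case $\lambda=1$, then prove that $\Upsilon_1$ is a functor, compute its value on $\sigma$, and finally verify additivity; the general statement then follows formally. For $\Phi\in\Fr_n(X,Y)$, replacing the trivialization $\mu_2=\lambda^n\mu_1$ of $\omega_{\calV/X}$ used in \Cref{constr:functorFrAcor} by $\mu_1$ turns $\Upsilon_\lambda(\Phi)$ into $\Upsilon_1(\Phi)$, and the two differ only by the automorphism of the line bundle $\omega_{\calV/X}$ given by multiplication by $\lambda^n\in k^\times$. As in the proof of \Cref{lm:fmu}, this has the effect of inserting the finite $A$-correspondence $\langle\lambda^n\rangle$ of \Cref{def:ip}, so that $\Upsilon_\lambda(\Phi)=\Upsilon_1(\Phi)\circ\langle\lambda^n\rangle$ with $n$ the level of $\Phi$. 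Since $\lambda$ lies in $k^\times$, the base-change axiom \Cref{def:coh}(5) shows that $\langle\lambda\rangle$ defines a central natural automorphism of $\id_{\Cor^A_k}$; as the level is additive under composition in $\Fr_*(k)$, it follows that once $\Upsilon_1$ is a functor (resp.\ factors through $\Z\rmF_*(k)$), so is $\Upsilon_\lambda$. Well-definedness of $\Upsilon_1$ on the classes constituting $\Fr_n(X,Y)$ is immediate: the defining equivalence only shrinks the étale neighborhood $\calV\to\A^n_X$ of the support $Z$, and every cohomology class appearing in \Cref{constr:functorFrAcor} has support contained in $Z$ (or in its image), so the étale excision axiom \Cref{def:pre-coh}(b) makes $\Upsilon_1(\Phi)$ independent of the choice of $\calV$; likewise $\Upsilon_1(\Phi)=0$ if the support is empty, since then $A^n_\varnothing=0$.

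Next I would establish functoriality of $\Upsilon_1$. The identity $\id_X\in\Fr_0(X,X)$ has level $0$ and trivial graph, whence $\Upsilon_1(\id_X)=\id_X$. The substantive point is compatibility with composition. Given $\Phi=(Z,\calV,\phi;g)\in\Fr_n(X,Y)$ and $\Psi=(Z',\calV',\psi;h)\in\Fr_m(Y,W)$, the composite $\Psi\circ\Phi\in\Fr_{n+m}(X,W)$ is built on the étale neighborhood $\calV''=\calV\times_Y\calV'$ of a support lying over $Z\times_Y Z'$, with frame the concatenation of the two pulled-back frames and structure map the pulled-back $h$. I would expand $\Upsilon_1(\Psi)\circ\Upsilon_1(\Phi)$ using the composition formula \eqref{eq:corr-comp} together with \Cref{constr:functorFrAcor}, expand $\Upsilon_1(\Psi\circ\Phi)$ directly, and identify the two by a diagram chase over the Cartesian square defining $\calV''$: the graph-pushforwards are transported past the various projections by the base-change axiom \Cref{def:coh}(5), the factor pulled back from the $\Psi$-side is moved past $g$ by the projection formula \Cref{def:coh}(6), and the Thom and relative-canonical twists are reconciled by external-product functoriality \Cref{def:coh}(4) together with the canonical identification $\omega_{\calV''/X}\cong\pr^*\omega_{\calV/X}\otimes\pr^*\omega_{\calV'/Y}$. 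This is exactly the bookkeeping carried out in the construction of composition of finite Milnor--Witt correspondences \cite[§4.2]{Calmes-Fasel} and of the Déglise--Fasel functor \cite[§1.2]{MW-cplx}, now performed with the axioms of a good cohomology theory in place of the explicit properties of Chow--Witt groups. I expect this verification to be the main obstacle: both composition laws are pushforwards along a projection of a product of graphs, and matching them cleanly is lengthy, the delicate point being the tracking of the canonical identifications of relative canonical and cotangent sheaves through the base-change and projection-formula axioms.

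For the remaining assertions: taking $\sigma=(0,\A^1,t;\pr\colon\A^1\to\pt)$, we have $n=1$, $\calV=\A^1$, support $\{0\}$, and the relative morphism $\calV\to\A^1_\pt$ is the identity, so unwinding \Cref{constr:functorFrAcor} exhibits $\Upsilon_1(\sigma)$ as the finite $A$-correspondence $\Div^A(t)^{dt,\pr}_{\{0\}}$ of \Cref{constr:lrangleclass}. Since $t$ induces an isomorphism $Z(t)\cong\pt$ and $dt$ is the trivialization of $N_{Z(t)/\A^1}$ determined by $t$, \Cref{lm:sectZ(f)cor}(b) gives $\Upsilon_1(\sigma)=\pr\circ i_0=\id_\pt$; combined with the reduction step, $\Upsilon_\lambda(\sigma)=\id_\pt\circ\langle\lambda\rangle=\langle\lambda\rangle$ (equivalently, this is \Cref{lm:sectZ(f)cor}(a) applied to $\Div^A(t)^{\lambda\,dt,\pr}_{\{0\}}$). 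Finally, $\Z\rmF_*(k)$ is obtained from $\Fr_*(k)$ by linearizing the Hom-sets subject to the relations that the empty framed correspondence is $0$ and that $\Phi=\Phi|_{Z_1}+\Phi|_{Z_2}$ whenever the support splits as $Z=Z_1\amalg Z_2$. Since $A^n_\varnothing=0$ and, by Zariski excision (\Cref{rem:Zarex}), $A^n_{Z_1\amalg Z_2}(\calV,\omega_{\calV/X})\cong A^n_{Z_1}(\calV,\omega_{\calV/X})\oplus A^n_{Z_2}(\calV,\omega_{\calV/X})$ compatibly with the additive operations $i_\calV^*$, $\res_X$ and $g_*$, the functor $\Upsilon_1$ sends these relations to the corresponding identities of $\Cor^A_k$; hence $\Upsilon_1$, and therefore $\Upsilon_\lambda$, factors through $\Z\rmF_*(k)$.
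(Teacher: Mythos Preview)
Your proposal is correct and follows essentially the same approach as the paper: both identify the four verifications needed---well-definedness on equivalence classes, preservation of identities, compatibility with composition, and additivity on disjoint supports---and note that each follows from the axioms of a good cohomology theory (\'etale excision, base change, projection formula, Zariski excision). The paper's proof is terser, simply listing these four points and declaring them ``straightforward from the properties of the cohomology theory $A^*$'', whereas you supply more detail on each step and add an initial reduction to the case $\lambda=1$ via the relation $\Upsilon_\lambda(\Phi)=\Upsilon_1(\Phi)\circ\langle\lambda^n\rangle$; this reduction is not in the paper but is harmless, and your explicit computation of $\Upsilon_\lambda(\sigma)$ via \Cref{lm:sectZ(f)cor} is a useful addition that the paper leaves implicit.
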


\begin{proof}
\Cref{constr:functorFrAcor} gives rise to a map
$\Upsilon_\lambda$ depending on the fraction $\lambda\in k^\times$ of the two trivializations of the canonical classes. To show that $\Upsilon_\lambda$ is in fact a functor, we need to check the following: 
\begin{itemize}
\item[(1)] Equivalent explicit framed correspondences give rise to the same finite $A$-correspondence.
\item[(2)] Let $\id_{X}\in \Fr_{0}(X,X)$ be the identity morphism in the graded category $\Fr_*(k)$. Then $\Upsilon_\lambda(\id_{X})$ is equal to the identity morphism in the category $\Cor^A_k$.
\item[(3)] For any $\Phi_1\in \Fr_{n_1}(X_1,X_2)$ and $\Phi_2\in \Fr_{n_2}(X_2,X_3)$, we have $\Upsilon_\lambda(\Phi_2\circ \Phi_1)=\Upsilon_\lambda(\Phi_2)\circ\Upsilon_\lambda(\Phi_1)$.
\item[(4)] For any $\Phi=(Z,\calV,\phi;g)\in \Fr_n(X_1,X_2)$ such that $Z=Z_1\amalg Z_2$, we have $\Upsilon_\lambda(\Phi)=\Upsilon_\lambda(Z_1,\calV,\phi;g)+\Upsilon_\lambda(Z_2,\calV,\phi;g)$.
\end{itemize}
All points are straightforward from the properties of the cohomology theory $A^*$.
\end{proof}

\begin{remark}Note that \Cref{thm:strict-hty-inv} on strict homotopy invariance of presheaves on $\Cor^A_k$ follows from the existence of a functor from framed correspondences to $\Cor^A_k$ along with the fact that this theorem holds for framed correspondences by work of Garkusha--Panin \cite{hty-inv}. Below we will however give an explicit proof not relying on framed correspondences.
\end{remark}

\section{Injectivity on the relative affine line}\label{section:inj-aff-line}
In this section we prove the following theorem, which is the first in a series of ingredients necessary to establish strict homotopy invariance (\Cref{thm:strict-hty-inv}):

\begin{theorem}\label{th:injRelA1}
  Let $U$ be an affine smooth $k$-scheme, and suppose that $V_1\subseteq V_2\subseteq\A^1_U$ are two open subschemes such that $\A^1_U\setminus V_2$ and $V_2\setminus V_1$ are finite over $U$. Let $i\colon V_1\subseteq V_2$ denote the inclusion. Then, for any homotopy invariant presheaf with $A$-transfers $\scrF\in\PSh_\Sigma(\Cor_k^A;\Z)$, the restriction homomorphism
$i^*\colon\scrF(V_2)\to\scrF(V_1)$
is injective.
\end{theorem}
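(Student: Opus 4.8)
The strategy is the classical one going back to Voevodsky's proof of injectivity for presheaves with transfers on the relative affine line, adapted to the $A$-correspondence setting via \Cref{constr:lrangleclass}. Suppose $a\in\scrF(V_2)$ with $i^*(a)=0\in\scrF(V_1)$; we want $a=0$. The idea is to construct a finite $A$-correspondence $\Phi\in\Cor^A_k(V_2,V_2)$, or rather a correspondence of pairs in $\Cor^{A,\pair}_k((\A^1_U,V_2),(\A^1_U,V_2))$ in the sense of the correspondence-of-pairs formalism, which on the one hand acts as (a unit multiple of) the identity on $\scrF(V_2)$ and on the other hand factors through $V_1$, so that $\Phi^*(a)$ is simultaneously $a$ (up to a unit $\ip\lambda$, which is invertible) and $0$. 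Since $U$ is affine and the complements $\A^1_U\setminus V_2$ and $V_2\setminus V_1$ are finite over $U$, one can choose a regular function $f$ on $\A^1_U$ (a suitable monic polynomial in the coordinate $t$ of $\A^1_U$, with coefficients in $\calO(U)$) whose vanishing locus $Z(f)$ is finite over $U$, is contained in $V_1$, and is disjoint from $\A^1_U\setminus V_2$; here the affineness of $U$ and finiteness of the two complements over $U$ is exactly what makes such an $f$ available (this is the standard point that a finite subset of $\A^1$ over an affine base can be cut out by one equation, together with a prime-avoidance/general-position argument).

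Concretely, I would apply \Cref{constr:lrangleclass} to the relative curve $\calC=\A^1_U\times_U\A^1_U$ (or rather $\A^1_{\A^1_U}$), with $p$ the first projection to $\A^1_U$, $g$ the second projection to $\A^1_U$ extended suitably, and the function being an interpolation $h=(1-\nu)f_0+\nu f_1$ between a function $f_0$ realizing the identity (via \Cref{lm:sectZ(f)cor}, choosing the graph of the identity section so that $\Div^A(f_0)^{\mu}= \id$ or $\id\circ\ip\lambda$) and a function $f_1$ whose divisor sits entirely inside $V_1$. Using the homotopy parameter $\nu\in\A^1$ one gets a relative correspondence $\Theta\in\Cor^A_U(\A^1_U\times\A^1,\A^1_U)$, hence by \Cref{lm:ZeroPairC} (the vanishing lemma for correspondences of pairs whose support misses $X\setminus X'$) a homotopy $\sim_{\A^1}$ between $\Theta\circ i_0$ and $\Theta\circ i_1$ in $\Cor^{A,\pair}_k((\A^1_U,V_2),(\A^1_U,V_2))$, where $\Theta\circ i_0$ is (a $\ip\lambda$-multiple of) the identity of the pair and $\Theta\circ i_1$ factors through $(V_1,V_1)$ and is therefore zero in the pair category by the very definition of $\Cor^{A,\pair}$. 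Applying the homotopy-invariant presheaf $\scrF$ and using that $\scrF$ kills the image of $i_0^*-i_1^*$, we conclude $\ip\lambda^*(a)=0$, hence $a=0$ since $\ip\lambda$ is invertible in $\Cor^A_k$ (its inverse is $\ip{\lambda^{-1}}$, or one can multiply by $w$ if $\lambda$ is a square as in \Cref{lm:sqroot}; in general $\ip\lambda$ is invertible because $\ip\lambda\circ\ip{\lambda^{-1}}=\ip 1=\id$).

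The main obstacle is the geometric input: producing, over the affine base $U$, the function $f$ (equivalently the pair of functions $f_0,f_1$ and their homotopy $h$) with the three simultaneous properties that (i) $Z(h)$ stays finite over $U\times\A^1$ so that \Cref{constr:lrangleclass} applies uniformly in the homotopy parameter, (ii) the fiber over $\nu=0$ recovers the identity section of $V_2$ (so a section through the ``diagonal'' point, which requires the coordinate to separate that section), and (iii) the fiber over $\nu=1$ has divisor supported in $V_1$ and, crucially, the whole family $Z(h)$ avoids $\A^1_U\setminus V_2$ so that the pair-correspondence is defined. This is where the hypothesis that \emph{both} $\A^1_U\setminus V_2$ and $V_2\setminus V_1$ are finite over $U$ is used, once to ensure $V_2$ itself admits enough functions and once to move the divisor into $V_1$; carrying this out cleanly will likely occupy most of the argument and will probably be deferred in part to the geometric appendix. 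The remaining steps — computing $\Theta\circ i_0$ and $\Theta\circ i_1$ using \Cref{lm:sectZ(f)cor}, \Cref{lm:fmu} and \Cref{rem:Zarex}, and invoking \Cref{lm:ZeroPairC} and homotopy invariance of $\scrF$ — are routine manipulations of the formalism already set up.
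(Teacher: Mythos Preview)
Your overall strategy is exactly the paper's: produce $\Phi\in\Cor^A_k(V_2,V_1)$ with $[i\circ\Phi]=[\id_{V_2}]$ in $\ol\Cor^A_k(V_2,V_2)$, whence $\Phi^*\circ i^*=\id$ on $\scrF(V_2)$ and $i^*$ is injective. Two points in your write-up, however, are off.

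First, the pair formalism is a red herring here. The pair you invoke, $(\A^1_U,V_2)$, does not mention $V_1$ at all, and ``factoring through $(V_1,V_1)$'' does not make a class vanish in $\Cor^{A,\pair}_k((\A^1_U,V_2),(\A^1_U,V_2))$; \Cref{lm:ZeroPairC} concerns supports missing the complement of the \emph{target} open, which in your pair is $\A^1_U\setminus V_2$, not $V_2\setminus V_1$. The paper proves the lemma entirely in $\Cor^A_k$ without pairs: once you have $i\circ\Phi\sim_{\A^1}\id_{V_2}$ as an honest relation in $\ol\Cor^A_k(V_2,V_2)$, injectivity is immediate.

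Second, and more substantively, you cannot take $f_0=y-x$ (degree $1$) and $f_1$ a monic polynomial of large degree $n$ with $Z(f_1)\subseteq V_1$ and then linearly interpolate: the leading coefficient of $h=(1-\nu)f_0+\nu f_1$ in $y$ is $\nu$, which vanishes at $\nu=0$, so $Z(h)$ is not finite over $V_2\times\A^1$ and \Cref{constr:lrangleclass} does not apply uniformly. The paper's fix is to pad the diagonal: choose a monic $g$ of degree $n-1$ (again by the Chinese remainder theorem, with $g\equiv1$ on $(V_2\setminus V_1)\times_U V_2$, on the diagonal, and $g\equiv(y-x)^{-1}$ on $(\A^1_U\setminus V_2)\times_U V_2$) and homotope between $f$ and $(y-x)g$, both monic of degree $n$. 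At $\lambda=1$ one then gets
\[
\Div^A((y-x)g)=\Div^A((y-x)g)_{Z(y-x)}+\Div^A((y-x)g)_{Z(g)}=\ip{\nu}+i\circ\Phi^{-}
\]
by \Cref{lm:sectZ(f)cor} and the conditions on $g$, so the correction term $\Phi^{-}$ itself lands in $V_1$ and can be absorbed into $\Phi$. This extra ``correction'' step is the piece your outline is missing; once you build it in, your argument becomes the paper's.
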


\subsubsection{}We deduce \Cref{th:injRelA1} from the following result, which ensures the existence of a left inverse to $i^*$:

\begin{lemma}
Suppose that $V_1\subseteq V_2\subseteq \A^1_U$ are open subschemes as in \Cref{th:injRelA1}. Then there is a finite $A$-correspondence $\Phi\in \Cor^A_k(V_2, V_1)$ such that 
$[i\circ\Phi]=[\id_{V_2}]\in \overline\Cor^A_k(V_2, V_2)$.
\end{lemma}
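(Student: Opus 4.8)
The plan is to construct $\Phi$ by an explicit application of \Cref{constr:lrangleclass}, following the classical ``moving lemma'' strategy that produces a retraction of an open immersion in the relative affine line up to $\A^1$-homotopy. Write $Z_2 \defeq \A^1_U \setminus V_2$ and $W \defeq V_2 \setminus V_1$, which by hypothesis are both finite over $U$ (hence semilocal after base change to a local scheme, and in particular closed in $\A^1_U$ with affine structure). The idea is to find a regular function $f$ on a smooth relative curve $\calC$ over $V_2$ — most naturally $\calC = \A^1_{V_2} = \A^1 \times_U (\A^1_U \supseteq V_2)$, or a suitable open therein — together with a morphism $g\colon\calC\to V_1$, such that:
\begin{itemize}
\item the vanishing locus $Z(f)$ decomposes as $Z \amalg Z'$ with $Z$ finite over $V_2$, and $Z$ maps via $g$ into $V_1$;
\item the correspondence $\Div^A(f)_Z^{\mu,g} \in \Cor^A_k(V_2, V_1)$, when postcomposed with $i\colon V_1 \to V_2$, becomes $\A^1$-homotopic to $\id_{V_2}$.
\end{itemize}

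Concretely, I would first reduce to the case where $U$ is local (or even henselian local) — strict homotopy invariance statements of this kind are checked stalkwise, and the formation of $\Phi$ should be compatible with the relevant localizations; alternatively one works over affine $U$ throughout and uses that $Z_2$ and $W$ are finite over $U$ to choose global coordinates. Next, using that $\A^1_U \setminus V_2$ is finite over $U$, pick a monic polynomial $p(t) \in \Gamma(U, \calO_U)[t]$ vanishing on $Z_2$ (so $V_2 \supseteq \{p \neq 0\}$ up to shrinking, or rather $p$ is invertible on $V_2$ after the standard manipulation), and similarly a monic polynomial $q(t)$ cutting out $W$ inside $V_2$. The retraction is then built from a function of the form $f(t,s) = s\cdot q(t) + (\text{something involving } p)$ on $\A^1_{V_2}$ with coordinate $s$ on the new $\A^1$-factor and $t$ the coordinate pulled back from $\A^1_U$; the zero locus $Z$ should be the graph of the map $V_2 \to \A^1_{V_2}$ sending a point to itself (diagonally) while the extraneous component $Z'$ is pushed into the locus where $g$ can be defined into $V_1$. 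The map $g\colon \calC \to V_1$ is the evident projection-type morphism, well-defined on a neighborhood of $Z$ precisely because $Z$ avoids $W = V_2\setminus V_1$.

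The two endpoints of the homotopy are then computed via \Cref{lm:sectZ(f)cor}: at $s$ specialized to one value the correspondence degenerates (by part (a) or (b) of that lemma) to $g\circ s\circ\ip\lambda$ for a section $s$ whose composite with $g$ and then $i$ is $\id_{V_2}$, possibly up to a unit $\langle\lambda\rangle$; at the other value it degenerates to $i\circ\Phi$ for the desired $\Phi$ (or to $i\circ\Phi$ plus correction terms that are themselves homotopic to zero by \Cref{lm:ZeroPairC}-type arguments, since their support lies in the locus mapping away from the relevant open). The leftover unit $\langle\lambda\rangle$ is dealt with by \Cref{lm:sqroot} after arranging — possibly by a further homotopy of the square-completion type used in the proof of \Cref{lm:sqroot} — that $\lambda$ is a square, or by absorbing it using \Cref{lm:fmu} via a rescaling of $f$ and $\mu$.

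I expect the main obstacle to be the bookkeeping of the extraneous zero component $Z'$ and the domain of definition of $g$: one must choose $f$ so that $Z(f)$ splits with $Z$ finite over all of $V_2$ (not merely over $U$), so that $Z$ genuinely misses $W$, and so that the component $Z'$ either lies over a locus where everything is defined or contributes a term that vanishes in $\ol\Cor^A_k$. Getting a single formula for $f$ that simultaneously achieves the correct specializations at both endpoints — this is exactly the role played by the two-step homotopy ($\Theta$ then $\Theta'$) in the proof of \Cref{lm:sqroot}, and here too I anticipate needing a composite of two (or more) elementary homotopies rather than one. The verification that all intermediate correspondences are actually \emph{finite} relative $A$-correspondences (i.e., that the relevant supports are finite over the base) is the routine-but-essential check that makes the formulas work, and it is precisely where the finiteness hypotheses on $\A^1_U\setminus V_2$ and $V_2\setminus V_1$ get used.
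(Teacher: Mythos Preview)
Your overall strategy—build $\Phi$ via \Cref{constr:lrangleclass} from an explicit function on a relative curve over $V_2$, produce an $\A^1$-homotopy to the identity, and compute the endpoints with \Cref{lm:sectZ(f)cor}—is exactly what the paper does. But two of the steps you sketch would not go through as written.

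First, your plan to neutralize the leftover unit $\langle\nu\rangle$ by invoking \Cref{lm:sqroot} (or by arranging that $\nu$ is a square) is the wrong tool here: \Cref{lm:sqroot} requires the base field to be infinite and $U$ to be local, neither of which is assumed in \Cref{section:inj-aff-line}. The paper's fix is much simpler: since $\langle\nu\rangle$ is invertible with inverse $\langle\nu^{-1}\rangle$, one just precomposes the whole homotopy (and $\Phi'$) by $\langle\nu^{-1}\rangle$. No square roots, no hypotheses on $k$.

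Second, your plan to kill the extraneous endpoint term ``by \Cref{lm:ZeroPairC}-type arguments'' does not apply, because this lemma lives in the category of pairs, and the present lemma is an unpaired statement about $\Cor^A_k(V_2,V_2)$. In the paper the endpoint $\Theta'\circ i_1$ is $\langle\nu\rangle + \Div^A((y-x)g)_{Z(g)}$, and the second summand is \emph{not} homotopic to zero. Instead, the construction of $g$ (via the Chinese remainder theorem) is arranged so that $Z(g)$ avoids $(\A^1_U\setminus V_1)\times_U V_2$; hence this extra term already has support over $V_1$ and defines an honest correspondence $V_2\to V_1$. One then sets $\Phi\defeq\Phi'\circ\langle\nu^{-1}\rangle - \Div^A((y-x)g)_{Z(g)}\circ\langle\nu^{-1}\rangle$, i.e., subtracts the correction rather than claiming it vanishes.

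Two smaller points: the reduction to local $U$ is unnecessary (the argument runs over any affine $U$), and the concrete shape of the functions is not your ``$s\cdot q(t)+\text{(something with $p$)}$'' but rather monic polynomials $f,g$ in a new variable $y$ with prescribed restrictions to the finite closed subsets $(\A^1_U\setminus V_1)\times_U V_2$, $(\A^1_U\setminus V_2)\times_U V_2$, and $Z(y-x)$, obtained directly from \Cref{ex:ChineseRemTh}; the homotopy is the linear interpolation $h=(1-\lambda)f+\lambda(y-x)g$. Monicity in $y$ is what guarantees finiteness of the supports, replacing your anticipated ``routine-but-essential check''.
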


\begin{proof}
To prove the claim we must construct a finite $A$-correspondence
$\Phi\in \Cor^A_k( V_2 , V_1 )$
along with a homotopy
$\Theta\in \Cor^A_k(\A^1\times V_2, V_2 )$
satisfying $\Theta\circ i_0=i\circ \Phi$ and $\Theta\circ i_1=\id_{V_2}$. To do this, we will make use of the following functions:
\leavevmode
\begin{center}
\begin{tabular}{@{}>{$}l<{$}>{$}l<{$}>{$}l<{$}@{}}\toprule
f\in k[\stackrel{y}{\A^1}\times \stackrel{(x,u)}{V_2}] &
h\in k[\stackrel{y}{\A^1}\times \stackrel{(x,u)}{V_2}\times\stackrel{\lambda}{\A^1}] &g\in k[\stackrel{y}{\A^1}\times \stackrel{(x,u)}{V_2}]\\
f=y^n+a_1 y^{n-1}+\cdots +a_n & h=y^n+b_1 y^{n-1}+\cdots +b_n & g=y^{n-1}+c_1 y^{n-2}+\cdots +c_{n-1}\\
\midrule
 & h\big|_{\A^1\times V_2\times0}=f & h\big|_{\A^1\times V_2\times1}=(y-x)g\\ 
f\big|_{(\A^1_U\setminus V_1)\times_U V_2 }= 1 & h\big|_{(\A^1_U\setminus V_2)\times_U V_2} =1 & g\big|_{(\A^1_U\setminus V_2)\times_U V_2}=(y-x)^{-1}\\
&& g\big|_{(V_2\setminus V_1)\times_U V_2}=1\\
&& g\big|_{Z(y-x)}=1\\\bottomrule
\end{tabular}
\end{center}
The functions $f$ and $g$ can be constructed for any $n$ big enough by using the Chinese remainder theorem \ref{ex:ChineseRemTh}. Having $f$ and $g$ we then put $h\defeq(1 -\lambda)f+\lambda (y-x)g$. 
We now aim to apply \Cref{constr:lrangleclass} to the regular functions $f$ and $h$. Keeping the notations as in \eqref{diag:pCf}, consider the following diagrams:
\[\begin{tikzcd}
\stackrel{y}{V_1}\times_U \stackrel{x}{V_2}\ar{r}{f}\ar{d}\ar{dr}{\pr^{12}_1} & \A^1 & & \stackrel{y}{V_2}\times_U \stackrel{x}{V_2}\ar{d}\ar{dr}{\pr^{22}_2}\ar{r}{(y-x)g} & \A^1 & & \stackrel{y}{V_2}\times_U \stackrel{x}{V_2}\times\A^1\ar{r}{h}\ar{dr}{\pr_2}\ar{d} & \A^1\\
\stackrel{x}{V_2} & \stackrel{y}{V_1}, & & \stackrel{x}{V_2} & \stackrel{y}{V_2}, & & \stackrel{x}{V_2}\times\A^1 & \stackrel{y}{V_2}.
\end{tikzcd}\]
Here $\pr_1^{12}, \pr^{22}_2$ and $\pr_2$ are projections. Since $f$, $(y-x)g$ and $h$ are monic polynomials in the variable $y$, it follows that $Z(f)$, $Z((y-x)g)$ and $Z(h)$ are finite over $V_2$ and $\A^1\times V_2$, respectively.
Hence \Cref{constr:lrangleclass} yields finite $A$-correspondences
\begin{align*}
\Phi^\prime &\defeq \Div^A( f )_{Z(f)}^{dy, \pr_1^{12}}\in \Cor^A_k( V_2, V_1 ),\\
\Theta^\prime &\defeq \Div^A( h )_{Z(h)}^{dy, \pr_2}\in \Cor^A_k( \A^1\times V_2, V_2 ).
\end{align*}
The properties of $f$ and $h$ above imply that 
\begin{align*}
\Theta^\prime\circ i_0&=i\circ \Phi^{\prime},\\
\Theta^\prime\circ i_1&=\Div^A( (y-x)g )^{dy,\pr_{2}^{22}}_{Z(y-x)} + \Div^A( (y-x)g )^{dy,\pr^{22}_2}_{Z(g)}.
\end{align*}
Now, according to \Cref{lm:sectZ(f)cor}, the first summand in the last equality is equal to $\langle \nu \rangle \in \Cor^A_k(V_2,V_2)$ for some invertible function $\nu$. 
Therefore, if we let $\Phi\defeq\Phi^+-\Phi^-$, where
\[
\Phi^+ \defeq\Phi'\circ\ip{\nu^{-1}},\quad
\Phi^-\defeq\Div^A( (y-x)g )^{dy,\pr^{22}_2}_{Z(g)} \circ \langle \nu^{-1} \rangle,
\]
it follows that 
\[
[\id_{V_2}] =\Div^A( (y-x)g )^{dy,\pr^{22}_2}_{Z(y-x)} \circ \langle \nu^{-1} \rangle = [i \circ \Phi]\in \ol\Cor^A_k(V_2,V_2),
\] 
as desired.
\end{proof}

\subsubsection{}We will need the following two particular cases of \Cref{th:injRelA1}:

\begin{corollary}
Suppose that $\scrF $ is a homotopy invariant presheaf with $A$-transfers over a field $k$.
Then, for any pair of open subschemes $V_1\subseteq V_2\subseteq \A^1_k$, 
the restriction homomorphism 
$\scrF (V_2)\to \scrF (V_1)$
is injective.
\end{corollary}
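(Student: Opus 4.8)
The corollary in question is essentially the special case of \Cref{th:injRelA1} obtained by taking $U = \Spec k$. The plan is to observe that over the one-point base $U = \Spec k$, every open subscheme $V \subseteq \A^1_k$ is automatically a scheme whose complement $\A^1_k \setminus V$ is a finite set of closed points, hence finite over $k = \Spec k$. Thus, given $V_1 \subseteq V_2 \subseteq \A^1_k$, both $\A^1_k \setminus V_2$ and $V_2 \setminus V_1$ are finite unions of closed points of $\A^1_k$, and in particular they are finite over $\Spec k$. Therefore the hypotheses of \Cref{th:injRelA1} are satisfied with $U = \Spec k$.

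Concretely, I would argue as follows. Since $\A^1_k$ is a smooth affine curve over $k$, any proper closed subset is a finite set of closed points, so it is finite (indeed, zero-dimensional and of finite type) over $\Spec k$. Applying this to $Z_2 \defeq \A^1_k \setminus V_2$ and to $V_2 \setminus V_1$ (the latter being a closed subset of the curve $V_2$, hence again a finite set of points), we see that both sets are finite over $U = \Spec k$. Now \Cref{th:injRelA1}, applied with this choice of $U$ and with $\scrF$ the given homotopy invariant presheaf with $A$-transfers, directly yields that $i^* \colon \scrF(V_2) \to \scrF(V_1)$ is injective, where $i \colon V_1 \hookrightarrow V_2$ is the inclusion. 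This is precisely the assertion of the corollary.

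There is essentially no obstacle here: the corollary is a direct specialization of the preceding theorem, and the only thing to check is that the finiteness hypotheses on the complements are automatic when the base is a point. The one mild subtlety worth a sentence is that $U = \Spec k$ is indeed an affine smooth $k$-scheme, so it is a legitimate choice in \Cref{th:injRelA1}; and that $V_1, V_2$, being open in $\A^1_k$, lie in $\Sm_k$, so the statement makes sense. Beyond that, the proof is a one-line invocation of \Cref{th:injRelA1}.
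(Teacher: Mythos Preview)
Your proposal is correct and is exactly the intended argument: the paper states this result as an immediate corollary of \Cref{th:injRelA1} with no separate proof, and the only content is the observation you spell out, namely that over $U=\Spec k$ the complements $\A^1_k\setminus V_2$ and $V_2\setminus V_1$ are finite sets of closed points and hence finite over $U$.
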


\begin{corollary}\label{cor:Gm-diag}
Suppose that $\scrF $ is a homotopy invariant presheaf with $A$-transfers over a field $k$, and let $U$ be an open subscheme of $\G_m\times\G_m$ such that the complement $(\G_m\times\G_m)\setminus U$ is finite over the first copy of $\G_m$.
Then the restriction homomorphism $\scrF (\G_m\times\G_m)\to \scrF (U)$ is injective. 
\end{corollary}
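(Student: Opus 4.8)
The plan is to realize the corollary as a direct instance of \Cref{th:injRelA1}, by regarding the first copy of $\G_m$ as the \emph{base} scheme $U$ appearing in that theorem. Concretely, write $\A^1_{\G_m} = \G_m \times \A^1$ with structure map the projection onto the first factor; note that $\G_m = \Spec k[t,t^{-1}]$ is affine and smooth over $k$, as required. Since $\G_m \times \G_m = \G_m \times (\A^1 \setminus \{0\})$, it is an open subscheme of $\A^1_{\G_m}$. Set $V_2 \defeq \G_m \times \G_m$ and let $V_1 \defeq U$ be the open subscheme appearing in the statement of the corollary, so that $V_1 \subseteq V_2 \subseteq \A^1_{\G_m}$ and $i\colon U \hookrightarrow \G_m \times \G_m$ is the inclusion.

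It then remains only to check the two finiteness hypotheses of \Cref{th:injRelA1}. The complement $\A^1_{\G_m} \setminus V_2$ is the zero section $\G_m \times \{0\}$, which is a section of the structure map and hence maps isomorphically — in particular finitely — onto the base $\G_m$. The complement $V_2 \setminus V_1 = (\G_m \times \G_m) \setminus U$ is finite over the first copy of $\G_m$ by assumption, i.e., finite over the base. Thus both hypotheses hold, and \Cref{th:injRelA1} applies with $U$ there taken to be $\G_m$. We conclude that for any homotopy invariant presheaf with $A$-transfers $\scrF$, the restriction $i^*\colon \scrF(\G_m \times \G_m) \to \scrF(U)$ is injective, which is exactly the assertion of the corollary.

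There is essentially no obstacle here: the statement is a formal specialization of the relative injectivity theorem. The only point worth keeping in mind is that passing from $\G_m \times \G_m$ to $\A^1_{\G_m}$ introduces the additional closed complement $\A^1_{\G_m} \setminus (\G_m \times \G_m)$, and one must note that this complement is the zero section and is therefore automatically finite over the base — so the hypothesis on $\A^1_U \setminus V_2$ in \Cref{th:injRelA1} is satisfied for free.
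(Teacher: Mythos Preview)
Your proof is correct and is exactly the intended argument: the paper states the corollary as an immediate special case of \Cref{th:injRelA1} without further proof, and your verification that $V_2=\G_m\times\G_m$ and $V_1=U$ satisfy the finiteness hypotheses over the base $\G_m$ is precisely what is needed.
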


\section{Excision on the relative affine line}\label{section:rel-aff-line}

The aim of this section is the prove the following excision result for open subsets of a relative affine line:

\begin{theorem}\label{th:ExRelAffL}
Suppose that $U\in\Sm_k$ is an affine scheme, and let $V_1\subseteq V_2\subseteq\A^1_U$ be a pair of open subschemes such that $0_U\in V_1$. Let $i\colon V_1\subseteq V_2$ denote the inclusion.
Then, for any homotopy invariant presheaf with $A$-transfers $\scrF\in\PSh_\Sigma(\Cor_k^A;\Z)$,
the restriction homomorphism $i^*$ induces an isomorphism
\[i^*\colon \scrF (V_2\setminus 0_U)/\scrF (V_2)\xrightarrow{\cong}\scrF (V_1\setminus 0_U)/\scrF (V_1).\]
\end{theorem}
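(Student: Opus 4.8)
The plan is to lift the statement to the category $\Cor^{A,\pair}_k$ of correspondences of pairs. Any homotopy invariant presheaf with $A$-transfers $\scrF\in\PSh_\Sigma(\Cor^A_k;\Z)$ extends to a functor on $\ol\Cor^{A,\pair}_k$ by $(X,U)\mapsto\coker(\scrF(X)\to\scrF(U))$: a morphism of pairs is represented by a pair $(\alpha,\beta)$ with $\alpha\circ j_X=j_Y\circ\beta$, which forces $\beta^*$ to descend to the cokernels, and homotopy invariance makes the induced map depend only on the $\A^1$-homotopy class. Thus, writing $i_{\pair}\colon(V_1,V_1\setminus 0_U)\to(V_2,V_2\setminus 0_U)$ for the morphism of pairs induced by $i$, it is enough to prove that $[i_{\pair}]$ is an isomorphism in $\ol\Cor^{A,\pair}_k$. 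Since the assignment $V\mapsto(V,V\setminus 0_U)$ is functorial in open immersions, a two-out-of-three argument applied to $V_1\hookrightarrow V_2\hookrightarrow\A^1_U$ reduces us to the case $V_2=\A^1_U$; that is, it suffices to show that for every open $W$ with $0_U\subseteq W\subseteq\A^1_U$ the induced morphism of pairs $(W,W\setminus 0_U)\to(\A^1_U,\A^1_U\setminus 0_U)$ is invertible in $\ol\Cor^{A,\pair}_k$.

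To produce an inverse I would mimic, in the setting of pairs, the construction used to prove \Cref{th:injRelA1}. The key geometric feature is that $0_U$ is a section of $\A^1_U\to U$, hence finite over $U$ and disjoint from the closed set $\A^1_U\setminus W$; even though the latter is in general only quasi-finite over $U$, the Chinese remainder theorem (\Cref{ex:ChineseRemTh}) still lets us build a polynomial $f$, monic in an auxiliary coordinate $y$ on a relative curve $\calC$ over $\A^1_U$, together with a morphism $g\colon\calC\to\A^1_U$ and a trivialization $\mu$ of $\omega_{\calC/\A^1_U}$, such that $Z(f)$ has a component $Z$ which is finite over $\A^1_U$, with $g|_Z$ landing in $W$ and restricting to the identity near $0_U$, while $f\equiv 1$ away from a neighbourhood of $0_U$. \Cref{constr:lrangleclass} then yields a finite $A$-correspondence $\Psi\defeq\Div^A(f)_Z^{\mu,g}\in\Cor^A_k(\A^1_U,W)$, and because $f\equiv 1$ off $0_U$, \Cref{lm:ZeroPairC} shows that $\Psi$ together with its restriction over $\A^1_U\setminus 0_U$ assembles into a correspondence of pairs in $\Cor^{A,\pair}_k((\A^1_U,\A^1_U\setminus 0_U),(W,W\setminus 0_U))$.

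It then remains to verify $\Psi\circ j_{\pair}\sim_{\A^1}\id$ and $j_{\pair}\circ\Psi\sim_{\A^1}\id$ in $\ol\Cor^{A,\pair}_k$. For each composite I would interpolate, as in the proof of \Cref{th:injRelA1}, by an auxiliary function $h$ (again monic in $y$, so that $Z(h)$ remains finite over the base) and apply \Cref{constr:lrangleclass} to obtain a homotopy $\Theta=\Div^A(h)$; one then computes $\Theta\circ i_0$ and $\Theta\circ i_1$ by splitting $Z(h)$ into its part supported on sections over the base and the rest. \Cref{lm:sectZ(f)cor} identifies the section-supported divisors with correspondences $\langle\nu\rangle$ and $\langle-1\rangle$, which are absorbed by the same bookkeeping ($\Psi=\Psi^{+}-\Psi^{-}$, via \Cref{lm:fmu}) used in \Cref{th:injRelA1}, while the divisors disjoint from $0_U$ die in $\ol\Cor^{A,\pair}_k$ by \Cref{lm:ZeroPairC}. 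All homotopies so constructed are relative over $\A^1_U$, hence automatically are homotopies of correspondences of pairs, and the argument closes.

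The step I expect to be the main obstacle is the bookkeeping required to keep everything compatible with the pair structure. In contrast to \Cref{th:injRelA1}, where a one-sided identity $[i\circ\Phi]=[\id]$ sufficed (giving mere injectivity of $i^*$), here one needs a genuine two-sided inverse in $\ol\Cor^{A,\pair}_k$, so the correspondence $\Psi$, both homotopies, and every intermediate divisor must be arranged to have support either finite over the base and contained in $W$, or disjoint from $0_U$. Choosing $f$ and $h$ to be simultaneously monic in $y$ --- so that all supports are finite over the base even when $\A^1_U\setminus V_1$ fails to be --- and to have the prescribed behaviour along $0_U$ is the delicate point; once the functions are fixed, the verification is a routine, if lengthy, application of \Cref{constr:lrangleclass}, \Cref{lm:sectZ(f)cor}, \Cref{lm:fmu}, and \Cref{lm:ZeroPairC}.
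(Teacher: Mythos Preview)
Your overall strategy is correct and matches the paper: reduce to $V_2=\A^1_U$ by two-out-of-three, and then exhibit the morphism of pairs $(W,W\setminus 0_U)\to(\A^1_U,\A^1_U\setminus 0_U)$ as an isomorphism in $\ol\Cor^{A,\pair}_k$ by building left and right homotopy inverses via \Cref{constr:lrangleclass}, killing the pieces supported away from $0_U$ with \Cref{lm:ZeroPairC}, and normalizing the section-supported piece with \Cref{lm:sectZ(f)cor}. The paper in fact does this in two separate lemmas (\Cref{lm:AffExInj} for $[i\circ\Phi]=[\id]$ and \Cref{lm:AffExSur} for $[\Psi\circ i]=[\id]$), but the content is the same.

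There is, however, a genuine gap in the tool you invoke. You propose to construct the monic polynomial $f$ via the Chinese remainder theorem (\Cref{ex:ChineseRemTh}), claiming it ``still lets us build'' $f$ with the required vanishing even though $\A^1_U\setminus W$ is only quasi-finite over $U$. This is false. The Chinese remainder theorem (despite how it is phrased in \Cref{ex:ChineseRemTh}) requires the closed subscheme to be \emph{finite} over the base: a monic lift exists precisely when the ideal of the subscheme contains a monic polynomial. For example, take $U=\A^1_k$ with coordinate $u$ and $W=\A^1_U\setminus Z(yu-1)$; then on the hyperbola one has $y=u^{-1}$, and no monic $f(y)\in k[u][y]$ can restrict to $1$ there, since equating constant terms in $1+a_1u+\cdots+a_nu^n=u^n$ forces $1=0$.

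This is exactly why the paper abandons the approach of \Cref{section:inj-aff-line} and instead works on $\PP^1_U$ with Serre's theorem (\Cref{prop:CorofSerreTh}): the complement $D=(\PP^1\times U)\setminus W$ is a closed subscheme of a scheme on which $\calO(1)$ is ample, so one may prescribe sections of $\calO(n)$ on $D$ (and on $0\times U$, and on the diagonal) for $n\gg0$ without any finiteness hypothesis. One then sets $f=s/t_\infty^n$ on the affine chart; the condition $s|_{\infty}=t_0^n$ recovers monicity, while $s|_D=t_0^n$ forces $Z(s)\subseteq W$, and finiteness of $Z(s)$ over the base follows from \Cref{lm:FinitnessOfVanLoc}. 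If you replace your appeal to \Cref{ex:ChineseRemTh} by this compactification-and-Serre step, the rest of your outline goes through essentially as in the paper.
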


\begin{remark}
By \Cref{th:injRelA1}, the restriction maps $\scrF(V_i)\to\scrF(V_i\setminus0)$ are injective for $i=1,2$, which justifies the notation $\scrF(V_i\setminus 0_U)/\scrF(V_i)$.
\end{remark}

\subsubsection{}To prove the above theorem, we will show that $i^*$ is injective and surjective, which amounts to constructing appropriate correspondences of pairs up to homotopy. Let us first show that $i^*$ is injective:

\begin{lemma}\label{lm:AffExInj}
Suppose that $i\colon V\subseteq \A^1_U$ is an open subscheme with $0_U\in V$. Then there is a finite $A$-correspondence of pairs
$
\Phi\in \Cor^{A,\pair}_k((\A^1_U,\A^1_U\setminus 0_U), (V,V\setminus 0_U))
$ such that 
$[i\circ\Phi]=[\id_{(\A^1_U,\A^1_U\setminus 0_U)}]\in \overline\Cor^{A,\pair}_k( (\A^1_U,\A^1_U\setminus 0_U), (\A^1_U,\A^1_U\setminus 0_U) )$.
\end{lemma}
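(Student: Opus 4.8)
The plan is to follow the template of the proof of \Cref{th:injRelA1}: I will exhibit $\Phi$ as the class of an explicit finite $A$-correspondence of pairs built via \Cref{constr:lrangleclass}, together with an explicit $\A^1$-homotopy $\Theta$ with $\Theta\circ i_0=i\circ\Phi$ and $\Theta\circ i_1=\id$. First I would make a harmless reduction. Since $\A^1_U$ is affine and $0_U$ is disjoint from the closed set $\A^1_U\setminus V$, the corresponding ideals are comaximal, so there is a regular function $\delta$ on $\A^1_U$ vanishing on $\A^1_U\setminus V$ with $\delta|_{0_U}$ invertible; then $0_U\subseteq D(\delta)\subseteq V$, with $D(\delta)$ affine over $U$, and composing a solution for $D(\delta)$ on the left with the open immersion $D(\delta)\hookrightarrow V$ yields one for $V$. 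So I may assume $V=D(\delta)$, in which case $\A^1_U\setminus V=Z(\delta)$ is a hypersurface.

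Next comes the geometric heart. Write $s$ for the coordinate on $\A^1_U$, regard $\calC\defeq\A^1_U\times_U\A^1$ as a relative curve $p\colon\calC\to\A^1_U$ with fibre coordinate $y$ and with the trivialization $\mu$ of $\omega_{\calC/\A^1_U}$ given by $dy$, and let $g\colon\calC\to\A^1_U$, $(s,y)\mapsto y$, be the second projection. Using the Chinese remainder theorem (\Cref{ex:ChineseRemTh}) together with the geometric (moving) lemmas of \Cref{appendix}, I would construct, for $n$ large enough, regular functions $f$ on $\calC$ that is monic of degree $n$ in $y$, and $q$ on $\calC$ that is monic of degree $n-1$ in $y$, subject to: $Z(f)=Z\amalg Z'$ with $Z$ finite over $\A^1_U$ and $g(Z)\subseteq V$ (the spurious part $Z'$ then automatically avoids $\{y=0\}=g^{-1}(0_U)$ because $0_U\subseteq V$); $f|_{\{y=0\}}=-s$; $q|_{\{y=s\}}=1$; and $q|_{\{y=0\}}$ invertible. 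Setting $h\defeq(1-\lambda)f+\lambda(y-s)q$ on $\calC\times\A^1_\lambda$, one checks that $h$ is monic of degree $n$ in $y$, that $h|_{\{y=0\}}=-s$, and that $h|_{\lambda=0}=f$ while $h|_{\lambda=1}=(y-s)q$. Then \Cref{constr:lrangleclass} produces
\[
\Phi\defeq\Div^A(f)^{\mu,g}_{Z}\in\Cor^{A,\pair}_k\bigl((\A^1_U,\A^1_U\setminus 0_U),(V,V\setminus 0_U)\bigr),\qquad \Theta\defeq\Div^A(h)^{\mu,g}_{Z(h)}\in\Cor^{A,\pair}_k\bigl(\A^1\times(\A^1_U,\A^1_U\setminus 0_U),(\A^1_U,\A^1_U\setminus 0_U)\bigr),
\]
the incidence conditions along $\{y=0\}$ being exactly what guarantees that $\Phi$ and $\Theta$ are morphisms of pairs.

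It then remains to evaluate the two ends of $\Theta$. At $\lambda=0$, \Cref{rem:Zarex} splits $\Theta\circ i_0=\Div^A(f)^{\mu,g}_{Z}+\Div^A(f)^{\mu,g}_{Z'}$, and the second summand is $0$ in $\Cor^{A,\pair}_k$ by \Cref{lm:ZeroPairC} since $Z'$ misses $0_U$; hence $\Theta\circ i_0=i\circ\Phi$. At $\lambda=1$, \Cref{rem:Zarex} gives $\Theta\circ i_1=\Div^A((y-s)q)^{\mu,g}_{Z(y-s)}+\Div^A((y-s)q)^{\mu,g}_{Z(q)}$; the $Z(q)$-term vanishes by \Cref{lm:ZeroPairC} because $q|_{\{y=0\}}$ is invertible, and the $Z(y-s)$-term equals $\id_{\A^1_U}$ by \Cref{lm:sectZ(f)cor}(b), the hypothesis $q|_{\{y=s\}}=1$ and $\mu|_{Z(y-s)}$ agreeing with the trivialization induced by $y-s$ forcing the twisting unit to be trivial. (Should one only obtain $\langle\nu\rangle\circ\id_{\A^1_U}$ for a unit $\nu$, one may still conclude after adjoining a square-root coordinate and invoking \Cref{lm:sqroot}, at the cost of assuming $k$ infinite; \Cref{lm:fmu} is used along the way to renormalize trivializations.) Therefore $[i\circ\Phi]=[\Theta\circ i_0]=[\Theta\circ i_1]=[\id_{(\A^1_U,\A^1_U\setminus 0_U)}]$ in $\ol\Cor^{A,\pair}_k$, as desired.

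The main obstacle is the geometric input of the second paragraph: producing the monic $f$ (and its deformation $h$) whose essential zeros lie over $V$, whose spurious zeros stay off $0_U$, and which specializes correctly at $\lambda=0,1$. Unlike in \Cref{th:injRelA1}, the complement $\A^1_U\setminus V$ need not be finite over $U$, so one cannot simply take a single global monic polynomial that is $\equiv 1$ on the complement of $V$; the reduction to $V=D(\delta)$ and a careful localization — using \Cref{ex:ChineseRemTh} and the moving lemmas of \Cref{appendix} to separate the essential and spurious zero loci of $f$ while keeping control near $0_U$ — are what make the construction go through. Once $f$ and $h$ are in hand, everything else is bookkeeping with \Cref{rem:Zarex}, \Cref{lm:fmu}, \Cref{lm:sectZ(f)cor} and \Cref{lm:ZeroPairC}.
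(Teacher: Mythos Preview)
Your overall strategy mirrors the paper's: build $\Phi$ via \Cref{constr:lrangleclass}, build a linear homotopy $\Theta$, and identify the endpoints using \Cref{lm:sectZ(f)cor} and \Cref{lm:ZeroPairC}. The endgame (splitting $\Theta\circ i_1$ into a diagonal piece and a spurious piece, then killing the unit) is fine. The gap is precisely where you flag it, but it is not resolved by the tools you invoke.

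You need a monic polynomial $f\in\calO(\A^1_U)[y]$ whose zero locus either lands in $g^{-1}(V)$ or splits off a clopen ``spurious'' piece $Z'$ disjoint from $\{y=0\}$. Neither is attainable by \Cref{ex:ChineseRemTh}. The Chinese remainder theorem only produces a monic $f$ with prescribed restriction to a closed subscheme that is \emph{finite} over the base; it fails for $g^{-1}(\A^1_U\setminus V)$ once $\A^1_U\setminus V$ is not finite over $U$. Concretely, take $U=\A^1_t$ and $V=D(ty-1)$: any monic $f=y^n+\sum c_i(s,t)y^i$ restricts along $ty=1$ to an element of $k[s,t^{\pm1}]$ with nonzero $t^{-n}$-coefficient, so $f|_{Z(\delta)}$ can never be a unit, and no choice of $f$ forces $Z(f)\subseteq g^{-1}(V)$. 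Nor do the lemmas of \Cref{appendix} provide a moving mechanism in this affine picture: \Cref{lm:clEnbSect} and \Cref{lm:FinitnessOfVanLoc} are statements about sections of ample bundles on \emph{projective} relative curves, and \Cref{lm:RelCurves} manufactures such curves---none of them separates the zero locus of an affine monic polynomial into clopen pieces. In particular, there is no reason for $g^{-1}(V)\cap Z(f)$ to be closed in $Z(f)$, so your decomposition $Z\amalg Z'$ is generally not available.

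The paper's fix is exactly to abandon the affine curve and pass to the projective compactification $\PP^1_{U\times\A^1}$. Then $(\PP^1\times U)\setminus V$ is a \emph{closed} subscheme of $\PP^1\times U$ on which $t_0$ is nowhere zero (because $0_U\subseteq V$), and Serre's theorem (\Cref{prop:CorofSerreTh}) furnishes a section $s\in\Gamma(\PP^1_{U\times\A^1},\calO(n))$ with $s|_{((\PP^1\times U)\setminus V)\times\A^1}=t_0^n$ and $s|_{0\times U\times\A^1}=t_\infty^{n-1}(t_0-xt_\infty)$. This forces $Z(s)\subseteq V\times\A^1$ outright---no splitting needed---and \Cref{lm:FinitnessOfVanLoc} gives finiteness over the base. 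Once you make this replacement (and similarly for $\tilde s$, $s'$), the rest of your argument goes through verbatim; the unit $\nu$ arising from \Cref{lm:sectZ(f)cor}(a) is removed by precomposing with $\ip{\nu^{-1}}$, so neither \Cref{lm:sqroot} nor any hypothesis on $k$ is needed here.
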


\begin{proof}
We need to construct a finite $A$-correspondence of pairs
$\Phi\in \Cor^{A,\pair}_k( (\A^1_U,\A^1_U\setminus 0_U) , (V,V\setminus 0_U) )$
along with a homotopy 
$\Theta\in \Cor^{A,\pair}_k(\A^1\times (\A^1_U,\A_U^1\setminus 0_U), (\A^1_U,\A^1_U\setminus 0_U) )$
such that $\Theta\circ i_0=i\circ \Phi$ and $\Theta\circ i_1=\id_{(\A^1_U,\A^1_U\setminus 0_U)}$.
To do this, we will make use of the following sections:
\leavevmode
\begin{center}
\begin{tabular}{@{}>{$}l<{$}>{$}l<{$}>{$}l<{$}@{}}\toprule
s\in \Gamma\p*{\stackrel{[t_0\colon t_\infty]}{ \PP^1}_{U\times\stackrel{x}{\A^1} }, \mathcal O(n)} &
\tilde s\in \Gamma\p*{\stackrel{[t_0\colon t_\infty]}{ \PP^1}_{U \times \stackrel{x}{\A^1}  \times \stackrel{\lambda}{\A^1} }, \mathcal O(n)} &
s^\prime \in \Gamma\p*{\stackrel{[t_0\colon t_\infty]}{ \PP^1}_{U\times\stackrel{x}{\A^1}  } , \mathcal O(n-1)} \\
\midrule
&\tilde s\big|_{\PP^1\times U\times\A^1\times 0}=s & \tilde s\big|_{\PP^1\times U\times\A^1\times 1}=(t_0-x t_\infty)s^\prime
\\ 
s\big|_{((\PP^1\times U) \setminus V)\times\A^1 }= t_0^n &\tilde s\big|_{\infty\times U\times\A^1\times\A^1 }= t_0^n &s^\prime\big|_{\infty\times U\times\A^1 }= t_0^{n-1}
\\
s\big|_{0\times U\times\A^1} =t_\infty^{n-1} (t_0-x t_\infty) & \tilde s\big|_{0\times U\times\A^1\times\A^1} =t_\infty^{n-1}(t_0-x t_\infty) & s^\prime\big|_{0\times U\times\A^1}=t_\infty^{n-1}\\
&& s^\prime\big|_{Z(t_0-x t_\infty)\times U}=t_\infty^{n-1}\\\bottomrule
\end{tabular}
\end{center}
Since $U$ is affine, it follows that $\calO(1)$ is ample on $\PP^1\times U \times \A^1$ and $\PP^1\times U\times \A^1\times\A^1$. Hence, for $n$ big enough,
Serre's theorem \ref{prop:CorofSerreTh} ensures the existence of the sections $s$ and $s^\prime$ as above. Having $s$ and $s'$, we then put $\tilde s\defeq(1 -\lambda)s+\lambda (t_0-x t_\infty)s^\prime$.

It follows 
by \Cref{lm:FinitnessOfVanLoc} that $Z(s)$ and $Z(\widetilde s)$ are finite over $U\times\A^1$ and $U\times\A^1\times\A^1$ respectively. Let $y\defeq t_0/t_\infty $ be the coordinate on the affine line $\A^1\subseteq \PP^1$, and consider the trivialization $dy$ of the canonical class of $\A^1$. Let moreover $p\colon\A^1\times V\to \A^1_U$ denote the composition of the projection onto $V$ followed by the inclusion $V\subseteq\A^1_U$, and let $p'\colon\A^1\times  \A^1\times U\times\A^1\to\A^1_U\times\A^1$ be the projection onto the last two coordinates. Applying \Cref{constr:lrangleclass} to the diagrams
\[\begin{tikzcd}
\stackrel{y}{V}\times \stackrel{x}{\A^1}\ar{d}[swap]{p}\ar{dr}{\pr}\ar{r}{s/t_\infty^n} & \A^1 & & \stackrel{y}{\A^1}\times U\times \stackrel{x}{\A^1}\times\stackrel{\lambda}{\A^1}\ar{d}[swap]{p'}\ar{dr}{\pr_2}\ar{r}{\wt s/t_\infty^n} & \A^1\\
\stackrel{x}{\A^1_U} & \stackrel{x}{V}, & & \stackrel{x}{\A^1_U}\times\stackrel{\lambda}{\A^1} & \stackrel{x}{\A^1_U},
\end{tikzcd}\]
we thus obtain finite $A$-correspondences  
\begin{align*}
\Phi^\prime &\defeq \Div^A( s/t_\infty^{n} )^{dy,\pr}\in \Cor^{A,\pair}_k( (\A^1_U,\A^1_U\setminus 0_U), (V,V\setminus 0_U) ),\\
\Theta^\prime &\defeq \Div^A( \tilde s/t_\infty^{n} )^{dy,\pr_2}\in \Cor^{A,\pair}_k(\A^1\times (\A^1_U,\A^1_U\setminus 0_U), (\A^1_U,\A^1_U\setminus 0_U) ).
\end{align*}
It then follows from the properties of $s$ and $\tilde s$ above that
\begin{align*}
\Theta^\prime\circ i_0&=i\circ \Phi^\prime,\\
\Theta^\prime\circ i_1&=\Div^A( (y-x)g )_{Z(y-x)} + \Div^A( (y-x)g )_{Z(g)},
\end{align*}
where  $g\defeq s^\prime/t_\infty^{n-1}\in k[\A^1\times\A^1\times U]$.
By \Cref{lm:sectZ(f)cor} the first summand in the last equality is equal to 
$\langle \nu\rangle$ for some $\nu\in k[\A^1_U]^\times$.
The second summand, $\Div^A( (y-x)g )_{Z(g)}$, is zero by \Cref{lm:ZeroPairC} since $Z(g)\cap (0\times\A^1\times U)=\varnothing$. 
Now we define $\Phi\defeq\Phi^\prime\circ \langle \nu^{-1}\rangle$ and $\Theta\defeq\Theta^\prime\circ (\langle \nu^{-1}\rangle\times\id_{\A^1})$. Then $\Theta^\prime\circ i_1=\id_{(\A^1_U,\A^1_U\setminus 0_U)}$, and the claim follows.
\end{proof}

\subsubsection{}The next step is to show surjectivity of $i^*$:

\begin{lemma}\label{lm:AffExSur}
Suppose that $i\colon V\subseteq \A^1_U$ is an open subscheme with $0_U\in V$. Then there is a finite $A$-correspondence of pairs 
$\Psi\in \Cor^{A,\pair}_k((\A^1_U,\A^1_U\setminus 0_U), (V,V\setminus 0_U))$ 
such that 
$[\Psi\circ i]=[\id_{(V,V\setminus 0_U)}]\in \overline\Cor^{A,\pair}_k( (V,V\setminus 0_U), (V,V\setminus 0_U) )$.
\end{lemma}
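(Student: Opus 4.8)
The plan is to build, in parallel with \Cref{lm:AffExInj}, a \emph{left} homotopy inverse of $i$ in $\ol\Cor^{A,\pair}_k$. It is enough to produce a finite $A$-correspondence of pairs $\Psi\in\Cor^{A,\pair}_k((\A^1_U,\A^1_U\setminus 0_U),(V,V\setminus 0_U))$ together with a homotopy
\[
\Theta\in\Cor^{A,\pair}_k\p*{\A^1\times(V,V\setminus 0_U),\ (V,V\setminus 0_U)}
\]
with $\Theta\circ i_0=\Psi\circ i$ and $\Theta\circ i_1=\id_{(V,V\setminus 0_U)}$; passing to $\ol\Cor^{A,\pair}_k$ then yields the statement. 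Exactly as in \Cref{lm:AffExInj}, it is convenient first to only arrange $\Theta\circ i_1=\langle\nu\rangle$ for some unit $\nu$, and then to correct by a twist with $\langle\nu^{-1}\rangle$ (using $\langle\nu^{-1}\rangle\circ\langle\nu\rangle=\id$).

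For the correspondence one can take $\Phi'=\Div^A(s/t_\infty^n)^{dy,\pr}$ as built in \Cref{lm:AffExInj} from the Serre section $s$ with $s|_{(\PP^1\times U)\setminus V}=t_0^n$; note that $\Phi'\circ i=\Div^A((s|_V)/t_\infty^n)^{dy,\pr}$, where $s|_V$ is $s$ restricted along the open immersion $i$ on the source. A homotopy $\Theta'$ connecting $\Phi'\circ i$ to a twisted identity is then obtained from \Cref{constr:lrangleclass} applied to an interpolating section $\tilde s\defeq(1-\lambda)(s|_V)+\lambda(t_0-xt_\infty)s'$ on the $\PP^1$-bundle over $V\times\A^1_\lambda$, where $s'$ is a second section of $\calO(n-1)$ produced by Serre's theorem (\Cref{prop:CorofSerreTh}) with prescribed restrictions chosen so that: (i) $(t_0-xt_\infty)s'$ equals $t_0^n$ on $(\PP^1\times U)\setminus V$ — legitimate since $t_0-xt_\infty$ is a unit there ($y\notin V$ while $x\in V$, so $y\ne x$) — which forces $Z(\tilde s)\subseteq V$ for every value of the homotopy coordinate $\lambda$; and (ii) over $\{y=0\}$ both $s$ and $\tilde s$ restrict to $t_\infty^{n-1}(t_0-xt_\infty)$ as in \Cref{lm:AffExInj}, so that $Z(s)$ and $Z(\tilde s)$ meet $\{y=0\}$ only over $0_U$ and hence genuinely define correspondences of pairs. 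Finiteness of $Z(s)$ and $Z(\tilde s)$ over the relevant bases follows from \Cref{lm:FinitnessOfVanLoc}. At $\lambda=0$ one recovers $\Phi'\circ i$; at $\lambda=1$ the vanishing locus splits (\Cref{rem:Zarex}) as $Z(t_0-xt_\infty)\amalg Z(s')$, where $Z(t_0-xt_\infty)$ is the graph of $\id_V$, finite over $V$, contributing a twisted identity $\langle\nu\rangle$ by \Cref{lm:sectZ(f)cor}(a), while $Z(s')$ vanishes in $\Cor^{A,\pair}_k$ by \Cref{lm:ZeroPairC} since its support misses $0_U$. Setting $\Psi\defeq\langle\nu^{-1}\rangle\circ\Phi'$ and $\Theta\defeq\langle\nu^{-1}\rangle\circ\Theta'$ then gives $\Theta\circ i_0=\Psi\circ i$ and $\Theta\circ i_1=\id_{(V,V\setminus 0_U)}$, as required.

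The only genuinely new point — and the main obstacle — is keeping the support of the interpolating section $\tilde s$ inside the \emph{target} $V$ for all values of $\lambda$: in \Cref{lm:AffExInj} the homotopy took values in $\A^1_U$ and its vanishing locus was free to leave $V$, whereas here $\Theta$ must be a correspondence \emph{into} the pair $(V,V\setminus 0_U)$. This is precisely what dictates the choice of $s'$ in (i) above, and what makes that choice realizable is that $(\PP^1\times U)\setminus V$ is a closed subscheme of $\PP^1\times U$, on which $\calO(1)$ is ample because $U$ is affine, so that Serre's theorem permits prescribing $s$ and $s'$ freely there. Once this invertibility and the finiteness statements are in hand, identifying the two ends of the homotopy via \Cref{lm:sectZ(f)cor} and \Cref{lm:ZeroPairC} is routine, just as in the proof of \Cref{lm:AffExInj}.
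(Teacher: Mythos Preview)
Your approach coincides with the paper's: build $\Psi'$ from a Serre section $s$ with $Z(s)\subseteq V$, then construct the homotopy over the restricted base $V$ using $\tilde s=(1-\lambda)\,s|_V+\lambda(t_0-xt_\infty)s'$, with the key condition (i) forcing $\tilde s|_{D\times_U V}\equiv t_0^n$ so that $Z(\tilde s)$ never leaves $V$ in the $y$-direction; at $\lambda=1$ split off the diagonal via \Cref{lm:sectZ(f)cor}, kill the $Z(s')$-part via \Cref{lm:ZeroPairC}, and finally twist by $\langle\nu^{-1}\rangle$. This is exactly the paper's argument.

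One prescribed restriction is missing from your list, however: you must also require $s'$ to be invertible along the diagonal $Z(t_0-xt_\infty)$ (the paper imposes $s'|_{Z(t_0-xt_\infty)}=t_\infty^{n-1}$). Without this there is no reason for the decomposition $Z\bigl((t_0-xt_\infty)s'\bigr)=Z(t_0-xt_\infty)\amalg Z(s')$ at $\lambda=1$ to be \emph{disjoint}, so your appeal to \Cref{rem:Zarex} and then \Cref{lm:sectZ(f)cor} breaks down. Simply add it as a third Serre condition; it is compatible with (ii) since both prescribe $t_\infty^{n-1}$ at the intersection $\{y=x=0\}$. A related minor point: the Serre step for $s'$ takes place on $\PP^1\times V$, not on $\PP^1\times U$ as your last paragraph suggests, so what you actually need is that $\calO(1)$ is ample on $\PP^1\times V$; this holds because $V$, being open in the affine scheme $\A^1_U$, is quasi-affine.
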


\begin{proof}
To prove the claim we need to construct a finite $A$-correspondence of pairs
$
\Psi\in \Cor^{A,\pair}_k( (\A^1_U,\A^1_U\setminus 0_U) , (V,V\setminus 0_U) )
$ 
along with a homotopy 
$
\Theta\in \Cor^{A,\pair}_k(\A^1\times (V,V\setminus 0_U), (V,V\setminus 0_U) )
$
such that $\Theta\circ i_0=\Psi\circ i$ and $\Theta\circ i_1=\id_{(V,V\setminus 0_U)}$. We do this via the following sections: 
\leavevmode
\begin{center}
\begin{tabular}{@{}>{$}l<{$}>{$}l<{$}>{$}l<{$}@{}}\toprule
s\in \Gamma\p*{\stackrel{[t_0\colon t_\infty]}{ \PP^1}_{U\times \stackrel{x}{\A^1}}, \mathcal O(n)} &
\tilde s\in \Gamma\p*{\stackrel{[t_0\colon t_\infty]}{ \PP^1}_{ \stackrel{x}{V} \times \stackrel{\lambda}{\A^1} }, \mathcal O(n)} &
s^\prime \in \Gamma\p*{\stackrel{[t_0\colon t_\infty]}{ \PP^1}_{ \stackrel{x}{V} } , \mathcal O(n-1)} \\
\midrule
& \tilde s\big|_{\PP^1\times V\times 0}=s & \tilde s\big|_{\PP^1\times V\times 1}=(t_0-xt_\infty)s^\prime\\ 
s\big|_{ D \times\A^1 }= t_0^n & \tilde s\big|_{ D \times V \times\A^1 }= t_0^n & g\big|_{ D \times\A^1 }= t_0^n (t_0-x t_\infty)^{-1}\\
s\big|_{0\times U\times\A^1} =t_0-x t_\infty & \tilde s\big|_{0\times V\times \A^1} =t_0-x t_\infty & s^\prime\big|_{0\times V}=t_\infty^n\\
&& s^\prime\big|_{Z(t_0-x t_\infty)}=t_\infty^{n-1}\\\bottomrule
\end{tabular}
\end{center}
Here $D\defeq(\PP^1\times U)\setminus V$ denotes the reduced closed complement,
$g\defeq s^\prime/t_\infty^{n-1}\in k[\A^1\times V]$,
and $Z(t_0-x t_\infty)\subseteq \PP^1\times V$ denotes vanishing locus of the section 
\[
t_0-xt_\infty\in \Gamma({\PP^1}\times {V},\calO(1)),
\]
with $[t_0\colon t_\infty]$ being coordinates on $\PP^1$, and $x$ the one on $V$.
Since $U$ is affine, it follows that $\mathcal O(1)$ is ample on $\PP^1\times \A^1\times U$ and $\PP^1\times \A^1\times U\times\A^1$. Hence Serre's theorem \ref{prop:CorofSerreTh} ensures the existence of the sections $s$ and $s^\prime$ as above, provided $n$ is big enough. Having $s$ and $s'$, we then put $\tilde s\defeq(1 -\lambda)s+\lambda (t_0-x t_\infty)s^\prime$. 

Next, 
it follows
by \Cref{lm:FinitnessOfVanLoc} that $Z(s)$ and $Z(\widetilde s)$ are finite over $U\times\A^1$ and $V\times\A^1$, respectively.
Let $y\defeq t_0/t_\infty$ be the coordinate on the affine line $\A^1\subseteq\PP^1$, and let us use the trivialization $dy$ of the canonical class of $\A^1$. Consider the diagrams
\[\begin{tikzcd}
\stackrel{y}{V}\times\stackrel{x}{\A^1}\ar{r}{s/t_\infty^n}\ar{dr}{\pr}\ar{d} & \A^1 & & \stackrel{y}{V}\times_U\stackrel{x}{V}\times\stackrel{\lambda}{\A^1}\ar{d}\ar{dr}{\pr'}\ar{r}{\wt s/t^n_\infty} & \A^1\\
\stackrel{x}{\A^1_U} & \stackrel{y}{V,} & & \stackrel{x}{V}\times\stackrel{\lambda}{\A^1} & \stackrel{y}{V}.
\end{tikzcd}\]
Here the map $\pr\colon\A^1\times V\to V$ is the projection, while the map $\pr'\colon\A^1\times V\times\A^1\to\A^1_U$ is the composition of the projection onto $V$ followed by the inclusion $V\subseteq\A^1_U$.
Applying \Cref{constr:lrangleclass} to these diagrams we get finite $A$-correspondences of pairs
\begin{align*}
\Psi^\prime &\defeq \Div^A( s/t_\infty^{n} )^{dy,\pr}\in \Cor^{A,\pair}_k( (\A^1_U,\A^1_U\setminus 0_U), (V,V\setminus 0_U) ),\\ 
\Theta^\prime &\defeq \Div^A( \tilde s/t_\infty^{n} )^{dy,\pr'}\in \Cor^{A,\pair}_k(\A^1\times (V,V\setminus 0_U), (V,V\setminus 0_U) ),
\end{align*}
The properties of $s$ and $s'$ above imply that 
\begin{align*}
&\Theta^\prime\circ i_0= \Psi^\prime\circ i,\\
&\Theta^\prime\circ i_1=\Div^A( (y-x)g )_{Z(y-x)} + \Div^A( (y-x)g )_{Z(g)}.
\end{align*}
By \Cref{lm:sectZ(f)cor}, the first summand in the last equality is equal to 
$\langle \nu\rangle\in \Cor^{A,\pair}_k((V,V\setminus 0_U),(V,V\setminus 0_U))$
for some $\nu\in k[V]^\times$. The second summand is zero by \Cref{lm:ZeroPairC}, since $Z(g)\cap (0\times V)=\varnothing$. 
Hence the $A$-correspondences
$\Psi\defeq \langle \nu^{-1}\rangle \circ \Psi^\prime$ and
$\Theta\defeq\langle \nu^{-1}\rangle \circ \Theta^\prime$ have the desired properties.
\end{proof}

\begin{proof}[Proof of \Cref{th:ExRelAffL}]
\Cref{lm:AffExInj} and \Cref{lm:AffExSur} immediately imply the claim for the case of $V_2=\A^1_U$.
In general, it follows that we have natural isomorphisms
\[\scrF (V_2\setminus 0_U)/\scrF (V_2)\cong
\scrF (\A^1_U\setminus 0_U)/\scrF (\A^1_U) \cong
\scrF (V_1\setminus 0_U)/\scrF (V_1),\]
which shows the claim.
\end{proof}

\subsubsection{}Arguing similarly as in the proof of \Cref{th:ExRelAffL}, we obtain also an excision result for a nonrelative affine line:

\begin{theorem}\label{th:ExAffL}
Consider the function field $K\defeq k(U)$ of some integral scheme $U\in \Sm_k$. Let $z$ be a closed point in $\A^1_K$, and let $i\colon V_1\subseteq V_2$ be an inclusion of two open subschemes of $\A^1_K$ such that $z\in V_1$.
Then, for any homotopy invariant presheaf with $A$-transfers $\scrF\in\PSh_\Sigma(\Cor_k^A;\Z)$,
the restriction homomorphism $i^*$ induces an isomorphism
\[i^*\colon \scrF (V_2\setminus z)/\scrF (V_2)\xrightarrow{\cong}\scrF (V_1\setminus z)/\scrF (V_1).\]
\end{theorem}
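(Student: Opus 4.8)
The plan is to mimic the proof of \Cref{th:ExRelAffL}, carried out over a smooth model of $K$, with the role played there by the zero section $0_U$ now played by the degree-$[\kappa(z):K]$ closed subscheme of $\A^1_K$ cut out by the monic polynomial defining $z$.

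First I would reduce, exactly as \Cref{th:ExRelAffL} was deduced from \Cref{lm:AffExInj} and \Cref{lm:AffExSur}, to constructing, for an arbitrary open immersion $j\colon V\hookrightarrow\A^1_K$ with $z\in V$, correspondences of pairs $\Phi,\Psi\in\Cor^{A,\pair}_k((\A^1_K,\A^1_K\setminus z),(V,V\setminus z))$ with
\[
[j\circ\Phi]=[\id]\in\ol\Cor^{A,\pair}_k((\A^1_K,\A^1_K\setminus z),(\A^1_K,\A^1_K\setminus z)),\qquad
[\Psi\circ j]=[\id]\in\ol\Cor^{A,\pair}_k((V,V\setminus z),(V,V\setminus z)).
\]
Here, as usual, $\scrF(V)\to\scrF(V\setminus z)$ is injective by \Cref{th:injRelA1} applied to a smooth model, which justifies the quotient notation. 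Granting $\Phi$ and $\Psi$, the map $j^*$ induces an isomorphism $\scrF(\A^1_K\setminus z)/\scrF(\A^1_K)\xrightarrow{\cong}\scrF(V\setminus z)/\scrF(V)$; applying this with $V=V_1$ and with $V=V_2$ and using that the relevant restriction maps compose then yields the theorem.

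To build $\Phi$ and $\Psi$ I would spread everything out. The point $z$ is the vanishing locus of a monic polynomial $q(y)\in K[y]$ of degree $m\defeq[\kappa(z):K]$. Writing $K=k(U)$ and shrinking $U$ to a small enough affine open $U'\subseteq U$, we may assume that $q$ has coefficients in $\calO(U')$, that $V$ descends to an open $V_{U'}\subseteq\A^1_{U'}$, that $\zeta_{U'}\defeq Z(q)\subseteq V_{U'}$ (automatically finite over $U'$ of rank $m$, since $q$ is monic), and that $\A^1_{U'}\setminus V_{U'}$ is finite over $U'$ (possible since over a field every proper closed subset of $\A^1$ is finite). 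As the groups $\Cor^{A,\pair}_k(-,-)$ of essentially smooth $k$-schemes and the values of $\scrF$ on them are filtered colimits of the corresponding data over such $U'$, compatibly with the $\A^1$-homotopy relation, it suffices to construct correspondences of pairs between $(\A^1_{U'},\A^1_{U'}\setminus\zeta_{U'})$ and $(V_{U'},V_{U'}\setminus\zeta_{U'})$ realizing the identity up to homotopy. These I would obtain by repeating the proofs of \Cref{lm:AffExInj} and \Cref{lm:AffExSur} almost verbatim: one still interpolates, through an auxiliary parameter $\lambda$, between the section $s$ defining the relevant divisor-correspondence and a product of the form $(t_0-xt_\infty)s'$, and converts the resulting monic families into finite $A$-correspondences via \Cref{constr:lrangleclass}. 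The only change is that the conditions imposed there on the restrictions of $s,\tilde s,s'$ to the origin fiber $\{y=0\}$ are now imposed on their restrictions to the degree-$m$ subscheme $Z_y(q)\subseteq\PP^1_{U'}$; this subscheme is finite over $U'$ and disjoint from $\PP^1_{U'}\setminus V_{U'}$, so Serre's theorem \ref{prop:CorofSerreTh} still permits prescribing those restrictions freely once $n$ is large, while \Cref{lm:FinitnessOfVanLoc} and the Chinese remainder theorem \ref{ex:ChineseRemTh} apply as before.

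The step requiring the most care---and where I expect the real work to lie---is checking that the constructed correspondences respect the pair structure and that the boundary terms at $\lambda=1$ evaluate as expected. For the former, one prescribes the restrictions of $s$ and $\tilde s$ to $Z_y(q)$ to agree, up to an invertible function, with that of $(t_0-xt_\infty)s'$; then $Z(s)\cap\{q(y)=0\}$ and $Z(\tilde s)\cap\{q(y)=0\}$ coincide with the part of the diagonal $\{y=x\}$ lying over $Z(q)$, and hence lie over $Z(q)$ in the source direction, which is exactly what is needed for $\Phi$, $\Psi$ and the homotopies to define correspondences of pairs. For the latter, the term produced at $\lambda=1$ again has the shape $\Div^A((y-x)g)_{Z(y-x)}+\Div^A((y-x)g)_{Z(g)}$; here $Z(y-x)$ is the full diagonal, hence a section, so \Cref{lm:sectZ(f)cor} shows the first summand equals $\langle\nu\rangle$ for some unit $\nu$, while the second vanishes by \Cref{lm:ZeroPairC} since one arranges $Z(g)\cap\{q(y)=0\}=\varnothing$. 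Twisting by $\langle\nu^{-1}\rangle$ as in \Cref{lm:AffExInj} and \Cref{lm:AffExSur} then produces $\Phi$ and $\Psi$ with the required properties. The only genuinely new feature is that $z$ need not be a rational point, so the explicit section conditions must be bookkept relative to the degree-$m$ scheme $Z_y(q)$ rather than to a point; I do not expect this to introduce any conceptual difficulty.
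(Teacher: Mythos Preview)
Your proposal is correct and follows essentially the same approach as the paper. The paper's proof is extremely terse: it simply says to rerun the proof of \Cref{th:ExRelAffL} after replacing the line bundle $\calO(1)$ by $\calO(d)$ (with $d=\deg_K k(z)$), the section $t_0$ by a section $\nu\in\Gamma(\PP^1_{\A^1_K},\calO(d))$ with $Z(\nu)=z\times\A^1_K$, and $t_\infty$ by $t_\infty^d$. Your version unpacks exactly this substitution---your ``conditions at $Z_y(q)$'' are the conditions at $Z(\nu)$---and adds the spreading-out step to a smooth affine $U'$, whereas the paper works directly over the essentially smooth scheme $\A^1_K$ using its colimit definition of $\Cor^A_k$ on $\EssSm_k$; both are legitimate and amount to the same argument.
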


\begin{proof}
The proof is parallel to the proof of \Cref{th:ExRelAffL}. All we need to do is to replace 
the line bundle $\calO(1)$ by $\calO(d)$, where $d\defeq\deg_K k(z)$; the section $t_0\in \Gamma(\PP^1_{\A^1_K},\calO(1))$ 
by a section $\nu\in \Gamma(\PP^1_{\A^1_K},\calO(d))$ such that $Z(\nu)=z\times{\A^1_K}$;
and the section $t_\infty$ by $t_\infty^d$. 
\end{proof}

\section{Injectivity for semilocal schemes}\label{section:inj-loc-sch}

In this section we will assume that the base field $k$ is infinite.

\begin{theorem}\label{thm:inj-loc-sch}
Let $X$ be a smooth $k$-scheme and let $x_1,\dots,x_r\in X$ be finitely many closed points. Let $U\defeq\Spec\calO_{X,x_1,\dots,x_r}$ and write $j\colon U\rightarrow X$ for the canonical inclusion. Let $Z\hookrightarrow X$ be a closed subscheme with $x_1,\dots,x_r\in Z$, and let $i\colon U\setminus Z\to U$ be the immersion of the open complement to the semilocalization of $Z$ at the points $x_1,\dots,x_r$. 
Then, for any homotopy invariant presheaf with $A$-transfers $\scrF\in\PSh_\Sigma(\Cor_k^A;\Z)$, the homomorphism $i^*\colon \scrF(U)\to \scrF(U\setminus Z)$ is injective.
\end{theorem}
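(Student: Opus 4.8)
The plan is to follow the strategy of \Cref{section:inj-aff-line}: after a geometric presentation of the pair $(X,Z)$ near $x_1,\dots,x_r$, we shall construct a finite $A$-correspondence $\Phi\in\Cor^A_k(U,U\setminus Z)$ together with an $\A^1$-homotopy exhibiting $[i\circ\Phi]=[\id_U]$ in $\ol\Cor^A_k(U,U)$. Granting this, homotopy invariance of $\mathscr F$ (which forces $i_0^*$ and $i_1^*$ to agree on $\mathscr F$ of any $\A^1$-cylinder, exactly as in the proof of \Cref{lm:sqroot}) gives $\Phi^*\circ i^*=(i\circ\Phi)^*=\id_{\mathscr F(U)}$, so that $i^*$ is split injective, which is the assertion of the theorem. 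Here, as usual, $\mathscr F$ is evaluated on the essentially smooth schemes $U$ and $U\setminus Z$ through the canonical colimit.

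For the reductions, write $\mathscr F(U)=\varinjlim_W\mathscr F(W)$ and $\mathscr F(U\setminus Z)=\varinjlim_W\mathscr F(W\setminus Z)$, where $W$ runs over affine open neighbourhoods of $\{x_1,\dots,x_r\}$ in $X$; since enlarging $Z$ only shrinks the target, we may assume $X$ affine and connected and $Z=Z(g)$ for some nonzero $g\in\calO(X)$. Next — and this is the one point at which the hypothesis that $k$ be infinite enters — the geometric presentation results collected in \Cref{appendix} allow us, after shrinking $X$ once more to an affine neighbourhood of $\{x_1,\dots,x_r\}$, to realize $X$ as a smooth affine relative curve $p\colon X\to S$ over a smooth affine $k$-scheme $S$ with $\dim S=\dim X-1$, equipped with a fibrewise coordinate $t\in\calO(X)$ and a smooth projective compactification $\overline X\to S$ whose boundary $\overline X\setminus X$ is finite over $S$, in such a way that $Z$ is finite over $S$ and disjoint from the boundary. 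The coordinate $t$ trivializes $\omega_{X/S}$, and hence $\omega_{(X\setminus Z)\times_S U/U}$ after base change along $U\to S$.

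With such a presentation at hand, the construction of $\Phi$ is a routine adaptation of the helper lemma of \Cref{section:inj-aff-line}, carried out on the smooth relative curve $(X\setminus Z)\times_S U\to U$ in place of $\A^1_U$. Serre vanishing (\Cref{prop:CorofSerreTh}) and the Chinese remainder theorem (\Cref{ex:ChineseRemTh}), applied on $\overline X\times_S U$, produce monic functions $f$ and an interpolating homotopy function $h$ whose zero loci are finite over $U$ and $\A^1\times U$ respectively and whose restrictions are prescribed as in the tables of \Cref{section:inj-aff-line}; \Cref{constr:lrangleclass}, applied with the trivialization coming from $t$, then yields $\Phi'\in\Cor^A_k(U,U\setminus Z)$ and a homotopy $\Theta'$ with $\Theta'\circ i_0=i\circ\Phi'$ and $\Theta'\circ i_1=\langle\nu\rangle+\Phi''$, where \Cref{lm:sectZ(f)cor} identifies the diagonal term as a twist $\langle\nu\rangle$ by a unit $\nu\in\calO(U)^\times$ and $\Phi''\in\Cor^A_k(U,U\setminus Z)$; setting $\Phi\defeq(\Phi'-\Phi'')\circ\langle\nu^{-1}\rangle$ gives $[i\circ\Phi]=[\id_U]$, as required. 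The main obstacle is the geometric presentation step: producing, over the infinite field $k$, a relative-curve neighbourhood of the finitely many points $x_1,\dots,x_r$ in which $Z$ together with the locus at infinity is finite over the $(\dim X-1)$-dimensional base — and then, in the construction of $\Phi$, keeping track of the several line bundles and trivializations needed to invoke \Cref{constr:lrangleclass} simultaneously at all of $x_1,\dots,x_r$; everything else follows the template of \Cref{section:inj-aff-line}.
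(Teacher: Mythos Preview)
Your proposal is essentially correct and follows the same strategy as the paper: pass to a relative-curve presentation over $U$ (your $X\times_S U$ is the paper's $\calC$, produced directly by \Cref{lm:RelCurves} rather than via a Gabber-style presentation of $X$ over a base $S$), then use Serre's theorem on a projective compactification to manufacture sections whose divisors, fed into \Cref{constr:lrangleclass}, give $\Phi$ and an $\A^1$-homotopy to the canonical map. One small correction in your bookkeeping: the construction naturally lands $\Phi$ in $\Cor^A_k(U,X\setminus Z)$ for the shrunk neighborhood $X$, not in $\Cor^A_k(U,U\setminus Z)$, and the homotopy identifies $i\circ\Phi$ with $j\colon U\to X$ rather than with $\id_U$ (the support of $Z(\tilde s)$ has no reason to project into the diagonal section); injectivity of $i^*$ then follows by the colimit argument you already sketched, exactly as in the paper's deduction of \Cref{thm:inj-loc-sch} from \Cref{lemma:inj-loc-sch}.
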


\subsubsection{}\Cref{thm:inj-loc-sch} is an immediate consequence of the following moving lemma:

\begin{lemma}\label{lemma:inj-loc-sch}
Assume the hypotheses of \Cref{thm:inj-loc-sch}.
Then there exists a finite $A$-correspondence $\Phi\in\Cor^A_k(U,X\setminus Z)$ such that the diagram
\[\begin{tikzcd}
& X\setminus Z\ar[d,"i"]\\
U\ar[ur,"\Phi"]\ar[r,"j"] & X
\end{tikzcd}\]
commutes up to homotopy.
\end{lemma}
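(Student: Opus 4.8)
The plan is to follow the same strategy as in \Cref{section:rel-aff-line}: produce $\Phi$ as a divisorial $A$-correspondence coming from \Cref{constr:lrangleclass}, together with an explicit $\A^1$-homotopy connecting $i\circ\Phi$ to $j$, and then correct by a unit-class $\ip{\nu}$. The one genuinely new ingredient compared with \Cref{lm:AffExInj,lm:AffExSur} is that the target $X$ is now a general smooth scheme rather than an open in a relative affine line, so I would first perform a geometric reduction exhibiting a smooth relative curve over $U$ that dominates a neighbourhood of $j(U)$ in $X$.

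\emph{Geometric preparation.} First I would reduce to the case that $X$ is affine and connected of dimension $d$: the points $x_1,\dots,x_r$ are contained in a common affine open $X_0\subseteq X$, and a finite $A$-correspondence $\Phi_0\in\Cor^A_k(U,X_0\setminus Z_0)$ together with a homotopy over $X_0$ yields the required data over $X$ after composing with the open immersions $X_0\setminus Z_0\hookrightarrow X\setminus Z$ and $X_0\hookrightarrow X$. Embedding $X$ closed in some $\A^N$ and using that $k$ is infinite, a sufficiently general linear projection, followed by a general splitting $\A^d=\A^{d-1}\times\A^1$ with projections $q$ and $\ell$, can be arranged so that $\sigma\defeq(q,\ell)\colon X\to\A^d$ is finite, is étale at each $x_i$, and restricts to a finite morphism on (the closure of) $Z$; then on the open locus of $X$ where $\sigma$ is étale --- which still contains all the $x_i$ --- the morphism $q\colon X\to\A^{d-1}$ is a smooth relative curve and $d\ell$ trivializes $\omega_{X/\A^{d-1}}$, since the non-vanishing locus of $d\ell$ in $\omega_{X/\A^{d-1}}$ is exactly the étale locus of $\sigma$. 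These are standard general-position facts of the kind collected in \Cref{appendix}.

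\emph{The relative curve and the moving homotopy.} Put $B\defeq\A^{d-1}$ and let $\calC$ be a suitable open neighbourhood of the graph of $j$ inside $X\times_B U$ (the base change of $q$ along $q\circ j\colon U\to B$) on which the projection $p\colon\calC\to U$ is a smooth relative curve; write $g\colon\calC\to X$ for the other projection. Then $\mu\defeq g^*(d\ell)$ trivializes $\omega_{\calC/U}$, the morphism $u\mapsto(j(u),u)$ is a section $s\colon U\to\calC$ of $p$ with $g\circ s=j$, and $g^{-1}(Z)$ is finite over $U$ by finiteness of $Z$ over $B$. Passing to a relative compactification with a relatively ample line bundle, I would then use Serre's theorem \ref{prop:CorofSerreTh} and the Chinese remainder theorem \ref{ex:ChineseRemTh} to produce, for $n\gg 0$, sections $\bar s,\bar s'$ of suitable twists, and the pencil $\tilde s\defeq(1-\lambda)\bar s+\lambda\cdot(\text{defining section of }s(U))\cdot\bar s'$, all with vanishing loci finite over $U$ (resp.\ over $\A^1\times U$) by \Cref{lm:FinitnessOfVanLoc}, such that $\bar s$ and $\bar s'$ are invertible along $g^{-1}(Z)$ and along the divisor at infinity and $s(U)$ is disjoint from $Z(\bar s')$. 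Then \Cref{constr:lrangleclass} produces $\Phi'\defeq\Div^A(\bar s)^{\mu,g}\in\Cor^A_k(U,X\setminus Z)$ and $\Theta'\defeq\Div^A(\tilde s)^{\mu,g}\in\Cor^A_k(\A^1\times U,X)$ with $\Theta'\circ i_0=i\circ\Phi'$ and, by \Cref{lm:sectZ(f)cor}(a) applied to the $s(U)$-component of $Z(\tilde s|_{\lambda=1})$, with $\Theta'\circ i_1=j\circ\ip{\nu}+\Div^A(\bar s')^{\mu,g}$ for some $\nu\in\Gamma(U,\calO_U^\times)$. Since $Z(\bar s')\subseteq g^{-1}(X\setminus Z)$ the term $\Div^A(\bar s')^{\mu,g}$ may be regarded as a correspondence into $X\setminus Z$, so the correspondence $\Phi\defeq\bigl(\Phi'-\Div^A(\bar s')^{\mu,g}\bigr)\circ\ip{\nu^{-1}}\in\Cor^A_k(U,X\setminus Z)$ satisfies $[i\circ\Phi]=[j]$ in $\ol\Cor^A_k(U,X)$, as desired; \Cref{thm:inj-loc-sch} then follows because a homotopy-invariant presheaf with $A$-transfers sends this homotopy to the relation $\Phi^*\circ i^*=j^*$ on $\scrF$.

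\emph{Main obstacle.} The homotopy-theoretic bookkeeping is entirely parallel to \Cref{section:rel-aff-line}; the crux is the geometric preparation. One must choose a single projection $\sigma$ that is simultaneously finite on $X$, étale at all the $x_i$, and finite on $Z$, while keeping the points inside the locus where $q$ is a smooth relative curve with trivial relative canonical sheaf --- in particular one must ensure that restricting to this locus does not spoil the finiteness of $g^{-1}(Z)$ over $U$, for which it is cleanest to fix finiteness of $Z$ over $B=\A^{d-1}$ first and then replace $Z$ by the closure of its trace on the smaller scheme. (Alternatively, a Gabber-type presentation lemma, available since $k$ is infinite, reduces the problem to a relative affine line over a semilocal base, bringing it close to \Cref{th:injRelA1}.) Once the geometry is in place, checking that sections $\bar s,\bar s'$ with the listed incidences exist and that the relevant vanishing loci are finite over the appropriate base is a routine but careful application of the appendix lemmas, mirroring \Cref{lm:AffExInj,lm:AffExSur}.
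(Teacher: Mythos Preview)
Your proposal is correct and follows the same strategy as the paper: build a smooth relative curve over $U$ with a compactification, choose sections via Serre's theorem, apply \Cref{constr:lrangleclass}, and correct by $\ip{\nu^{-1}}$ using \Cref{lm:sectZ(f)cor}. The only organizational differences are that the paper packages your entire geometric preparation into \Cref{lemma:relcurve1} (the case $\pi=\id$ of \Cref{lm:RelCurves}) rather than constructing the relative curve by hand via a general linear projection, and it absorbs the extra $Z(s')$-term into the definitions of $\Phi'$ and $\Theta'$ from the outset rather than subtracting it at the end as you do.
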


\subsubsection{}We prove \Cref{lemma:inj-loc-sch} by constructing an appropriate relative curve $\calC$ over $U$ along with a good compactification $\ol\calC$ of $\calC$. The desired finite $A$-correspondence will then be defined by using certain sections on $\ol\calC$. 

\begin{lemma}\label{lemma:relcurve1}
Assume the hypotheses of \Cref{thm:inj-loc-sch}. Then there exists a diagram 
\[
X\xleftarrow{v}\calC\xrightarrow{j}\ol\calC\xrightarrow{p}U
\] 
in $\EssSm_k$, satisfying the following properties:
\begin{itemize}
\item[(1)] $p\colon\ol\calC\to U$ is a relative projective curve, $j\colon\calC\to\ol\calC$ is an open immersion, and the composition $p\circ j$ is smooth.  
\item[(2)] The map $p\circ j$ admits a section $\Delta\colon U\to \calC$. By abuse of notation, we write $\Delta$ also for the image of the morphism $\Delta$. 
\item[(3)] Let $\calZ\defeq v^{-1}(Z)\subseteq\calC$. Then $\calZ$ is finite over $U$. 
\item[(4)] $D\defeq\ol\calC\setminus\calC$ is finite over $U$.
\item[(5)] The relative curve $\ol\calC$ has an ample line bundle $\calO_{\ol\calC}(1)$.
\item[(6)] There is a trivialization $\mu\colon\calO_\calC\xrightarrow{\cong}\omega_{\calC/U}$. 
\end{itemize}
\end{lemma}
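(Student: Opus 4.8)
The plan is to read off the relative curve, its section, and its compactification from a geometric presentation of a small neighbourhood of $x_1,\dots,x_r$. First I would shrink $X$ and apply Quillen's trick, which is available since $k$ is infinite (see \Cref{appendix}): writing $n\defeq\dim X$, after replacing $X$ by an affine open neighbourhood $X^0$ of $\{x_1,\dots,x_r\}$ there is a morphism $q\colon X^0\to\A^{n-1}_k$ that factors as $X^0\xrightarrow{e}\A^{n-1}_k\times\A^1_k\xrightarrow{\pr}\A^{n-1}_k$ with $e$ étale, and such that $q|_{Z\cap X^0}\colon Z\cap X^0\to\A^{n-1}_k$ is finite; in particular $q$ is smooth of relative dimension $1$.

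Next I would base change along the composite $\iota\colon U\hookrightarrow X^0\xrightarrow{q}\A^{n-1}_k$ and set $\calC\defeq X^0\times_{\A^{n-1}_k}U$. Then $\calC\to U$ is smooth of relative dimension $1$, so $\calC\in\EssSm_k$, which gives the smoothness assertion in (1). The graph of $\iota$ over $\A^{n-1}_k$ is a section $\Delta\colon U\to\calC$ of $\calC\to U$; writing $v\colon\calC\to X^0\hookrightarrow X$ for the first projection, $v\circ\Delta$ is the canonical inclusion $U\hookrightarrow X$, which is (2). For (3), $\calZ\defeq v^{-1}(Z)=(Z\cap X^0)\times_{\A^{n-1}_k}U$ is finite over $U$, being the base change of the finite map $q|_{Z\cap X^0}$. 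For (6), the étale morphism $e$ gives $\omega_{X^0/\A^{n-1}_k}\cong e^*\omega_{(\A^{n-1}_k\times\A^1_k)/\A^{n-1}_k}\cong\calO_{X^0}$, a trivialization being $e^*(dt)$ for the coordinate $t$ on $\A^1_k$; pulling back along $\calC\to X^0$ and using the base-change isomorphism for relative differentials produces the trivialization $\mu\colon\calO_\calC\xrightarrow{\cong}\omega_{\calC/U}$.

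It remains to compactify. Here I would invoke the good-compactification statement for a smooth affine relative curve over a semilocal base collected in \Cref{appendix}: there is a relative projective curve $p\colon\ol\calC\to U$ (which may be taken smooth over $U$, so $\ol\calC\in\EssSm_k$) carrying a relatively ample line bundle $\calO_{\ol\calC}(1)$, together with an open immersion $j\colon\calC\hookrightarrow\ol\calC$ realizing $\calC$ as a fibrewise-dense open with $p\circ j$ its smooth structure map. This gives (5) and the remaining parts of (1). Finally $D\defeq\ol\calC\setminus\calC$ is closed in the projective $U$-scheme $\ol\calC$, hence proper over $U$, and it is quasi-finite over $U$ because $\calC$ is fibrewise dense; a proper quasi-finite morphism is finite, which is (4).

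The step I expect to be the main obstacle is the construction of $\ol\calC$ with $D$ finite over $U$: the naïve projective closure of $X^0$ inside $\A^{n-1}_k\times\PP^1_k$ is not available, since $e$ is merely étale and need not be finite, so the boundary is not visibly quasi-finite over the base. One must instead argue through Zariski's main theorem and a relative normalization of $\PP^1$ over the (semilocal) base, using that $U$ is regular and that the fibres of $\calC$ are curves, so as to prevent the boundary from acquiring whole fibral components; then $D$ is quasi-finite and, being proper, finite over $U$. Everything else is routine base change and functoriality of the constructions of \Cref{section:ccorrs}.
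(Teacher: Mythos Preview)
The paper's proof is a single line: apply \Cref{lm:RelCurves} with $\pi=\id\colon X\to X$. That lemma, stated in the appendix and proved in \cite[Lemma~3.7]{GWStrHomInv}, already packages together the geometric presentation, the section $\Delta$, the finiteness of $\calZ$ and $D$, the trivialization $\mu$, and the ample bundle on $\ol\calC$. Your proposal instead unpacks this black box and reconstructs the relative curve directly via Quillen's trick followed by a compactification argument. That is essentially how \Cref{lm:RelCurves} itself is proved, so your route is correct in outline but longer; what the paper buys by citing the lemma is that the delicate compactification step (Zariski's main theorem plus a normalization argument to control the boundary) has already been carried out once and for all in the cited reference.

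Two small points. First, your references to ``\Cref{appendix}'' are slightly off: the appendix of this paper does not record Quillen's trick or a stand-alone good-compactification statement; it only states the fully packaged \Cref{lm:RelCurves}. Second, your parenthetical that $\ol\calC$ ``may be taken smooth over $U$'' is not generally true when $\dim U>1$; the normalization-based compactification only yields a normal $\ol\calC$, and indeed the lemma asks only that $p\circ j$ be smooth, not $p$. You invoke smoothness of $\ol\calC$ over $U$ solely to place $\ol\calC$ in $\EssSm_k$, but that conclusion is not needed for any of the subsequent arguments in \Cref{section:inj-loc-sch}, which use only the projectivity of $\ol\calC$ over $U$ and the ample bundle. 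So this overclaim is harmless, but you should drop it.
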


\begin{proof}
We apply \Cref{lm:RelCurves} with $\pi=\id\colon X\to X$.
\end{proof}

\begin{proof}[Proof of \Cref{lemma:inj-loc-sch}.]
First of all we apply \Cref{lemma:relcurve1}. Then it follows from Serre's theorem \ref{prop:CorofSerreTh} that there is an integer $l\gg0$ and a section $d\in\Gamma(\ol\calC,\calO(l))$ such that $D\subseteq Z(d)$, $Z(d)\cap \calZ=\varnothing$ and $Z(d)$ is finite over $U$. For notational simplicity, let us redenote $\calO(l)$ by $\calO(1)$, and redenote $D\defeq Z(d)$. Now our aim is to construct the following sections:
\begin{center}
\begin{tabular}{@{}>{$}l<{$}>{$}l<{$}>{$}l<{$}>{$}l<{$}@{}}\toprule
s\in \Gamma(\ol\calC,\calO(n) ) &
\wt s\in \Gamma(\ol\calC\times\stackrel{\lambda}{\A^1},\calO(n) ) &
s^\prime \in \Gamma(\ol\calC,\calO(n)\otimes\scrL(\Delta)^{-1}) &
\delta\in\Gamma(\ol\calC,\scrL(\Delta))  \\
\midrule
Z(s|_{\calZ\amalg D})=\varnothing & \wt s|_{\ol\calC\times 0}=s & Z(s'|_{\calZ\amalg D\amalg\Delta})=\varnothing & Z(\delta)=\Delta \\
 & \wt s|_{\ol\calC\times1}=s'\otimes\delta &  & \\
& \wt s|_{D\times\A^1}=s & &\\
\bottomrule
\end{tabular}
\end{center}
To do this, let $\delta$ be a section of $\scrL(\Delta)$ with $Z(\delta)=\Delta$, and choose, using \Cref{prop:CorofSerreTh}, an integer $n\gg0$ such that the restriction maps
\begin{align*}
\Gamma(\ol\calC,\calO(n)\otimes\scrL(\Delta)^{-1}) &\to\Gamma(\calZ\amalg D\amalg\Delta,\calO(n)\otimes\scrL(\Delta)^{-1}),\\
\Gamma(\ol\calC,\calO(n))&\to\Gamma(\calZ\amalg D,\calO(n))
\end{align*}
are surjective. We can then find a global section $s'$ of $\calO(n)\otimes\scrL(\Delta)^{-1}$ such that $s'|_{\calZ\amalg D\amalg\Delta}$ is invertible. Let $s$ be a lift of $s'\delta|_{\calZ\amalg D}\in\Gamma(\calZ\amalg D,\calO(n))$, and define $\wt s\defeq(1-\lambda)s+\lambda s'\otimes\delta$. 
We now aim to apply \Cref{constr:lrangleclass} to the diagrams
\[\begin{tikzcd}
\calC\ar[rr,shift left,"s'\otimes\delta/d^n"]\ar[rr,shift right,swap,"s/d^n"]\ar{d}[swap]{p\circ j}\ar{drr}[swap]{v} & & \A^1 & & \calC\times\A^1\ar{d}[swap]{(p\circ j)\times\A^1}\ar{r}{\wt s/d^n}\ar{dr}{v\circ\pr} & \A^1\\
U & & X, & & U\times\A^1 & X.
\end{tikzcd}\]
Here $\pr\colon\calC\times\A^1\to\calC$ is the projection. By \Cref{lm:FinitnessOfVanLoc}, the vanishing loci $Z(s)$ and $Z(\widetilde s)$ 
are finite over $U$ and $U\times\A^1$, respectively.
Hence we obtain finite $A$-correspondences
\begin{align*}
\Phi'&\defeq \Div^A( s/d^n )_{Z(s)}^{\mu,v}-\Div^A( s'\otimes\delta/d^n )_{Z(s')}^{\mu,v}\in\Cor_k^A(U,X\setminus Z),\\
\Theta'&\defeq \Div^A( \tilde s/d^n )_{Z(\wt s)}^{\mu,v\circ \pr^{\calC\times\A^1}_{\calC}}-\Div^A( s'\otimes\delta/d^n )_{Z(s')}^{\mu,v}\circ\pr_U^{U\times\A^1}\in\Cor_k^A(U\times\A^1,X).
\end{align*}
Then the properties of the sections above imply that
$
\Theta^\prime\circ i_0=i\circ \Phi^\prime
$,
and
\Cref{lm:sectZ(f)cor} implies that
$ 
\Theta^\prime\circ i_1=
j\circ \langle\nu\rangle
$
for some $\nu\in k[U]^\times$.
Now let
$\Phi\defeq\Phi^\prime\circ  \langle\nu^{-1}\rangle$.
Then $\Theta\defeq \Theta'\circ  \langle\nu^{-1}\rangle$
gives the required homotopy, satisfying
$\Theta\circ i_0=i\circ \Phi$ and
$j=\Theta\circ i_1$.
\end{proof}

\section{Étale excision}\label{section:et-exc}

In this section we assume that the base field is infinite. The main result of the section is the following étale excision result for homotopy invariant presheaves with $A$-transfers:

\begin{theorem}\label{th:etex}
Let $X\in\Sm_k$ and suppose that $\pi\colon (X^\prime, Z^\prime)\to (X,Z)$ is an étale neighborhood of $Z$ in $X$. Assume also that $z\in Z$ and $z^\prime\in Z^\prime$ are two closed points such that $\pi(z^\prime)=z$. Write $U\defeq X_z=\Spec\calO_{X,z}$ for the corresponding local scheme, and similarly $U'\defeq X'_{z'}$. 
Then, for any homotopy invariant presheaf with $A$-transfers $\scrF\in\PSh_\Sigma(\Cor_k^A;\Z)$, the map $\pi^*$ induces an isomorphism 
\[
\pi^*\colon\scrF (X_z\setminus Z_z)/\scrF (X_z)\xrightarrow{\cong} \scrF (X_{z'}^\prime\setminus Z_{z'}^\prime)/\scrF (X_{z'}^\prime).
\] 
\end{theorem}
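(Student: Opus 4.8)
The statement is an étale-excision result of the same shape as \Cref{th:ExRelAffL} and \Cref{thm:inj-loc-sch}, and the plan is to prove it by the relative-curve technique already used for those theorems. First note that by \Cref{thm:inj-loc-sch} the maps $\scrF(X_z)\to\scrF(X_z\setminus Z_z)$ and $\scrF(X'_{z'})\to\scrF(X'_{z'}\setminus Z'_{z'})$ are injective, so the quotients in the statement make sense; moreover, after shrinking $X'$ around $z'$ we may assume $\pi^{-1}(Z)=Z'$, so that $\pi$ is a morphism of pairs $(X'_{z'},X'_{z'}\setminus Z'_{z'})\to(X_z,X_z\setminus Z_z)$, inducing $\pi^*$ on quotients via the graph functor. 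The plan is then to produce, in the category $\Cor^{A,\pair}_k$, a finite $A$-correspondence of pairs $\Phi$ from $(X_z,X_z\setminus Z_z)$ to $(X'_{z'},X'_{z'}\setminus Z'_{z'})$ together with $\A^1$-homotopies realizing $[\pi\circ\Phi]=[\id_{(X_z,X_z\setminus Z_z)}]$ and $[\Phi\circ\pi]=[\id_{(X'_{z'},X'_{z'}\setminus Z'_{z'})}]$ in the respective homotopy categories $\ol\Cor^{A,\pair}_k$. Applying $\scrF$ and using homotopy invariance, the first identity shows $\pi^*$ is a split monomorphism (with retraction $\Phi^*$) and the second that it is a split epimorphism, whence it is an isomorphism. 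As in \Cref{th:ExRelAffL}, it is harmless to establish the two identities separately, possibly with different correspondences.

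The construction of $\Phi$ mirrors \Cref{lemma:inj-loc-sch}, now with the étale map $\pi$ threaded through the relative curve. Concretely, I would invoke the relative-curve lemma \Cref{lm:RelCurves} of \Cref{appendix} — applied this time to the genuine morphism $\pi\colon X'\to X$ rather than to $\id_X$ — to obtain an essentially smooth diagram $X\xleftarrow{v}\calC\xrightarrow{j}\ol\calC\xrightarrow{p}U$ with $U=X_z$, carrying a lift $v'\colon\calC\to X'$ of $v$ through $\pi$, such that $p$ is a relative projective curve with $p\circ j$ smooth; $p\circ j$ has a section $\Delta$ with $v\circ\Delta$ the inclusion $U\hookrightarrow X$; the closed sets $\calZ\defeq v^{-1}(Z)$, $\calZ'\defeq(v')^{-1}(Z')$ and $D\defeq\ol\calC\setminus\calC$ are finite over $U$; $\ol\calC$ carries an ample line bundle; and there is a trivialization $\mu\colon\calO_\calC\xrightarrow{\cong}\omega_{\calC/U}$. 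With this in hand the rest of the argument is a faithful repetition of the proof of \Cref{lemma:inj-loc-sch}: use Serre's theorem \Cref{prop:CorofSerreTh} to replace $\calO(1)$ by a high power and to produce a section $d$ with $D\subseteq Z(d)$, $Z(d)\cap(\calZ\cup\calZ')=\varnothing$, and $Z(d)$ finite over $U$; build global sections $s,\wt s,s',\delta$ of suitable twists whose restrictions to $\calZ\amalg\calZ'\amalg D\amalg\Delta$ are prescribed so that the divisors $\Div^A(s/d^n)$ and $\Div^A((s'\otimes\delta)/d^n)$ are supported away from $Z$ and $Z'$ while $\wt s$ interpolates between $s$ and $s'\otimes\delta$ and stays constant on $D$; apply \Cref{constr:lrangleclass} to the resulting diagrams, using the trivialization $\mu$ and the map $v'$ into $X'$, to obtain a correspondence $\Phi'\in\Cor^A_k(U,X'\setminus Z')$ and a homotopy $\Theta'\in\Cor^A_k(U\times\A^1,X')$; kill the spurious summand via \Cref{lm:ZeroPairC} and correct by $\langle\nu^{-1}\rangle$ using \Cref{lm:sectZ(f)cor}, where $\nu\in k[U]^\times$ is the unit it provides. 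Since all homotopies so constructed are relative over $U$, they pass to the categories of correspondences of pairs, exactly as in the last paragraph of the proof of \Cref{lm:sqroot}; and the relations $v\circ\Delta=(U\hookrightarrow X)$ and $v=\pi\circ v'$ then identify $\pi\circ\Phi$, after localizing at $z$ and shrinking the Zariski neighborhood of $z'$, with a section-type correspondence that becomes $\id_{(X_z,X_z\setminus Z_z)}$ in $\ol\Cor^{A,\pair}_k$.

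For the reverse identity $[\Psi\circ\pi]=[\id_{(X'_{z'},X'_{z'}\setminus Z'_{z'})}]$ I would run the symmetric construction, building the relative curve over $U'=X'_{z'}$ instead; because $\pi$ is an étale neighborhood, hence an isomorphism over $Z'\xrightarrow{\cong}Z$, the data needed to invoke \Cref{lm:RelCurves} downstairs transports upstairs, and the same recipe yields $\Psi$ with the required property. I expect none of the cohomological bookkeeping above to be the real obstacle — it is routine given \Cref{constr:lrangleclass} and the axioms of $A^*$. The hard part is the geometric input: producing a single relative curve over $U$ (resp. $U'$) equipped with compatible morphisms $v\colon\calC\to X$ and $v'\colon\calC\to X'$ with $v=\pi\circ v'$ and with $v^{-1}(Z)$, $(v')^{-1}(Z')$ and $\ol\calC\setminus\calC$ all finite over the base and suitably disjoint. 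This is precisely where the hypothesis that $\pi$ is an isomorphism over $Z$ is indispensable — it is what lets the divisor in the construction, which can be arranged to be supported in a neighborhood of the $Z$-locus, be viewed consistently upstairs and downstairs — and this construction is the substance of \Cref{lm:RelCurves} in \Cref{appendix}; granting it, \Cref{th:etex} follows by the argument sketched here.
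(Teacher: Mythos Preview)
Your overall strategy is correct and matches the paper's: split into injectivity and surjectivity of $\pi^*$ by producing, via \Cref{constr:lrangleclass} on a relative curve, correspondences of pairs $\Phi$ with $[\pi\circ\Phi]=[\id]$ and $\Psi$ with $[\Psi\circ\pi]=[\id]$. However, two genuine gaps separate your sketch from a proof.

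\textbf{The output of \Cref{lm:RelCurves} is not a single curve with a lift.} You describe ``a lift $v'\colon\calC\to X'$ of $v$ through $\pi$'' on one relative curve $\calC$ over $U$. That is not what the lemma produces: it gives an \emph{étale morphism of relative curves} $\varpi\colon\calC'\to\calC$ over $U$, with $v\colon\calC\to X$ and $v'\colon\calC'\to X'$ satisfying $\pi\circ v'=v\circ\varpi$. Crucially, the section $\Delta$ lives in $\calC$, not in $\calC'$. So for injectivity you cannot simply ``read off'' the relation $\pi\circ\Phi\sim i$ from $v=\pi\circ v'$; you must build $\Phi'$ on $\calC'$ supported on a locus that $\varpi$ embeds into $\calC$. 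The paper does this via \Cref{lm:clEnbSect} (producing a section $\xi$ on $\ol\calC'$ with $Z(\xi)\hookrightarrow\calC$), and then needs a further explicit homotopy---embedding a neighborhood of the lifted support into $\A^r\times V$ and interpolating the equations---to identify $\pi\circ\Phi'$ with $\Theta\circ i_0$. None of this is visible in your account.

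\textbf{The surjectivity direction is not ``the same recipe''.} Running the construction over $U'$ (the paper uses $\calC''=\calC'\times_U U'$) you end, after \Cref{lm:sectZ(f)cor}, with $\Theta'\circ i_1 = i'\circ\langle\lambda'\rangle$ for some unit $\lambda'\in k[U']^\times$. The asymmetry is that $\Psi$ is a correspondence \emph{from $U$}, so you cannot simply precompose by $\langle{\lambda'}^{-1}\rangle$ as you could in the injectivity step where the unit lived on the source. You need the separate result \Cref{lm:locup}: if $\lambda$ restricts to $1$ on $Z\times_X U$, then $i\circ\langle\lambda\rangle\sim_{\A^1} i$ as correspondences of pairs. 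Its proof passes to the étale cover $X'=\Spec k[X][w]/(w^2-\lambda)$---this is where the standing hypothesis $\Char k\neq2$ enters---and then invokes the already-proved injectivity lemma together with \Cref{lm:sqroot}. Without this lemma you cannot cancel the $\langle\lambda'\rangle$, and your surjectivity argument does not close.
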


%
\newcommand{\ovcCpp}{\overline{\cCpp}}
\newcommand{\cCpp}{ {\mathcal C^{\prime\prime}} }
\newcommand{\ovcCp}{\overline{\cCp}}
\newcommand{\cCp}{ {\mathcal C^\prime} }
\newcommand{\ovcC}{\overline{\cC}}
\newcommand{\cC}{\mathcal C }
\newcommand{\cZ}{\mathcal Z}
\newcommand{\cZp}{\cZ^\prime}
\newcommand{\cZpp}{\cZ^{\prime\prime}}
\newcommand{\Upri}{U^\prime}
\newcommand{\Dpri}{D^\prime}
\newcommand{\Dppri}{D^{\prime\prime}}
\newcommand{\cCinf}{\cC_\infty}
\newcommand{\cCinfp}{\cC^\prime_\infty}
\newcommand{\cCinfpp}{\cCpp_\infty}
\newcommand{\ovC}{\overline{C}}
\newcommand{\ovCp}{\overline{C^\prime}}
\newcommand{\Xpri}{X^\prime}
\newcommand{\ovX}{\overline X}
\newcommand{\ovXp}{\overline{X^\prime}}
\newcommand{\Xinf}{X_\infty}
\newcommand{\Xinfp}{X_\infty^\prime}
\newcommand{\Zpri}{Z^\prime}
\newcommand{\ovZ}{\overline Z}
\newcommand{\ovZp}{\overline{Z^\prime}}
\newcommand{\Zinf}{Z_\infty}
\newcommand{\Zinfp}{Z_\infty^\prime}
\newcommand{\ovpi}{\overline{\pi}}

\subsubsection{}
The proof of \Cref{th:etex} relies on some geometric input. Our main tool for this is \Cref{lm:RelCurves}; we refer the reader to the appendix for details around this construction. 

Having \Cref{lm:RelCurves} at hand, we start out by showing that the map $\pi^*$ is injective:

\begin{lemma}\label{lm:injACoretex}
Under the assumptions of \Cref{th:etex}
there is a finite $A$-correspondence $\Phi\in\Cor^A_k(U,X^\prime)$ satisfying $\pi\circ \Phi\sim_{\A^1}i$, where $i\colon U\to X$ denotes canonical embedding.
\end{lemma}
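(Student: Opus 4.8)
The strategy mirrors the proof of \Cref{lemma:inj-loc-sch}: we will build a suitable compactified relative curve over $U$ equipped with a trivialization of the relative canonical class, and then assemble the correspondence $\Phi$ and the homotopy from carefully chosen sections of an ample line bundle. First I would invoke \Cref{lm:RelCurves} from the appendix — this time applied to the morphism $\pi\colon X'\to X$ rather than to $\id_X$ — to obtain a diagram $X'\xleftarrow{v}\calC\xrightarrow{j}\ol\calC\xrightarrow{p}U$ in $\EssSm_k$ enjoying the analogues of properties (1)–(6) of \Cref{lemma:relcurve1}: $p$ is a relative projective curve with $p\circ j$ smooth admitting a section $\Delta\colon U\to\calC$, the preimage $\calZ\defeq v^{-1}(Z')$ is finite over $U$, $D\defeq\ol\calC\setminus\calC$ is finite over $U$, $\ol\calC$ carries an ample line bundle $\calO_{\ol\calC}(1)$, and there is a trivialization $\mu\colon\calO_\calC\xrightarrow{\cong}\omega_{\calC/U}$. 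The key extra feature we extract from \Cref{lm:RelCurves} is that, because $\pi$ restricts to an isomorphism on $Z'$, the composite $v\colon\calC\to X'\xrightarrow{\pi}X$ should already factor the canonical embedding appropriately, i.e.\ $\pi\circ v$ together with the section $\Delta$ realizes $i\colon U\to X$ up to the correspondence we are about to build.

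Next, exactly as in the proof of \Cref{lemma:inj-loc-sch}, I would use Serre's theorem \ref{prop:CorofSerreTh} to produce an integer $l\gg0$ and a section $d\in\Gamma(\ol\calC,\calO(l))$ with $D\subseteq Z(d)$, $Z(d)\cap\calZ=\varnothing$ and $Z(d)$ finite over $U$, and redenote $\calO(l)$ by $\calO(1)$ and $D\defeq Z(d)$. Then, picking $\delta\in\Gamma(\ol\calC,\scrL(\Delta))$ with $Z(\delta)=\Delta$ and choosing $n\gg0$ so that the restriction maps to $\calZ\amalg D\amalg\Delta$ and to $\calZ\amalg D$ are surjective, I would select sections $s\in\Gamma(\ol\calC,\calO(n))$, $s'\in\Gamma(\ol\calC,\calO(n)\otimes\scrL(\Delta)^{-1})$ with $s'|_{\calZ\amalg D\amalg\Delta}$ invertible and $s|_{\calZ\amalg D}$ equal to a lift of $s'\delta|_{\calZ\amalg D}$, and set $\wt s\defeq(1-\lambda)s+\lambda s'\otimes\delta$. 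Applying \Cref{constr:lrangleclass} to the relative curve $\calC\to U$ (and to $\calC\times\A^1\to U\times\A^1$) with the map $v\colon\calC\to X'$ and the trivialization $\mu$, I obtain $\Phi'\defeq\Div^A(s/d^n)^{\mu,v}_{Z(s)}-\Div^A(s'\otimes\delta/d^n)^{\mu,v}_{Z(s')}\in\Cor^A_k(U,X')$ together with a homotopy $\Theta'\in\Cor^A_k(U\times\A^1,X)$ (here the target is $X$, not $X'$, since after composing with $\pi$ the term $s'\otimes\delta$ need no longer avoid $Z$). By \Cref{lm:sectZ(f)cor}, $\Theta'\circ i_0=\pi\circ\Phi'$ and $\Theta'\circ i_1=i\circ\ip\nu$ for some $\nu\in k[U]^\times$; rescaling, $\Phi\defeq\Phi'\circ\ip{\nu^{-1}}$ and $\Theta\defeq\Theta'\circ\ip{\nu^{-1}}$ give $\pi\circ\Phi\sim_{\A^1}i$, as desired.

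The main obstacle I anticipate is verifying that the geometric input from \Cref{lm:RelCurves}, applied to the étale neighborhood $\pi$ rather than to the identity, genuinely produces a relative curve $\calC$ on which the section $\Delta$ and the map $v$ cooperate so that $v\colon\calC\to X'$ lifts $i\colon U\to X$ along $\pi$ in the required sense — in other words, that the divisor of $s'\otimes\delta$, pushed forward along $v$, really does land in $X'$ and reduces modulo $\A^1$-homotopy to (a unit multiple of) the embedding $\pi^{-1}(i)$. This is the point where the hypothesis that $\pi$ is an isomorphism over $Z$, and the finiteness of $\calZ=v^{-1}(Z')$ over $U$, must be used essentially; once the relative curve is in hand, the section-juggling argument is formally identical to that of \Cref{lemma:inj-loc-sch}. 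A secondary technical point is to confirm that all constructions remain valid in $\EssSm_k$, i.e.\ that \Cref{constr:lrangleclass}, \Cref{lm:sectZ(f)cor} and the correspondence formalism extend to essentially smooth local base schemes $U$ via the colimit definitions set up earlier in \Cref{section:ccorrs}.
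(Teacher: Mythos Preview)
Your plan has a real gap at precisely the point you flag as the ``main obstacle.'' What \Cref{lm:RelCurves} actually supplies, when fed the \'etale neighbourhood $\pi\colon(X',Z')\to(X,Z)$, is \emph{not} a single relative curve over $U$ mapping to $X'$ and carrying a section realizing $i\colon U\to X$. It gives a pair of curves: $\calC\xrightarrow{v}X$ and $\calC'\xrightarrow{v'}X'$, linked by an \'etale morphism $\varpi\colon\calC'\to\calC$. The diagonal section $\Delta$ lies in $\calC$, not in $\calC'$, and there is no reason for $\Delta$ to lift along $\varpi$; the fibre $\varpi^{-1}(\Delta)$ is \'etale over $U$ but need not contain a copy of $U$. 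So you cannot simply rerun the proof of \Cref{lemma:inj-loc-sch} on a curve mapping to $X'$: the curve that carries the section (and hence the endpoint $\Theta'\circ i_1=i\circ\ip\nu$) maps to $X$, while the curve that sees $X'$ has no such section.

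The paper therefore defines $\Theta'$ on $\calC$ (target $X$) and $\Phi'$ on $\calC'$ (target $X'$), and then must bridge the resulting mismatch $\Theta'\circ i_0\not=\pi\circ\Phi'$. Two additional ingredients enter. First, the section $s$ on $\ol\calC$ is chosen, via \Cref{lm:clEnbSect}, so that its vanishing locus $Z(s)=Z_0\amalg Z_0'$ admits a lift $l\colon Z_0\to\calC'$ with $\varpi\circ l=\id_{Z_0}$; this is what lets one define $\Phi'=\Div^A(\varpi^*(s/d^n))_{Z_0}^{\varpi^*(\mu),v'}$ on $\calC'$ at all. Second, one constructs an explicit $\A^1$-homotopy between $\Div^A(s/d^n)^{\mu,v}$ on $\calC$ and $\pi\circ\Phi'=\Div^A(\varpi^*(s/d^n))^{\varpi^*(\mu),\,v\circ\varpi}$ on $\calC'$ by embedding a neighbourhood of $l(Z_0)$ into $\A^r\times\calC$ and linearly deforming the defining equations to the coordinate functions. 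Your proposed identification ``$\Theta'\circ i_0=\pi\circ\Phi'$ by \Cref{lm:sectZ(f)cor}'' elides exactly this step; \Cref{lm:sectZ(f)cor} is used only for the other endpoint $\Theta'\circ i_1$.
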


\begin{proof}
Applying \Cref{lm:RelCurves} we obtain a morphism of relative curves $\varpi\colon \calC^\prime\to \calC$ over $U$, with compactification $\overline{\varpi}\colon \ovcCp\to\ovcC$, and subschemes $D,\Delta,\calZ\subseteq\ovcC$, $\Dpri,\Delta_Z^\prime,\cZp\subseteq\ovcCp$ as in \Cref{lm:RelCurves}. 
Let $\delta\in \Gamma(\ovcC,\scrL(\Delta))$ be a section such that $Z(\delta)=\Delta$.
Our first aim is to prove that there is an integer $N$ such that for all $n\ge N$, 
there exist 
sections satisfying the following conditions:
\leavevmode
\begin{center}
\begin{tabular}{@{}>{$}l<{$}>{$}l<{$}>{$}l<{$}@{}}\toprule
s\in \Gamma( \ovcC , \mathcal O(n) ) &
\tilde s\in \Gamma( \ovcC\times \stackrel{\lambda}{\A^1} , \mathcal O(n) ) &
s^\prime \in \Gamma( \ovcC , \mathcal O(n)\otimes\scrL(\Delta)^{-1} ) \\
\midrule
& \tilde s\big|_{\ovcC\times 0}=s & \tilde s\big|_{\ovcC\times 1}=\delta \otimes s^\prime\\ 
Z(s\big|_{D})= \varnothing
& Z(\tilde s\big|_{D\times\A^1})= \pr^*(s) &\\ 
s\big|_{\mathcal Z} =\delta \otimes s^\prime & \tilde s\big|_{\mathcal Z\times\A^1} =\delta \otimes s^\prime  & Z(s^\prime\big|_{\mathcal Z})=\varnothing\\
& Z(\tilde s)\cap Z(d)=\varnothing &\\\bottomrule
\end{tabular}
\end{center}
In addition, we will require that $Z(s)=Z_0\amalg Z^\prime_0$ and that there exists
a regular map $l\colon Z_0\to \cCp$ satisfying $\varpi \circ l=\id_{Z_0}$.
Here $\pr\colon \ovcC\times\A^1\to \ovcC$ is the canonical projection.

To do this we start the following preparations.
Let $\calO_{\cCp}(1)\defeq \overline{\varpi}^*(\calO(1))$. Then, since $\overline{\varpi}$ is finite, $\calO_{\cCp}(1)$ is an ample bundle on $\ovcCp$.
Since $\varpi$ induces isomorphisms $\cZp\cong\calZ$ and $\Delta_Z^\prime\cong \Delta\times_{\cC} \calZ$, 
there is a section $\delta^\prime\in \Gamma(\cZp,\scrL^\prime)$ such that $Z(\delta^\prime)=\Delta_Z^\prime$ for some line bundle $\scrL^\prime$ on $\cZp$.
Since $\cZp$ is a finite scheme over a local scheme $U$, $\cZp$ is semilocal and any line bundle on $\cZp$ is trivial.
Hence there is an isomorphism $\scrL^\prime\cong \calO_{\cCp}(m)\big|_{\calZ'}$ for any $m\in \Z$.
Similarly, since the subscheme $\Dpri\subseteq\ovcCp$ is finite over $U$, for any $m\in \Z$, the line bundle $\calO_{\cCp}(m)\big|_{\Dpri}$ is trivial.
Now, applying \Cref{lm:clEnbSect}
to the morphism $\overline\varpi\colon \ovcCp\to \ovcC$ and the subschemes $\Dpri$ and $\calZ$
we construct, 
for some $m\in \Z$, a section $\xi\in \Gamma(\cCp,\calO_{\calC^\prime}(m))$ 
such that there is a closed embedding $Z(\xi)\to \calC$, and such that
$Z(\xi\big|_{\overline\varpi^{-1}(\calZ)})=\Delta_Z^\prime$. 
Define $Z_0\defeq{\overline\varpi}(Z(\xi))\subseteq \calC\subseteq\overline \calC$ and put $\scrL\defeq\scrL(Z)$. Let $\zeta\in \Gamma(\overline \calC,\scrL)$ be a section with $Z(\zeta)=Z$. Then $Z(\zeta\big|_{\calZ})=\Delta_Z$.

Using Serre's theorem \ref{prop:CorofSerreTh} we can choose an integer $N\in \Z$ such that for all $n\ge N$, the restriction homomorphisms 
\begin{align*}\Gamma(\overline\calC,\calO(n)\otimes\scrL^{-1})&\to \Gamma(\calZ\amalg D,\calO(n)\otimes\scrL^{-1})\\
\Gamma(\overline\calC,\calO(n))& \to \Gamma( (\calZ\cup \Delta) \amalg D,\calO(n))
\end{align*}
are surjective.
Then, since $\calZ\amalg D$ is semilocal, 
there is a section $\zeta^\prime\in \Gamma(\ol\calC, \calO(n)\otimes\scrL^{-1})$ such that $\zeta^\prime\big|_{\calZ\amalg D}$ is invertible.
Define $s\defeq\zeta\otimes \zeta^\prime\in \Gamma(\overline\calC, \calO(n))$.

Now choose a section $s_1\in \Gamma(\overline\calC, \calO(n))$ such that $s_1\big|_{\Delta}=0$ and $s_1\big|_{\calZ} =s$. We then put $\tilde s\defeq(1-\lambda)s+\lambda s_1$.
Since $s_1\big|_{\Delta}=0$, there is a section $s^\prime \in \Gamma(\overline\calC, \calO(n)\otimes \scrL(\Delta)^{-1})$ such that $s_1=\delta \otimes s^\prime$, where $\delta\in \Gamma(\overline\calC, \scrL(\Delta))$ satisfies $Z(\delta)=\Delta$.
Moreover, since by construction $Z(s_1\big|_{\calZ})=\Delta_Z=Z(\delta\big|_{\calZ})$, it follows that $s^\prime\big|_{\calZ}$ is invertible and so $Z(s^\prime\big|_{\calZ})=\varnothing$.
Hence the desired sections $s$, $\tilde s$, and $s^\prime$ are constructed.
Moreover it follows by \Cref{lm:FinitnessOfVanLoc} now that $Z(s)$ and $Z(\widetilde s)$ are finite over $U$ and $U\times\A^1$ respectively.

By construction, the morphism $\varpi$ induces an isomorphism between the closed subschemes $l(Z_0)\subseteq \calC^\prime$ and $Z_0$. Since $\varpi$ is étale,
it follows that 
$\varpi^{-1}(Z_0)=l(Z_0)\amalg \wh Z_0$. Hence we can define an étale neighborhood 
$\varpi^+\colon (\calC^\prime\setminus\wh Z_0,l(Z_0))\to (\calC,Z_0)$ such that $\varpi^+(Z_0)=l(Z_0)$. Consider the diagrams
\[\begin{tikzcd}
\calC'\ar{d}[swap]{p\circ j\circ\varpi}\ar{r}{\varpi^*(s/d^n)}\ar{dr}{v'} & \A^1 & & \calC\times\A^1\ar{d}[swap]{(p\circ j)\times\A^1}\ar{r}{\wt s/d^n}\ar{dr}{v\circ\pr} & \A^1\\
U & X', & & U\times\A^1 & X,
\end{tikzcd}\]
where $\pr\colon \calC\times\A^1\to \calC$ is the projection.
Applying \Cref{constr:lrangleclass} to these diagrams we obtain finite $A$-correspondences
\begin{align*}
&\Phi^\prime\defeq \Div^A( \varpi^*(s/d^n) )_{Z_0}^{\varpi^*(\mu), v^\prime}\in \Cor^{A,\pair}_k((U,U\setminus Z\times_X U), (X^\prime, X^\prime\setminus Z^\prime)),\\
&\Theta^\prime \defeq \Div^A( \tilde s/d^n )^{v\circ \pr}\in \Cor^{A,\pair}_k(\A^1\times(U,U\setminus Z\times_X U), (X, X\setminus Z)).
\end{align*}
It follows from the list of properties above, \Cref{lm:sectZ(f)cor}, and \Cref{lm:ZeroPairC}
that 
$\Theta^\prime\circ i_1= i\circ\langle \nu\rangle$ 
for some invertible function $\nu\in k[U]^\times$.
If we let $\Phi\defeq\Phi^\prime\circ \langle \nu^{-1}\rangle$ and $\Theta\defeq \Theta^\prime\circ \langle \nu^{-1}\rangle$, it follows that
$\Theta\circ i_1= i$. So to prove the lemma it is enough to show that
$\Theta\circ i_0 \sim_{\A^1} \pi\circ \Phi$.

Since $\ol\varpi$ is finite, it is affine. Hence for some Zariski neighborhood $V^\prime$ of $l(Z_0)$ in $\calC^\prime\setminus \wh Z_0$, the restriction 
$\varpi\big|_{V}$ is affine. Then, for some Zariski neighborhood $V$ of $Z_0$ in $\calC$, there is 
a closed embedding $c\colon V^{\prime\prime}\subseteq \A^r\times V$, where $V''\defeq V^\prime\cap \varpi^{-1}(V)$, which is such that $c(l(Z_0))= 0\times Z_0$.
Let $f_1,\dots, f_r\in k[\A^r\times V]$ be functions satisfying 
$f_i\big|_{c(V^{\prime\prime})}=0$ and $f_i\big|_{\A^r\times Z_0}=x_i$,
where the $x_i$'s denote the coordinate functions on $\A^r$.
For $i=1,\dots,r$, let $\wt f_i\defeq(1-\lambda)f_i+\lambda x_i$ and consider the closed subscheme $Z(\wt f_1,\dots, \wt f_r)\subseteq \A^r\times V\times \A^1$.
Then the projection $\pr\colon Z(\tilde f_1,\dots, \tilde f_r)\to V\times \A^1$ is étale over $Z_0\times\A^1$.
Let $W\subseteq Z(\tilde f_1,\dots,\tilde f_r)$ be a Zariski neighborhood of $0\times Z_0\times\A^1$ such that the restriction of the projection $\pr_W\colon W\to V\times\A^1$ is étale. Furthermore, let $t$ be the pullback of $s/d^n$ from $V$ to $W$, and let $i_V\colon V\to \calC$ denote the open embedding. Applying \Cref{constr:lrangleclass} to the diagram
\[\begin{tikzcd}
W\ar{r}{t}\ar{d}[swap]{v\times\A^1}\ar{dr}{v\circ i_V\circ\pr_W} & \A^1\\
U\times\A^1 & X,
\end{tikzcd}\]
we obtain a homotopy 
\[
\Div^A( t)_{0\times Z_0\times\A^1}^{\pr_W^*(\omega),v\circ i_V\circ \pr_W}\in \Cor^{A,\pair}_k(\A^1\times(U,U\setminus Z\times_X U), (X,X\setminus Z))
\]
connecting  
$\pi\circ \Phi^\prime = \Div^A( \varpi^*(s/d^n) )^{\varpi^*(\omega),v\circ \varpi}$
and
$\Theta\circ i_0 = \Div^A( s/d^n )^{\omega,v}$.
\end{proof}

\subsubsection{}Before we move on to the surjective part of étale excision, we need the following lemma:

\begin{lemma}\label{lm:locup}
Suppose that $\Char k\ne2$, and let $X\in \Sm_k$. Let $Z\subseteq X$ be a closed subscheme and $z\in X$ a closed point.
Write $U$ for the essentially smooth local scheme $U\defeq X^h_z=\Spec\calO^h_{X,z}$, and let
$\lambda\in k[U]^\times$ be an invertible regular function satisfying $\lambda\big|_{Z\times_X U}=1$.
Then 
\[i\circ \langle \lambda \rangle\sim_{\A^1} i\in \Cor^{A,\pair}_k( (U, U\setminus Z\times_X U), (X, X\setminus Z) ),\]
where $i$ denotes the canonical morphism $i\colon U\to X$.
\end{lemma}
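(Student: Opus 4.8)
The plan is to reduce the statement to \Cref{lm:sqroot} after a case distinction on the position of the closed point $z$ relative to $Z$. Throughout write $Z_U\defeq Z\times_X U$ and $V\defeq U\setminus Z_U$, so that the morphism in question factors through the pair map $i\colon(U,V)\to(X,X\setminus Z)$ and $V=i^{-1}(X\setminus Z)$. First I would dispose of the case $z\notin Z$. Since $U=\Spec\calO^h_{X,z}$ is a local scheme, every nonempty closed subscheme of $U$ contains the unique closed point, and that point maps to $z$ under $i$; hence $Z_U$ is nonempty if and only if $z\in Z$. If $z\notin Z$ then $Z_U=\varnothing$, so $V=U$ and, by the remark following the definition of $\Cor^{A,\pair}$, the group $\Cor^{A,\pair}_k((U,U),(X,X\setminus Z))$ is zero; thus $i\circ\langle\lambda\rangle$ and $i$ are both the zero morphism of pairs and the asserted homotopy is vacuous.

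It remains to treat $z\in Z$, in which case the closed point of $U$ lies in $Z_U$, so the hypothesis $\lambda|_{Z_U}=1$ forces $\lambda(z)=1$ in the residue field $k(z)$. The key step is then to observe that $\lambda$ is a square in $k[U]^\times$. Indeed, $U$ is henselian local with residue field $k(z)$ of characteristic $\ne 2$ (as $k(z)/k$ is finite and $\Char k\ne 2$); reducing the monic polynomial $T^2-\lambda$ modulo the maximal ideal gives $T^2-1=(T-1)(T+1)$, a product of coprime monic factors since $2\ne 0$ in $k(z)$, so Hensel's lemma yields a factorization $T^2-\lambda=(T-w)(T-w')$ with $w\equiv 1$ and $w'\equiv -1$ modulo the maximal ideal. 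Comparing coefficients gives $w'=-w$ and $\lambda=w^2$, and $w\in k[U]^\times$ because it is a unit modulo the maximal ideal. This is precisely where the hypothesis $\Char k\ne 2$ enters.

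Finally, with $\lambda=w^2$ in hand I would invoke \Cref{lm:sqroot} — legitimately, since the base field is infinite in this section and $U$ is an essentially smooth local scheme — to conclude $\langle\lambda\rangle\sim_{\A^1}\id_{(U,V)}$ in $\Cor^{A,\pair}_k((U,V),(U,V))$. Post-composing this homotopy with the morphism of pairs $i\colon(U,V)\to(X,X\setminus Z)$ (well defined because $V=i^{-1}(X\setminus Z)$) gives $i\circ\langle\lambda\rangle\sim_{\A^1}i\circ\id_{(U,V)}=i$ in $\Cor^{A,\pair}_k((U,V),(X,X\setminus Z))$, which is the assertion. The only substantive point is the Henselian square-root argument; the rest is bookkeeping with the definition of the pair category and functoriality of composition, the only mild subtlety being to check that the self-homotopy of $(U,V)$ furnished by \Cref{lm:sqroot} composes with $i$ to land in the correct mapping group of pairs.
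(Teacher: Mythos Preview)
Your proof is correct and takes a more direct route than the paper's. You exploit that $U=X^h_z$ is henselian: since $\lambda(z)=1$ and $\Char k(z)\ne 2$, Hensel's lemma produces $w\in k[U]^\times$ with $\lambda=w^2$ on $U$ itself, so \Cref{lm:sqroot} applied to the local scheme $(U,V)$ gives $\langle\lambda\rangle\sim_{\A^1}\id_{(U,V)}$, which you post-compose with $i$. The paper never uses the henselian hypothesis; instead it lifts $\lambda$ to an affine Zariski neighborhood of $z$ in $X$, passes to the étale double cover $X'=\Spec k[X][w]/(w^2-\lambda)$ on which $\pi^*\lambda=w^2$ is visibly a square, invokes the injectivity half of étale excision (\Cref{lm:injACoretex}) to obtain $\Phi\in\Cor^{A,\pair}_k((U,V),(X',X'\setminus Z'))$ with $\pi\circ\Phi\sim_{\A^1} i$, then uses \Cref{lm:sqroot} to get $\langle\lambda\rangle\circ\pi=\pi\circ\langle w^2\rangle\sim_{\A^1}\pi$ and chains the homotopies. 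Your argument is shorter and avoids any dependence on \Cref{lm:injACoretex}; the paper's argument, by not relying on the henselian property of $U$, would equally establish the statement for the Zariski localization $U=X_z$, which is in fact the form in which the lemma is invoked downstream in the proof of \Cref{lemma:etex-surj}.
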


\begin{proof}
Lift $\lambda$ to an invertible section on some affine Zariski neighborhood $V\subseteq X$ of the point $z\in X$.
Then $\lambda\big|_{Z\times_X V^\prime}=1$ for some other Zariski neighborhood $V^\prime\subseteq V$ of $z$;
shrinking $X$ to $V^\prime$ we may assume that $\lambda\in k[X]^\times$ with $\lambda\big|_Z=1$.

Consider the étale covering $\pi\colon X^\prime\to X$, where $X^\prime=\Spec k[X][w]/(w^2-\lambda)$.
Let $Z^\prime$ be the closed subscheme of $X^\prime$ given by $Z^\prime\defeq \Spec k[Z][w]/(w-1)$, so that $Z'\cong Z$.
Then $(X^\prime, Z^\prime)\to (X,Z)$ is an étale neighborhood. By \Cref{lm:injACoretex}
there exists a finite $A$-correspondence of pairs
\[\Phi\in \Cor^{A,\pair}_k((U,U\setminus Z\times_X U) , (X^\prime,X^\prime\setminus Z^\prime) )
\]
such that
$\pi \circ \Phi\sim_{\A^1} i$ in $\Cor^{A,\pair}_k( (U, U\setminus Z\times_X U), (X, X\setminus Z) )$.
On other hand, \Cref{lm:sqroot} implies that 
\[\langle \lambda\rangle \circ \pi = \pi \circ \langle \pi^*(\lambda)\rangle = \pi\circ \langle w^2\rangle \sim_{\A^1} \pi\in \Cor^{A,\pair}_k((X,X\setminus Z), (X,X\setminus Z)).\]
Hence $ i \circ \langle i^*(\lambda) \rangle = \langle \lambda \rangle\circ i\sim_{\A^1} \langle \lambda \rangle\circ \pi \circ \Phi \sim_{\A^1} \pi\circ \Phi \sim_{\A^1} i$.
\end{proof}

\begin{lemma}\label{lemma:etex-surj}
Let $i^\prime\colon U^\prime=X^\prime_{z^\prime}\to X^\prime$ denote the canonical embedding. Then under the assumptions of \Cref{th:etex},
there exists $\Phi\in \Cor^A_k(U,X^\prime)$ such that $\Phi\circ\pi\sim_{\A^1}i^\prime$. 
\end{lemma}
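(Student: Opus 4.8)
The plan is to repeat the divisor‑on‑a‑relative‑curve argument from the proof of \Cref{lm:injACoretex}, arranged so that it is the \emph{precomposition} of the resulting correspondence with $\pi\colon U^\prime\to U$ — rather than its postcomposition — that recovers the canonical embedding $i^\prime$ up to $\A^1$-homotopy. The geometric input is again \Cref{lm:RelCurves}, which produces a morphism of relative projective curves $\overline\varpi\colon\ovcCp\to\ovcC$ over $U$ with $\overline\varpi$ finite (so that $\calO_{\cCp}(1)\defeq\overline\varpi^*\calO(1)$ is ample), open complements $\cC\subseteq\ovcC$ and $\cCp\subseteq\ovcCp$ whose structure maps to $U$ are smooth, maps $v\colon\cC\to X$ and $v^\prime\colon\cCp\to X^\prime$, a trivialization $\mu\colon\calO_{\cC}\xrightarrow{\cong}\omega_{\cC/U}$, and finite $U$-subschemes $D,\Delta,\cZ\subseteq\ovcC$ and $\Dpri,\Delta_Z^\prime,\cZp\subseteq\ovcCp$ with $\varpi$ étale, $\varpi\colon\cZp\xrightarrow{\cong}\cZ$, and $\Delta_Z^\prime\cong\Delta\times_{\cC}\cZ$. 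Because $\pi$ is an étale neighbourhood, $\Delta_Z^\prime$ is a copy of an open of $U^\prime$ lying over $Z^\prime$; this is what will make the construction land on $i^\prime$, and it is also the configuration to which \Cref{lm:locup} applies — which is precisely why that lemma (with its hypothesis $\Char k\ne2$, and for a henselian local base, to which we pass when invoking it) has been stated just above.

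First I would build, for $n\gg0$, sections $s\in\Gamma(\ovcCp,\calO(n))$ and $s^\prime\in\Gamma(\ovcCp,\calO(n)\otimes\scrL(\Delta_Z^\prime)^{-1})$, together with the interpolation $\wt s\defeq(1-\lambda)s+\lambda\,\delta\otimes s^\prime$ on $\ovcCp\times\stackrel{\lambda}{\A^1}$, where $\delta\in\Gamma(\ovcCp,\scrL(\Delta_Z^\prime))$ cuts out $\Delta_Z^\prime$. These are to be chosen so that $s$ and $\delta\otimes s^\prime$ agree on $\cZp$; $s$ and $s^\prime$ are invertible on $\cZp\amalg\Dpri$ (and $s^\prime$ also on $\Delta_Z^\prime$); $\wt s|_{\Dpri\times\A^1}$ is pulled back from $\ovcCp$; and $Z(\wt s)$ avoids the boundary divisor $Z(d)$, where $d$ is a section with $D\subseteq Z(d)$, $Z(d)\cap\cZp=\varnothing$ and $Z(d)$ finite over $U$. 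Their existence follows from Serre vanishing (\Cref{prop:CorofSerreTh}), using that $\cZp\amalg\Dpri$ and $\Delta_Z^\prime$ are semilocal, being finite over the local scheme $U$; and \Cref{lm:FinitnessOfVanLoc} makes $Z(s)$ and $Z(\wt s)$ finite over $U$ and $U\times\A^1$. Feeding $s/d^n$ on $\cCp$ and $\wt s/d^n$ on $\cCp\times\A^1$ into \Cref{constr:lrangleclass}, with trivialization $\varpi^*\mu$ and the maps to $X^\prime$, produces $\Phi^\prime\in\Cor^{A,\pair}_k((U,U\setminus Z\times_X U),(X^\prime,X^\prime\setminus Z^\prime))$ and a homotopy $\Theta^\prime\in\Cor^{A,\pair}_k(\A^1\times(U,U\setminus Z\times_X U),(X^\prime,X^\prime\setminus Z^\prime))$.

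Next I would read off the endpoints of $\Theta^\prime$. At $\lambda=1$ the vanishing locus of $\delta\otimes s^\prime$ splits as $Z(\delta)\amalg Z(s^\prime)=\Delta_Z^\prime\amalg Z(s^\prime)$; the $Z(s^\prime)$-summand is supported away from $Z^\prime$, hence is zero in $\Cor^{A,\pair}_k$ by \Cref{lm:ZeroPairC}, while the $\Delta_Z^\prime$-summand equals $i^\prime\circ\langle\nu\rangle$ for an invertible $\nu$ by \Cref{lm:sectZ(f)cor}(a), and a local adjustment of the construction near $\cZp$ (of the sort made available by part (b) of \Cref{lm:sectZ(f)cor}) can be used to force $\nu$ to restrict to $1$ on $Z^\prime$. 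At $\lambda=0$ one obtains $\Phi^\prime\circ\pi$, possibly only after one further $\A^1$-homotopy — built, exactly as at the end of the proof of \Cref{lm:injACoretex}, from a tubular neighbourhood of the lifted section via the functions $\wt f_i\defeq(1-\lambda)f_i+\lambda x_i$ and one more application of \Cref{constr:lrangleclass}. Concatenating the homotopies, using \Cref{lm:fmu} to absorb units coming from rescaling, and finally invoking \Cref{lm:locup} (which gives $i^\prime\circ\langle\nu\rangle\sim_{\A^1}i^\prime$ since $\nu|_{Z^\prime}=1$), we conclude that $\Phi\defeq\Phi^\prime$ satisfies $\Phi\circ\pi\sim_{\A^1}i^\prime$.

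The main obstacle is the unit $\nu$ together with the section bookkeeping behind it. The point is that $\nu$ lives on an open of $U^\prime$ — it is the ratio of the trivialization of $\omega_{\Delta_Z^\prime/U}$ used in \Cref{constr:lrangleclass} against the canonical one — and it neither descends to $U$ nor extends to $X^\prime$, so it cannot be removed by composing $\langle\nu^{-1}\rangle$ on the source or the target of $\Phi$; the only way to dispose of it is to arrange $\nu|_{Z^\prime}=1$ and appeal to \Cref{lm:locup}, which is exactly what forces the passage to henselizations and the hypothesis $\Char k\ne2$. Making the three requirements on the sections simultaneously consistent — (a) the $\lambda=1$ divisor splits off precisely the copy $\Delta_Z^\prime$ of $U^\prime$ plus a piece disjoint from $Z^\prime$; (b) the residual trivialization at $\Delta_Z^\prime$ is trivial on $Z^\prime$; and (c) the $\lambda=0$ endpoint links back to $\Phi^\prime\circ\pi$ through the tubular‑neighbourhood homotopy — all the while keeping everything a morphism of pairs relative to $U$ and keeping track of the twists $\calO(n)$, $\scrL(\Delta_Z^\prime)$ and $\scrL(Z)$ on $\ovcCp$, is where essentially all of the work sits; the remaining verifications are formal consequences of the axioms of the good cohomology theory $A^*$ and of the lemmas already established.
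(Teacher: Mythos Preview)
There is a structural gap. Your construction takes place entirely on the relative curve $\cCp$ over $U$, so both $\Phi'$ and your homotopy $\Theta'$ have source $U$ (or $U\times\A^1$). But the two correspondences you want to connect, $\Phi'\circ\pi$ and $i'\circ\langle\nu\rangle$, have source $U'$, not $U$; your $\Theta'$ cannot interpolate between them. Relatedly, the divisor $\Delta_Z'$ you use is \emph{not} a section of $\cCp\to U$: it lives only over the closed subscheme $Z\times_X U\subseteq U$, so \Cref{lm:sectZ(f)cor} does not apply to produce anything of the form $i'\circ\langle\nu\rangle$.

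The paper fixes this by passing to the base-changed curve $\cCpp=\cCp\times_U U'$ over $U'$ (the top row of \Cref{lm:RelCurves}). The section $s$ is still chosen on $\ovcCp$, giving $\Phi'\in\Cor^A_k(U,X')$; but the homotopy $\tilde s$ lives on $\ovcCpp\times\A^1$, so $\Theta'\in\Cor^A_k(U'\times\A^1,X')$. At $\lambda=0$ one has $\tilde s=\varpi'^*(s)$, and the divisor construction applied to this pullback is \emph{exactly} $\Phi'\circ\pi$ by base change --- no tubular-neighbourhood homotopy is needed here (that trick from \Cref{lm:injACoretex} handled post-composition with $\pi$ on the target, which is a different problem). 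At $\lambda=1$ the relevant divisor is $\Delta''\subseteq\cCpp$, which \emph{is} a genuine section of $\cCpp\to U'$, so \Cref{lm:sectZ(f)cor} gives $i'\circ\langle\lambda'\rangle$. Finally, rather than forcing $\lambda'|_{Z'}=1$ by section choices, the paper picks $\omega\in k[U]^\times$ with $\pi^*(\omega)(z')=\lambda'(z')^{-1}$ and replaces $\Phi'$ by $\Phi'\circ\langle\omega\rangle$; since $\omega$ lives on $U$, this stays in $\Cor^A_k(U,X')$, and then $\langle\lambda'\cdot\pi^*(\omega)\rangle$ is killed by \Cref{lm:locup}.
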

\begin{proof}
\newcommand{\pprime}{ {\prime\prime} }

Using \Cref{lm:RelCurves} we construct 
relative projective curves 
$p^\prime\colon \ol\calC^\prime\to U$, $p^\pprime\colon \ol\calC^\pprime\to U^\prime$, 
along with the other data related to the first two rows of the diagram \eqref{diag:relCurves}.

Since $U^\prime$ is essentially smooth, we have $\Delta^{\prime\prime}\cong U'$. Moreover, since $p^\pprime\colon \calC^\pprime\to U^\pprime$ is a smooth morphism with fibers of dimension one, it follows that $\Delta^\pprime$ is a smooth divisor on $\calC^\pprime$. Hence it is a smooth divisor on $\ol\calC^\pprime$ as well and there is an invertible bundle $\scrL(\Delta^{\prime\prime})$ on $\ol\calC^\pprime$ and a section $\delta\in \Gamma(\ol\calC^\pprime, \scrL(\Delta^{\prime\prime}))$ such that $Z(\delta)=\Delta^\pprime$.  

Since $\calZ^\prime$ is finite over the local scheme $U$, $\calZ^\prime$ is semilocal.
Let $\delta^\prime\in k[\calZ^\prime]$ be a regular function such that $\delta^\prime\big|_{\Delta_Z^\prime}=0$, and such that $\delta'$ is invertible on the closed points of $\calZ^\prime$ outside $\Delta_Z'$. 
Then the closed fibers of $Z(\delta^\prime)$ and $\Delta_Z^\prime$ coincide. 
Now $Z(\delta^\prime)$ is finite over $U$ since it is a closed subset in $\calZ^\prime$. Moreover,
$\Delta_Z$ is finite over $U$ since $\Delta_Z$ is isomorphic to the closed subscheme $U\times_X Z$ in $U$.
Hence $Z(\delta^\prime)=\Delta_Z$ by Nakayama's lemma.

Using the notations of \Cref{lm:RelCurves}, define $\calO_{\ol\calC^\prime}(1)\defeq\ol\varpi'^*(\calO(1))$ and $\calO_{\ol\calC^\pprime}(1)\defeq \ol\varpi^*\ol\varpi'^*(\calO(1))$. Then, since $\calO(1)$ is ample and $\ol\varpi,\ol\varpi^\prime$ are finite, it follows that $\calO_{\ol\calC^\prime}(1)$ and $\calO_{\ol\calC^\pprime}(1)$ are ample.
Serre's theorem \ref{prop:CorofSerreTh} then tells us that there is an integer $n\in  \Z$ such that the restriction homomorphisms
\begin{align}
\Gamma( \ol\calC^\prime , \calO(n) )&\to 
  \Gamma( \calZ^\pprime \amalg D^\pprime , \calO(n)\otimes\scrL(\Delta^\pprime) ),\label{eq:surrestrSutEtEx}\\
\Gamma( \ol\calC^\pprime , \calO(n)\otimes\scrL(\Delta^\pprime) )&\to 
  \Gamma( \calZ^\pprime \amalg D^\pprime , \calO(n)\otimes\scrL(\Delta^\pprime) )\label{eq:surrestrSutEtEx2}
\end{align}
are surjective.
As mentioned above, $\calZ$ and $D$ are finite over $U$, so it follows that $\calZ^\prime$ and $D^\prime$ are semilocal, and moreover that
there are trivializations 
$\xi_Z\colon \calO_{\calZ^\prime}\stackrel{\cong}{\to} \calO_{\ovcCp}(1)\big|_{\calZ^\prime}$ and
$\xi_D\colon \calO_{D^\prime}\stackrel{\cong}{\to} \calO_{\ovcCp}(1)\big|_{D^\prime}$.
Now using surjectivity of the map \eqref{eq:surrestrSutEtEx} we find a section 
\[
s\in \Gamma( \ovcCp , \calO(n) ),\quad s\big|_{\calZ^\prime}=\delta\otimes\xi_Z^{\otimes n},\quad s\big|_{D^\prime}=\xi_D^{\otimes n}.
\]

By the same reason as above there is some trivialization $\xi_Z^\prime\colon \calO_{\calZ^\pprime}\stackrel{\cong}{\to}\scrL(\Delta^\pprime)\big|_{\calZ^\pprime}$.
Then $b_1=\varpi^*(\delta^\prime)$ and $b_2=\delta\otimes {\xi_Z^\prime}^{-1}$ 
are two regular functions on $\calZ^\pprime$ such that 
$Z( b_1 )=Z( b_2 )=\Delta_Z^\pprime$. 
Hence there is an invertible function $\nu\in k[\calZ^\pprime]^\times$ such that 
$
\varpi^*(\delta^\prime)\nu = \delta\otimes {\xi_Z^\prime}^{-1}.
$
Indeed, $\nu$ is uniquely defined by the equality $b_1\nu = b_2$ on the closed subscheme $Z(I)\subseteq \calZ^\pprime$. Here $I\defeq\ker(m^{b_1})$, where $m^{b_1}\in \End(k[\calZ^\pprime])$ is defined as multiplication by $b_1$. Moreover, the equality $b_1\nu = b_2$ implies that $\nu$ is invertible on $Z(I)$, and any lift of $\nu$ to a regular function on $\calZ^\pprime$ satisfies the equality $b_1\nu = b_2$ as well. So it is enough to choose a lift such that $\nu$ is nonzero at the closed points of $\calZ^\pprime\setminus Z(I)$. 

Using surjectivity of the second map \eqref{eq:surrestrSutEtEx2}, we find a section
\[s^\prime\in \Gamma( \ovcCpp , \calO(n)\otimes\scrL(\Delta^\pprime)^{-1} ),\quad
s^\prime\big|_{\calZ^\pprime}={\ol\varpi^\prime}^*(\xi_Z)^{\otimes n}\nu,\quad s^\prime\big|_{D^\pprime}=\ol\varpi^*(\xi_D^{\otimes n})\otimes\delta\big|_{D^\pprime}^{-1}.
\]
Note that the section $\delta\big|_{D^\pprime}^{-1}$ is well defined since $\Delta^\pprime\cap D^\pprime=\varnothing$. 
Now define $\tilde{s}\defeq (1-\lambda)s+\lambda s^\prime$. Then we have:
\leavevmode
\begin{center}
\begin{tabular}{@{}>{$}l<{$}>{$}l<{$}>{$}l<{$}@{}}\toprule
s\in \Gamma( \ovcCp , \mathcal O(n) ) &
\tilde s\in \Gamma( \ovcCpp\times \stackrel{\lambda}{\A^1} , \mathcal O(n) ) &
s^\prime \in \Gamma( \ovcCpp , \mathcal O(n)\otimes\scrL(\Delta^\prime)^{-1} ) \\
\midrule
& \tilde s\big|_{\ovcCpp\times 0}={\varpi^\prime}^*(s) & \tilde s\big|_{\ovcCpp\times 1}=\delta \otimes s^\prime
\\ 
Z(s\big|_{D^\prime})= \varnothing
& Z(\tilde s\big|_{D^{\prime\prime} })= \pr^*({\varpi^\prime}^*(s)) &
\\ 
s\big|_{\mathcal Z^\prime\times_U Z} =\delta^\prime \otimes s^\prime & \tilde s\big|_{\mathcal Z^{\prime\prime}\times\A^1} =\delta \otimes s^\prime  & Z(s^\prime\big|_{\mathcal Z}^{\prime\prime})=\varnothing
\\\bottomrule
\end{tabular}
\end{center}
We now aim to apply \Cref{constr:lrangleclass} to the diagrams
\[\begin{tikzcd}
\calC'\ar{d}[swap]{p'\circ j'}\ar{dr}{v'}\ar{r}{s/d^n} & \A^1 & & \calC''\times\A^1\ar{d}[swap]{(p''\circ j'')\times\A^1}\ar{dr}{v''\circ\pr}\ar{r}{\wt s/d^n} & \A^1\\
U & X', & & U'\times\A^1 & X'.
\end{tikzcd}\]
Here $\pr\colon\calC''\times\A^1\to\calC''$ is the projection. 
By \Cref{lm:FinitnessOfVanLoc}, $Z(s)$ and $Z(\widetilde s)$ are finite over $U$ and $U^\prime\times\A^1$, respectively.
Hence \Cref{constr:lrangleclass} yields finite $A$-correspondences
\begin{align*}
\Phi^\prime&\defeq \Div^A( s/d^n )^{\mu^\prime,v^\prime}\in \Cor^A_k(U,X^\prime),\\
\Theta^\prime&\defeq \Div^A( \tilde s/d^n )^{\varpi^*(\mu^\prime),v^\pprime\circ\pr}\in \Cor^A_k(U^\prime\times\A^1,X^\prime).
\end{align*}
Then, by construction,
\begin{align*}
&\Theta^\prime\circ i_0=\Phi^\prime\circ\pi,\\
&\Theta^\prime\circ i_1=\Div^A( \delta\otimes s^\prime/d^n)^{\varpi^*(\mu^\prime),v^\pprime}_{\Delta^\pprime} + \Div^A(\delta\otimes s^\prime/d^n)^{\varpi^*(\mu^\prime),v^\pprime}_{Z(s^\prime)}.
\end{align*}
By \Cref{lm:ZeroPairC} we have
\[
\Div^A(\delta\otimes s^\prime/d^n)^{\varpi^*(\mu^\prime),v^\pprime}_{Z(s^\prime)}=0\in \Cor^{A,\pair}_k( (U^\prime,U^\prime\setminus Z^\prime\times_{X^\prime} U^\prime), (X^\prime,X^\prime\setminus Z^\prime) ).
\]
Furthermore, \Cref{lm:sectZ(f)cor} tells us that $\Div^A(\delta\otimes s^\prime/d^n)^{\varpi^*(\mu^\prime),v^\pprime}_{\Delta^\pprime}=i^\prime\circ \langle \lambda'\rangle$
for some $\lambda'\in k[U^\prime]^\times$.
Let $\omega\in k[U]^\times$ be an invertible function on $U$ satisfying $\pi^*(\omega)(z)=\lambda'(z)^{-1}$. 
Define
$\Phi\defeq \Phi^\prime\circ \ip{\omega}$ and $\Theta\defeq \Theta^\prime\circ \ip{\pi^*(\omega)}$.
Then 
$\Theta\circ i_1= i^\prime\circ \ip{\lambda'\cdot\pi^*(\omega)}$ and so \Cref{lm:locup} yields the claim. 
\end{proof}

\begin{proof}[Proof of \Cref{th:etex}]
Lemmas \ref{lm:injACoretex} and \ref{lemma:etex-surj} establish respectively injectivity and surjectivity of the map $\pi^*$. 
\end{proof}

\subsubsection{}We finish this section with a result on the interplay between Zariski excision, étale excision and homotopy invariance for the cohomology theory $A^*$.

\begin{corollary}\label{cor:hty-inv-exc}
Suppose that $A^*$ is a graded presheaf of abelian groups that satisfies all properties of  a good cohomology theory except the étale excision axiom. Instead, assume that $A^*$ satisfies Zariski excision and homotopy invariance. In other words,
for any $X\in \Sm_k$, any line bundle $\scrL$ on $X$, any open subscheme $j\colon U\subseteq X$ and any closed subscheme $Z\subseteq X$ such that $Z\subseteq U$, the maps
\begin{align*}
&\pr^*\colon A^n(X,\scrL)\xrightarrow{\cong} A^n(X\times\A^1,\pr^*\scrL),\\
&j^*\colon A^n(X,X\setminus Z,\scrL)\xrightarrow{\cong} A^n(U,U\setminus Z,j^*\scrL)
\end{align*}
are isomorphisms.

Then $A^*$ satisfies the étale excision axiom on local schemes. In other words, for any $X\in\Sm_k$, $Z\subseteq X$, $\pi\colon (X^\prime, Z^\prime)\to (X,Z)$,
$z\in Z$ and $z^\prime\in Z^\prime$ as in \Cref{th:etex}, the morphism $\pi$ induces an isomorphism 
\[
\pi^*\colon A^*(X_z,X_z\setminus Z_z)\xrightarrow{\cong} A^* (X_{z'}^\prime,X_{z'}^\prime\setminus Z_{z'}^\prime).
\] 
\end{corollary}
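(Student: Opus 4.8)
The plan is to deduce the corollary from \Cref{thm:inj-loc-sch} and \Cref{th:etex} applied to the cohomology theory $A^*$ itself, regarded degree by degree as a family of presheaves with $A$-transfers, together with the localization exact sequence of \Cref{def:pre-coh}(a). Note first that everything needed remains available under the present hypotheses: by \Cref{rem:Zarex}, the construction of $\Cor^A_k$ in \Cref{section:ccorrs} and the proofs of \Cref{thm:inj-loc-sch} and \Cref{th:etex} use only Zariski excision of $A^*$ (étale excision enters only in \Cref{section:framed} and in the proof of \Cref{thm:ext}), so appealing to those results here is not circular.

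The heart of the argument is the claim that for each $n\in\Z$ the untwisted functor $A^n\defeq A^n(-,\calO)\colon\Sm_k^{\op}\to\Ab$ underlies a homotopy invariant presheaf with $A$-transfers whose underlying presheaf on $\Sm_k$ is the given $A^n$. The transfer structure is built by the same pull--cup--push recipe as the composition law of \cite[\S4.2]{Calmes-Fasel}: a finite $A$-correspondence $\alpha\in\Cor^A_k(V,U)=\varinjlim_T A^{\dim U}_T(V\times U,\omega_U)$ carries $\beta\in A^n(U)$ to $(p_V)_*(p_U^*\beta\smile\alpha)\in A^n(V)$, the degrees and twists matching up because we use only the trivial twist and because $T$ is finite over $V$. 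That $\id^*=\id$, that $(\beta\circ\alpha)^*=\alpha^*\circ\beta^*$, and that this action restricts along the graph functor $\gamma_A$ to the honest pullback are checked via the projection formula and the base change axiom, by exactly the computations that show $\Cor^A_k$ is a category (\Cref{lemma:corr-cat}). Finally $A^n\in\PSh_\Sigma$ by Zariski excision, and $A^n$ is homotopy invariant by the homotopy invariance hypothesis on $A^*$.

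Granting this, fix $X$, $Z$, $\pi\colon(X',Z')\to(X,Z)$, $z\in Z$, $z'\in Z'$ as in \Cref{th:etex}, and write $U=X_z$, $U'=X'_{z'}$. Applying \Cref{thm:inj-loc-sch} to $A^n$ shows that $A^n(U)\to A^n(U\setminus Z_z)$ and $A^n(U')\to A^n(U'\setminus Z'_{z'})$ are injective for every $n$, and \Cref{th:etex} applied to $A^n$ shows that $\pi^*\colon A^n(U\setminus Z_z)/A^n(U)\to A^n(U'\setminus Z'_{z'})/A^n(U')$ is an isomorphism for every $n$. Feeding this into the localization sequence
\[
A^n(U)\xrightarrow{i^*}A^n(U\setminus Z_z)\xrightarrow{\partial}A^{n+1}_{Z_z}(U)\xrightarrow{j^*}A^{n+1}(U)\xrightarrow{i^*}A^{n+1}(U\setminus Z_z),
\]
injectivity of the right-hand $i^*$ forces $j^*=0$, so $\partial$ is surjective, while injectivity of the left-hand $i^*$ identifies $\ker\partial$ with $A^n(U)$; hence $\partial$ induces an isomorphism $A^n(U\setminus Z_z)/A^n(U)\xrightarrow{\cong}A^{n+1}_{Z_z}(U)$, natural in $\pi$ since $\partial$ is a natural transformation, and likewise over $U'$. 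Combining this with the isomorphism supplied by \Cref{th:etex} shows that $\pi^*\colon A^{n+1}_{Z_z}(U)\to A^{n+1}_{Z'_{z'}}(U')$ is an isomorphism for all $n\in\Z$, which is precisely the asserted isomorphism $\pi^*\colon A^*(X_z,X_z\setminus Z_z)\xrightarrow{\cong}A^*(X'_{z'},X'_{z'}\setminus Z'_{z'})$; a line bundle on $X_z$ is trivial anyway, and the twisted statement follows by transporting along the pullback of a trivialization.

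The step I expect to cost the most effort is the second paragraph: confirming in detail that each $A^n(-,\calO)$ is a genuine object of $\PSh_\Sigma(\Cor^A_k;\Z)$, i.e. that the pull--cup--push action is additive, functorial, and compatible with $\gamma_A$. (In negative degrees one must also know that the pushforwards of $A^*$ exist in all degrees, not merely for $n\ge0$ as literally stated in \Cref{def:coh}(1); this holds for the examples of interest, where the pushforwards come from Gysin maps.) Everything after that is a short diagram chase with the localization sequence plus the two excision theorems.
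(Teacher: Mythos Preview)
Your proof is correct and takes essentially the same route as the paper: both observe that the constructions underlying \Cref{th:etex} (namely Lemmas~\ref{lm:injACoretex} and~\ref{lemma:etex-surj}, together with \Cref{constr:lrangleclass}) do not use étale excision of $A^*$, and then let the resulting correspondences act on $A^*$ itself. The paper's proof is more compressed: it simply asserts that the correspondences of pairs $\Phi_l,\Phi_r$ produced by those lemmas induce a left and a right inverse to $\pi^*$ on $A^*(X_z,X_z\setminus Z_z)$ directly, whereas you unpack this by first exhibiting each $A^n$ as a homotopy invariant presheaf with $A$-transfers, applying the packaged Theorems~\ref{thm:inj-loc-sch} and~\ref{th:etex} to it, and then running the localization sequence to identify $A^n(U\setminus Z_z)/A^n(U)$ with $A^{n+1}_{Z_z}(U)$. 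These are two descriptions of the same mechanism; your version just makes explicit the transfer structure and the localization step that the paper leaves implicit.
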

\begin{proof}
Consider the category $\Cor^A_k$ of correspondences built from $A^*$ in the sense of \Cref{def:ACor}.
First of all we see that the proofs of Lemmas \ref{lm:injACoretex} and \ref{lemma:etex-surj} (as well as \Cref{constr:lrangleclass}) do not use the étale excision axiom for $A^*$.
Thus we have morphisms $\Phi_l, \Phi_r\in \Cor^A_k(U,X^\prime)$ such that $\pi\circ \Phi_r=i$, and $\Phi_r\circ\pi=i^\prime$.
Then $\Phi_l$ induces a right inverse $ A^* (X_{z'}^\prime,X_{z'}^\prime\setminus Z_{z'}^\prime)\to A^*(X_z,X_z\setminus Z_z)$ to $\pi^*$, and $\Phi_r$ induces a left inverse.
\end{proof}

\section{The cancellation theorem}\label{section:cancel}
In this section we show the cancellation theorem for $A$-correspondences by suitably adapting Voevodsky's proof for the case of $\Cor_k$ \cite{Voe-cancel}; see \Cref{thm:cancel}. For the sake of brevity we will omit the steps that are identical to Voevodsky's original proof, and rather focus on the details that are specific to our situation. We refer the interested reader to \cite{Voe-cancel} for the remaining formal aspects of the proof.

\begin{definition}
The \emph{Karoubi envelope} of $\Cor^A_k$ is the preadditive category whose objects are pairs $(X,p)$ with $X\in\Sm_k$ and $p\in\Cor^A_k(X,X)$ an idempotent. The morphisms are given by 
\[
\Cor^A_k((X,p), (X^\prime,p^\prime)) = \im\p*{\Cor^A_k(X,X^\prime)\xrightarrow{p'\circ (-)\circ p}\Cor^A_k(X,X^\prime)}.
\]
Any object $X\in \Sm_k$ can be considered as an object of the Karoubi envelope of $\Cor^A_k$ by $X\mapsto(X,\id_X)$. By abuse of notation, we will write $\Cor^A_k$ also for the Karoubi envelope of $\Cor^A_k$.
\end{definition}
\newcommand{\Gmw}{\G_m^{\wedge 1}}
\newcommand{\inj}{\iota}
\begin{definition}\label{def:wedgeGm}
Define
$X\wedge \Gmw \defeq \ker(\pr_1\colon X\times\G_m\to X)$ as an object of the Karoubi envelope of $\Cor^A_k$. 
Let $\pr^\wedge\colon \G_m^{\times2}\to \G_m^{\wedge 2}$ denote the canonical projection,
and let $\inj^\wedge\colon \G_m^{\wedge 2}\to \G_m^{\times2}$ denote the canonical injection. Note that $\pr^\wedge \circ \inj^\wedge=\id_{\G_m^{\wedge 2}}$.
The external product on $A$-correspondences defines a functor 
$
(-)\wedge \G^{\wedge 1}_m\colon \Cor^A_k\to \Cor^A_k
$
given by $X\to X\wedge \Gmw$, $\alpha\mapsto \alpha\times \id_{\G^{\wedge 1}_m}$. 
Furthermore, for any $X\in\Sm_k$ we let $\rmc_A(X)\wedge\Gmw$ denote the presheaf $U\mapsto\Cor^A_k(U\wedge\G_m,X\wedge\G_m)$.
\end{definition}

\begin{lemma}\label{lm:twistGm2}
Let $\tau^\times\colon \G_m^{\times2}\to \G_m^{\times 2}$ denote the twist automorphism given by $\tau(x_1,x_2)\defeq(x_2,x_1)$, and let 
\[
\tau^\wedge\defeq \pr^\wedge\circ\tau^\times\circ\inj^\wedge\colon\G_m^{\wedge2}\to\G_m^{\wedge2}.
\]
Then $\tau^\wedge$ is $\A^1$-homotopic to $\epsilon=-\ip{-1}\in\Cor^A_k(\G^{\wedge 2}_m, \G^{\wedge 2}_m)$.
\end{lemma}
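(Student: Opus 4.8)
The plan is to follow Voevodsky's computation of the twist on $\Gm^{\wedge 2}$ in the setting of $\Cor_k$ (as in \cite{Voe-cancel}, see also \cite[Lemma 4.4]{MW-cancel}), tracking the twists of line bundles coming from the trivialization data that is part of $A$-correspondences. First I would reduce to an explicit statement about the composite $\pr^\wedge\circ\tau^\times\circ\inj^\wedge$: writing $\tau^\times$ as the $A$-correspondence $\gamma_A(\tau^\times)=(\tau^\times)_*(1)$ supported on the graph $\Gamma_{\tau^\times}\subseteq\Gm^{\times 2}\times\Gm^{\times 2}$, I would observe that $\tau^\wedge$ is the image of $\gamma_A(\tau^\times)$ under the Karoubi-envelope projection/injection, i.e.\ the idempotent-cut-down piece. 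Because $\pr^\wedge$ and $\inj^\wedge$ are built from $1$ and the point $\{1\}\in\Gm$, all the cohomology classes involved are pushforwards of the unit $1\in A^0$, so everything is governed by multiplicativity of $(-)_*$ and the projection formula from \Cref{def:coh}.

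Next I would construct the homotopy. The standard trick is to use the automorphism of $\Gm\times\A^1$ (or a chain of elementary correspondences) given by functions like $h(x,t)=(x-1)t+ (\text{something})$ linking the two coordinates, exactly of the type produced by \Cref{constr:lrangleclass}. Concretely, on $\Gm^{\wedge 2}$ one writes $\tau^\wedge$ as a difference of graphs of maps $\Gm^{\wedge 1}\to\Gm^{\wedge 1}$ of the form $x\mapsto x$ and $x\mapsto x^{-1}$ (the "swap after splitting off the basepoint" identity), and then $\Div^A$ of a suitable one-parameter family of polynomials in a coordinate $y$ on $\A^1\subset\PP^1$ interpolating between $(y-x)(y-1)$ type factors realizes the homotopy; \Cref{lm:sectZ(f)cor} identifies each section-supported summand with $s\circ\ip\lambda$ for an explicit unit $\lambda$, and \Cref{lm:fmu} lets me normalize the trivialization. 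Collecting the sign contributions, the graph of $x\mapsto x^{-1}$ contributes the class $\ip{-1}$ (because the derivative of $y\mapsto y^{-1}$ at a point contributes a $-x^{-2}$, i.e.\ a square times $\langle -1\rangle$, and squares are killed up to homotopy by \Cref{lm:sqroot} over the relevant local base), yielding $\tau^\wedge\sim_{\A^1}-\ip{-1}=\epsilon$.

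The main obstacle I anticipate is bookkeeping the line-bundle twists and the resulting signs. In $\Cor_k$ (the $\CH^*$ case) there are no twists and the computation collapses, but here each pushforward $i_*$ in \Cref{constr:lrangleclass} carries a canonical-bundle twist $\omega_{i}$ and the identifications $j^*\omega_{\Gamma_f/U}\cong\omega_{Z(f)/U}\otimes\omega_j$ must be made compatibly, so I must be careful that the automorphism realizing the twist ($\tau^\times$ versus the map $x\mapsto x^{-1}$ on $\Gm$) introduces exactly the Jacobian unit whose class is $\ip{-1}$ and not $\ip{+1}$. This is where \Cref{lm:fmu} (invariance under $(f,\mu)\mapsto(\lambda f,\lambda\mu)$), \Cref{lm:sectZ(f)cor}(b) (matching $\mu|_{Z(f)}$ with $df$), and \Cref{lm:sqroot} (squares are homotopically trivial) do the real work; the geometry is formally identical to Voevodsky's, so once the twist compatibilities are pinned down the homotopy itself is routine. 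I would therefore spend most of the write-up setting up the trivialization conventions and then citing \Cref{lm:sectZ(f)cor} and \Cref{lm:sqroot} to conclude, with the final assembly of summands $\id + \ip{-1}$ on one side and $\ip{-1}+\ip{-1}$ (up to homotopy, via squares) on the other, giving $\tau^\wedge\sim_{\A^1}\epsilon$ after cancelling the common $\ip{-1}$ summand in the Karoubi envelope.
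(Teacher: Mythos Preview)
Your sketch has the right flavor---build an explicit $\A^1$-homotopy out of $\Div^A$ of a family of quadratics and read off the twist---but two concrete pieces are wrong or missing.

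First, the reduction you propose does not typecheck: $\tau^\wedge$ is an endomorphism of $\G_m^{\wedge 2}$, not a difference of graphs of maps $\G_m^{\wedge 1}\to\G_m^{\wedge 1}$. The ``swap $=x\mapsto x^{-1}$'' heuristic is a statement about the action on $\pi_{*,*}$ or about a different endomorphism; it is not how one computes $\tau^\wedge$ directly in $\Cor^A_k$. The paper instead works with the relative curve $\G_m\times(\G_m^{\times 2}\setminus(\Delta\cup\widehat\Delta))$ over $\G_m^{\times 2}\setminus(\Delta\cup\widehat\Delta)$, using the map $g(t,x_1,x_2)=(t,x_1x_2 t^{-1})$ to the target $\G_m^{\times 2}$, and interpolates $(t-x_1)(t-x_2)\rightsquigarrow(t-x_1x_2)(t-1)$. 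The summands at $\lambda=0$ give $\tau^\times+\langle-1\rangle$ (after a unit twist), while at $\lambda=1$ the summands factor through $(x_1x_2,1)$ and $(1,x_1x_2)$, hence die under $\pr^\wedge$.

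Second---and this is the real gap---any such $\Div^A$-homotopy only exists on the open locus where the two roots stay apart, i.e.\ on $\G_m^{\times 2}\setminus(\Delta\cup\widehat\Delta)$. You therefore only obtain $\pr^\wedge\circ(\tau^\times+\langle-1\rangle)\circ j=0$ for the open immersion $j$. To remove $j$ the paper invokes \Cref{cor:Gm-diag} (injectivity of restriction for homotopy invariant presheaves with $A$-transfers, applied to $\overline\Cor^A_k(-,\G_m^{\wedge 2})$). Your outline never mentions this step, and \Cref{lm:sqroot}---which you do invoke---is stated only for an essentially smooth \emph{local} scheme, so it is not available over $\G_m^{\times 2}$. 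Without the injectivity input there is no way to promote the homotopy from the open subset to all of $\G_m^{\times 2}$, and the argument does not close. Your final bookkeeping (``$\id+\langle-1\rangle$ versus $\langle-1\rangle+\langle-1\rangle$'') is also off: what one actually obtains is $\pr^\wedge\circ(\tau^\times+\langle-1\rangle)=0$ in $\overline\Cor^A_k(\G_m^{\times 2},\G_m^{\wedge 2})$, and then one simply composes with $\iota^\wedge$.
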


\begin{proof}
Let $(x_1,x_2)$ denote the coordinates on $\G^{\times2}_m$. Denote by $\Delta\subseteq\G_m^{\times 2}$ the diagonal, and by $\wh\Delta\subseteq\G_m^{\times 2}$ the anti-diagonal, i.e.,
\[
\Delta\defeq Z(x_1x_2^{-1}-1),\quad \wh\Delta\defeq Z(x_1x_2-1)\subseteq\G_m^{\times2}.
\]
Let us first show that $\pr^\wedge\circ \tau^\times \circ j\sim_{\A^1} \epsilon\circ j$, where 
$
j\colon \G_m^{\times2}\setminus (\Delta\cup \wh \Delta)\to \G_m^{\times 2}
$
denotes the inclusion and $\epsilon=-\ip{-1}\in \Cor^A_k(\G^{\times 2}_m, \G^{\times 2}_m)$. 
\newcommand{\Gmph}{\G_m^{\phantom{1}}}
To do this, consider the diagram 
\[\begin{tikzcd}
\stackrel{t}{\Gmph}\times((\stackrel{x_1}{\Gmph}\times\stackrel{x_2}{\Gmph})\setminus (\Delta\cup \wh \Delta))\ar{r}{f}\ar{d}[swap]{p}\ar{dr}{g} & \A^1\\
(\stackrel{x_1}\Gmph\times\stackrel{x_2}{\Gmph})\setminus (\Delta\cup \wh \Delta) & \G_m\times\G_m
\end{tikzcd}\]
in which $g(t,x_1,x_2)\defeq (t, x_1x_2 t^{-1})$.
Then $p$ is a smooth relative curve whose relative canonical class is trivialized by $dt$. Applying \Cref{constr:lrangleclass} to this diagram we obtain a finite $A$-correspondence
\[
\Div^A(f)_Z^{dt,g}\in \Cor^A_k(\G_m^{\times 2}\setminus (\Delta\cup \wh \Delta), \G^{\times 2}_m)
\]
for any regular function $f$ whose vanishing locus $Z$ is finite over $\G_m^{\times2}\setminus (\Delta\cup \wh \Delta)$.
For simplicity, let us skip $dt$ and $g$ in the notation.
Then the required $\A^1$-homotopy is given as follows:
\begin{align*}
&(\tau^\times + \langle -1\rangle)\circ j\\ 
&= \p*{\Div^A( (t-x_1)(t-x_2))_{Z(t-x_2)} +  \Div^A( (t-x_1)(t-x_2))_{Z(t-x_1)} }\circ j \circ \langle (x_2-x_1)^{-1} \rangle   \\
&= \Div^A( (t-x_1)(t-x_2))\circ j \circ \langle (x_2-x_1)^{-1} \rangle\\
&\sim_{\A^1} \Div^A( (t-x_1 x_2)(t-1)) \circ j\circ \langle (x_2-x_1)^{-1} \rangle  \\
&= \p*{\Div^A( (t-x_1 x_2)(t-1))_{Z(t-1)} + \Div^A( (t-x_1 x_2)(t-1))_{Z(t-x_1 x_2)}}\circ j \circ \langle (x_2-x_1)^{-1} \rangle \\
&= ( \nu_1 + \nu_2 )\circ i \circ \langle (1-x_1x_2) (x_2-x_1)^{-1} \rangle\in \Cor^A_k(\G_m^{\times 2}\setminus (\Delta\cup \wh \Delta), \G^{\times 2}_m).
\end{align*}
Here $\nu_1\colon \G_m^{\times 2}\to \G_m^{\times 2}$ is the morphism $(x_1,x_2)\mapsto (x_1 x_2,1)$, while $\nu_2\colon \G^{\times 2}_m\to \G_m^{\times 2}$ is defined by $(x_1,x_2)\mapsto (1,x_1 x_2)$. Since $\pr^{\wedge} \circ \nu_1 = 0$ and $\pr^{\wedge} \circ \nu_2 = 0$ in $\Cor^A_k(\G_m^{\times 2}\setminus (\Delta\cup \wh \Delta), \G^{\wedge 2}_m)$, it follows that 
\[
\pr^\wedge\circ (\tau^\times + \langle -1\rangle)\circ j=0\in \ol\Cor^A_k(\G_m^{\times 2}\setminus (\Delta\cup \wh \Delta), \G^{\wedge 2}_m).
\]
Now \Cref{cor:Gm-diag} yields that 
\begin{equation}\label{eq:tautwis}
\pr^\wedge\circ (\tau^\times + \langle -1\rangle)=0\in \ol\Cor^A_k(\G_m^{\times 2}, \G^{\wedge 2}_m),
\end{equation}
since $\overline\Cor^A_k(-,\G_m^{\wedge 2})$ is a homotopy invariant presheaf with $A$-transfers.
Finally, since 
\[
\epsilon=-\pr^\wedge\circ\ip{-1}\circ\inj^\wedge\in\Cor_k^A(\G_m^{\wedge2},\G_m^{\wedge2}),
\]
we get the claim upon composing \eqref{eq:tautwis} with $\inj^\wedge$.
\end{proof}

\begin{definition}\label{def:rat-fu}
Let $\G_m\times\G_m$ have coordinates $(t_1,t_2)$. For any $n\ge1$, define the functions $g_n^+,g_n^-\in k[\G_m\times\G_m]$ by
\[
g_n^+\defeq t_1^n+1,\quad g_n^-\defeq t_1^n+t_2.
\]
Moreover, let $Z_n^\pm$ denote the support of the principal divisor $Z(g_n^\pm)$ on $\G_m\times\G_m$ defined by $g_n^\pm$.
\end{definition}

\begin{remark}\label{rmk:gn}
The functions $g_n^+/g_n^-$ differ by a sign from Voevodsky's functions $g_n$ defined in \cite[§4]{Voe-cancel}. However, the same proof as that of \cite[Lemma 4.1]{Voe-cancel} goes through to show that for any closed subset $T$ of $\G_m\times X\times \G_m\times Y$ finite and surjective over $\G_m\times X$, there is an integer $N$ such that for all $n\ge N$, the divisor of $g^+_n/g^-_n$ intersects $T$ properly over $X$, and the associated cycle is finite over $X$. The only reason for our choice of functions is to make the finite $A$-correspondence in \Cref{lemma:homotopy} homotopic to $\ip1$, and not $\ip{-1}$. Of course, in the situation of \cite{Voe-cancel} this choice does not matter, as Voevodsky's correspondences are oriented.
\end{remark}

\begin{definition}
Let $Y\in\Sm_k$, and recall from \Cref{def:wedgeGm} the definition of the presheaf $\rmc_A(Y)\wedge \Gmw$. Given any integer $n\ge1$, we will construct maps of presheaves
\[\begin{tikzcd}
\rmc_A(Y)\ar[r,shift left,"\theta"] & \rmc_A(Y)\wedge \Gmw\ar[l,shift left,"\rho_n"]
\end{tikzcd}\]
as follows. 

Let $X\in\Sm_k$, and let $T$ be any admissible subset of $X\times Y$. Then the homomorphism
\[
\theta\colon A^{\dim Y}_T(X\times Y,\omega_Y)\to A^{\dim Y+1}_{T\times\Delta(\G_m)}(X\times\G_m\times Y\times\G_m,\omega_{Y\times\G_m})
\]
is defined by 
\[
\theta\defeq(-)\times\id_{\G_m}=(-)\times\Delta_*(1),
\] 
where $\Delta\colon\G_m\to\G_m\times\G_m$ is the diagonal. 
Since for any admissible $T$ in $X\times Y$ the subset $T\times\Delta(\G_m)$ is admissible in $X\times\G_m\times Y\times\G_m$,
the map $\theta$ is well defined. It follows that $\theta$ induces a map of presheaves
$
\theta\colon \rmc_A(Y)\to\rmc_A(Y)\wedge\Gmw.
$
On the other hand, the map
\[
\rho_n\colon A_{T}^{\dim Y+1}(X\times\G_m\times Y\times\G_m,\omega_{Y\times\G_m})\to A_{T\cap (Z_n^+\cup Z_n^-)}^{\dim Y}(X\times Y,\omega_Y)
\]
is defined in the following way. By applying \Cref{constr:lrangleclass} to the diagram
\[\begin{tikzcd}
\stackrel{t_1}{\G_m}\times\stackrel{t_2}{\G_m}\ar{r}{g_n^\pm}\ar{d}[swap]{\pr_2}\ar{dr}{\pr_1} & \A^1\\
\stackrel{t_2}{\G_m} & \stackrel{t_1}{\G_m}
\end{tikzcd}\]
we obtain finite $A$-correspondences $\Div^A(g_n^\pm)\in\Cor_k^A(\G_m,\G_m)$. We then define $\rho_n$ by the formula
\[\rho_n\defeq p_*\p*{(-)\smile q^*\p*{\Div^A(g_n^+)-\Div^A(g_n^-)}},\]
where $p$ and $q$ are the projections 
\[
p\colon X\times \G_m\times Y\times\G_m\to X\times Y,\quad q\colon X\times\G_m\times Y\times\G_m\to\G_m\times \G_m.
\] 
Thus $\rho_n$ is defined whenever the subset $T\cap (Z_n^+\cup Z_n^-)$ is admissible in $X\times Y$.
Now, note that for any 
$f\colon X^\prime\to X$ and $\Phi\in A_{T}^{\dim Y+1}(X\times\G_m\times Y\times\G_m,\omega_{Y\times\G_m})$, 
the element 
$\rho_n(f^*(\Phi))$ is defined whenever $\rho_n(\Phi)$ is defined, and $\rho_n(f^*(\Phi))=f^*(\rho_n(\Phi))$.
Secondly, 
for any $\Phi,\Psi\in A_{T}^{\dim Y+1}(X\times\G_m\times Y\times\G_m,\omega_{Y\times\G_m})$
the element $\rho_n(\Phi+\Psi)$ is defined whenever $\rho_n(\Phi)$ and $\rho_n(\Psi)$ are defined and $\rho_n(\Phi+\Psi)=\rho_n(\Phi)+\rho_n(\Psi)$. In this regard we refer to $\rho_n$ a \emph{partially defined map of presheaves}.
\end{definition}

\subsubsection{}The maps $\rho_n$ form an exhausting sequence of partially defined homomorphisms in the sense that for any finite subset $F\subseteq \Cor^A_k(X\wedge\G_m, Y\wedge\G_m)$, there is an integer $N(F)$ such that for all $n\ge N(F)$, $\rho_n(\alpha)$ is defined for all $\alpha\in F$. Indeed, this condition is satisfied by \Cref{rmk:gn}.

\begin{lemma}\label{lemma:homotopy}
Let $q'\colon\G_m\times\G_m\to\Spec k$ denote the projection, and let $\Delta\colon\G_m\to\G_m\times\G_m$ be the diagonal. Then there is an $\A^1$-homotopy 
\[q'_*\p*{ \Delta_*\p*{\Div^A( \Delta^*(g_n^+)) - \Div^A( \Delta^*(g^-_n))}}\sim_{\A^1}\ip1\in A^0(\Spec k,\calO_{\Spec k}).\]
\end{lemma}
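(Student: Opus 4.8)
The plan is to unwind the statement into an identity in $\Cor^A_k(\Spec k,\Spec k)=A^0(k)$ and then realize it by an explicit $\A^1$-homotopy on the affine line via \Cref{constr:lrangleclass}. First I would use that pushforwards compose, together with $q'\circ\Delta=\pr\colon\G_m\to\Spec k$, to see that the left-hand side equals $\pr_*\Div^A_{\G_m}(t^n+1)-\pr_*\Div^A_{\G_m}(t^n+t)$, where $\Div^A_{\G_m}(t^n+1),\Div^A_{\G_m}(t^n+t)\in\Cor^A_k(\Spec k,\G_m)$ come from \Cref{constr:lrangleclass} applied to $\id\colon\G_m\to\G_m$ over $\Spec k$ with the standard trivialization $dt$ of $\omega_{\G_m}$, and with supports $Z(t^n+1)$ resp. $Z(t^n+t)\cap\G_m=Z(t^{n-1}+1)$, both finite over $\Spec k$. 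Checking that this is what $\Div^A(\Delta^*g_n^\pm)$ unwinds to is routine.

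Next I would pass from $\G_m$ to $\A^1$. Since $Z(t^n+1)\subseteq\G_m$, Zariski excision along $\G_m\hookrightarrow\A^1$ gives $\pr_*\Div^A_{\G_m}(t^n+1)=\pr_*\Div^A_{\A^1}(t^n+1)$. On $\A^1$ the zero locus of $t^n+t=t(t^{n-1}+1)$ is the disjoint union $\{0\}\amalg Z(t^{n-1}+1)$, so additivity of $\Div^A$ over disjoint supports gives $\pr_*\Div^A_{\A^1}(t^n+t)=\pr_*\Div^A_{\A^1}(t^n+t)_{\{0\}}+\pr_*\Div^A_{\G_m}(t^n+t)$. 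I would then show $\pr_*\Div^A_{\A^1}(t^n+t)_{\{0\}}\sim_{\A^1}\langle 1\rangle$ by feeding the degree-preserving family $(1-\lambda)t^n+t=t\bigl((1-\lambda)t^{n-1}+1\bigr)$ into \Cref{constr:lrangleclass} over $\A^1_\lambda$: its vanishing locus contains the section $\{0\}\times\A^1_\lambda$ as a clopen (hence finite over $\A^1_\lambda$) part, and the resulting homotopy connects $\Div^A_{\A^1}(t^n+t)_{\{0\}}$ at $\lambda=0$ to $\Div^A_{\A^1}(t)_{\{0\}}$ at $\lambda=1$, the latter being $\langle 1\rangle$ by \Cref{lm:sectZ(f)cor}(b) (since $d(t)=dt$, the twist is trivial). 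Hence $\pr_*\Div^A_{\G_m}(t^n+t)\sim_{\A^1}\pr_*\Div^A_{\A^1}(t^n+t)-\langle 1\rangle$, and the lemma is reduced to $\pr_*\Div^A_{\A^1}(t^n+1)\sim_{\A^1}\pr_*\Div^A_{\A^1}(t^n+t)$.

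This final homotopy is immediate: apply \Cref{constr:lrangleclass} to the smooth relative curve $\stackrel{t}{\A^1}\times\stackrel{\lambda}{\A^1}\xrightarrow{\pr_\lambda}\A^1_\lambda$ (with $\omega$ trivialized by $dt$), the structure morphism to $\Spec k$, and $H(t,\lambda)\defeq t^n+\lambda t+(1-\lambda)$. As $H$ is monic of degree $n$ in $t$, its vanishing locus is finite over $\A^1_\lambda$, so $\Theta\defeq\Div^A(H)^{dt}\in\Cor^A_k(\A^1_\lambda,\Spec k)$ is a genuine $\A^1$-homotopy; precomposing with the sections $i_0,i_1$ gives $\Theta\circ i_0=\pr_*\Div^A_{\A^1}(t^n+1)$ and $\Theta\circ i_1=\pr_*\Div^A_{\A^1}(t^n+t)$, which together with the previous two steps proves the lemma.

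The argument is almost entirely formal; the one real idea is that one should not fight the vanishing of $t^n+t$ at the origin inside $\G_m$, but instead pass to $\A^1$, where $t^n+1$ deforms to $t^n+t$ through a manifestly support-finite (degree-$n$) homotopy, and then separately account for the extra zero at the origin, whose contribution is exactly $\langle 1\rangle$. This also explains the choice of the functions $g_n^\pm=t_1^n+1$ and $t_1^n+t_2$ with these signs: with the opposite sign the boundary term at the origin would instead be $\langle -1\rangle=-\epsilon$ and the conclusion would be $\epsilon$, cf. \Cref{rmk:gn}. The remaining technical points — applying \Cref{constr:lrangleclass} with a nonempty ``escaping'' component $Z'$ of a vanishing locus, and the occasional rescaling of a trivialization by a function that is only a local unit on the curve (handled via \Cref{lm:fmu}) — are dealt with exactly as in the rest of the paper.
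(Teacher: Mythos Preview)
Your proposal is correct and follows essentially the same route as the paper: unwind to $\Div^A(t^n+1)-\Div^A(t^n+t)_{Z(t^{n-1}+1)}$ on $\A^1$, use the linear homotopy $t^n+\lambda t+(1-\lambda)$, and identify the leftover contribution at $t=0$ with $\langle 1\rangle$. The only difference is that you spend an extra homotopy $(1-\lambda)t^n+t$ to reduce $\Div^A_{\A^1}(t^n+t)_{\{0\}}$ to $\Div^A_{\A^1}(t)_{\{0\}}$ before invoking \Cref{lm:sectZ(f)cor}(b), whereas the paper applies that lemma directly to $\Div^A_{\A^1}(t^n+t)_{\{0\}}$ (since $d(t^n+t)\big|_{t=0}=dt$), obtaining $\langle 1\rangle$ on the nose without the auxiliary homotopy.
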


\begin{proof}
We deduce the claim from the following computation:
\begin{align}
&q'_*\p*{ \Delta_*\p*{\Div^A( \Delta^*(g_n^+)) - \Div^A( \Delta^*(g^-_n))}}\\
&=\Div^A( \Delta^*(g_n^+))^{\pr^{\G_m}_{\pt}} - \Div^A( \Delta^*(g^-_n))^{\pr^{\G_m}_{\pt}}\label{lem:eq1}\\
&=\Div^A( \Delta^*(g_n^+))^{\pr^{\A^1}_{\pt}} - \Div^A( \Delta^*(g^-_n))_{Z(g^-_n|_{\G_m})}^{\pr^{\A^1}_{\pt}}\label{lem:eq2}\\
&=\Div^A( t^n+1)^{\pr^{\A^1}_{\pt}} - \Div^A( t^n+t)_{Z(t^{n-1}+1)}^{\pr^{\A^1}_{\pt}}\label{lem:eq3}\\
&\sim_{\A^1}\Div^A( t^n+ t)_{Z(t^n+ t)}^{\pr^{\A^1}_{\pt}} - \Div^A( t^n+t)_{Z(t^{n-1}+1)}^{\pr^{\A^1}_{\pt}}\label{lem:eq4}\\
&=\Div^A( t^n+ t)_{Z(t)}^{\pr^{\A^1}_{\pt}}=
\langle 1 \rangle.\label{lem:eq5}
\end{align}
Here the homotopy \eqref{lem:eq4} is given by $t^n+\lambda t+(1-\lambda)\in k[\A^1\times\A^1]$.
\end{proof}

\subsubsection{}We are now ready to prove the cancellation theorem for $A$-correspondences.

\begin{theorem}\label{thm:cancel}
For any $X,Y\in \Sm_k$, the map $\theta=(-)\wedge {\G^{\wedge 1}_m}$ 
induces a quasi-isomorphism of complexes of presheaves with $A$-transfers
\[
\rmC_*(\theta)\colon \Cor^A_k(\Delta^\bullet\times X,Y)\simeq \Cor^A_k((\Delta^\bullet\times X)\wedge \Gmw, Y\wedge \Gmw). 
\]
Here $\Delta^\bullet$ denotes the standard cosimplicial scheme over $k$, whose $n$-simplices $\Delta^n$ are given by $\Spec k[x_0,\dots,x_n]/(\sum_i x_i-1)$.
\end{theorem}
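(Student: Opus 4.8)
**The plan is to follow Voevodsky's original proof of the cancellation theorem \cite{Voe-cancel} step by step, replacing integer-coefficient cycles by $A$-cohomology classes throughout and keeping careful track of the quadratic twists $\ip\lambda$ that appear.** The key point is that all the geometric inputs needed — the functions $g_n^\pm$, the moving lemma from \Cref{rmk:gn}, the partially defined maps $\theta$ and $\rho_n$ — have already been set up above in a form directly parallel to \cite{Voe-cancel}. What remains is the chain-level homotopy argument.

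First I would recall the overall strategy. The map $\theta = (-)\wedge\Gmw$ admits, for each $n$, the partially defined ``inverse'' $\rho_n$, and the family $(\rho_n)$ is exhausting in the sense made precise after \Cref{lemma:homotopy}: on any finite set of correspondences, $\rho_n$ is eventually defined. One then shows two things at the level of the singular complexes $\rmC_*(-) = \Cor^A_k(\Delta^\bullet\times(-),(-))$:
\begin{enumerate}
\item[(i)] $\rho_n\circ\theta$ is chain homotopic to the identity on $\rmC_*(\Cor^A_k(\Delta^\bullet\times X,Y))$, via an explicit $\A^1$-homotopy that does not depend on $n$ once $n$ is large enough;
\item[(ii)] $\theta\circ\rho_n$ is chain homotopic to the identity on $\rmC_*(\Cor^A_k((\Delta^\bullet\times X)\wedge\Gmw,Y\wedge\Gmw))$, again by an explicit homotopy valid for $n$ large.
\end{enumerate}
Given (i) and (ii), a standard argument with exhausting families of partially defined maps (\cite[proof of Theorem 4.2 or 4.4]{Voe-cancel}) upgrades these chain homotopies to a genuine quasi-isomorphism: on homology, $\theta_*$ is both split injective and split surjective, hence an isomorphism in every degree, because any given homology class and any given nullhomotopy involve only finitely many correspondences, so a single $\rho_n$ handles it.

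For (i), the composite $\rho_n\circ\theta$ sends a correspondence supported on $T\subseteq X\times Y$ to the class obtained by external product with $\Delta_*(1)$, cup product with $q^*(\Div^A(g_n^+)-\Div^A(g_n^-))$, and pushforward along $p$; by the projection formula and base change this reduces — exactly as in \cite[Lemma 4.6, 4.7]{Voe-cancel} — to multiplication by the $A$-correspondence computed in \Cref{lemma:homotopy}, namely $q'_*\bigl(\Delta_*(\Div^A(\Delta^*g_n^+)-\Div^A(\Delta^*g_n^-))\bigr)$, which that lemma identifies with $\ip1 = \id$ up to $\A^1$-homotopy. (This is precisely why the functions $g_n^\pm$ were chosen with those signs, per \Cref{rmk:gn}.) So $\rho_n\circ\theta\sim_{\A^1}\id$ on the singular complex, using homotopy invariance of the relevant presheaves with $A$-transfers. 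For (ii) one argues dually: $\theta\circ\rho_n$ acts on $\Cor^A_k(X\wedge\Gmw,Y\wedge\Gmw)$ and one builds the homotopy to the identity from the same local pieces via \Cref{constr:lrangleclass}, the projection formula, and \Cref{lm:twistGm2} to absorb the twist automorphism $\tau^\wedge\sim_{\A^1}\epsilon$ on $\G_m^{\wedge2}$ — this is where the unoriented bookkeeping is most delicate, since one must check that the sign $\epsilon$ produced by swapping the two $\G_m$ factors cancels against the sign implicit in $\rho_n$. The remaining verifications (that $\theta$ and $\rho_n$ are maps of complexes, compatibility with the simplicial structure, that $T\cap(Z_n^+\cup Z_n^-)$ is admissible for $n\gg0$) are routine given \Cref{rmk:gn} and the axioms of a good cohomology theory.

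\textbf{The main obstacle} I expect is bookkeeping the quadratic twists: in the non-oriented setting every pushforward and every swap of $\G_m$-factors can introduce a factor $\ip\lambda$ or $\epsilon$, and one must verify that in the end all these collect to $\ip1$ rather than to some nontrivial unit — this is exactly the role played by \Cref{lm:twistGm2}, \Cref{lemma:homotopy}, and the sign conventions in \Cref{def:rat-fu}, so the proof amounts to threading these through Voevodsky's argument without sign errors. The second, more technical, obstacle is making the ``exhausting family of partially defined maps'' formalism of \cite{Voe-cancel} work verbatim in our context; but since $\overline\Cor^A_k(-,Y\wedge\Gmw)$ is a homotopy invariant presheaf with $A$-transfers and the excision/moving results of the previous sections are available, this transfers with only cosmetic changes.
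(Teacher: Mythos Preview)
Your proposal is correct and follows essentially the same route as the paper's proof: both invoke Voevodsky's template, establish $\rho_n\circ\theta\sim_{\A^1}\id$ via the projection formula, base change, and \Cref{lemma:homotopy}, and then obtain the other composite from \Cref{lm:twistGm2} by the argument of \cite[Theorem~4.6]{Voe-cancel}. The paper additionally spells out that property~(2) of $g_n^\pm$ (their agreement on $\G_m\times 1$) forces the degenerate terms $\rho_n((\alpha\times\id_{\G_m})\circ i_X)$ etc.\ to vanish up to homotopy---this is what makes $\rho_n$ descend to the smash product $(-)\wedge\Gmw$---but you implicitly subsume this under ``routine verifications,'' which is fair.
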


\begin{proof}
The proof follows the same approach as Voevodsky's cancellation theorem for the category $\Cor_k$ \cite{Voe-cancel}. Thus many aspects of the proof will be the same as those of Voevodsky's proof, and we will therefore focus on the details that are specific to our context.

To prove that $\rmC_*(\theta)$ is a quasi-isomorphism it is enough to show that the maps $\rho_n$ and $\theta$ are inverse to each other up to natural $\A^1$-homotopy. To this end, first note that the functions $g_n^+$ and $g_n^-$ enjoy the following properties:
\begin{itemize}
\item[(1)] $g_n^+\big|_{\Delta} = t^n +a_1 t^{n-1}+\dots +a_{n-1}t+1$, and $g_n^-\big|_{\Delta} = t^n+b_1 t^{n-1}+\dots+ b_{n-2} t^2 +t$ (in fact, $g_n^+\big|_{\Delta} = t^n+1$ and $g_n^-\big|_{\Delta} = t^n+t$);
\item[(2)] $g_n^+\big|_{\G_m\times 1}=g_n^-\big|_{\G_m\times 1}\neq 0$.
\end{itemize}
Let $p$ and $q$ be the projections 
\[
p\colon X\times \G_m\times Y\times\G_m\to X\times Y,\quad q\colon X\times\G_m\times Y\times\G_m\to\G_m\times \G_m.
\] 
Moreover, denote by $p'\colon X\times Y\to \Spec k$ and $q'\colon\G_m\times\G_m\to\Spec k$ the structure maps. Thus we have a pullback square
\[
\begin{tikzcd}
X\times\G_m\times Y\times \G_m\ar{r}{q}\ar{d}[swap]{p} & \G_m\times\G_m\ar{d}{q'}\\
X\times Y\ar{r}{p'} & \Spec k.
\end{tikzcd}
\]
Property (1) along with \Cref{lemma:homotopy} then implies that the composition $\rho_n\circ\theta$ is $\A^1$-homotopic to the identity, by the following computation:
\begin{align}
&p_*\p*{ (\alpha\times \Delta_*(1) ) \smile  q^*\p*{\Div^A( g_n^+) - \Div^A( g^-_n)}}\\
&=p_*\p*{
p^*(\alpha)\smile q^*\p*{\Delta_*(1)  \smile \p*{\Div^A( g_n^+) - \Div^A( g^-_n)}}}\label{cancel:eq1}\\
&= \alpha\smile p_*\p*{
q^*\p*{ \Delta_*(1)  \smile \p*{\Div^A( g_n^+) - \Div^A( g^-_n)}}}\label{cancel:eq2}\\
&= \alpha\smile (p')^*\p*{
q'_*\p*{\Delta_*(1)  \smile \p*{\Div^A( g_n^+) - \Div^A( g^-_n)}}}\label{cancel:eq3}\\
&= \alpha\smile (p')^*\p*{ 
q'_*\p*{ \Delta_*\p*{\Div^A(\Delta^*(g_n^+)) - \Div^A(\Delta^*(g^-_n))}}}\label{cancel:eq4}\\
&\sim_{\A^1}  \alpha\smile (p')^*(\ip1)\label{cancel:eq5}\\
&= \alpha.\label{cancel:eq}
\end{align}
Here the equality \eqref{cancel:eq2} follows from the projection formula, \eqref{cancel:eq3} follows from base change applied to the diagram above, and the homotopy \eqref{cancel:eq5} is given by \Cref{lemma:homotopy}.

Similarly, property (2) implies that for any $\alpha\in \Cor^A_k(X,Y)$, the
classes 
$\rho_n( (\alpha\times \id_{\G_m})\circ i_X)$, $\rho_n(i_ Y \circ (\alpha\times \id_{\G_m})\circ i_X)$ and $\rho_n(i_ Y \circ  (\alpha\times \id_{\G_m}))$ are equal to 0 up to natural homotopy, where $i_X\colon X\to X\times \G_m$ and $i_Y\colon Y\to Y\times \G_m$ denote the morphisms given by the rational point $1\colon \Spec k\to \G_m$.
Thus we see that $\rho_n\circ \theta\sim_{\A^1} \id_{\rmc_A(Y)}$.

Finally, \Cref{lm:twistGm2} implies that $\rho_n$ is also right inverse up to $\A^1$-homotopy by the same argument as \cite[Theorem 4.6]{Voe-cancel} (see also \cite[Lemma 7.5]{framed-cancel}).
\end{proof}

\section{The category of \texorpdfstring{$A$}{A}-motives}\label{section:motives}

In this section we assume that the base field $k$ is infinite, perfect and of characteristic different from $2$.

\subsection{Nisnevich localization}

\begin{theorem}\label{thm:ext}
The category of Nisnevich sheaves with $A$-transfers is abelian.
The Nisnevich sheafification $\scrF_{\Nis}$ of any presheaf with $A$-transfers $\scrF$ is equipped with $A$-transfers in a unique and natural way, and there is a natural isomorphism
\[
\Ext^i_{\Shv_{\Nis}(\Cor^A_k;\Z)}(\Z_A(X), \scrF_{\Nis} ) \cong \rmH^i_{\Nis}(X,\scrF_{\Nis}) .
\]
\end{theorem}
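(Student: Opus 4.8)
The plan is to follow the now-standard template for proving that transfers descend along Nisnevich sheafification — the argument originating in Voevodsky's work on presheaves with transfers \cite{Voe-hty-inv} and streamlined in \cite{MVW}, and already adapted to the Milnor–Witt setting in \cite{MW-cplx}. The essential input that makes this template run is exactly the \emph{étale excision on local schemes} property; for the cohomology theory $A^*$ this is an axiom (or, via \Cref{cor:hty-inv-exc}, a consequence of Zariski excision and homotopy invariance), and it is precisely what \Cref{rem:Zarex} flagged as the place where étale (rather than merely Zariski) excision is needed.

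First I would show that the presheaf $U \mapsto \scrF(U^h_x)$ on étale neighborhoods, and more generally the value of a presheaf with $A$-transfers on a henselian local scheme, behaves well: using \Cref{lm:AEssSmSum} and \Cref{lm:EssSmACor=limA_T} one checks that $\Cor^A_k(U, X)$ for $U$ henselian local decomposes as a direct sum indexed by points of $X$, so that a finite $A$-correspondence out of a henselian local scheme is, up to the relevant colimit, supported on finitely many henselian local pieces. The key consequence I would extract is that if $\scrF \in \PSh_\Sigma(\Cor^A_k;\Z)$ and $X^h_x$ is a henselian local scheme, then the $A$-transfer structure on $\scrF$ induces a well-defined action of $\Cor^A_k(U, X^h_x)$ on sections over henselian local schemes, compatible with the presheaf structure — this is what lets one transport transfers to stalks. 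Concretely: for a Nisnevich-local isomorphism, i.e. an étale neighborhood $\pi\colon(X',Z')\to(X,Z)$ with matching closed points, the induced map on stalks $\scrF_{\Nis}(X^h_x) \to \scrF_{\Nis}(X'^h_{x'})$ is an isomorphism, and one uses étale excision for $A^*$ to see that the finite $A$-correspondences realizing this isomorphism exist and are inverse up to homotopy — essentially the content already assembled in \Cref{lm:injACoretex} and \Cref{lemma:etex-surj}, though here one needs it at the level of presheaves of sections rather than of the theory $A^*$ itself.

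Granting that transfers descend, the next step is formal but must be spelled out. One checks that $\Shv_\Nis(\Cor^A_k;\Z)$ is abelian: kernels and cokernels are computed in $\PSh_\Sigma$ and then Nisnevich-sheafified, and since sheafification now preserves $A$-transfers, these are objects of $\Shv_\Nis(\Cor^A_k;\Z)$; exactness is checked on Nisnevich stalks using that $\gamma^A_*$ detects it. For the $\Ext$ computation, the standard argument is: the forgetful functor $\gamma^A_*\colon \Shv_\Nis(\Cor^A_k;\Z) \to \Shv_\Nis(\Sm_k;\Z)$ is exact and has an exact left adjoint $\Z_A(-) = \gamma^*_A a_\Nis \rmc(-)$, so it preserves injectives' acyclicity in the sense needed; more precisely one shows that if $\scrI$ is an injective Nisnevich sheaf with $A$-transfers then $\gamma^A_*\scrI$ has no higher Nisnevich cohomology on any $X\in\Sm_k$ (because Nisnevich cohomology is computed by a Čech/hypercohomology procedure that only involves henselian local rings, where by the descent of transfers the relevant complex is exact — this is the Voevodsky argument that $\scrF_{\Nis}$ is acyclic for local rings when $\scrF$ is). Then
\[
\Ext^i_{\Shv_\Nis(\Cor^A_k;\Z)}(\Z_A(X), \scrF_{\Nis}) \cong \rmH^i\big(R\Hom(\Z_A(X), \scrF_{\Nis})\big) \cong \rmH^i_\Nis(X, \gamma^A_*\scrF_{\Nis}),
\]
the last step by Yoneda-adjunction $\Hom(\Z_A(X), -) = \gamma^A_*(-)(X) = \Gamma(X, \gamma^A_*(-))$ together with the acyclicity just established, which guarantees the derived functors agree.

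The main obstacle, as anticipated in \Cref{rem:Zarex}, is the very first step: making precise and proving that the $A$-transfer action descends to Nisnevich stalks, i.e. that for a homotopy-invariant-free presheaf with $A$-transfers the sheafification $\scrF_{\Nis}$ canonically carries $A$-transfers and that $\scrF \mapsto \scrF_{\Nis}$ is compatible with them. This requires the careful bookkeeping with henselian local schemes and étale neighborhoods (Lemmas \ref{lm:AEssSmSum} and \ref{lm:EssSmACor=limA_T}) and crucially uses étale excision for $A^*$ on local schemes; everything downstream — abelianness, the Čech-acyclicity of $\gamma^A_*\scrI$, and the $\Ext$ identification — is then a routine homological-algebra transcription of the classical case in \cite[Lecture 13]{MVW} and \cite[§1--3]{MW-cplx}. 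I would carry the steps out in the order: (i) descent of transfers to stalks; (ii) $\Shv_\Nis(\Cor^A_k;\Z)$ is abelian with sheafification its reflector; (iii) acyclicity of $\gamma^A_*$ applied to injectives; (iv) the $\Ext$ formula via adjunction.
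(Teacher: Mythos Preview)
Your overall strategy---the Voevodsky/MVW template of (i) descent of transfers to Nisnevich stalks, (ii) abelianness, (iii) acyclicity of $\gamma^A_*$ on injectives, (iv) the $\Ext$ formula via adjunction---is valid and is in fact acknowledged in the paper's remark immediately following the proof as one possible route. The paper itself takes a more economical path: it invokes \cite[Theorem 3.1]{DruDMGWeff}, which packages the whole template into a single criterion, namely that for $U$ henselian local one has $\Cor^A_k(U,X)\cong\bigoplus_{x\in X}\Cor^A_k(U,X^h_x)$. Once this criterion is on the table, the entire proof reduces to the short computation with \Cref{lm:AEssSmSum} and \Cref{lm:EssSmACor=limA_T} that you also correctly identify as the key technical input. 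Your route is more self-contained; the paper's is shorter by outsourcing the formal homological algebra.

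One correction: your invocation of \Cref{lm:injACoretex} and \Cref{lemma:etex-surj} is misplaced. Those lemmas construct finite $A$-correspondences that are inverse \emph{up to $\A^1$-homotopy}, and they feed into \Cref{th:etex}, the étale excision theorem for \emph{homotopy invariant} presheaves with $A$-transfers. But \Cref{thm:ext} makes no homotopy invariance assumption, so those results are irrelevant here. What is actually needed---and what you correctly name earlier in the same paragraph---is the étale excision axiom for the cohomology theory $A^*$ itself (Definition~\ref{def:pre-coh}(b)), which enters via \Cref{lm:AEssSmSum}. The distinction matters: the decomposition $\Cor^A_k(U,X)\cong\bigoplus_x\Cor^A_k(U,X^h_x)$ holds on the nose, not merely up to homotopy, and it is this exact splitting that drives the descent of transfers to the sheafification.
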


\begin{proof}
By \cite[Theorem 3.1]{DruDMGWeff} it is enough to show that
$
\Cor^A_k(U,X)
\cong 
\bigoplus_{x\in X} \Cor^A_k(U,X^h_x),
$
where $x\in X$ ranges over the set of all (not necessary closed) points.
Let $d_X$ denote the dimension of $X$. Then we have
\begin{align*}
\Cor^A_k(U,X) &=
\varinjlim\limits_{T\in \calA_0(U\times X/U)} A^{d_X}_T(U\times X,\omega_{X}) &\\
&\cong
\varinjlim\limits_{T\in \calA_0(U\times X/U)} 
\bigoplus_{x\in X} A^{d_X}_{T^h_{x}}(U\times X^h_x,\omega_{X^h_x})&\\
&=
\bigoplus_{x\in X} 
\varinjlim\limits_{T\in \calA_0(U\times X^h_x/U)} A^{d_X}_{T}(U\times X^h_x,\omega_{X^h_x})
\cong
\bigoplus_{x\in X} 
\Cor^A_k(U,X^h_x), 
\end{align*}
where the isomorphism in the second row is given by \Cref{lm:AEssSmSum}, and the isomorphism in the last row follows from \Cref{lm:EssSmACor=limA_T}.
\end{proof}

\begin{remark}
The category of finite $A$-correspondences $\Cor^A_k$ is a strict V-category of correspondences in the sense of \cite[Definition 2.3]{Garkusha-reconst},
and a V-ringoid in the sense of \cite[Definition 2.4]{K-motives}. So, alternatively,
\Cref{thm:ext} can be proved by using the technique of \cite{K-motives}.
Note also that the proof of \Cref{thm:ext} could be obtained by following the original approach of Suslin and Voevodsky \cite{Voe-hty-inv}, that is, showing that the cone of the morphism $\rmc_A(\calU^\bullet)\to \rmc_A(U)$ is acyclic. Here $\rmc_A(\calU^\bullet)$ is the \v{C}ech complex associated to a Nisnevich covering 
$\calU\to U$ of a smooth $k$-scheme $U$. 
\end{remark}

\subsection{Strict homotopy invariance}

\begin{theorem}\label{thm:strict-hty-inv}
Let $\scrF\in\PSh_\Sigma(\Cor^A_k;\Z)$ be a homotopy invariant presheaf with $A$-transfers. Then the associated Nisnevich sheaf $\scrF_{\Nis}$ is strictly homotopy invariant, i.e., the projection $p\colon X\times\A^1\to X$ induces an isomorphism
\[
p^*\colon \rmH^n_{\Nis}(X,\scrF_{\Nis})\xrightarrow{\cong} \rmH^n_{\Nis}(X\times\A^1,\scrF_{\Nis})
\]for all $X\in\Sm_k$ and all $n\ge0$. 
\end{theorem}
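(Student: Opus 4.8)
The plan is to follow the now-standard Voevodsky-style argument for strict homotopy invariance, adapted to presheaves with $A$-transfers, and to feed in the excision and injectivity results proved in Sections \ref{section:inj-aff-line}--\ref{section:et-exc} as the geometric input. First I would observe that, by \Cref{thm:ext}, the Nisnevich sheafification $\scrF_{\Nis}$ of a homotopy invariant presheaf with $A$-transfers is again a presheaf with $A$-transfers, so we may as well replace $\scrF$ by $\scrF_{\Nis}$ and assume $\scrF$ is a Nisnevich sheaf with $A$-transfers. The key reduction is the classical one: it suffices to prove that $\scrF_{\Nis}$ is \emph{strictly} homotopy invariant as a Nisnevich sheaf, and by a theorem of Morel--Voevodsky type (using that $k$ is perfect) this in turn follows once one knows (a) the cohomology presheaves $X\mapsto \rmH^n_{\Nis}(X,\scrF_{\Nis})$ are homotopy invariant, and (b) the restriction of $\scrF_{\Nis}$ to smooth local (henselian) schemes, and to the generic fibre over function fields, is well behaved. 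The engine driving (a) is the Gersten-type resolution, which is built from exactly the injectivity and excision statements established above.

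The main steps, in order, would be: (1) Reduce to showing that for $\scrF$ a homotopy invariant sheaf with $A$-transfers, the presheaf $U\mapsto\scrF(U\times\A^1)/\scrF(U)$ vanishes Nisnevich-locally; equivalently, that $\scrF(\A^1_K)=\scrF(K)$ for every finitely generated field extension $K/k$, and that $\scrF$ satisfies the appropriate local vanishing. (2) Using \Cref{thm:inj-loc-sch} (injectivity for semilocal schemes), \Cref{th:ExRelAffL} and \Cref{th:ExAffL} (excision on the relative and absolute affine line), and \Cref{th:etex} (\'etale excision on local schemes), construct the Cousin/Gersten complex for $\scrF$ on a smooth local scheme and show it is a resolution; this is the verbatim analogue of \cite[\S24]{MVW} or Voevodsky's original argument, with the role of ``finite correspondences'' played throughout by finite $A$-correspondences. (3) Deduce that the Nisnevich cohomology sheaves $\underline{\rmH}^n_{\Nis}(\scrF)$ are themselves homotopy invariant presheaves with $A$-transfers, by the standard descending induction on cohomological degree using the coniveau spectral sequence, whose $E_1$-page is controlled by the Gersten resolution. (4) Finally, run the same argument one more time, or invoke the by-now formal consequence (cf. \cite{MVW,MW-cplx}), to conclude that $p^*\colon\rmH^n_{\Nis}(X,\scrF_{\Nis})\to\rmH^n_{\Nis}(X\times\A^1,\scrF_{\Nis})$ is an isomorphism for all $X\in\Sm_k$ and all $n\ge 0$.

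Because essentially all the geometric work has been isolated into the excision theorems of the preceding sections, the bulk of this proof is formal bookkeeping: one must check that the presheaves-with-transfers formalism for $\Cor^A_k$ behaves like Voevodsky's $\Cor_k$ at every point where his argument uses transfers --- in particular that $A$-transfers are compatible with the limits defining correspondences out of essentially smooth (local, henselian) schemes, which is exactly \Cref{lm:AEssSmSum}, \Cref{lm:EssSmACor=limA_T} and \Cref{thm:ext}. I expect the main obstacle to be \textbf{step (3)}: verifying that the Gersten complex assembled from the excision isomorphisms is genuinely a complex of presheaves with $A$-transfers and that it computes Nisnevich cohomology, i.e.\ reconciling the twists by relative canonical bundles $\omega_{X/S}$ that pervade \Cref{def:ACor} with the untwisted functoriality needed for the coniveau spectral sequence. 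The correspondences of pairs introduced above, together with \Cref{lm:sqroot} and \Cref{lm:locup} (which handle the $\langle\lambda\rangle$-discrepancies forced by the lack of orientation), are precisely the tools designed to absorb these twists, so the argument should close; but keeping track of the line-bundle trivializations through the spectral sequence is where care is genuinely required, and where our situation differs from the oriented case treated in \cite{hty-inv,MVW}.
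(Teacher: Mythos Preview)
Your proposal is correct and takes essentially the same approach as the paper: the paper's own proof simply states that the theorem is a consequence of the injectivity and excision theorems of Sections \ref{section:inj-aff-line}--\ref{section:et-exc}, declares the deduction ``formal'', and refers to \cite{hty-inv} and \cite{GWStrHomInv} for the details. What you have written is precisely an outline of that formal deduction, and your identification of the relevant ingredients (Theorems \ref{th:injRelA1}, \ref{th:ExRelAffL}, \ref{th:ExAffL}, \ref{thm:inj-loc-sch}, \ref{th:etex}, together with \Cref{thm:ext}) matches the paper exactly.
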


\begin{proof}
The theorem is a consequence of the injectivity and excision theorems proved in Sections \ref{section:inj-aff-line}, \ref{section:rel-aff-line}, \ref{section:inj-loc-sch} and \ref{section:et-exc}. The deduction of strict homotopy invariance from these results is formal; see for example \cite{hty-inv} or \cite{GWStrHomInv}.
\end{proof}

\subsection{Effective \texorpdfstring{$A$}{A}-motives}

\begin{definition}
The $\infty$-category $\DM_A^{\eff}(k)$ of \emph{effective $A$-motives} is the localization of the derived category $\D^-(\Shv_\Nis(\Cor^A_k;\Z))$ with respect to the morphisms of the form $X\times\A^1\to X$. Let  
$\rmM_A^{\eff}\colon \Sm_k\to \DM_A^{\eff}(k)$ be the functor defined as the composition of the localization $\D^-(\Shv_\Nis(\Cor^A_k;\Z))\to \DM_A^{\eff}(k)$ with the functor $\Sm_k\to \D^-(\Shv_\Nis(\Cor^A_k;\Z))$ given by $X\mapsto \Z_A(X)[0]$. For any $X\in\Sm_k$, we refer to $\rmM_A^\eff(X)$ as the \emph{effective $A$-motive} of $X$. If $X=\Spec k$, we abbreviate $\rmM_A^\eff(\Spec k)$ to $\Z_A$. Finally, we define the Tate object $\Z_A(1)$ as 
\[\Z_A(1)\defeq\cofib(\Z_A\to \rmM_A^\eff(\G_m))[-1],\] 
where $\Z_A\to \rmM_A^\eff(\G_m)$ is the map induced by the rational point $1\colon\Spec k\to\G_m$.
\end{definition}

\subsubsection{}Note that there is a symmetric monoidal structure on $\DM_A^\eff(k)$ inherited from that on $\Shv_\Nis(\Cor^A_k;\Z)$, satisfying $\rmM_A^{\eff}(X)\otimes \rmM_A^{\eff}(Y)\simeq \rmM_A^{\eff}(X\times Y)$. The motive of a point, $\Z_A$, is then the unit for this monoidal structure. For any $n\ge1$, we can use the monoidal structure to define $\Z_A(n)\defeq\Z_A(1)^{\otimes n}$.

\begin{theorem}[\protect{cf. \cite[Theorem 14.11]{MVW}}]
The $\infty$-category $\DM_A^\eff(k)$ of effective $A$-motives is equivalent to the full subcategory of $\D^-(\Shv_\Nis(\Cor^A_k;\Z))$ spanned by \emph{motivic complexes}, i.e., complexes whose cohomology sheaves are strictly homotopy invariant.
\end{theorem}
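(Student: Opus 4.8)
The plan is to follow the classical identification of $\DM^{\eff}$ from \cite[Theorem 14.11]{MVW}: to recognize $\DM_A^{\eff}(k)$, defined as the localization of $\D^-(\Shv_\Nis(\Cor^A_k;\Z))$ at the maps $X\times\A^1\to X$, as the full subcategory of $\A^1$-local objects, and then to see that these are exactly the motivic complexes. The essential inputs are the Nisnevich localization theorem (\Cref{thm:ext}) and strict homotopy invariance (\Cref{thm:strict-hty-inv}). First I would promote \Cref{thm:ext} from sheaves to bounded-above complexes, i.e.\ $\Ext^i_{\D^-(\Shv_\Nis(\Cor^A_k;\Z))}(\Z_A(X),K)\cong\HH^i_\Nis(X,K)$ for all $K\in\D^-(\Shv_\Nis(\Cor^A_k;\Z))$, $X\in\Sm_k$, $i\in\Z$, via an injective resolution of $K$ and the hyper-Ext spectral sequence. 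Since the $\Z_A(X)$ generate $\D^-(\Shv_\Nis(\Cor^A_k;\Z))$ and the class we invert is generated by the maps $\Z_A(X\times\A^1)\to\Z_A(X)$, an object $K$ is $\A^1$-local exactly when $\HH^i_\Nis(X,K)\to\HH^i_\Nis(X\times\A^1,K)$ is an isomorphism for all $i$ and all $X\in\Sm_k$. The hypercohomology spectral sequence $\rmH^p_\Nis(X,\calH^q(K))\Rightarrow\HH^{p+q}_\Nis(X,K)$ then shows that any complex whose cohomology sheaves $\calH^q(K)$ are strictly homotopy invariant is $\A^1$-local; this is the easy direction of the equivalence, and it is immediate from the definition of strict homotopy invariance.

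For the reverse inclusion I would bring in the Suslin complex functor $\underline{C}_\bullet$ on $\D^-(\Shv_\Nis(\Cor^A_k;\Z))$, given levelwise by $\underline{C}_n\scrF(U)\defeq\scrF(U\times\Delta^n)$ with its induced $A$-transfers and totalized on complexes, together with the natural transformation $\id\to\underline{C}_\bullet$. Three properties have to be checked. \emph{(i)} For every $K$ the map $K\to\underline{C}_\bullet K$ becomes an isomorphism in $\DM^\eff_A(k)$: this is formal, since the augmentation of the standard cosimplicial scheme $\Delta^\bullet\to\mathrm{pt}$ is an $\A^1$-equivalence, exactly as in \cite{Voe-motives, MVW}. \emph{(ii)} $\underline{C}_\bullet K$ is a motivic complex. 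As $\underline{C}_\bullet K$ is already a complex of Nisnevich sheaves, its cohomology sheaves are the Nisnevich sheafifications of its cohomology presheaves, so by \Cref{thm:strict-hty-inv} it suffices that the cohomology presheaves of $\underline{C}_\bullet K$ are homotopy invariant presheaves with $A$-transfers; by a dévissage along the truncation filtration (using that $\underline{C}_\bullet$ preserves distinguished triangles) this reduces to $K=\scrF$ a single sheaf, where homotopy invariance of the Suslin homology presheaves $\rmh_i(\underline{C}_\bullet\scrF)$ follows from the pretheory-type structure on $\Cor^A_k$ furnished by \Cref{constr:lrangleclass}, just as in Voevodsky's argument for $\Cor_k$. \emph{(iii)} If $K$ is itself a motivic complex then $K\to\underline{C}_\bullet K$ is a quasi-isomorphism: the same dévissage reduces to $K=\scrF$ strictly homotopy invariant, and for homotopy invariant $\scrF$ the augmentation $\scrF\to\underline{C}_\bullet\scrF$ is a quasi-isomorphism of complexes of Nisnevich sheaves by the usual contracting homotopy on the Suslin complex of a homotopy invariant presheaf, cf.\ \cite{MVW}.

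Assembling these: if $K$ is $\A^1$-local then $K\simeq\underline{C}_\bullet K$ by \emph{(i)}, and $\underline{C}_\bullet K$ is a motivic complex by \emph{(ii)}, so $K$ is a motivic complex; together with the easy direction of the first paragraph this identifies the $\A^1$-local objects with the objects of the full subcategory $\D^-_{\mathrm{mot}}\subseteq\D^-(\Shv_\Nis(\Cor^A_k;\Z))$ of motivic complexes. Since a localization of an $\infty$-category at a set of morphisms is canonically equivalent to the full subcategory of local objects, we obtain $\DM^\eff_A(k)\simeq\D^-_{\mathrm{mot}}$, with quasi-inverses the inclusion and the (sheafified) Suslin complex functor, well-defined up to equivalence by \emph{(iii)}. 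The main obstacle is property \emph{(ii)} — showing that the Suslin complex of a presheaf with $A$-transfers has homotopy invariant cohomology presheaves. This is exactly the place where the pretheory-type output of \Cref{constr:lrangleclass}, the analogue of Voevodsky's defining axiom for a pretheory, is indispensable; everything else is either formal or a direct appeal to Theorems \ref{thm:ext} and \ref{thm:strict-hty-inv}.
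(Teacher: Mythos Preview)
The paper gives no proof for this statement beyond the citation to \cite[Theorem 14.11]{MVW}; it relies on the fact that once \Cref{thm:ext} and \Cref{thm:strict-hty-inv} are in place, the classical argument goes through verbatim. Your outline is precisely that classical argument and is correct in substance.

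There is, however, a misattribution in your final paragraph. You identify the ``main obstacle'' as property~(ii), namely that the cohomology presheaves $h_i(\underline{C}_\bullet\scrF)$ are homotopy invariant, and claim this is where \Cref{constr:lrangleclass} is indispensable. This is not right: the homotopy invariance of the Suslin complex is a purely formal simplicial fact, valid for \emph{any} presheaf whatsoever (this is \cite[Lemma 2.18]{MVW}, proved by the standard prism decomposition of $\Delta^n\times\A^1$). No pretheory structure, no transfers, and certainly nothing from \Cref{constr:lrangleclass} are needed at this step; in particular your d\'evissage to a single sheaf is unnecessary. The genuine content enters only when you pass from homotopy invariance of the cohomology \emph{presheaves} to strict homotopy invariance of the cohomology \emph{sheaves}, and that is exactly \Cref{thm:strict-hty-inv}, which you already invoke correctly. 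The geometric machinery of \Cref{constr:lrangleclass} is used in the proof of \Cref{thm:strict-hty-inv} (via the excision and injectivity results of Sections~\ref{section:inj-aff-line}--\ref{section:et-exc}), not in the formal deduction you are carrying out here. So the argument is fine, but your diagnosis of where the difficulty lies is off by one level of abstraction.
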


\begin{theorem}[\protect{cf. \cite[Proposition 14.16]{MVW}}]
Let $X\in \Sm_k$, and let $\scrF^\bullet$ be a motivic complex. Then there is a natural isomorphism 
\[
[\rmM_A^{\eff}(X),\scrF^\bullet[i]]_{\D^-(\Shv_\Nis(\Cor^A_k;\Z))}\cong \HH^i_{\Nis}(X,\scrF^\bullet)
\]
for each $i\ge0$.
\end{theorem}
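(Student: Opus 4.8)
The plan is to reduce the left-hand group to ordinary Nisnevich hypercohomology, exploiting that $\rmM_A^{\eff}(X)=\Z_A(X)$ corepresents a completely explicit functor whose higher $\Ext$'s are already computed by \Cref{thm:ext}. First I would record the following identification, valid for any $\scrG\in\Shv_\Nis(\Cor^A_k;\Z)$: the sheafification adjunction $(a_\Nis,i)$ together with the additive Yoneda lemma gives a natural isomorphism
\[
\Hom_{\Shv_\Nis(\Cor^A_k;\Z)}(\Z_A(X),\scrG)=\Hom_{\PSh_\Sigma(\Cor^A_k;\Z)}(\rmc_A(X),\scrG)\cong\scrG(X)=\Gamma(X,\gamma^A_*\scrG),
\]
so that the functor corepresented by $\Z_A(X)$ is literally $\Gamma(X,-)\circ\gamma^A_*$. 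Its $i$-th right derived functor is $\Ext^i_{\Shv_\Nis(\Cor^A_k;\Z)}(\Z_A(X),-)$, which by \Cref{thm:ext} is naturally isomorphic to $\rmH^i_\Nis(X,\gamma^A_*(-))$ on individual sheaves. Taking $\scrG=\scrI$ injective shows $\rmH^i_\Nis(X,\gamma^A_*\scrI)=0$ for all $i>0$ and all $X\in\Sm_k$; in other words $\gamma^A_*$ carries injective sheaves with $A$-transfers to Nisnevich sheaves on $\Sm_k$ that are acyclic for every $\Gamma(X,-)$.

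Next, let $\scrF^\bullet$ be a motivic complex (in particular bounded above) and choose a termwise-injective K-injective resolution $\scrF^\bullet\xrightarrow{\ \simeq\ }\scrI^\bullet$ in the Grothendieck abelian category $\Shv_\Nis(\Cor^A_k;\Z)$ (\Cref{thm:ext}). Computing the derived $\Hom$ with this resolution and applying the identification above termwise gives
\[
[\rmM_A^{\eff}(X),\scrF^\bullet[i]]_{\D^-(\Shv_\Nis(\Cor^A_k;\Z))}=H^i\bigl(\Hom^\bullet_{\Shv_\Nis(\Cor^A_k;\Z)}(\Z_A(X),\scrI^\bullet)\bigr)=H^i\bigl(\Gamma(X,\gamma^A_*\scrI^\bullet)\bigr).
\]
Since $\gamma^A_*$ is exact (part of the sheaf-theoretic formalism underlying \Cref{thm:ext}; see \cite[Theorem 3.1]{DruDMGWeff}), the map $\gamma^A_*\scrF^\bullet\to\gamma^A_*\scrI^\bullet$ is a quasi-isomorphism of bounded-above complexes of Nisnevich sheaves on $\Sm_k$ whose terms are $\Gamma(X,-)$-acyclic by the previous paragraph. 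As $X$ has finite Krull dimension, hence finite Nisnevich cohomological dimension, such a complex computes Nisnevich hypercohomology, so $H^i(\Gamma(X,\gamma^A_*\scrI^\bullet))\cong\HH^i_\Nis(X,\gamma^A_*\scrF^\bullet)=\HH^i_\Nis(X,\scrF^\bullet)$. Chaining the isomorphisms together proves the claim, naturality in $X$ and $\scrF^\bullet$ being evident from the construction.

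The only step genuinely using the axioms on $A^*$ is the comparison between the two derived-functor computations — i.e. the acyclicity of $\gamma^A_*\scrI$ for injective $\scrI$ — which is exactly where \Cref{thm:ext} is invoked; everything else is formal homological algebra, so I do not expect a serious obstacle, only bookkeeping around boundedness of the resolutions. An equivalent packaging, if one prefers to avoid K-injective resolutions, is to compare the two hypercohomology spectral sequences $E_2^{p,q}=\Ext^p_{\Shv_\Nis(\Cor^A_k;\Z)}(\Z_A(X),\scrH^q(\scrF^\bullet))\Rightarrow[\rmM_A^{\eff}(X),\scrF^\bullet[p+q]]$ and $E_2^{p,q}=\rmH^p_\Nis(X,\scrH^q(\scrF^\bullet))\Rightarrow\HH^{p+q}_\Nis(X,\scrF^\bullet)$, whose $E_2$-pages are identified by \Cref{thm:ext} and the exactness of $\gamma^A_*$, and then to check the identification respects the filtrations. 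Finally, since $\scrF^\bullet$ is motivic, the preceding theorem lets one read the group $[\rmM_A^{\eff}(X),\scrF^\bullet[i]]$ formed in $\D^-(\Shv_\Nis(\Cor^A_k;\Z))$ equally as a morphism group in $\DM_A^{\eff}(k)$, so the statement also describes $\Hom$-groups of $A$-motives.
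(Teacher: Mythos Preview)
Your proof is correct and is precisely the standard argument from \cite[Proposition 14.16]{MVW} that the paper cites without giving an explicit proof; the key input is \Cref{thm:ext}, which you invoke exactly where needed to show that injectives in $\Shv_\Nis(\Cor^A_k;\Z)$ become $\Gamma(X,-)$-acyclic after forgetting transfers. Since the paper offers no argument of its own beyond the reference, there is nothing further to compare.
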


\subsection{The category of \texorpdfstring{$A$}{A}-motives}As in the classical case, we obtain the category $\DM_A(k)$ of $A$-motives via a stabilization process with respect to tensoring with the Tate object. 

\begin{definition}
The $\infty$-category $\DM_A(k)$ of \emph{$A$-motives} is obtained from $\DM_A^\eff(k)$ by $\otimes$-inverting $\Z_A(1)$. There is then a canonical functor $\Sigma^\infty\colon\DM^\eff_A(k)\to\DM_A(k)$, and we define the functor $\rmM_{A}\colon \Sm_k\to \DM_A(k)$ as the composition of $\rmM_A^{\eff}$ and $\Sigma^\infty$.
\end{definition}

\subsubsection{}It follows similarly as in \cite{MW-cplx} that $\DM_A(k)$ is a presentably symmetric monoidal stable $\infty$-category equipped with an adjunction
$
\Sigma^\infty:\DM_A^\eff(k)\rightleftarrows\DM_A(k):\Omega^\infty.
$

\subsubsection{}The following result is a consequence of the cancellation theorem for $A$-correspondences:

\begin{theorem}
The canonical functor $\Sigma^\infty\colon\DM^{\eff}_A(k)\to \DM_A(k)$ is fully faithful, and 
for any $X\in \Sm_k$ and any motivic complex $\scrF^\bullet \in \D^-(\Shv_\Nis(\Cor^A_k;\Z))$,
there is a natural isomorphism 
\[
[\rmM_{A}(X),\Sigma^\infty\scrF^\bullet]_{\DM_A(k)}\cong \HH^i_{\Nis}(X,\scrF^\bullet).
\]
\end{theorem}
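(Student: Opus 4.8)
The plan is to deduce both assertions formally from the cancellation theorem (\Cref{thm:cancel}) together with the effective computation of morphism groups established in the preceding subsection. Recall that $\DM_A(k)$ is defined as the $\otimes$-inversion of $\Z_A(1)$ in $\DM_A^\eff(k)$; since $\Z_A(1)=\cofib(\Z_A\to\rmM_A^\eff(\G_m))[-1]$, tensoring with $\Z_A(1)$ agrees, after a shift, with the endofunctor $(-)\wedge\Gmw$ appearing in \Cref{thm:cancel}. Inverting $\Z_A(1)$ therefore presents the stable category as the sequential colimit
\[
\DM_A(k)\simeq\colim\left(\DM_A^\eff(k)\xrightarrow{-\otimes\Z_A(1)}\DM_A^\eff(k)\xrightarrow{-\otimes\Z_A(1)}\cdots\right),
\]
with $\Sigma^\infty$ the canonical map into the colimit and $\Omega^\infty$ its right adjoint.

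First I would upgrade \Cref{thm:cancel} to the statement that the endofunctor $-\otimes\Z_A(1)$ of $\DM_A^\eff(k)$ is fully faithful. What \Cref{thm:cancel} literally provides is that $\rmC_*(\theta)$ is a quasi-isomorphism, i.e. that for the representable motives $\rmM_A^\eff(X)$ the natural map $[\rmM_A^\eff(X),\scrF^\bullet]\to[\rmM_A^\eff(X)\otimes\Z_A(1),\scrF^\bullet\otimes\Z_A(1)]$ is an isomorphism whenever $\scrF^\bullet$ is a motivic complex. Since the shifts $\rmM_A^\eff(X)[n]$ generate $\DM_A^\eff(k)$ under colimits and $-\otimes\Z_A(1)$ preserves colimits, a standard argument — resolving an arbitrary effective motive by representables and passing to the limit, using a Milnor/lim$^1$ sequence and the five lemma — promotes this to full faithfulness of $-\otimes\Z_A(1)$ on all of $\DM_A^\eff(k)$. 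For this one must know that $-\otimes\Z_A(1)$ preserves the subcategory of motivic complexes; this follows from strict homotopy invariance (\Cref{thm:strict-hty-inv}) applied to the Nisnevich sheaves with $A$-transfers underlying $\scrF^\bullet\otimes\Z_A(1)$, together with the internal-hom description of the stabilized category.

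Granting that $-\otimes\Z_A(1)$ is fully faithful, the displayed colimit is a sequential colimit of fully faithful functors between presentable stable $\infty$-categories, so the structure map $\Sigma^\infty\colon\DM_A^\eff(k)\to\DM_A(k)$ is fully faithful; in particular $[\Sigma^\infty\rmM,\Sigma^\infty\rmN]_{\DM_A(k)}\cong[\rmM,\rmN]_{\DM_A^\eff(k)}$ for all effective motives. Taking $\rmM=\rmM_A^\eff(X)$ and $\rmN=\scrF^\bullet[i]$ and combining with the effective identification $[\rmM_A^\eff(X),\scrF^\bullet[i]]\cong\HH^i_\Nis(X,\scrF^\bullet)$ from the previous theorem yields the asserted natural isomorphism $[\rmM_A(X),\Sigma^\infty\scrF^\bullet]_{\DM_A(k)}\cong\HH^i_\Nis(X,\scrF^\bullet)$.

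I expect the main obstacle to be the bookkeeping required to pass from the cancellation statement for representable motives — which is all \Cref{thm:cancel} directly gives — to full faithfulness of $-\otimes\Z_A(1)$ on the entire derived category, and in particular the verification that tensoring with the Tate object preserves motivic complexes; everything downstream is a formal manipulation of filtered colimits of stable $\infty$-categories and of the adjunction $(\Sigma^\infty,\Omega^\infty)$. One should also keep in mind that the coordinate twist acts as $\epsilon=-\ip{-1}$ rather than the identity (\Cref{lm:twistGm2}); since $\epsilon$ is invertible this does not affect full faithfulness, and it only enters when tracking the symmetric monoidal structure on $\DM_A(k)$, exactly as in Voevodsky's original treatment.
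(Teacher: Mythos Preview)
Your proposal is correct and is precisely the standard deduction the paper has in mind: the paper does not spell out a proof but simply states that the theorem ``is a consequence of the cancellation theorem for $A$-correspondences,'' and your argument—promoting \Cref{thm:cancel} to full faithfulness of $-\otimes\Z_A(1)$ on $\DM_A^\eff(k)$ via the generators $\rmM_A^\eff(X)[n]$, then reading off full faithfulness of $\Sigma^\infty$ from the sequential presentation and combining with the effective hypercohomology identification—is exactly how one makes this precise.
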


\begin{definition}
Let $X\in\Sm_k$. For any pair of integers $p,q\in\Z$, we define the \emph{$A$-motivic cohomology of $X$ in bidegree $(p,q)$} as
$
\rmH_A^{p,q}(X,\Z)\defeq[\rmM_A(X),\Z_A(q)[p]]_{\DM_A(k)}.
$
\end{definition}

\subsubsection{}
The adjunction $\gamma^*_A:\PSh_\Sigma(\Sm_k)\rightleftarrows\PSh_\Sigma(\Cor^A_k;\Z):\gamma_*^A$ descends to an adjunction
\begin{equation}\label{eq:stable-adj}
\gamma^*_A:\SH(k)\rightleftarrows\DM_A(k):\gamma_*^A
\end{equation}
of stable $\infty$-categories, which allows us to compare $\DM_A(k)$ with the motivic stable homotopy category $\SH(k)$.

\begin{definition}
Denote by $\sspt\in\SH(k)$ the motivic sphere spectrum. In the adjunction \eqref{eq:stable-adj} above, let $\rmH\Z_A\in\SH(k)$ denote the Eilenberg--Mac Lane spectrum $\rmH\Z_A\defeq\gamma_*^A\gamma_A^*(\sspt)$.
\end{definition}

\begin{lemma}
The spectrum $\rmH\Z_A$ is an $\calE_\infty$-ring spectrum in $\SH(k)$.
\end{lemma}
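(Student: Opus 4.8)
The plan is to observe that this statement is purely formal, resting only on the symmetric monoidality of $\gamma_A^*$. First I would recall from the adjunction \eqref{eq:stable-adj} that the functor $\gamma_A^*\colon\SH(k)\to\DM_A(k)$ is symmetric monoidal; this is inherited from the symmetric monoidality of the unstable functor $\gamma_A^*\colon\PSh_\Sigma(\Sm_k)\to\PSh_\Sigma(\Cor^A_k;\Z)$ recorded above, together with the compatibility of the $\A^1$-localization and the Tate stabilization with the respective Day convolution symmetric monoidal structures. Consequently its right adjoint $\gamma_*^A$ acquires a canonical lax symmetric monoidal structure, by the general fact that the right adjoint of a symmetric monoidal functor between presentably symmetric monoidal $\infty$-categories is lax symmetric monoidal.

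Next I would use that a symmetric monoidal functor preserves the unit object, so that there is a canonical equivalence $\gamma_A^*(\sspt)\simeq\mathbf 1_{\DM_A(k)}$ identifying $\gamma_A^*(\sspt)$ with the unit of $\DM_A(k)$; the unit object of any symmetric monoidal $\infty$-category is canonically a commutative algebra object, i.e.\ an $\calE_\infty$-algebra. Since a lax symmetric monoidal functor carries $\calE_\infty$-algebras to $\calE_\infty$-algebras, the object $\rmH\Z_A=\gamma_*^A\gamma_A^*(\sspt)\simeq\gamma_*^A(\mathbf 1_{\DM_A(k)})$ inherits a canonical structure of $\calE_\infty$-ring spectrum in $\SH(k)$, as claimed.

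I do not expect a genuine obstacle here; the only point requiring mild care is to make precise that the \emph{stable} functor $\gamma_A^*$ is symmetric monoidal, rather than merely the unstable presheaf-level functor. This amounts to checking that the intermediate localization and stabilization functors are symmetric monoidal Bousfield localizations, which is standard and entirely parallel to the treatment of $\wt\Cor_k$ in \cite{MW-cplx}.
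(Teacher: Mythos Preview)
Your proposal is correct and follows essentially the same approach as the paper: the paper's proof likewise observes that $\gamma_A^*$ is symmetric monoidal, hence $\gamma_*^A$ is lax symmetric monoidal and preserves $\calE_\infty$-algebras, and that $\gamma_A^*(\sspt)$ is the unit in $\DM_A(k)$ and therefore an $\calE_\infty$-algebra. Your additional remarks on why the stable functor $\gamma_A^*$ inherits symmetric monoidality are a welcome elaboration but do not change the argument.
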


\begin{proof}
As the right adjoint $\gamma_*^A$ is lax symmetric monoidal, it follows that it preserves $\calE_\infty$-algebras. Now the left adjoint $\gamma^*_A$ is symmetric monoidal, so $\gamma_A^*(\sspt)$ is the unit in $\DM_A(k)$ and hence an $\calE_\infty$-algebra. We conclude that $\rmH\Z_A=\gamma_*^A\gamma^*_A(\sspt)$ is an $\calE_\infty$-ring spectrum.
\end{proof}

\subsubsection{}
The cancellation theorem for $A$-correspondences implies that $\rmH\Z_A$ is an $\Omega_{\rmT}$-spectrum in $\SH(k)$ which represents $A$-motivic cohomology. More precisely, for any $X\in\Sm_k$ and any pair of integers $p,q$, there is a natural isomorphism
$
[\Sigma^\infty_{\rmT} X_+,\Sigma^{p,q}\rmH\Z_A]_{\SH(k)}\cong \rmH^{p,q}_A(X,\Z).
$

\subsubsection{}The combination of \Cref{lemma:corr-cat} and \cite[Theorem 5.2]{Classification} shows moreover that in the above adjunction \eqref{eq:stable-adj}, the right adjoint is monadic:

\begin{theorem}
Let $e$ denote the exponential characteristic of $k$. Then there is an equivalence of presentably symmetric monoidal stable $\infty$-categories
\[
\Mod_{\rmH\Z_A[1/e]}(\SH(k))\simeq\DM_A(k,\Z[1/e]),
\]
where $\Mod_{\rmH\Z_A[1/e]}(\SH(k))$ denotes motivic spectra equipped with an action from $\rmH\Z_A[1/e]$.
\end{theorem}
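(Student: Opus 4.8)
The plan is to reduce the statement to the general monadicity result of \cite{Classification}, feeding in the properties of $\Cor^A_k$ established above. By \Cref{lemma:corr-cat} the category $\Cor^A_k$ is a (discrete) correspondence category in the sense of \cite[Definition 4.1]{Classification}; moreover, by \Cref{thm:ext} (Nisnevich localization), \Cref{thm:strict-hty-inv} (strict homotopy invariance), and \Cref{thm:cancel} (cancellation), the construction of $\DM_A(k)$ has all the features required by the machinery of \cite{Classification}. These are exactly the hypotheses needed to invoke \cite[Theorem 5.2]{Classification}, which yields that, after inverting the exponential characteristic $e$, the right adjoint $\gamma_*^A$ in the adjunction \eqref{eq:stable-adj} is conservative and preserves colimits, hence monadic by the Barr--Beck--Lurie theorem.

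Granting monadicity, I would then spell out the conclusion. Writing $T\defeq\gamma_*^A\gamma_A^*$ for the induced monad on $\SH(k)$ with $\Z[1/e]$-coefficients, the comparison functor gives an equivalence $\DM_A(k,\Z[1/e])\simeq\Mod_T(\SH(k)_{\Z[1/e]})$. It remains to identify $T$ with the free-module monad on $\rmH\Z_A[1/e]$. Since $\gamma_A^*$ is symmetric monoidal, $\DM_A(k)$ is a module category over $\SH(k)$ via $\gamma_A^*$, and its right adjoint $\gamma_*^A$ is then $\SH(k)$-linear; concretely this is the projection formula $\gamma_*^A(\gamma_A^*(E)\otimes M)\simeq E\otimes\gamma_*^A(M)$. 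Taking $M=\gamma_A^*(\sspt)=\Z_A$ gives $T(E)=\gamma_*^A\gamma_A^*(E)\simeq E\otimes\gamma_*^A\gamma_A^*(\sspt)=E\otimes\rmH\Z_A$, naturally in the monad structure; inverting $e$ yields $\Mod_T(\SH(k)_{\Z[1/e]})\simeq\Mod_{\rmH\Z_A[1/e]}(\SH(k))$.

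To obtain the equivalence as presentably symmetric monoidal stable $\infty$-categories rather than merely of underlying $\infty$-categories, I would use that $\gamma_A^*$ is symmetric monoidal and $\gamma_*^A$ lax symmetric monoidal: the monad $T$ thereby acquires the structure of an $\calE_\infty$-algebra --- already recorded above, since $\rmH\Z_A$ is an $\calE_\infty$-ring spectrum --- and the Barr--Beck comparison is then symmetric monoidal for the relative tensor product over $\rmH\Z_A[1/e]$. Presentability and compatibility of the monoidal structure with colimits hold on both sides, so the equivalence lands in presentably symmetric monoidal stable $\infty$-categories. One may alternatively cite the symmetric monoidal refinement of \cite[Theorem 5.2]{Classification} directly.

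The main obstacle I anticipate is not in the formal monadicity argument but in checking that all hypotheses of \cite[Theorem 5.2]{Classification} are genuinely met by $\Cor^A_k$ --- in particular that it is a strict V-category of correspondences / V-ringoid in the appropriate sense (cf. the remark following \Cref{thm:ext}), and that the conservativity of $\gamma_*^A[1/e]$ holds. The latter ultimately rests on the $A$-motives $\rmM_A(X)$, $X\in\Sm_k$, generating $\DM_A(k)$ under colimits together with strict homotopy invariance, which identifies $\gamma_*^A\rmM_A(X)$ with the expected Suslin-type singular complex; once these inputs are assembled the proof is a formal consequence of the machinery above.
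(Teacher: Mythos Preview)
Your proposal is correct and follows essentially the same approach as the paper: the paper's proof is a single sentence citing \Cref{lemma:corr-cat} together with \cite[Theorem 5.2]{Classification}, and you have spelled out the argument behind that citation in more detail, including the identification of the monad with the free $\rmH\Z_A[1/e]$-module monad via the projection formula. The additional inputs you list (\Cref{thm:ext}, \Cref{thm:strict-hty-inv}, \Cref{thm:cancel}) are implicitly used in verifying the hypotheses of \cite[Theorem 5.2]{Classification}, so your elaboration is accurate.
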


\begin{remark}
Recall that the category $\SH^\eff(k)$ of \emph{effective spectra} is the stable subcategory of $\SH(k)$ generated under colimits by $\PP^1$-suspension spectra of smooth $k$-schemes. 
We note that Bachmann and Fasel's effectivity criterion \cite[Theorem 4.4]{effectivity} applies in our setting, showing that the spectrum $\rmH\Z_A\in\SH(k)$ is effective. 
G. Garkusha and I. Panin communicated to us orally that they proved this result independently using the category $\Z\rmF_*(k)$ of linear framed correspondences.
\end{remark}

\appendix
\section{Geometric ingredients}\label{appendix}

In this section we summarize the geometric facts and constructions used in the text.
In particular, we formulate a version of Serre's theorem on the existence of sections satisfying relevant properties, which is used in the proofs in Sections \ref{section:rel-aff-line}, \ref{section:inj-loc-sch} and \ref{section:et-exc}. We then provide the construction of the relative curves used in Sections \ref{section:inj-loc-sch} and \ref{section:et-exc}. Finally, we formulate a few lemmas that imply the finiteness conditions on the vanishing loci of the functions constructed in Sections \ref{section:rel-aff-line}, \ref{section:inj-loc-sch} and \ref{section:et-exc}.

All schemes considered in this appendix are assumed to be noetherian and separated.

\begin{prop}\label{prop:ZarMainTh}
For any étale morphism $e\colon U\to Y$ there is a decomposition
$U\xrightarrow{u} X\xrightarrow{p} Y$ with $p\circ u = e$, in which $u$ is a dense open immersion and $p$ is finite.
\end{prop}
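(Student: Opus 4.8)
The plan is to recognize the statement as Zariski's Main Theorem in Grothendieck's form: any separated, quasi-finite morphism of finite type between noetherian schemes factors as an open immersion followed by a finite morphism. So I would first check that the étale morphism $e\colon U\to Y$ satisfies these hypotheses, then invoke the theorem to get \emph{some} such factorization, and finally perform a short scheme-theoretic closure argument to upgrade the open immersion to a \emph{dense} one.

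For the hypotheses I would argue as follows. Since every scheme in this appendix is noetherian and separated, $U$ is quasi-compact and quasi-separated, so the étale (hence locally of finite presentation) morphism $e$ is of finite type; moreover the composite $U\to Y\to\Spec\Z$ is separated because $U$ is a separated scheme, and $Y\to\Spec\Z$ is separated, whence $e$ itself is separated. It remains to see that $e$ is quasi-finite: each fibre $U_y\to\Spec\kappa(y)$ is étale over a field and of finite type, hence a finite disjoint union of spectra of finite separable extensions of $\kappa(y)$, so $e$ has finite fibres. Zariski's Main Theorem then yields a factorization $U\xrightarrow{j}X'\xrightarrow{q}Y$ with $j$ an open immersion and $q$ finite.

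To obtain density I would replace $X'$ by $X$, the scheme-theoretic image of $j$ (equivalently, the schematic closure of $U$ in $X'$). Since $j$ is quasi-compact—as $U$ is noetherian—the closed subscheme $X\subseteq X'$ has underlying space the topological closure of $j(U)$, so $j(U)$ is dense in $X$; and $p\colon X\to Y$, being a closed immersion followed by the finite morphism $q$, is finite. Finally $j$ factors as $U\xrightarrow{u}X\hookrightarrow X'$, and $u$ is again an open immersion because $U$ is open in $X'$ and contained in $X$, so the open-subscheme structures inherited from $X$ and from $X'$ coincide. This produces the desired factorization $U\xrightarrow{u}X\xrightarrow{p}Y$ with $u$ a dense open immersion and $p$ finite.

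There is no genuine obstacle here: the proposition is essentially a repackaging of Zariski's Main Theorem. The only points meriting care are the verification of quasi-finiteness—where the noetherian hypothesis is exactly what rules out an infinite étale cover—and the routine passage to the schematic closure in order to make the open immersion dense.
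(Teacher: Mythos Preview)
Your proposal is correct and takes essentially the same approach as the paper: the paper's proof is a one-line citation of Zariski's Main Theorem (\cite[III Corollary 11.4]{Hartshorne}), and you have simply spelled out the verification of the hypotheses and the passage to the schematic closure for density.
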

\begin{proof}
This follows Zariski's Main Theorem \cite[III Corollary 11.4]{Hartshorne}.
\end{proof}

\subsubsection{Serre's theorem}The following lemma is a consequence of \cite[III Theorem 5.2]{Hartshorne}, and is used in Sections \ref{section:rel-aff-line}, \ref{section:inj-loc-sch} and \ref{section:et-exc}. In the text we refer to this result simply as Serre's theorem.

\begin{lemma}[Serre]
\label{prop:CorofSerreTh}
Let $\calO(1)$ be an ample invertible sheaf on a scheme $X$, and $\scrL$ be an invertible sheaf on $X$.
Then there is, for any closed subscheme $Z\subseteq X$, an integer $N\in \Z$ such that
the restriction homomorphism 
$
\Gamma(X,\scrL(l))\to \Gamma(Z,\scrL(l))
$
is surjective for all $l\ge N$.
Here $\scrL(l)\defeq\scrL \otimes\calO(l)$.
\end{lemma}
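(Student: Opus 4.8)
The plan is to reduce the statement to Serre's cohomological vanishing theorem by means of the ideal-sheaf exact sequence. Write $i\colon Z\hookrightarrow X$ for the closed immersion and $\calI_Z\subseteq\calO_X$ for the (coherent) ideal sheaf of $Z$. Since $\scrL(l)$ is invertible and hence flat over $\calO_X$, tensoring the defining short exact sequence $0\to\calI_Z\to\calO_X\to i_*\calO_Z\to 0$ by $\scrL(l)$ and using the projection formula produces an exact sequence of coherent sheaves on $X$,
\[
0\to \calI_Z\otimes\scrL(l)\to \scrL(l)\to i_*\bigl(\scrL(l)|_Z\bigr)\to 0 .
\]
Its long exact cohomology sequence contains the segment
\[
\Gamma(X,\scrL(l))\longrightarrow \Gamma\bigl(X,i_*(\scrL(l)|_Z)\bigr)\longrightarrow \rmH^1\bigl(X,\calI_Z\otimes\scrL(l)\bigr),
\]
and since $\Gamma(X,i_*(\scrL(l)|_Z))=\Gamma(Z,\scrL(l))$, the restriction homomorphism $\Gamma(X,\scrL(l))\to\Gamma(Z,\scrL(l))$ of the statement is surjective as soon as $\rmH^1(X,\calI_Z\otimes\scrL(l))$ vanishes.

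It then remains to apply Serre vanishing. Set $\calG\defeq\calI_Z\otimes\scrL$, a coherent sheaf on $X$; by \cite[III Theorem 5.2]{Hartshorne}, part (b), applied to $\calG$ with respect to the ample sheaf $\calO(1)$, there is an integer $N$ with $\rmH^i(X,\calG(l))=0$ for all $i>0$ and all $l\ge N$, whence in particular $\rmH^1(X,\calI_Z\otimes\scrL(l))=\rmH^1(X,\calG(l))=0$ for $l\ge N$, as needed. The only point meriting a word of care is that \cite{Hartshorne} formulates this theorem for a \emph{very} ample sheaf: if $\calO(1)$ is merely ample, one picks $m>0$ with $\calO(m)$ very ample over the noetherian base, applies the cited result to each of the finitely many coherent sheaves $\calG,\calG(1),\dots,\calG(m-1)$ with respect to $\calO(m)$, and takes $N$ to be the largest of the resulting bounds. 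In every place where this lemma is invoked in the text, $X$ is projective over a noetherian affine scheme (e.g.\ a $\PP^1$-bundle or a relative projective curve), so the projectivity hypothesis of the cited theorem holds automatically.

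I do not anticipate any genuine obstacle here: the argument is entirely standard, and the only things to verify are the flatness of $\scrL(l)$ (used to keep the sequence exact after twisting) and the elementary ampleness/very-ampleness bookkeeping just described.
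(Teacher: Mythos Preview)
Your argument is correct and is exactly the standard deduction the paper has in mind: the paper does not spell out a proof but merely records the lemma as ``a consequence of \cite[III Theorem 5.2]{Hartshorne}'', and your ideal-sheaf exact sequence plus Serre vanishing is precisely how one extracts the surjectivity of restriction from that reference. Your remark about the projectivity hypothesis and the ample-vs-very-ample bookkeeping is a welcome clarification of a point the paper leaves implicit.
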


\begin{example}[Chinese remainder theorem]\label{ex:ChineseRemTh}
Let $U$ be an affine scheme. Suppose that $Z\subseteq \A^1_U$ is a closed subscheme, and that $v\in\calO_Z$ is a regular function on $Z$. 
Then, for all large enough $n$ there is a monic polynomial $f\in \calO_U[t]=\calO_{\A^1_U}$ of degree $n$ such that $f\big|_{Z}=v$.
\end{example}

\subsubsection{Construction of relative curves}
We now formulate the construction of relative curves used in the proofs of the étale excision theorems. For the proof we refer to \cite[Lemma 3.7]{GWStrHomInv}. Before stating the result, let us first recall the notion of an étale neighborhood:


\begin{definition}\label{def:et-nbhd}
Let $X$ be a scheme and suppose that $Z\subseteq X$ is a closed subscheme. If $\pi\colon X^\prime\to X$ is an étale morphism and $Z^\prime\subseteq X^\prime$ is a closed subscheme such that $\pi$ induces an isomorphism $Z^\prime\xrightarrow{\cong} Z$,
then we say that $\pi\colon (X^\prime,Z^\prime)\to (X,Z)$ is an \emph{étale neighborhood of $Z$ in $X$}.
\end{definition}

\begin{lemma}[\protect{\cite[Lemma 3.7]{GWStrHomInv}}]
\label{lm:RelCurves}
Let $k$ be a field and let $X$ be a smooth $k$-scheme. Suppose we are given a closed subscheme $Z\subseteq X$ along with an étale neighborhood $\pi\colon (X^\prime, Z^\prime)\to (X,Z)$ of $Z$ in $X$. Let moreover $z\in Z$ and $z^\prime\in Z^\prime$ be closed points such that $\pi(z^\prime)=z$, and write $U\defeq X_z$ and $U^\prime\defeq X^\prime_{z^\prime}$ for the corresponding local schemes.
Then there is a commutative diagram
\begin{equation}\label{diag:relCurves}\begin{gathered}\xymatrix{
U^\prime\ar[d] & \ovcCpp\ar[l]_{p''}\ar[d]^{\ol\varpi^\prime} &\cCpp\ar[l]_{j''}\ar[r]^{v^{\prime\prime}}\ar[d]^{\varpi^\prime} & X^\prime\ar@{=}[d]\\
U\ar@{=}[d] & \ovcCp\ar[l]_{p'}\ar[d]^{\ol\varpi} & \cCp\ar[l]_{j'}\ar[r]^{v^\prime}\ar[d]^{\varpi} & X^\prime\ar[d]^{\pi}\\
U & \ovcC\ar[l]_{p} &\cC\ar[l]_{j}\ar[r]^{v}& X
}\end{gathered}\end{equation}
in $\Sm_k$, such that the following properties hold:
\begin{itemize}
\item[(1)]
$p$, $p^\prime$, $p^{\prime\prime}$ are relative projective curves;
$j$, $j^\prime$, $j^{\prime\prime}$ are open immersions; $\varpi$, $\varpi^\prime$ are étale; $\ol\varpi$, $\ol\varpi^\prime$ are finite; and
$p \circ j$, $p^\prime\circ j^\prime,p^{\prime\prime}\circ j^{\prime\prime}$ are smooth.
Moreover,
$\calC^{\prime\prime}=\calC^{\prime}\times_U U^\prime$;
$\overline\calC^{\prime\prime}=\overline\calC^\prime\times_U U^\prime$; and 
there are trivializations of the relative canonical classes $\mu\colon \calO_{\calC}\cong \omega_{\calC/U}$ and $\mu'\colon \calO_{\calC^\prime}\cong \omega_{\calC^\prime/U}$.
\item[(2)]
The schemes 
$\cZ\defeq v^{-1}(Z)$, $\cZp\defeq v'^{-1}(\Zpri)$ and $\cZpp\defeq{v^{\prime\prime}}^{-1}(\Zpri)$ are finite over 
$U$ and $\Upri$, respectively.
\item[(3)]
There are closed subschemes 
$\Delta_Z\subseteq \calZ$, $\Delta_Z^\prime\subseteq \calZ^\prime$ and $\Delta_Z^{\prime\prime}\subseteq \calZ^{\prime\prime}$
such that $p$, $p^\prime$ and $p^{\prime\prime}$ induce isomorphisms 
$w\colon \Delta_Z\cong Z^\prime\times_X U$, $w^\prime\colon \Delta_Z^\prime\cong Z\times_X U$ and $w^{\prime\prime}\colon \Delta_Z^{\prime\prime}\cong Z^\prime\times_{X^\prime} U^\prime$.
Moreover,
$v\big|_{\calZ}\circ w^{-1}= \pr^{Z\times_X U}_Z$, 
$v^{\prime}\big|_{\calZ^\prime}\circ {w^\prime}^{-1} = \pi\big|_{Z^\prime}\circ \pr^{Z^\prime\times_X U}_{Z^\prime}$, and 
$v^{\prime\prime}\big|_{\calZ^{\prime\prime}}\circ {w^{\prime\prime}}^{-1}= \pr^{Z^\prime\times_{X^\prime} U^\prime}_{Z^\prime}$.
\item[(4)]
There are closed subschemes
$\Delta\subseteq\calC$ and $\Delta^\prime\subseteq \calC^{\prime\prime}$
such that 
$\Delta\times_{U} Z=\Delta_Z$, $\Delta^\prime\times_{\Upri} \Zpri=\Delta_Z^{\prime\prime}$
and such that
$p$ and $p^{\prime\prime}$ induce isomorphisms $p\big|_\Delta\colon \Delta\cong U$ and $p^{\prime\prime}\big|_{\Delta^\prime}\colon\Delta^\prime\cong U^\prime$. Moreover,
the compositions $v\circ p\big|_{\Delta}^{-1}$ and $v\circ p^{\prime\prime}\big|_{\Delta'}^{-1}$
are equal to the canonical morphisms $U\to X$ and $U^\prime\to X^\prime$, respectively.
\item[(5)]
The schemes 
$D\defeq\ol\calC\setminus \calC$, $D^\prime\defeq{\ol\calC}^\prime \setminus \calC^\prime$ and $D^{\prime\prime}\defeq\ovcCpp \setminus \cCpp$
are finite over $U$ and $U^\prime$ respectively.
Furthermore,
$D^{\prime\prime}\cong {\overline\varpi^\prime}^{-1}(D^\prime)$, and
$D^\prime\supseteq \ol\varpi^{-1}(D)$.
\item[(6)]
There is an ample line bundle $\calO(1)$ on $\overline{\calC}$ and a section $d\in \Gamma(\overline{\calC},\calO(1))$ such that $Z(d)=D$.
\end{itemize} 
\end{lemma}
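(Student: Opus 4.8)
All of the data in the conclusion is visible on Zariski neighbourhoods of $z$ in $X$ and of $z'$ in $X'$, so we may freely shrink $X$ (and $X'=\pi^{-1}(X)$) to an affine open $X_0\ni z$; if $Z$ contains a component of $X$ near $z$ then $Z=X$ locally and the statement is vacuous (all schemes taken empty over $U$), so we assume $Z$ has codimension $\ge1$ at $z$. Everything then rests on a single geometric input — a Gabber-type presentation lemma, i.e.\ the existence of an elementary fibration through $z$ adapted to $Z$: after a further affine shrinking $X_0\ni z$ there is a smooth morphism $q\colon X_0\to B$ of relative dimension $1$ onto a smooth affine $k$-scheme $B$ with $\dim B=\dim_zX-1$, such that $Z_0\defeq Z\cap X_0$ is finite over $B$, such that $q$ extends to a projective relative curve $\ol q\colon\ol{X_0}\to B$ with $\ol{X_0}$ normal and $D_0\defeq\ol{X_0}\setminus X_0$ finite over $B$, such that $\ol{X_0}$ carries a relatively ample line bundle, and (after one more shrinking) such that $\omega_{X_0/B}$ is trivial. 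This is exactly the content drawn from \cite[Lemma 3.7]{GWStrHomInv}; its proof reduces to Gabber's presentation lemma (as in the sections where \Cref{lm:RelCurves} is applied, one may assume $k$ infinite).

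Given this, the bottom row is produced by one base change. Set $U\defeq X_z$ and $\beta\colon U\hookrightarrow X_0\xrightarrow{q}B$; put $\mathcal C\defeq X_0\times_{B,\beta}U$, $\ol{\mathcal C}\defeq\ol{X_0}\times_{B,\beta}U$, let $p\colon\ol{\mathcal C}\to U$ be the projection, $j\colon\mathcal C\hookrightarrow\ol{\mathcal C}$ the open immersion induced by $X_0\subseteq\ol{X_0}$, and $v\colon\mathcal C\to X_0\hookrightarrow X$ the other projection followed by the inclusion. Then $p$ is a projective relative curve and $p\circ j$ is smooth of relative dimension $1$ (base changes of $\ol q$ and $q$); $D\defeq\ol{\mathcal C}\setminus\mathcal C=D_0\times_BU$ and $\mathcal Z\defeq v^{-1}(Z)=Z_0\times_BU$ are finite over $U$; the graph of $U\hookrightarrow X_0$ over $B$ is a section $\Delta\colon U\to\mathcal C$ of $p$ with $p|_\Delta$ an isomorphism and $v\circ\Delta$ the canonical map $U\to X$; an ample $\mathcal O_{\ol{\mathcal C}}(1)$ with a section $d$ satisfying $Z(d)=D$ is obtained by pulling back along the affine morphism $\ol{\mathcal C}\to\ol{X_0}$ an ample bundle on $\ol{X_0}$ (relative ampleness plus $B$ affine gives ampleness) and twisting; and $\mu\colon\mathcal O_{\mathcal C}\xrightarrow{\cong}\omega_{\mathcal C/U}$ is the pullback of the trivialization of $\omega_{X_0/B}$. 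Taking $\Delta_Z\defeq\Delta\cap\mathcal Z$, the identification $\Delta\cong U$ carries it to $U\times_XZ$, which yields (3)–(4) for the bottom row (where $Z'=Z$, $\pi=\id$).

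The middle and top rows come from $\pi$ together with $U'\to U$. Put $X_0'\defeq\pi^{-1}(X_0)$, $q'\defeq q\circ\pi\colon X_0'\to B$, smooth of relative dimension $1$; since $\pi$ restricts to an isomorphism $Z'\xrightarrow{\cong}Z$, the scheme $Z_0'\defeq Z'\cap X_0'$ is finite over $B$. By Zariski's Main Theorem (\Cref{prop:ZarMainTh}) $\pi|_{X_0'}\colon X_0'\to X_0$ factors through a dense open immersion into a finite $X_0$-scheme; letting $\ol{X_0'}$ be the normalization of $\ol{X_0}$ in $X_0'$ gives a projective relative curve over $B$, finite over $\ol{X_0}$, containing $X_0'$ as a dense open, with $D_0'\defeq\ol{X_0'}\setminus X_0'$ finite over $B$ and containing the preimage of $D_0$. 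Now set $\mathcal C'\defeq X_0'\times_BU$, $\ol{\mathcal C}'\defeq\ol{X_0'}\times_BU$ (over $U$), $\mathcal C''\defeq\mathcal C'\times_UU'$, $\ol{\mathcal C}''\defeq\ol{\mathcal C}'\times_UU'$ (over $U'=X'_{z'}$, via $U'\to U\to B$), with $\varpi,\ol\varpi$ the base changes of $\pi|_{X_0'}$ and $\ol{X_0'}\to\ol{X_0}$ and $\varpi',\ol\varpi'$ the base changes of $U'\to U$; take $v',v''$, the section subschemes $\Delta',\Delta_Z',\Delta_Z''$, etc.\ as the evident analogues of the bottom-row data. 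Properties (1)–(6) for the upper rows then follow by base change from the corresponding facts over $B$ — e.g.\ $\ol\varpi$ is finite because $\ol{X_0'}\to\ol{X_0}$ is; $\mathcal C''=\mathcal C'\times_UU'$ and $\ol{\mathcal C}''=\ol{\mathcal C}'\times_UU'$ by construction; the inclusions among $D,D',D''$ come from $D_0'\supseteq$ (preimage of $D_0$); and $\omega_{\mathcal C'/U}$ is trivial because $\omega_{X_0'/B}=\pi^*\omega_{X_0/B}$ is.

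The one genuinely non-formal ingredient is the geometric presentation lemma of the first paragraph; everything after it is base change along $q$, along $\pi$, and along $U'\to U$, together with a single application of Zariski's Main Theorem to compactify the étale map. The simultaneous control of the fibration, of $Z$, of the projective compactification with finite boundary and ample polarization, and of the triviality of the relative canonical bundle — which is the fiddly part — is exactly what is carried out in \cite[Lemma 3.7]{GWStrHomInv}, to which we refer for the details.
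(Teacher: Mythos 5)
The paper does not actually prove \Cref{lm:RelCurves}: it is imported wholesale, with the proof deferred to \cite[Lemma 3.7]{GWStrHomInv}. Your proposal instead tries to re-derive it from a Gabber/Artin-type presentation over a base $B$ plus base change, Zariski's Main Theorem (\Cref{prop:ZarMainTh}) and Serre-type ampleness, deferring only the ``fiddly part'' back to the same citation. The architecture (elementary fibration $q\colon X_0\to B$ with $Z_0$ finite over $B$, projective compactification $\overline{X}_0$, then base change along $U\to B$ to get the bottom row with the diagonal section $\Delta$, and along $\pi$ and $U'\to U$ for the upper rows) is indeed the expected one, and the bottom row, property (2) for $\calZ'$, and the identifications $\calC''=\calC'\times_U U'$, $\overline\calC''=\overline\calC'\times_U U'$ come out correctly by base change as you say.

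However, the part you treat as formal contains the genuinely non-formal content. First, your assertion that $D_0'\defeq\overline{X_0'}\setminus X_0'$ is finite over $B$ (where $\overline{X_0'}$ is the ZMT/normalization compactification of the \'etale cover) is unjustified and is not automatic: an irreducible component of a fibre of $\overline{X_0'}\to B$ over a point in the image of $U$ may lie entirely outside $X_0'$, either in the boundary of the normalization of $X_0$ in $X_0'$ or over points of $X_0\setminus\pi(X_0')$. What one actually needs is finiteness of $D'$ over the \emph{local} base $U$, and the standard arguments obtain it by a separate analysis of the components of the closed fibre of $\overline\calC'\to U$ (possibly discarding bad components of the preimage of $\calC$), using that $\pi$ is an \'etale \emph{neighborhood} of $Z$; this is precisely part of what \cite[Lemma 3.7]{GWStrHomInv} proves, not a consequence of the presentation lemma. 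Second, trivializing $\omega_{X_0/B}$ ``after one more shrinking'' at the stage where the fibration is already fixed is not a legitimate move: shrinking $X_0$ destroys the finiteness of $Z_0$ over $B$ and enlarges the boundary to something no longer finite; the trivialization has to be arranged beforehand (e.g., shrink $X$ around $z$ so that $\omega_{X/k}$ is trivial and take $B$ with trivial canonical class, so that $\omega_{X_0/B}\cong\omega_{X_0/k}\otimes q^*\omega_{B/k}^{-1}$ is trivial), and the induced trivialization of $\omega_{\calC'/U}$ then comes from $\omega_{X_0'/B}=\pi^*\omega_{X_0/B}$ as you note. Finally, your reduction remark is wrong as stated: if $Z$ contains a component of $X$ through $z$ the lemma is not ``vacuous with all schemes empty,'' since property (4) forces a nonempty section $\Delta\cong U$ inside $\calC$, which together with (2) would be contradictory; the correct remark is that this degenerate case is excluded (and is irrelevant in the applications, where the excision statements are trivial when $Z_z=X_z$). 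So as a standalone proof the proposal has concrete gaps exactly at the points the cited lemma is needed for, and it should be presented, like the paper does, as a reduction to \cite[Lemma 3.7]{GWStrHomInv} rather than a proof of it.
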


\subsubsection{Finiteness of vanishing loci}
The following lemmas are used to prove that the zero loci of the functions constructed in Sections \ref{section:rel-aff-line}, \ref{section:inj-loc-sch} and \ref{section:et-exc} are finite over the relevant schemes.

\begin{lemma}[\protect{\cite[Lemma 4.1]{GWStrHomInv}}]
\label{lm:clEnbSect}
Let $U$ be a local scheme, and let $x\in U$ denote the closed point. Suppose that the residue field $k\defeq k(x)$ is infinite.
Let 
\[
\xymatrix{\Dpri\ar@{^(->}[r]^i\ar[rd] & \ovcCp \ar[r]^\pi\ar[d]^{p^\prime}& \ovcC\ar[dl]^p \\ & U}
\]
be a commutative diagram such that
\begin{itemize}
\item $p^\prime$ and $p$ are projective morphisms of relative dimension one;
\item $i$ is a closed immersion, and
\item $\pi$ and $p^\prime\circ i$ are finite.
\end{itemize}
Suppose furthermore that we are given the following data:
\begin{itemize}
\item an ample line bundle $\calO(1)$ on $\ovcCp$;
\item a section $d\in \Gamma(\ovcCp,\calO(1))$ such that $Z(d)\subseteq \Dpri$;
\item an invertible section $s_\infty\in \Gamma(\Dpri,\calO(1))$;
\item a closed subscheme $\calZ\subseteq \ovcC$ satisfying
$\cZp\cap \Dpri=\varnothing$, where $\cZp\defeq\pi^{-1}(\calZ)\subseteq\ovcCp$;
\item a section $s_{\cZp}\in \Gamma(\cZp,\calO(1))$ such that 
$\pi$ induces an isomorphism
$Z(s_{\cZp})\cong \pi(Z(s_{\cZp}))$.
\end{itemize}
Then there is
an integer $L\in\Z$ such that for all $l\ge L$, there is a section $s\in \Gamma(\ovcCp,\calO(l))$ satisfying
\begin{enumerate}
\item[$(1)$]
$s\big|_{\Dpri}=s_\infty^l$,
$s\big|_{\cZp} = s_{\cZp} d^{l-1}$;
\item[$(2)$]
$\pi$ induces an isomorphism $Z(s)\cong \pi(Z(s))$. 
\end{enumerate}
\end{lemma}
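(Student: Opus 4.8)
The plan is to build $s$ in two stages: first prescribe its restriction to the disjoint closed subscheme $\Dpri\amalg\calZ'$, and then perturb within the resulting linear system so that $Z(s)$ lies in general position with respect to $\pi$. Since $\calZ'\cap\Dpri=\varnothing$ (indeed $Z(d)\subseteq\Dpri$ is disjoint from $\calZ'=\pi^{-1}(\calZ)$), the subscheme $\Dpri\amalg\calZ'\subseteq\ovcCp$ is closed, so by Serre's theorem (\Cref{prop:CorofSerreTh}) there is an $L$ such that for all $l\ge L$ the restriction map
\[
\Gamma(\ovcCp,\calO(l))\longrightarrow \Gamma(\Dpri,\calO(l))\oplus\Gamma(\calZ',\calO(l))
\]
is surjective and, moreover, $\calO(l)\otimes\calI_{\Dpri\amalg\calZ'}$ is globally generated and separates points on $\ovcCp\setminus(\Dpri\amalg\calZ')$. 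Choosing any $s_0$ restricting to $s_\infty^{l}$ on $\Dpri$ and to $s_{\calZ'}d^{l-1}$ on $\calZ'$ already gives property~(1). For any $s$ in the affine subspace $s_0+\Gamma(\ovcCp,\calO(l)\otimes\calI_{\Dpri\amalg\calZ'})$ one has $Z(s)\cap\Dpri=\varnothing$ (as $s_\infty^l$ is invertible on $\Dpri$), and $Z(s)\cap\calZ'=Z(s|_{\calZ'})=Z(s_{\calZ'})$ since $d$ is invertible on $\calZ'$; hence property~(2) holds along $\calZ'$ by hypothesis on $s_{\calZ'}$, and it remains to achieve it on $\ovcCp\setminus(\Dpri\amalg\calZ')$.

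Next I would isolate the locus obstructing property~(2). Since $\pi$ is finite, $Z(s)\to\pi(Z(s))$ is finite, and it is a closed immersion onto its scheme-theoretic image exactly when $\calO_{\pi(Z(s))}\to\pi_*\calO_{Z(s)}$ is surjective. Let $B\subseteq\ovcCp$ be the closed subset consisting of the non-smooth locus of $p'$ together with the locus where $\pi$ is ramified or fails to be a local isomorphism onto its image; by the hypothesis on $s_{\calZ'}$, $B$ is disjoint from a neighbourhood of $Z(s_{\calZ'})$. Because $p$ and $p'$ have relative dimension one, $B$ meets the fibre $\ovcCp_x$ over the closed point $x\in U$ in a finite set of closed points, where $k\defeq k(x)$ is infinite by assumption. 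Working on this fibre: $\calO(l)$ is very ample, and for $l\gg 0$ the linear system of sections with the prescribed restriction on $(\Dpri\amalg\calZ')_x$ is base-point free and separates points away from $\Dpri\amalg\calZ'$, so the members of it passing through a given point of $B_x\setminus(\Dpri\cup\calZ')$, or vanishing along a component of the fibre, form finitely many proper linear subspaces of the parameter space. As $k$ is infinite a member avoiding all of them exists; lifting it back (using base change, valid for $l\gg 0$) and adjusting $s_0$ accordingly, we get an $s$ with $Z(s)$ finite over $U$ and with $Z(s)\cap B\subseteq\calZ'$.

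Finally I would verify property~(2) for this $s$. On $\ovcCp\setminus B$ the morphism $\pi$ is finite, unramified and injective on $Z(s)$, hence a closed immersion there; along $\calZ'$ it is a closed immersion onto its image by the hypothesis on $s_{\calZ'}$; these two opens cover $Z(s)$, so the cokernel sheaf $\calQ$ of $\calO_{\pi(Z(s))}\to\pi_*\calO_{Z(s)}$ vanishes on the fibre $\ovcC_x$. Since $Z(s)$ is finite over the local scheme $U$, $\calQ$ has support finite over $U$; finite pushforward commutes with base change, so $(p)_*\calQ$ is a finite $\calO_U$-module with $(p)_*\calQ\otimes k(x)=0$, whence $\calQ=0$ by Nakayama's lemma, giving property~(2) over all of $U$. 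The main obstacle is the general-position step of the second paragraph: one must control the bad locus $B$ — in particular check that it meets each fibre in dimension zero, which uses the relative-curve hypotheses and, in the intended applications, the generic étaleness of $\pi$ supplied by \Cref{lm:RelCurves} — and carry out the Bertini-type argument with the restriction to $\Dpri\amalg\calZ'$ held fixed over the infinite but not necessarily algebraically closed residue field $k$.
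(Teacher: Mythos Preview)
The paper does not supply its own proof of this lemma: it is stated in the appendix with the citation \cite[Lemma 4.1]{GWStrHomInv} and no argument is given. So there is nothing in the present paper to compare your proposal against.

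That said, your outline follows the expected shape of such arguments --- Serre vanishing to prescribe the restriction to $\Dpri\amalg\calZ'$, a Bertini-type choice over the closed fibre using that $k(x)$ is infinite, and Nakayama to propagate from the closed fibre to all of $U$ --- and this is almost certainly the strategy of the cited proof. You have also correctly flagged the genuine subtlety: the definition and control of the bad locus $B$. As stated, the lemma only assumes $\pi$ is finite, so the ramification locus of $\pi$ could a priori be all of $\ovcCp$ (e.g.\ a purely inseparable morphism in positive characteristic), and then your set $B$ would not meet the closed fibre in finitely many points. The resolution is not to avoid the ramification locus per se, but to arrange that $Z(s)$ over the closed fibre is reduced and that distinct points of $Z(s)_x$ have distinct images under $\pi_x$; this is a condition on pairs of points and on tangent directions, and the parameter count goes through because the linear system has dimension growing with $l$ while the constraints are codimension one. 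Making this precise --- especially uniformly in $l$ and without smoothness assumptions on the fibres --- is exactly the content of the cited lemma, and your sketch stops short of it.
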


\begin{lemma}\label{lm:FinitnessOfVanLoc}
Let $U$ be a scheme and suppose that $\ovcC\to U$ is a projective morphism of pure dimension one.
Let $\scrL$ be an ample line bundle on $\ovcC$.
Then, for any pair of sections $d,e\in \Gamma(\ovcC,\scrL)$ such that $Z(d)\cap Z(e)=\varnothing$, 
the vanishing loci $Z(e)$ and $Z(d)$ are finite over $U$.
\end{lemma}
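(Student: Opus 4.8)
The plan is to derive the finiteness of $Z(d)$ and $Z(e)$ over $U$ from two classical facts: that the non-vanishing locus of a section of a power of an ample invertible sheaf is affine, and that a morphism which is simultaneously affine and proper is finite. Since finiteness of a morphism may be checked locally on the target, I would first reduce to the case $U=\Spec R$ affine (and noetherian, as assumed throughout the appendix). Restricting $\scrL$ to the open subscheme $\ol\calC\times_U U$ — which is $\ol\calC$ itself in this reduced situation — keeps it ample, and $\ol\calC$ is proper, hence projective, over $R$. Then for $n\gg0$ the section $d^{n}\in\Gamma(\ol\calC,\scrL^{\otimes n})$ has the property that the open subscheme $\ol\calC\setminus Z(d)$, i.e.\ the locus where $d^{n}$ is invertible, is affine; symmetrically $\ol\calC\setminus Z(e)$ is affine over $U$.

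Next I would invoke the hypothesis $Z(d)\cap Z(e)=\varnothing$. Equipping $Z(e)$ with its natural closed subscheme structure on $\ol\calC$, this hypothesis says precisely that $Z(e)$ is supported in the open set $\ol\calC\setminus Z(d)$, hence is a closed subscheme of $\ol\calC\setminus Z(d)$. Therefore the composite $Z(e)\hookrightarrow\ol\calC\setminus Z(d)\to U$ exhibits $Z(e)\to U$ as an affine morphism. On the other hand $Z(e)$ is a closed subscheme of $\ol\calC$, which is proper over $U$, so $Z(e)\to U$ is also proper. A morphism that is both affine and proper is finite, so $Z(e)\to U$ is finite. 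Exchanging the roles of $d$ and $e$ shows likewise that $Z(d)\to U$ is finite, which completes the argument.

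I do not anticipate any real obstacle here; the proof is a short assembly of standard facts. The only points deserving care are the correct relative formulations of the two inputs — that the complement of the vanishing locus of a section of (a power of) an ample line bundle on a quasi-compact scheme is affine, and that affine plus proper implies finite — both of which are available off the shelf under the running noetherianity and separatedness hypotheses. I would also remark that the assumption that $\ol\calC\to U$ has pure dimension one plays no role in this argument and is retained only because it reflects the situations in which the lemma is applied.
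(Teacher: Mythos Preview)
Your proof is correct and takes a genuinely different route from the paper's. The paper argues via quasi-finiteness: $Z(e)$ is closed in $\ol\calC$, hence projective over $U$, so it suffices to show that no fiber of $Z(e)\to U$ has positive dimension. Using the pure-dimension-one hypothesis, this amounts to showing that $Z(e)$ contains no irreducible component $C$ of any fiber $\ol\calC\times_U x$; if it did, ampleness of $\scrL|_C$ would force $Z(d|_C)\neq\varnothing$, contradicting $Z(d)\cap Z(e)=\varnothing$.

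Your approach instead combines ``complement of the vanishing of a section of an ample bundle is affine'' with ``affine $+$ proper $=$ finite'', and is slightly cleaner: it avoids any fiber-by-fiber analysis and, as you note, does not use the relative-dimension-one hypothesis at all. The paper's argument, on the other hand, is a bit more elementary in that it only invokes the non-triviality of an ample bundle restricted to a projective curve, rather than the affineness of $X_s$. Either proof is perfectly adequate here.
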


\begin{proof}
We prove that $Z(e)$ is finite over $U$; the case of $Z(d)$ follows by symmetry.
Since $\ovcC$ is projective over $U$, the same holds also for the closed subscheme $Z(e)$.
As $\ovcC$ is of pure dimension one, it follows that 
$Z(e)$ is finite over $U$ 
unless $Z(e)$ contains at least one irreducible component $C$ of the fiber $\ovcC\times_U x$ for some point $x\in U$.
But since $\scrL$ is ample, $\scrL\big|_{C}$ is nontrivial and hence $Z(d\big|_{C})\neq\varnothing$. So $Z(e)$ cannot contain an irreducible component of the fiber $\ovcC\times_U x$.
\end{proof}

\printbibliography
\end{document}